\theoremstyle{definition}
 \newtheorem{defi}{Definition}[section]
\theoremstyle{remark}
 \newtheorem{remark}[defi]{Remark}
 \newtheorem{exam}[defi]{Example}
\theoremstyle{plain}
\newtheorem{theo}[defi]{Theorem}
\newtheorem{prop}[defi]{Proposition}
\newtheorem{cor}[defi]{Corollary}
\newtheorem{lemma}[defi]{Lemma}
\newtheorem*{acknow}{Acknowledgements}
\newcommand{\zz}{\mathbb{Z}}
\newcommand{\qq}{\mathbb{Q}}
\newcommand{\rr}{\mathbb{R}}
\newcommand{\cc}{\mathbb{C}}
\newcommand{\Z}[1]{\zz_{#1}}
\newcommand{\abs}[1]{\left\vert #1 \right\vert}
\newcommand{\red}{/\!\!/}
\newcommand{\A}{\mathscr{A}}
\newcommand{\Mg}{\mathscr{M}^\mathfrak{g}}
\newcommand{\G}{\mathscr{G}}
\newcommand{\Gc}{\mathscr{G}^c}
\newcommand{\N}{\mathscr{N}}
\newcommand{\Nc}{\mathscr{N}^c}
\newcommand{\I}{\mathcal{I}}
\DeclareMathOperator{\supp}{supp}
\newcommand{\Symp}{\textbf{Symp}}
\newcommand{\Cob}{\textbf{Cob}}  
\newcommand{\Cobelem}{\textbf{Cobelem}}
\newcommand{\arnaque}{\begin{flushright}
$\Box$
\end{flushright}}
\title[HSI: twisting, connected sums, and Dehn surgery]{Symplectic Instanton Homology: twisting, connected sums, and Dehn surgery}
\author[Guillem Cazassus]{Guillem Cazassus}
\begin{document}

\begin{abstract}We define a twisted version of Manolescu and Woodward's Symplectic Instanton homology, prove that this invariant fits into the framework of Wehrheim and Woodward's Floer Field theory, and describe its behaviour for connected sum and Dehn surgery.
\end{abstract}

\maketitle
\tableofcontents

\section{Introduction}\label{sec:intro}

Instanton homology groups are graded abelian groups associated to  integral homology 3-spheres   introduced by Floer in \cite{Floerinst}. They form natural receptacles for  analogs of Donaldson invariants for 4-manifolds with boundary, and allow one  to compute Donaldson invariants by cutting a closed 4-manifold along 3-manifolds. Indeed,  they satisfy axioms  of a (3+1)-topological quantum field theory in the sense of Atiyah (with the exception that the groups are not defined for all 3-manifolds).

Likewise, Atiyah suggested in \cite{Atiyahfloer} the following alternative procedure for defining  these invariants by cutting 3-manifolds along surfaces, called the Atiyah-Floer conjecture. By stretching a 3-manifold  $Y$ along a Heegaard surface $\Sigma$ dividing $Y$ into two handlebodies $H_0$ and $H_1$, he suggested that these invariants can be computed by counting holomorphic discs inside a symplectic manifold $M(\Sigma)$ associated to the surface $\Sigma$, with boundary conditions in two Lagrangian submanifolds $L(H_0), L(H_1) \subset M(\Sigma)$ associated to the handlebodies. The symplectic manifold suggested by this procedure is the moduli space of flat connections on the trivial $SU(2)$-principal bundle over $\Sigma$ introduced in \cite{AtiyahBott}, and the Lagrangian submanifolds correspond to connections that extend flatly to $H_0$, resp. $H_1$.

Although the symplectic manifold $M(\Sigma)$ is not smooth (which is one of the main difficulties in defining Floer homology inside it), Huebschmann and Jeffrey proved that it can be realized as the symplectic quotient of a smooth finite dimensional $SU(2)$-Hamiltonian  manifold, \cite{jeffrey}, \cite{huebschmann},  called the extended moduli space. Manolescu and Woodward then managed to define homology groups,  replacing the symplectic manifold appearing in Atiyah's suggestion by an open subset of this moduli space. These groups, called Symplectic Instanton homology (HSI) \cite{MW}, are defined for every closed oriented three-manifold, and their isomorphism type is a topological invariant.

As a variant of the Atiyah-Floer conjecture, Manolescu and Woodward conjecture that these groups (with $\qq$-coefficients) are isomorphic to a variation of Instanton homology defined by Donaldson, \cite[Conjecture 1.2]{MW}, and ask whether or not the same holds with the hat version of Heegaard Floer homology.

At the same time, Wehrheim and Woodward developed a general framework, called Floer field theory, for dealing with symplectic constructions similar with Atiyah's procedure, and provided a general criterion for these to give topological invariants: the assignment 
\begin{align*}
\Sigma &\mapsto M(\Sigma), \\ H_i &\mapsto L(H_i)
\end{align*}
should be functorial in a certain sense, see \cite{WWfft,Wehrheimphilo}.

The goal of the present paper is to investigate the effect of connected sums and Dehn surgery on symplectic instanton homology. Regarding the second operation, in similar theories, one usually has long exact sequences relating the invariants associated to a ``surgery triad'' (see Definition~\ref{def:triad}), for example the $0,1, \infty$ -surgeries along a framed knot: see \cite{Floertri} for instanton homology,  \cite{OSzholodisk2} for Heegaard Floer theory, and \cite{KMOSzchir} for monopole homology. 

However, the existence of such an exact sequence in instanton homology must face the following problem: three manifolds forming a triad can never be simultaneously integral homology spheres. Floer overcame this issue by using a nontrivial $SO(3)$-bundle over the $S^2\times S^1$-homology manifold appearing in the triad. Even though HSI homology is well-defined for every closed oriented 3-manifold, we will see that the same phenomena also appears in this context. Following a suggestion by Chris Woodward, we  introduce a ``twisted'' version $HSI(Y,c)$ of this invariant, associated to a 3-manifold $Y$ endowed with a class $c$ in $H_1(Y,\Z{2})$, or equivalently an isomorphism class of $SO(3)$-bundles over $Y$.

\subsection{Statement of the main results} 
Concerning connected sum, we obtain the following Künneth formula: 
\begin{theo}(Künneth formula for connected sum)\label{sommecnx} Let  $Y$ and $Y'$ be two closed oriented 3-manifolds, and $c, c'$ two classes in $H_1(Y;\Z{2})$ and $H_1(Y';\Z{2})$ respectively. Then,
\begin{align*}
HSI(Y \# Y' , c+ c') \simeq & HSI(Y, c) \otimes HSI(Y' , c' )  \\
 & \oplus \mathrm{Tor}(HSI(Y, c) , HSI(Y' , c' ))[-1].
\end{align*}
\end{theo}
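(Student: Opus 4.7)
The plan is to reduce the connected sum calculation to a Künneth formula for Lagrangian Floer homology of product Lagrangians in a product symplectic manifold, exploiting the Floer field theoretic framework for HSI established earlier in the paper. I would choose Heegaard splittings $Y = H_0 \cup_{\Sigma} H_1$ and $Y' = H_0' \cup_{\Sigma'} H_1'$ compatible with the separating $S^2$ of the connected sum, so that $Y \# Y'$ inherits the Heegaard splitting $(\Sigma \# \Sigma', H_0 \natural H_0', H_1 \natural H_1')$. Under the canonical isomorphism $H_1(Y \# Y'; \Z{2}) \simeq H_1(Y; \Z{2}) \oplus H_1(Y'; \Z{2})$ the class $c + c'$ corresponds to $(c, c')$, and representing cycles can be chosen disjoint from the connecting $S^2$, so that the twisting data splits across the two factors.

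The next step is to identify the symplectic side. Using van Kampen on the once-punctured connect sum, the extended moduli space $\M(\Sigma \# \Sigma')$ is naturally a fibered product of $\M(\Sigma)$ and $\M(\Sigma')$ over the $SU(2)$-holonomy of the loop separating the two summands. Restricting to the open subset used by \MW to define HSI, the holonomy around this separating loop becomes central (in fact trivial after a suitable Hamiltonian reduction), and the fiber product reduces to an honest product $\M(\Sigma) \times \M(\Sigma')$; similarly, since the fundamental group of a boundary-connected-sum handlebody is a free product, the Lagrangian $L(H_i \natural H_i')$ identifies with $L(H_i) \times L(H_i')$.

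With this identification in hand, the theorem reduces to a Künneth formula for Lagrangian Floer homology of product Lagrangians. Choosing a product almost complex structure, every holomorphic strip in the product splits as a pair of holomorphic strips on each factor, so the Floer chain complex $CF(L_0 \times L_0', L_1 \times L_1')$ decomposes as the tensor product $CF(L_0, L_1) \otimes CF(L_0', L_1')$, intertwining the twisted differentials thanks to the geometric splitting of the cycles representing $c$ and $c'$. The algebraic Künneth theorem then produces the tensor product and the Tor term with the standard degree shift $[-1]$. The main obstacle is verifying that the product decomposition of the chain complex is compatible with \MW's choice of open subset and with the perturbation data needed for transversality: one must rule out disk or sphere bubbling that does not respect the product structure, which should follow from monotonicity of $L(H_i)$ in $\M(\Sigma)$ together with a control of the locus where the open subset is violated, arguing along the lines of \MW's stabilization invariance proof.
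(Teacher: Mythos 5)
Your proposal takes a genuinely different route from the paper, and unfortunately it runs into the exact obstruction that motivated the paper's choice of framework. The paper's proof is essentially one line: in the Floer field theory setup, removing two balls from $Y\#Y'$ along the separating sphere exhibits $\underline{L}(Y\#Y',c+c')$ as the concatenation $\underline{L}(Y,c),\underline{L}(Y',c')$ of the two generalized Lagrangian correspondences from $pt$ to $pt$, and the quilted Künneth formula (Proposition~\ref{kunnethfloer}, due to Wehrheim--Woodward) then gives the result directly. There is no need to pick Heegaard splittings of $Y$ and $Y'$, let alone to identify a single ambient moduli space as a product. Indeed the introduction remarks that the K\"unneth formula is ``a simple application'' precisely \emph{because} one works in $\Cob$/$\Symp$, and that the author does not know how to carry out the argument from the Heegaard splitting picture you propose.

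The key step in your proposal that does not go through is the claim that, after restricting to the Manolescu--Woodward open subset, the fibered product $\M(\Sigma\#\Sigma')$ ``reduces to an honest product $\M(\Sigma)\times\M(\Sigma')$.'' The MW open subset $\N$ is cut out by the condition that the holonomy around the \emph{boundary of the puncture} be $\neq -I$; it imposes nothing on the holonomy around the interior curve separating the two connected summands, so that holonomy remains an arbitrary element of $SU(2)$, not a central or trivial one. Even on the locus where the separating holonomy \emph{is} trivial, which is only a coisotropic subset, the induced 2-form is not the product of the Atiyah--Bott/Huebschmann--Jeffrey forms on the two factors, and the compactification $\Nc(\Sigma\#\Sigma')$ (obtained by symplectic cutting at $|\theta|=\pi\sqrt{2}$ for the \emph{single} boundary) bears no simple product relation to $\Nc(\Sigma')\times\Nc(\Sigma'')$: the degeneracy hypersurface $R$, the monotone form $\tilde\omega$, and the reference almost complex structure are all global objects on the genus-$(g+g')$ moduli space. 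Consequently the Lagrangians $L(H_i\natural H_i')$ do not sit as products inside a product, and the split almost complex structure argument at the end of your proposal has no ambient product to split against. If one wanted to rescue this approach one would need to perform a symplectic reduction at the separating curve and then control the Lagrangians and the whole compactification package through that reduction, which is precisely the bookkeeping that passing to quilted Floer homology avoids; this is why the proof in the paper stays inside $\Symp$ rather than inside a single $\Nc(\Sigma\#\Sigma')$.
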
 

Concerning Dehn surgery, recall first the definition of a surgery triad: 
\begin{defi}\label{def:triad} A \emph{surgery triad} is a triple of 3-manifolds $Y_\alpha$, $Y_\beta$ and $Y_\gamma$ obtained from a compact oriented 3-manifold $Y$ with genus one boundary by gluing a solid torus along the  boundary, identifying the meridian with respectively  three simple curves  $\alpha$, $\beta$ and $\gamma$, such that $\alpha. \beta = \beta. \gamma =\gamma. \alpha = -1$.
\end{defi}
Our exact sequence can then be stated as:
\begin{theo}[Surgery exact sequence]\label{suiteintro}\label{trianglechir} Let $(Y_\alpha , Y_\beta, Y_\gamma)$ be a surgery triad obtained from  $Y$ as in the previous definition, $c\in H_1(Y;\Z{2})$, and for $\delta \in \lbrace \alpha,\beta,\gamma\rbrace$, $c_\delta \in H_1(Y_\delta;\Z{2})$ the  class induced from  $c$ by the inclusions. Let also  $k_\alpha \in H_1(Y_\alpha;\Z{2})$ be the  class corresponding to the core of the solid torus. Then, there exists a long exact sequence:
\[ \cdots\rightarrow HSI(Y_\alpha ,c_\alpha+ k_\alpha) \rightarrow HSI(Y_\beta,c_\beta) \rightarrow HSI(Y_\gamma,c_\gamma)\rightarrow \cdots . \]
\end{theo}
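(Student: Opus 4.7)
The approach I would take is the standard one for Floer-theoretic surgery exact triangles: express all three HSI groups as Lagrangian Floer homologies inside a common symplectic manifold with three Lagrangians in ``triangle position,'' then invoke an algebraic exact triangle criterion. The twisted framework developed earlier in the paper is essential; without it, the three Lagrangians would carry incompatible bundle data, which is the direct symplectic shadow of the obstruction Floer originally overcame by using a non-trivial $SO(3)$-bundle on $S^1\times S^2$.

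First I would choose a decomposition of the bordered 3-manifold $Y$ of the form $Y = H \cup_\Sigma C$, where $H$ is a handlebody, $\Sigma$ is a closed surface of sufficiently large genus, and $C$ is a compression body with $\partial_+ C = \Sigma$ and $\partial_- C = T^2$. Filling along $\delta \in \{\alpha,\beta,\gamma\}$ with the solid torus $V_\delta$ produces a Heegaard splitting $Y_\delta = H \cup_\Sigma (C \cup_{T^2} V_\delta)$ of each surgery, with $H$ and $\Sigma$ fixed as $\delta$ varies. Using the twisted Floer field theoretic setup established earlier in the paper, each group $HSI(Y_\delta, c_\delta^*)$ is presented as a Lagrangian Floer homology in the extended moduli space $\N(\Sigma)$ between the fixed Lagrangian $L(H)$ and the varying Lagrangian $L(C \cup V_\delta)$.

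Next, \WW strip-shrinking identifies $L(C \cup V_\delta)$ with the geometric composition $\Lambda_\delta \circ L(C)$, where $L(C) \subset \N(T^2)^- \times \N(\Sigma)$ is the Lagrangian correspondence of the compression body and $\Lambda_\delta \subset \N(T^2)$ is the circle Lagrangian attached to the solid torus $V_\delta$. In the twisted setting $\N(T^2)$ reduces essentially to a 2-torus, and the intersection hypotheses $\alpha \cdot \beta = \beta \cdot \gamma = \gamma \cdot \alpha = -1$ place the three circles $\Lambda_\alpha, \Lambda_\beta, \Lambda_\gamma$ in triangle position (each pair meeting transversally in a single point). The additional class $k_\alpha$ in $HSI(Y_\alpha, c_\alpha + k_\alpha)$ is precisely what is required to align the bundle data for $\Lambda_\alpha$ with that of the other two circles. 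The question is thereby reduced to an exact triangle for three circles on a torus.

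I would then define the three maps in the long exact sequence by Lagrangian Floer triangle products $\mu^2$ among the $\Lambda_\delta$, transported via $L(C)$ to chain maps between the HSI complexes (or equivalently by the 2-handle cobordism maps induced by the Floer field theory framework). Consecutive compositions should be shown to be null-homotopic by the usual boundary-of-a-1-parameter-family argument for holomorphic quadrilaterals, and the exact triangle is then extracted by an algebraic lemma of the kind used by \OSz and Seidel: three null-homotopies plus a quasi-isomorphism between the mapping cone of one map and the third term yield a long exact sequence.

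The main obstacle will be analytical. The \MW extended moduli space is an open subset carefully chosen to avoid reducible singularities, so applying strip-shrinking in this non-compact setting requires verifying that the relevant holomorphic curves stay inside the good region, that the geometric composition $\Lambda_\delta \circ L(C)$ is embedded, and that the moduli spaces of quilted triangles defining the connecting maps and their null-homotopies are transversely cut out and compact. A secondary but delicate bookkeeping task is ensuring that the class $k_\alpha$ is introduced at the correct step so that spin structures, orientations, and the $H_1(\,\cdot\,;\Z{2})$-data remain consistent around the triangle.
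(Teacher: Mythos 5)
Your proposal departs substantially from the paper's proof, and the departures conceal a genuine gap. The paper does not pass to a high-genus splitting and does not reduce to three circles on a $2$-torus: it works directly inside the extended moduli space $\Nc(T')$ of the \emph{once-punctured} torus $T'$. This space is an open subset of $SU(2)^2$, hence $6$-dimensional, and the three relevant Lagrangians $L_\beta=\lbrace\mathrm{Hol}_\beta=I\rbrace$, $L_\gamma=\lbrace\mathrm{Hol}_\gamma=I\rbrace$ and $S=L_\alpha^-=\lbrace\mathrm{Hol}_\alpha=-I\rbrace$ are Lagrangian $3$-spheres, not circles. Your assertion that ``in the twisted setting $\N(T^2)$ reduces essentially to a $2$-torus'' is incorrect in this framework: the theory attaches extended moduli spaces to surfaces with boundary (never to the closed $T^2$), and even the closed-torus representation variety would be the singular pillowcase rather than a torus. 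Consequently there is no abstract ``exact triangle for three Lagrangians in triangle position'' available here; in two real dimensions one can verify the Ozsv{\'a}th--Szab{\'o} triangle-counting hypothesis by hand, but in dimension six that hypothesis --- the quasi-isomorphism between a mapping cone and the third term --- is exactly the content one has to supply, and your sketch does not say how.

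The missing ingredient is the key geometric input of the paper: a Dehn twist of the punctured torus along $\alpha$ induces a \emph{generalized} Dehn twist of $\Nc(T')$ along the Lagrangian sphere $S=\lbrace\mathrm{Hol}_\alpha=-I\rbrace$ (Theorem~\ref{twistinduit}), and this twist carries $L_\gamma$ to $L_\beta$. Feeding this into a quilted extension of Seidel's long exact sequence (Theorem~\ref{quilttri}) yields the triangle, and the K\"unneth formula (Theorem~\ref{sommecnx}) identifies $HF(L_0,S^T,S,\underline{L})$ with $HSI(Y_\alpha,c_\alpha+k_\alpha)$ because $S$ is the Lagrangian of the $2$-handle attached along $\alpha$ \emph{with} the nontrivial $\Z{2}$-class. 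Your guess that $k_\alpha$ serves to align bundle data is morally right, but the mechanism is precisely that the twisting sphere is $\lbrace\mathrm{Hol}_\alpha=-I\rbrace$ rather than $\lbrace\mathrm{Hol}_\alpha=I\rbrace$.

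Finally, your route through a large-genus splitting plus strip-shrinking would run into an obstacle the paper deliberately avoids: for a surface $\Sigma$ of genus at least $2$, the Dehn twist acts on $\N(\Sigma)$ as a fibered twist along the coisotropic $\lbrace\widetilde{B}=I\rbrace$, which is \emph{not compact} in $\N(\Sigma)$ (its closure meets the hypersurface $R$ in $\Nc(\Sigma)$), so the Wehrheim--Woodward fibered-twist exact triangle does not directly apply. It is only in $\N(T')$, where $\lbrace\widetilde{B}=-I\rbrace$ lies inside the compact level set $\lbrace g=0\rbrace$, that one extracts a genuine generalized Dehn twist with a compactly supported Hamiltonian --- which is exactly why the paper states that it is convenient to work directly in the punctured-torus moduli space and deal with generalized rather than fibered twists.
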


\begin{remark}As in Heegaard Floer theory, one can define maps associated to four-dimensional cobordisms, and show that two of the three mophisms appearing in this sequence are associated to the canonical cobordisms, with appropriate cohomology classes. This has been done (in French) in the author's PhD \cite[Chapter 6]{these}, and will be the object of a forthcoming paper \cite{cobordisms}, which should also address the naturality issue.
\end{remark}

\subsection{Applications}

The exact sequence of Theorem~\ref{suiteintro}, together with the knowledge of the Euler characteristic of symplectic instanton homology and an observation from Ozsv{\'a}th and Szab{\'o}, allow one to compute the HSI groups of several manifolds. We will present some of them in section~\ref{sec:applic}, including plumbings, surgeries on knots, and branched double covers.

\subsection{Organisation of the paper}

In section~\ref{sec:fft}, we review Wehrheim and Woodward's Floer Field theory and adapt it to the framework of HSI. In section~\ref{sec:hsi} we will build the twisted symplectic instanton homology groups $HSI(Y,c)$ within this framework. In section~\ref{sec:firstprop} we give their first properties: we prove in section~\ref{sec:compusplitting} that the definition agrees with the one we  outline in the next paragraph, and prove Theorem~\ref{sommecnx} in section~\ref{sec:connectedsum}. Section~\ref{sec:surgery} is devoted to Dehn surgery, we prove Theorem~\ref{trianglechir} in section~\ref{sec:proofsurgeryexactseq}, and provide applications in section~\ref{sec:applic}.

\subsection{Outline of the construction of $HSI(Y,c)$, and sketch of the proofs}\label{sec:outlineproofs}

We first outline the construction of the twisted version from a Heegaard splitting, without using Wehrheim and Woodward's theory. Let $Y = H_0 \cup_\Sigma H_1 $ be a Heegaard splitting, and $C_0$, $C_1$ two knots inside $H_0$ and $H_1$ respectively, such that the class of their union in $H_1(Y;\Z{2})$ equals $c$. Let $\Sigma'$ be the surface with boundary obtained by removing a disc to $\Sigma$, and $*\in \partial\Sigma'$ a base point. The open part of Huebschmann and Jeffrey's extended moduli space   $\N(\Sigma')$ that will be involved in the construction admits the following description: 
\[\N(\Sigma') = \lbrace  \rho \in \mathrm{Hom}(\pi_1(\Sigma',*) , SU(2) ):  \rho(\partial \Sigma')   \neq -I \rbrace. \]
By choosing a base of the free group $\pi_1(\Sigma',*)$, this space can be realized as an open subset of $SU(2)^{2g}$, where $g$ is the genus of $\Sigma$. It also admits a natural symplectic structure, for which the following sub-manifolds  are  Lagrangian:
\begin{align*}
 L_0 &= \lbrace  \rho \circ i_{0,*}: \rho  \in \mathrm{Hom}(\pi_1(H_0\setminus C_0,*), SU(2) ) ,  \rho(\mu_0)  = -I \rbrace  \\
 L_1 &= \lbrace  \rho \circ i_{1,*}: \rho  \in \mathrm{Hom}(\pi_1(H_1\setminus C_1,*), SU(2) ) , \rho(\mu_1)  = -I \rbrace ,
\end{align*}
with $i_{0,*}$ and $i_{1,*}$ induced by the inclusions, and $\mu_0$, $\mu_1$  meridians of $C_0$ and $C_1$ respectively.

The group  $HSI(Y,c)$ can then be defined as the  Lagrangian Floer homology $HF(L_0,L_1)$. 

However, since $\N(\Sigma')$ is noncompact, the fact that Lagrangian Floer homology can be defined inside it is not immediate at all. Manolescu and Woodward manage to do so after a quite involved construction. They first compactify $\N(\Sigma')$ by a symplectic cutting, and obtain a compact moduli space $\Nc(\Sigma') = \N(\Sigma')\cup R$, with a symplectic hypersurface $R$ and a monotone 2-form which degenerates at $R$. By performing another symplectic cutting, they  find a symplectic form on $\Nc(\Sigma')$ which is not monotone anymore, and after some interplay with both forms, they define Floer homology relatively to $R$, which corresponds to Floer homology inside $\N(\Sigma')$.

In order to be able to use Wehrheim and Woodward's theory of quilts, we will define this invariant directly in the Floer field theory framework, and then prove in Proposition~\ref{calculscindement} that its definition agrees with the one we just outlined. This will have two advantages: first, from this definition, the proof of Theorem~\ref{sommecnx} is a simple application of K\"unneth formula, although it is unclear (at least for the author) how to prove it from the Heegaard splitting definition. Second, when proving the surgery exact sequence, it will be convenient to work directly inside the moduli space of the punctured torus, and deal with generalized Dehn twists rather than fibered ones as in \cite{WWtriangle}.

The proof of Theorem~\ref{trianglechir} will follow from a generalization of Seidel's long exact sequence, and from the fact that a Dehn twist on a punctured torus induces a generalized Dehn twist on its extended moduli space.

\begin{acknow} I would like to warmly thank my advisors Paolo Ghiggini and Michel Boileau for suggesting me this problem and for their constant support. I also would like to thank Chris Woodward for suggesting the definition of the twisted version, Christian Blanchet for suggesting the application to alternating links, and Jean-François Barraud, Frédéric Bourgeois, Laurent Charles, Julien Marché and Raphael Zentner, for their interest, and helpful conversations.
\end{acknow}

\section{Floer Field Theory}\label{sec:fft}

\subsection{Outline}\label{sec:outline}

According to Wehrheim and Woodward \cite{WWfft}, a \emph{Floer field theory} is a functor from a category of (2+1)-cobordisms, to a variation of Weinstein's symplectic category, whose objects are symplectic manifolds, and morphisms (equivalence classes of) sequences of Lagrangian correspondences (see Definition~\ref{def:lagcorresp}).

Such a functor being given, one can associate a topological invariant to a closed oriented 3-manifold $Y$ in the following way: after removing two 3-balls to $Y$, one obtains a cobordism from the 2-sphere to itself, the functor applied to this cobordism gives rise to a sequence of Lagrangian correspondences having the same source and target symplectic manifolds, and to such a sequence (with extra technical assumptions) one can associate a homology group called quilted Floer homology, which is a generalization of Lagrangian Floer homology.  

We will see that HSI groups fit into this framework, as already suggested in Manolescu and Woodward's proof of their stabilisation invariance. Yet, some slight modifications will be necessary, mainly for three reasons:

\begin{itemize}

\item The moduli spaces are associated to surfaces with one boundary component: a  disc will have to be removed to a closed surface, and similarly, a tube connecting two such discs will have to be removed to a cobordism. Since the Lagrangian correspondence obtained will depend on the choice of this tube (as in Example~\ref{exemchgtchemin}), these should be incorporated in the cobordism category. Rather than being seen as an inconvenient, this phenomena might be used to define invariants for knots and sutured manifolds. Moreover, Lagrangian corresponces will also depend on a parametrization of the tube (see Example~\ref{exemreparam}), of which the category should also keep track.

\item The twisting homology  class in $H_1(Y;\Z{2})$ should also be incorporated in the cobordism. Although this class corresponds to a second Stiefel-Whitney class, we prefer working with homology classes to avoid using relative cohomology classes.

\item The target symplectic category will also have to be more complicated in order to be able to define quilted Floer homology, for the reasons described in section~\ref{sec:outlineproofs} : symplectic manifolds will come equipped with a hypersurface, and two 2-forms, which should satisfy extra technical assumptions as in \cite[Assumption 2.5]{MW}.

\end{itemize}

\subsection{Quilts, the symplectic category, and Floer homology}
\label{sec:quiltsetc}

\begin{defi}\label{def:lagcorresp}
A \emph{Lagrangian correspondence} between two symplectic ma\-nifolds  $M$ and $M'$ is a Lagrangian submanifold $L\subset M^- \times M'$, where $M^-$ denotes the manifold $M$ endowed with the opposite of its symplectic form.
\end{defi}

This kind of correspondences, sometimes called canonical relations, appears frequently  in symplectic geometry: a diffeomorphism between two symplectic manifolds is a symplectomorphism if and only if its graph is a Lagrangian correspondence. Moreover, given a Hamiltonian group action of a group $G$ on a symplectic manifold $M$ which is free on the zero level on the moment map, the zero level of the moment map induces a Lagrangian correspondence from $M$ to $M\red G$.

Recall the following definition from Wehrheim and Woodward:
\begin{defi}
A \emph{generalized Lagrangian correspondence} between two symplectic manifolds $M$ and $M'$ consists of intermediate symplectic manifolds $M_1$, $M_2$, ..., $M_{k-1}$, and a sequence of Lagrangian correspondences, for $0\leq i \leq k-1$,  $L_{i(i+1)} \subset M_i ^- \times M_{i+1}$, with $M_0 = M$ and $M_k = M'$. Such a sequence will be denoted $\underline{L}$:
\[\underline{L} = \left(  \xymatrix{   M_0 \ar[r]^{L_{01}} &  M_1 \ar[r]^{L_{12}} &  M_2 \ar[r]^{L_{23}} & \cdots \ar[r]^{L_{(k-1)k}} & M_k } \right), \]
The integer $k$ will be referred to as the \emph{length of $\underline{L}$}. If $L$ (resp. $\underline{L}$) denotes a (resp. generalized) Lagrangian correspondence  from $M$  to $M'$, we will denote $L^T$ (resp. $\underline{L}^T$) the correspondence from $M'$ to $M$, obtained by reversing the arrows.
\end{defi}

We will denote $pt$ the symplectic manifold consisting of one point. A  Lagrangian correspondence from $pt$ to $M$ is then  simply a Lagrangian submanifold of $M$.

If $\underline{L}$ is a generalized Lagrangian correspondence from  $pt$ to $pt$, the quilted Floer homology of $\underline{L}$ can be defined (whenever this is possible) as the Lagrangian Floer homology
\[ HF(\underline{L}) = HF( L_{01}\times L_{23} \times \cdots , L_{12}\times L_{34} \times \cdots ), \]
where the ambient symplectic manifold is the product of all the manifolds $M_0^- \times M_1 \times M_2^- \times \cdots $. When $L_{01}\times L_{23} \times \cdots $ and $L_{12}\times L_{34} \times \cdots $ intersect transversely, the corresponding chain complex is generated by the set of  \emph{generalized intersection points}
\[ \I(\underline{L}) =\lbrace (x_0, \cdots , x_k) ~|~ \forall i, (x_i,x_{i+1}) \in L_{i(i+1)}   \rbrace, \]
and its differential counts index 1 Floer trajectories, which can alternatively be viewed as ``quilted strips''. We now recall  the definition of quilted surface and pseudo-holomorphic quilt, which are the key objects involved in Wehrheim and Woodward's theory:

\begin{defi}A \emph{quilted surface} $\underline{S}$ consists of: 
\begin{enumerate}
\item[$(i)$] a collection of Riemann surfaces $\underline{S} = (S_k)_{k=1\cdots m}$, called patches, and endowed with complex structures $j_k$. The boundary components of $S_k$ will be indexed by a set $\mathcal{B}(S_k)$: $\partial S_k = \bigcup_{b\in\mathcal{B}(S_k) }{I_{k,b}}$.
\item[$(ii)$] a collection $\mathcal{S}$ of \emph{seams}, consisting of pairwise disjoints two-element subsets:
$ \sigma \subset \bigcup_{k=1}^{m}\bigcup_{b\in\mathcal{B}(S_k)} I_{k,b}$, and for each $\sigma = \lbrace  I_{k,b}, I_{k',b'}\rbrace$, a real analytic diffeomorphism $\varphi_\sigma \colon I_{k,b} \rightarrow I_{k',b'}$.
\end{enumerate}\end{defi}

\begin{defi} Let $\underline{S}$ be a quilted surface as before, and $\underline{M} = (M_k)_{k=1\cdots m}$ a collection of symplectic manifolds,  one for each patch, and  \[\underline{L} = \left(  L_\sigma \subset M_k ^- \times M_{k'} , L_{k,b} \subset M_k\right)\] a collection of Lagrangian correspondences, one associated to each seam $\sigma = \lbrace  I_{k,b}, I_{k',b'}\rbrace$, and Lagrangian submanifolds, one for each $I_{k,b}$ which is not contained in any seam. A \emph{(pseudo-holomorphic) quilt} $  \underline{u}\colon   \underline{S} \rightarrow \left(\underline{M}, \underline{L} \right) $ is a collection of (pseudo-holomorphic, provided the $M_k$ are endowed with almost-complex structures) maps   $u_i\colon S_i\rightarrow M_i$ satisfying the following seam and boundary conditions:
 \[ (u_k(x), u_k'(\varphi_\sigma (x)) ) \in L_\sigma,  x \in I_{k,b}  ,\]
 \[ u_k(x) \in L_{k,b}, x \in I_{k,b} .\]
\end{defi}
These data can be summarized in a diagram as in figure~\ref{cylinder}:

\begin{figure}[!h]
    \centering
    \def\svgwidth{.65\textwidth}
    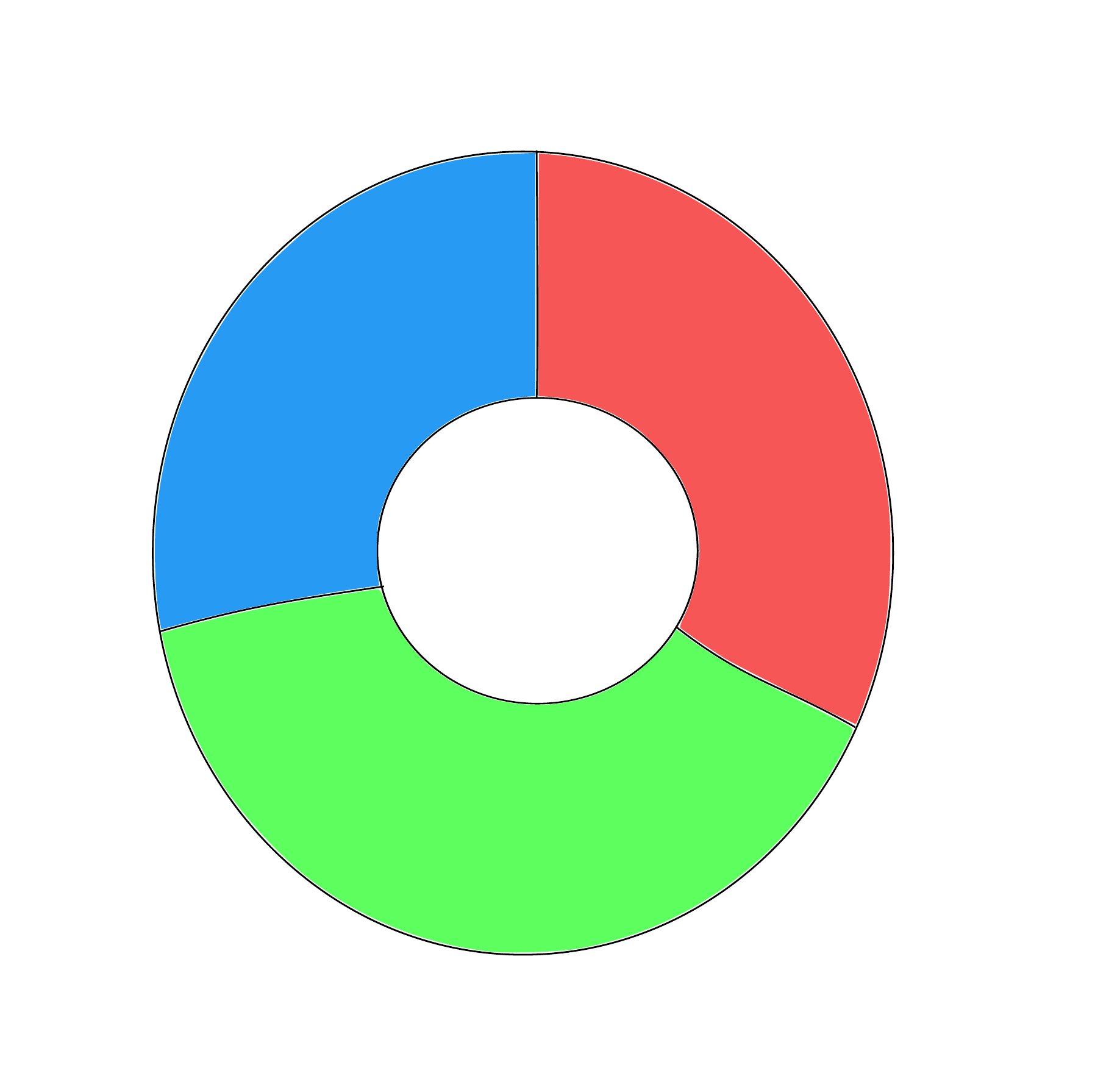
      \caption{A quilted cylinder.}
      \label{cylinder}
\end{figure}

\begin{remark}
\begin{enumerate}
\item We will refer to $I_{k,b}$ as a boundary of $\underline{S}$ if it is not contained in any seam.
\item We will sometimes identify $\underline{S}$ and the  surface obtained by gluing all the patches together along the seams.
\end{enumerate}
\end{remark}

 Floer trajectories involved in defining the differential can then be seen as quilted strips as in figure~\ref{bande}.

\begin{figure}[!h]
    \centering
    \def\svgwidth{.65\textwidth}
   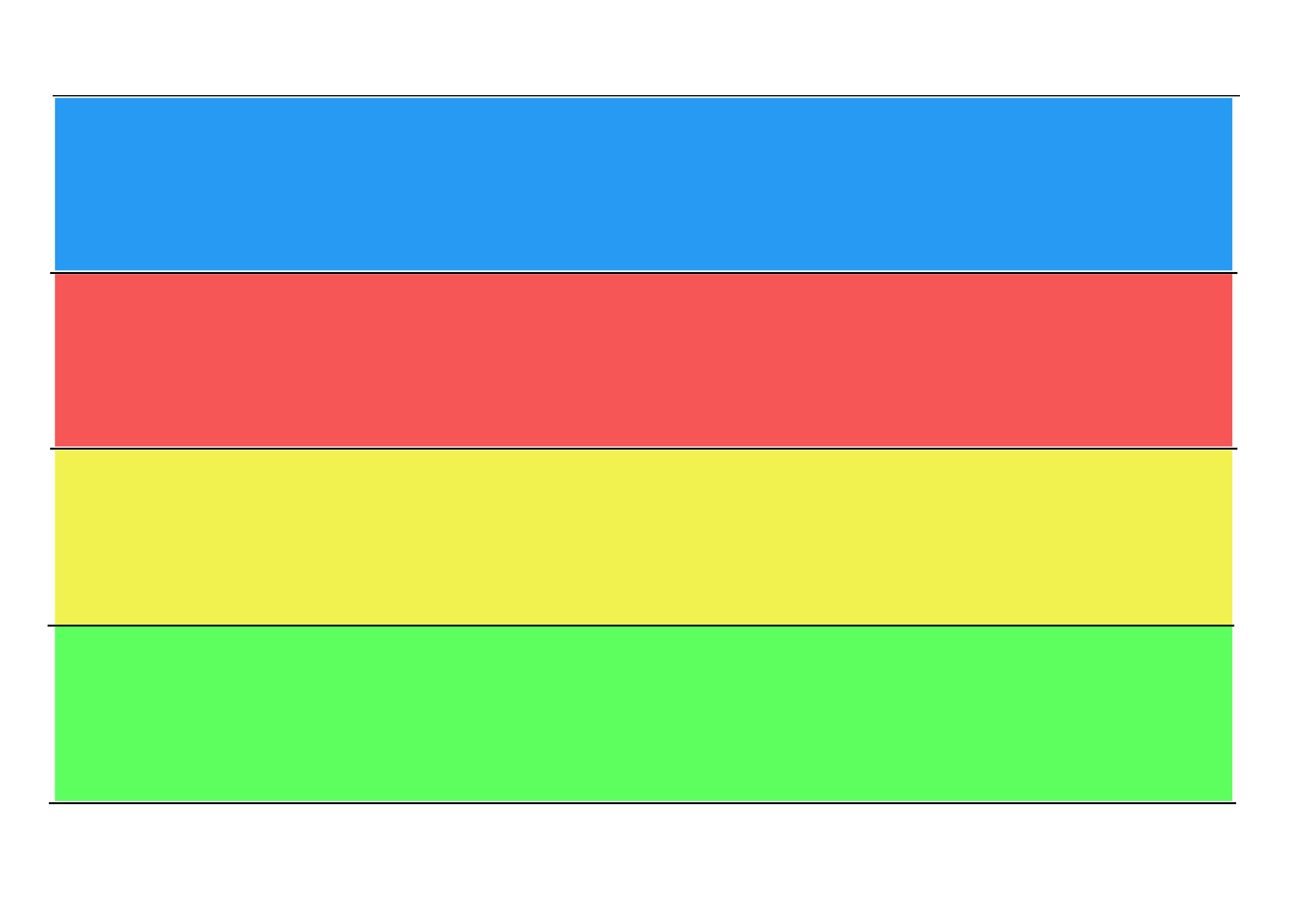
      \caption{ A quilted strip involved in the differential of quilted Floer homology.}
      \label{bande}
\end{figure}

Weinstein suggested in \cite{weinsteincat} that Lagrangian correspondences  should be seen as morphisms of  a category, for which the composition should be defined in the following way:

\begin{defi}[Geometric composition]

Let $M_0$, $M_1$, $M_2$ be three symplectic manifolds, and $L_{01} \subset M_0\times M_1$, $L_{12} \subset M_1\times M_2$ be Lagrangian correspondences. The \emph{geometric composition} of $L_{01}$ with $L_{12}$ is the subset:
\[L_{01} \circ L_{12} = \pi_{02}(L_{01}\times M_2 \cap M_0\times L_{12}),\]
where $\pi_{02}$ denotes the projection
\[\pi_{02}\colon M_0\times M_1\times M_2 \rightarrow M_0\times  M_2 .\]
\end{defi}

\begin{remark}This composition generalises the composition of symplectomorphisms, and the diagonal $\Delta_{M} = \lbrace (x,x)~|~ x\in M \rbrace$ plays the role of the identity.
\end{remark}

Unfortunately, the resulting correspondence may not be smooth, therefore this composition is not always a Lagrangian correspondence. To overcome this difficulty,  Wehrheim and Woodward only allow this operation whenever the following criterion is satisfied:

\begin{defi}[Embedded geometric composition]
A geometric composition  $L_{01} \circ L_{12}$ is called \emph{embedded} when:

\begin{itemize}
\item $L_{01} \times M_2$ and $M_0 \times L_{12}$ intersect transversally,

\item $\pi_{02}$ induces an embedding of $L_{01}\times M_2 \cap M_0\times L_{12}$ in $M_0\times  M_2$.
\end{itemize}

\end{defi}

When this criterion is fulfilled,  not only  $L_{01} \circ L_{12}$ is a Lagrangian correspondence, but quilted Floer homology also behaves well in this situation: we will see in Theorem~\ref{compogeom} that, assuming $L_{01} \circ L_{12}$ is embedded, and some extra hypothesis,
\[ HF( \cdots,  L_i, L_{i+1}, \cdots ) \simeq HF( \cdots,  L_i \circ L_{i+1}, \cdots ). \]

Wehrheim and Woodward then define the extended symplectic category  $ \mathrm{Symp}_\tau^ \#$ (see \cite[Def. 3.1.7]{WWfft}) as the category whose objects are symplectic manifolds (with some extra monotonicity  conditions), and  morphisms equivalence classes of generalized Lagrangian correspondences   (also satisfying extra conditions), where the equivalence relation is generated by:
\[ ( \cdots,  L_i, L_{i+1}, \cdots ) \sim ( \cdots,  L_i \circ L_{i+1}, \cdots ),\]
provided $L_i \circ L_{i+1}$ is embedded, and some extra  conditions ensuring Floer homology is well-defined.

\begin{remark}It will follow from Cerf theory that the  generalized Lagrangian correspondences we will construct are equivalent to length 2 generalized Lagrangian correspondences. This facts holds for every generalized Lagrangian correspondence, as observed by  Weinstein in \cite{Weinsteinremark}. 
\end{remark}

In order to fit in the framework of \cite[Assumption 2.5]{MW} required  for defining symplectic instanton homology, we will adapt the definition of  $ \mathrm{Symp}_\tau^ \#$ for defining \Symp\ (c.f. Definition~\ref{defcat}). We first give here some definitions required in order to do so. 

As the symplectic manifolds we will consider will be equipped with symplectic hypersurfaces, the Lagrangian correspondences shall satisfy the following compatibility condition (\cite[Def. 6.2]{MW}):

\begin{defi}[Lagrangian correspondences  compatible with  a pair of hypersurfaces] \label{correspcompat}
 Let  $M_0$, $M_1$ be two  symplectic manifolds, and $R_{0}\subset M_0$, $R_{1} \subset M_1 $ two  symplectic hypersurfaces. A Lagrangian correspondence $L_{01} \subset M_0\times M_1$ is \emph{compatible with  the pair $(R_0,R_1)$} if $L_{01}$ intersects $R_0 \times M_1$ and $M_0 \times R_1$ transversally, and these two intersections coincide (and are equal to $(R_0 \times R_1 ) \cap L_{01}$).
\end{defi}

In these conditions, one can choose tubular  neighborhoods  $\tau_0$ and $\tau_1$ of $R_0$ and $R_1$ respectively:
\[ \tau_0\colon N_{R_0} \rightarrow M_0,\   \tau_1\colon N_{R_1} \rightarrow M_1, \]
 such that the preimage $ (\tau_0 \times \tau_1)^{-1}(L_{01}) \subset N_{R_0} \times N_{R_1} $ is the graph of a bundle isomorphism 
\[ \varphi \colon (\widetilde{N}_{R_0})|_{(R_0 \times R_1 ) \cap L_{01}} \rightarrow (\widetilde{N}_{R_1})|_{(R_0 \times R_1 ) \cap L_{01}}, \]
where $\widetilde{N}_{R_0}$ is the normal bundle  of $R_0 \times M_1 \subset M_0 \times M_1$, and $\widetilde{N}_{R_1}$ is the  normal bundle of $M_0 \times R_1 \subset M_0 \times M_1$.

Notice that if one of the symplectic manifolds  is a point, a Lagrangian $L$ is compatible with an hypersurface $R$ if and only if $L$ and $R$ are disjoint.

 Let    $\underline{S}$ be a  quilted surface,  $\underline{u}\colon \underline{S}\rightarrow (\underline{M},\underline{L})$ a quilt, and $\underline{R}\subset \underline{M}$ a family of hypersurfaces such that each Lagrangian correspondence associated to a seam is  compatible with  the corresponding hypersurfaces. Recall that $\underline{u}$ and $\underline{R}$ have a well-defined intersection number $\underline{u}. \underline{R}$:

Let $\underline{U}\subset \underline{S}$ be an open neighborhood   of $\underline{u}^{-1}(\underline{R})$ such that the image of each patch $S_i$ is contained in the tubular neighborhoods  $\tau_i$ of $R_i$. Each map $u_i$ can be seen as a section of the complex line bundle $u_i^* N_{R_i}$. All these bundles can be glued together into a bundle over $\underline{U}$ by using the isomorphisms $\varphi$, and the sections $u_i$ glue together to a global section  of this bundle, which is nonzero over the boundary  $\partial \underline{U}$.  The bundle is then trivial over this boundary and extends to a bundle over $\underline{S}$, and the sections extend to global sections, which are non-zero outside $\underline{U}$. The intersection number $\underline{u}\cdot \underline{R}$ is then defined as the Euler number  of this bundle.

The following lemma  is proven in \cite[Lemma 6.4]{MW} when the quilted surface consists of several  parallel strips, its proof adapts to any  quilted surface. 

\begin{lemma}(\cite[Lemma 6.4]{MW}) The intersection number $\underline{u}\cdot \underline{R}$ doesn't change when one perturbs $\underline{u}$ by a homotopy preserving the boundary and seam conditions. Moreover, if $\underline{u}$ is pseudo-holomorphic, $\underline{R}$ almost complex and $\underline{u}$ and $\underline{R}$ intersect transversely, then this number is given by:
\[ \underline{u} \cdot \underline{R} = 
\sum_{j=0}^k 
\# \{ z_j \in \operatorname{int}(S_j) | u_j(z_j)
\in R_j \} + \frac{1}{2} \# \{ z_j \in \partial S_j | u_j(z_j) \in R_j \} .\]
\end{lemma}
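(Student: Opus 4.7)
The plan is to adapt the argument of \cite[Lemma 6.4]{MW} by checking that its local nature allows one to replace ``parallel strips'' by any quilted surface, so that no new global ingredients are required.

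First I would set up the line bundle $\tilde{E} \to \underline{S}$ and its section $\tilde{s}$ more carefully. Over each patch $S_j$, pull back the complex line bundle $N_{R_j}$ by $u_j$ on the neighborhood $\underline{U}_j = u_j^{-1}(\tau_j(N_{R_j})) \subset S_j$; using the tubular neighborhoods $\tau_j$, each $u_j|_{\underline{U}_j}$ is identified with a section $s_j$ of $u_j^*N_{R_j}$. Along every seam $\sigma = \{I_{k,b}, I_{k',b'}\}$, the compatibility assumption provides the complex linear bundle isomorphism $\varphi$ of Definition~\ref{correspcompat}, which identifies $u_k^*N_{R_k}$ with $(u_{k'}\circ\varphi_\sigma)^*N_{R_{k'}}$ along $I_{k,b} \cap \underline{U}_k$, and intertwines $s_k$ with $s_{k'}\circ\varphi_\sigma$. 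These local pieces therefore glue into a complex line bundle $E$ over $\underline{U} = \bigcup_j \underline{U}_j$ together with a section $s$ that is nonzero on $\partial\underline{U}$ (since outside $\underline{U}$, the maps $u_j$ avoid the tubular neighborhoods). Since $E|_{\partial\underline{U}}$ is a complex line bundle over a $1$-complex and $s|_{\partial\underline{U}}$ is nowhere vanishing, $E$ and $s$ extend over $\underline{S}\setminus \underline{U}$ to a bundle $\tilde{E}$ and a nowhere vanishing section there; we set $\underline{u}\cdot\underline{R} := e(\tilde{E})$. This extension is unique up to isomorphism, so the definition does not depend on the choices.

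For homotopy invariance, I would run the same construction in families over $\underline{S}\times[0,1]$. A homotopy $(\underline{u}_t)_{t\in[0,1]}$ preserving boundary and seam conditions gives rise, exactly as above, to a complex line bundle $\tilde{\mathcal{E}}$ over $\underline{S}\times[0,1]$ equipped with a section nonvanishing outside a compact subset of the interior; its Euler number computed on each slice $\underline{S}\times\{t\}$ is the common value $e(\tilde{E}_t)$, which is therefore independent of $t$.

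For the transverse formula, I would localize near each intersection point. Interior intersections $z_j \in \mathrm{int}(S_j)$ with $u_j(z_j)\in R_j$ are isolated zeros of $s_j$; as $u_j$ is $J$-holomorphic and $R_j$ is almost complex, positivity of intersections yields local index $+1$, giving the usual interior count. A seam intersection is a point $z \in I_{k,b}$ with $u_k(z) \in R_k$, hence also $u_{k'}(\varphi_\sigma(z)) \in R_{k'}$: in the glued bundle $\tilde{E}$ on the quilted surface, this is a \emph{single} zero of $\tilde{s}$, and a standard local model (two half-discs glued along an arc, with holomorphic sections transverse to the zero section) shows its contribution to $e(\tilde{E})$ is $+1$. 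The point appears once in the patch sum $\#\{z_k\in\partial S_k\mid u_k(z_k)\in R_k\}$ and once in the corresponding sum for $S_{k'}$, so weighting by $1/2$ recovers the correct count. Boundary intersections on a non-seam component $I_{k,b}$ cannot occur, because the Lagrangian $L_{k,b}$ is disjoint from $R_k$ by the compatibility convention recalled just after Definition~\ref{correspcompat}.

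The main obstacle is the local index computation at seam intersections: one must verify, with the conventions fixed by the bundle isomorphisms $\varphi_\sigma$, that the zero of $\tilde{s}$ sitting on the seam contributes with sign $+1$ rather than some other signed or fractional value. This is handled by choosing Darboux-type coordinates in which $R_k$, $R_{k'}$, and the correspondence $L_\sigma$ become standard complex-linear models near $(R_k\times R_{k'})\cap L_\sigma$; under such coordinates the two sections $s_k$ and $s_{k'}\circ\varphi_\sigma$ become holomorphic functions on two half-discs glued along their common diameter, and transversality together with the holomorphic condition gives local index $+1$ exactly as in the strip case of \cite[Lemma 6.4]{MW}. Everything else is formally identical to the parallel-strip argument.
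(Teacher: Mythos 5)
Your proof is correct and follows the same approach as the paper, which itself only states the intersection-number construction and then defers to \cite[Lemma 6.4]{MW} with the remark that the parallel-strip argument adapts to general quilted surfaces; you have essentially written out the details of why it does so, and all the steps (gluing via the seam isomorphisms, extending the bundle using the nonvanishing of the section on $\partial\underline{U}$, homotopy invariance via the construction over $\underline{S}\times[0,1]$, and the local index computation giving the $1/2$ weight at seam points together with the observation that boundary Lagrangians $L_{k,b}$ are disjoint from $R_k$ by the compatibility convention) are correct.
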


\begin{defi}\label{defcat}
We will call $\Symp$ the following category:  

$\bullet$ Its objects are tuples $(M, \omega , \tilde{\omega}  , R, \tilde{J})$ satisfying conditions $(i)$, $(ii)$, $(iii)$, $(iv)$, $(v)$, $(x)$, $(xi)$ and $(xii)$ of  \cite[Assumption 2.5]{MW}, namely:
\begin{enumerate}
\item[$(i)$] $(M,\omega)$ is a compact symplectic manifold.

\item[$(ii)$] $\tilde{\omega}$ is a closed  2-form on $M$.

\item[$(iii)$] The degeneracy locus $R\subset M$ of $\tilde{\omega}$ is a symplectic hypersurface  for $\omega$.

\item[$(iv)$] $\tilde{\omega}$ is $\frac{1}{4}$-monotone, that is $\left[ \tilde{\omega} \right] = \frac{1}{4} c_1(TM) \in H^2(M; \rr)$.

\item[$(v)$] The restrictions of $\tilde{\omega}$ and $\omega$ to $M\setminus R$ define the same cohomology class in  $H^2(M\setminus R; \rr)$.

\item[$(x)$] The minimal Chern number $N_{M\setminus R}$ with respect to $\omega$ is a positive multiple  of 4, so that the minimal Maslov number   $N = 2N_{M\setminus R}$ is a positive  multiple of $8$.

\item[$(xi)$] $\tilde{J}$ is an $\omega$-compa\-tible almost complex structure  on $M$, $\tilde{\omega}$-compa\-tible on $M\setminus R$, and such that $R$ is an  almost complex hypersurface for $\tilde{J}$.

\item[$(xii)$] Every index zero $\tilde{J}$-holomorphic sphere in $M$, necessarily contained in $R$ by monotonicity, has an intersection number with  $R$ equal to a negative multiple  of 2.

\end{enumerate}

$\bullet$ The set of morphisms between two objects consists of strings of elementary morphisms $\underline{L} = (L_{01}, L_{12}, \cdots ) $, modulo an equivalence relation:
\begin{itemize}
\item The elementary morphisms are  correspondences $L_{i(i+1)} \subset  M_i^- \times M_{i+1}$ which are Lagrangian for the monotone forms $\tilde{\omega}_i$, simply con\-nected, $(R_i, R_{i+1})$-compatible in the sense of  Definition~\ref{correspcompat}, such that $L_{i(i+1)} \setminus \left( R_i \times R_{i+1} \right)$ is spin, and such that every pseudo-holomor\-phic disc of $M_i^- \times M_{i+1}$ with boundary in $L_{i(i+1)}$ and zero area has an intersection number with $(R_i, R_{i+1})$ equal to a positive multiple  of $-2$.
\item The equivalence relation on strings of morphisms is generated by the following identification:  $(L_{01},\cdots ,  L_{(i-1)i},L_{i(i+1)}, \cdots )$ is identified with $(L_{01},\cdots  L_{(i-1)i}\circ  L_{i(i+1)}, \cdots )$ whenever the composition of $L_{(i-1)i}$ and $L_{i(i+1)}$ is embedded, simply connected, $(R_{i-1} , R_{i+1} )$-compatible, spin outside $R_{i-1}\times R_{i+1}$, satisfies the above  hypothesis concerning  pseudo-holomorphic discs, and also the following one: every  quilted pseudo-holomorphic cylinder as in figure~\ref{cylinder} of zero area  and with seam conditions in $L_{(i-1)i}$,   $L_{i(i+1)}$ and $L_{(i-1)i}\circ  L_{i(i+1)}$ has an intersection number with $(R_{i-1} , R_i, R_{i+1})$ smaller than $-2$.
\end{itemize}
\end{defi}

\begin{remark}\label{monotonicity} Under these hypotheses, the  generalized Lagrangian correspondences are automatically monotone: if $\underline{x},\underline{y} \in  \I(\underline{L})$ are generalized intersection points, and   $\underline{u}$ denotes a quilted strip with seam  conditions in $\underline{L}$ and with limits $\underline{x},\underline{y}$, then the symplectic area of $\underline{u}$ is $A(\underline{u}) = \frac{1}{8} I(\underline{u}) + c(\underline{x},\underline{y})$, with  $c(\underline{x},\underline{y})$ a number only depending  on the points $\underline{x}$ and $\underline{y}$. This follows from monotonicity of the  symplectic manifolds and from simply connectedness of the correspondences, see \cite[Lemma 2.8]{MW}.
\end{remark}

\subsubsection{Definition of quilted Floer homology }\label{defhomolgiequilted}

Let $\underline{L} = (L_{i(i+1)})_{i = 0\cdots k} $ be a morphism of $\Symp$ of length $k+1$ from $pt$ to $pt$, with intermediate objects $\underline{M} = (M_i, \omega_i , \tilde{\omega}_i, R_i , \tilde{J}_i)_{i = 0\cdots k+1}$, and  $M_0 = M_{k+1} = pt$:

\[\underline{L} = \left(  \xymatrix{   pt\ar[r]^{L_{01}} &  M_1 \ar[r]^{L_{12}} &  M_2 \ar[r]^{L_{23}} & \cdots \ar[r]^{L_{k(k+1)}} & pt } \right). \]

Let $\mathcal{J}(M_i, \mathrm{int}\left\lbrace \omega_i = \tilde{\omega}_i \right\rbrace ,  \tilde{J}_i)$ be the set of $\omega_i$-compatible almost-complex structures on $M_i$ coinciding with  $\tilde{J}_i$ outside $\mathrm{int}\left\lbrace \omega_i = \tilde{\omega}_i \right\rbrace$. Let also \[\mathcal{J}_t(M_i, \mathrm{int}\left\lbrace \omega_i = \tilde{\omega}_i \right\rbrace ,  \tilde{J}_i) = \mathcal{C}^\infty([0,1], \mathcal{J}(M_i, \mathrm{int}\left\lbrace \omega_i = \tilde{\omega}_i \right\rbrace ,  \tilde{J}_i))\] be the set of time-dependent almost-complex structures.

In order to have transverse intersections, we introduce  Hamiltonian perturbations. Let  $\underline{H} = (H_i)_{i = 1\cdots k}$ be  Hamiltonians, $H_i: M_i\times \rr \rightarrow \rr$ with support inside $\mathrm{int}\left\lbrace \omega_i = \tilde{\omega}_i \right\rbrace$. Denote $\varphi_i$ the time 1  flow of $X_{H_i}$ and
\begin{align*}
& \widetilde{L}_{i(i+1)} = \left\lbrace (\varphi_i(x_i),x_{i+1}) ~|~ (x_i,x_{i+1}) \in L_{i(i+1)} \right\rbrace   \\
 & \widetilde{L}_{(0)} = \widetilde{L}_{0} \times \widetilde{L}_{12} \times \cdots \\
& \widetilde{L}_{(1)} = \widetilde{L}_{01} \times \widetilde{L}_{23} \times \cdots.
\end{align*}

Suppose that the generalized intersection points $\I(\underline{L})$ are contained inside the product of the $\mathrm{int}\left\lbrace \omega_i = \tilde{\omega}_i \right\rbrace$,  which will be the case when defining HSI homology. For a generic choice of Hamiltonians, the intersection $\widetilde{L}_{(0)} \cap \widetilde{L}_{(1)}$ is transverse. The finite set  $\widetilde{L}_{(0)} \cap \widetilde{L}_{(1)}$ is in one-to-one correspondence  with  the set of perturbed generalized intersection points  $\I_{\underline{H}}(\underline{L})$ consisting of $k$-tuples $p_i\colon [0,1] \rightarrow M_i$ such that $\frac{\mathrm{d}p_i}{\mathrm{d}t} = X_{H_i}$, and $(p_i(1), p_{i+1}(0) )\in L_{i(i+1)}$. Indeed,  given  $p_i(1) = \varphi_i(p_i(0))$, they correspond to points $(x_1, \cdots , x_k)$ of $\widetilde{L}_{(0)} \cap \widetilde{L}_{(1)},$ with $x_i = p_i(0)$.

Let $\widetilde{\mathcal{M}}(\underline{x},\underline{y})$ be the set of maps $u_i\colon \rr \times [0,1]\to M_i$ such that, with  $s\in\rr$ and $t\in [0,1]$:

\[ \begin{cases} 0 = \partial_s u_i + J_t ( \partial_t u_i - X_{H_i}) \\
\mathrm{lim}_{s\rightarrow - \infty}{u_i(s,t)} = \varphi_i^t (y_i)\\
\mathrm{lim}_{s\rightarrow + \infty}{u_i(s,t)} = \varphi_i^t (x_i)\\
(u_i(s,1), u_{i+1}(s,0) )\in L_{i(i+1)} \\
\underline{u} \cdot \underline{R} = 0 \\
I(\underline{u}) = 1.
\end{cases} \]
The space of quilted  Floer trajectories  is then the quotient \[\mathcal{M}(\underline{x},\underline{y})  = \widetilde{\mathcal{M}}(\underline{x},\underline{y}) /\rr\] by the $s$-reparametrization. For generic choices of  Hamiltonians $\underline{H}$ and almost complex structures $\underline{J}$, it is a finite set, see \cite[Prop. 5.2.1]{WWqfc}.

The Floer complex  is then defined as \[CF(\underline{L},\underline{H}, \underline{J}) = \bigoplus_{\underline{x}\in \I_{\underline{H}}(\underline{L})} \zz \underline{x},\] and endowed with the differential  defined by \[\partial \underline{x} = \sum_{\underline{y}} \# \mathcal{M}(\underline{x},\underline{y}) \underline{y},\] 
where $\# \mathcal{M}(\underline{x},\underline{y}) = \sum_{u\in \mathcal{M}(\underline{x},\underline{y}) }{o(u)}$, with  $o(\underline{u}) = \pm 1$ the orientation of the point $\underline{u}$ constructed in \cite{WWorient} with the unique relative spin structure on $\underline{L}$.
Recall the following result from Manolescu and Woodward that makes Floer homology well-defined for elements of $Hom_{Symp}(pt,pt)$:

\begin{theo}(\cite[Theorem 6.5]{MW}) Let  $\underline{L}$ and $\underline{H}$ be as above. There exists a comeagre subset
\[\mathcal{J}_t^{reg}(M_i, \mathrm{int}\left\lbrace \omega_i = \tilde{\omega}_i \right\rbrace ,  \tilde{J}_i) \subset \mathcal{J}_t(M_i, \mathrm{int}\left\lbrace \omega_i = \tilde{\omega}_i \right\rbrace ,  \tilde{J}_i)\] of regular almost complex structures such that the  differential is finite and satisfies $\partial ^2 = 0$. Therefore, the quilted Floer homology  $HF (\underline{L})$ is well-defined for generic almost complex structures and Hamiltonian perturbations, and is independent on these choices, except eventually on the reference almost complex structure.
\end{theo}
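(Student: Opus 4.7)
The plan is to adapt the proof of Manolescu--Woodward's [MW, Theorem 6.5], which treats the case of a single strip with Lagrangian boundary conditions, to the quilted setting of a generalized Lagrangian correspondence from $pt$ to $pt$. Three ingredients, standard in Floer theory, have to be checked in the presence of the hypersurfaces $\underline{R}$: (a) regularity of the moduli spaces $\mathcal{M}(\underline{x},\underline{y})$ for generic $(\underline{H},\underline{J})$, (b) their compactness in the index $\leq 2$ range, and (c) the identification of the boundary of the one-dimensional moduli spaces with broken trajectories, yielding $\partial^2=0$ and independence of the choices.

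For transversality of $\widetilde{L}_{(0)} \cap \widetilde{L}_{(1)}$, one perturbs by generic Hamiltonians $\underline{H}$ supported inside the $\mathrm{int}\{\omega_i = \tilde{\omega}_i\}$. For regularity of the parametrized moduli spaces, the standard scheme applies: the universal moduli space is cut out transversely by a Fredholm section of a Banach bundle, and the projection onto the space of admissible almost complex structures is Fredholm. Since by assumption the perturbed generalized intersection points lie in the product of the $\mathrm{int}\{\omega_i = \tilde{\omega}_i\}$, and since Floer trajectories with $\underline{u}\cdot\underline{R}=0$ can be perturbed by varying $\underline{J}$ away from the reference structure $\tilde{J}_i$, Sard--Smale yields a comeagre subset $\mathcal{J}_t^{reg}$ of regular almost complex structures.

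The main obstacle is compactness of $\mathcal{M}(\underline{x},\underline{y})$. A priori, a sequence of quilted Floer trajectories may degenerate by sphere bubbling inside some $M_i$, by disc bubbling on a boundary Lagrangian or at a seam (figure-eight bubble), or by breaking into several strips. The hypotheses packaged in Definition~\ref{defcat} --- namely the $\tfrac14$-monotonicity of condition (iv), simple connectedness of the correspondences, the positivity assumption (xii) together with its analog for discs with boundary in the $L_{i(i+1)}$, and the automatic area/index relation of Remark~\ref{monotonicity} --- are calibrated exactly so that any nonconstant bubble would contribute strictly negatively to the intersection number with $\underline{R}$; combined with the constraint $\underline{u}\cdot\underline{R}=0$ along the sequence, this rules out bubbling in the Gromov limit. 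The condition on quilted cylinders appearing in the equivalence relation of Definition~\ref{defcat} plays the same role for figure-eight bubbles at seams. The only remaining codimension-one degeneration is therefore strip breaking into two trajectories, each of index $1$ and of zero intersection number with $\underline{R}$.

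Given compactness, the index-one moduli spaces are finite, so $\partial$ is well-defined; and the standard gluing argument identifies the boundary of the compactified one-dimensional moduli spaces with $\bigsqcup_{\underline{z}} \mathcal{M}(\underline{x},\underline{z}) \times \mathcal{M}(\underline{z},\underline{y})$, giving $\partial^2=0$ once signs are fixed via the relative spin structure on $\underline{L}$ and the orientation scheme of [WWorient]. Independence of the choice of $(\underline{H},\underline{J})$ (up to the reference $\tilde{J}_i$ on $R_i$) is then obtained by the usual continuation-map argument between two generic choices, whose defining moduli spaces satisfy the same monotonicity and compactness estimates and thus yield chain homotopy equivalences.
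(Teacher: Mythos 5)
This theorem is cited directly from Manolescu--Woodward (\cite[Theorem~6.5]{MW}); the present paper recalls it without supplying a proof of its own, so there is no internal argument for your sketch to be compared against. That said, your outline is a reasonable reconstruction of the MW argument: you correctly identify the three ingredients (Sard--Smale transversality with perturbations confined to $\mathrm{int}\{\omega_i = \tilde\omega_i\}$, compactness controlled by the monotonicity axioms and the intersection-number constraint $\underline{u}\cdot\underline{R}=0$, and gluing/continuation for $\partial^2=0$ and invariance), and you correctly note that the quilted-cylinder condition in Definition~\ref{defcat} plays the role of excluding figure-eight bubbles at seams.

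A few places are imprecise. First, \cite[Theorem~6.5]{MW} is already stated for quilted Floer homology --- Manolescu and Woodward need quilts for their stabilization-invariance argument --- so there is no passage ``from a single strip to the quilted setting'' to carry out; the work is all in the relative ($\underline{R}$-dependent) setting, not the multi-strip generalization. Second, your compactness step ``any nonconstant bubble would contribute strictly negatively to the intersection number'' is missing a link in the chain: one first uses the index/area relation to show bubbles have index $0$ (any positive index would be at least the minimal Maslov number $\geq 8$, impossible given the index-$\leq 1$ budget), hence area $0$, hence are contained in $\underline{R}$; only then does each bubble contribute $\leq -2$ to $\underline{u}\cdot\underline{R}$, and the contradiction with $\underline{u}\cdot\underline{R}=0$ requires that transverse intersections of the principal component with $\underline{R}$ at points not matched by bubbles persist in the Gromov limit (this is where a tangency-codimension argument of the type of Lemma~\ref{codimtangence} is used). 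Third, ``Floer trajectories with $\underline{u}\cdot\underline{R}=0$ can be perturbed by varying $\underline{J}$ away from $\tilde J_i$'' needs the observation that such trajectories avoid $R_i$ entirely (by positivity of transverse intersections) and have a somewhere-injective point inside $\mathrm{int}\{\omega_i=\tilde\omega_i\}$, so that the restricted perturbation class still cuts out the universal moduli space transversely. None of these are missing ideas, but they are the places where the actual proof in \cite{MW} does work that your sketch passes over.
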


\begin{remark}For the manifolds we will work with, the choice of the reference almost complex structure will not affect the construction since it will be chosen in a contractible space, see \cite[Remark 4.13]{MW}.
\end{remark}

\paragraph{\bf Grading} The hypothesis on the minimal Maslov number allows one to define a relative $\Z{8}$-grading on the chain complex: if $\underline{x}$ and $\underline{y}$ are two generalized intersection points, and $u$, $v$ two quilted Floer trajectories (not necessarily pseudo-holomorphic) connecting them, $I(u) = I(v)$ modulo 8. We then denote $I(\underline{x},\underline{y}) \in \Z{8}$ the common quantity. The differential is then of degree $1$. It follows that $I(\underline{x},\underline{y})$ defines a  relative grading on $HF(\underline{L})$.

Recall then  the following result, which ensures  the invariance of quilted Floer homology under embedded geometric composition:

\begin{theo}(\cite[Theorem 6.7]{MW})\label{compogeom} Let $\underline{L}$ be a generalized Lagrangian correspondence as before. Assume moreover that the geometric composition $L_{i-1,i} \circ  L_{i, i + 1}$ is embedded, simply connected, $(R_{i-1} , R_{i+1})$-compa\-tible, and such that the intersection number of every pseudo-holomorphic quilted cylinder  with  $(R_{i-1} , R_i,  R_{i+1})$ is smaller than $-2$, then $HF (\underline{L})$ is canonically isomorphic to $HF (\cdots L_{i-1,i} \circ L_{i, i + 1} \cdots )$.
\end{theo}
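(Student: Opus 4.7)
The plan is to prove this by the \emph{strip shrinking} technique of Wehrheim and Woodward, adapted to the monotone, hypersurface-relative setting of \cite{MW}. Given a quilted Floer trajectory $\underline{u}$ for $\underline{L}=(\ldots,L_{(i-1)i},L_{i(i+1)},\ldots)$, the patch $u_i\colon \rr\times[0,\delta]\to M_i$ is viewed as a thin strip of width $\delta$, and one studies the limit $\delta\to 0$. In that limit the two seams bounding the $M_i$-patch fuse into a single seam with Lagrangian correspondence $L_{(i-1)i}\circ L_{i(i+1)}$, and $\underline{u}$ degenerates to a quilted trajectory for the shortened sequence $\underline{L}'=(\ldots,L_{(i-1)i}\circ L_{i(i+1)},\ldots)$. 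Making this precise in both directions yields a chain isomorphism.

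The first step is a canonical bijection on generators. Since the composition is embedded, $\pi_{(i-1)(i+1)}$ restricts to a diffeomorphism from $(L_{(i-1)i}\times M_{i+1})\cap(M_{i-1}\times L_{i(i+1)})$ onto $L_{(i-1)i}\circ L_{i(i+1)}$. With a compatible choice of Hamiltonian perturbations (trivial on $M_i$, or transported through $\pi_{(i-1)(i+1)}$), the forgetful map $(\ldots,x_{i-1},x_i,x_{i+1},\ldots)\mapsto(\ldots,x_{i-1},x_{i+1},\ldots)$ realises a bijection between the perturbed generalized intersection sets $\I_{\underline{H}}(\underline{L})\cong\I_{\underline{H}'}(\underline{L}')$, which is the candidate chain isomorphism on generating sets.

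Next I would compare moduli spaces of index $1$ quilted trajectories. For small $\delta>0$, let $\mathcal{M}_\delta(\underline{x},\underline{y})$ denote the moduli for $\underline{L}$ in which the $M_i$-patch is taken to have width $\delta$, and $\mathcal{M}'(\underline{x}',\underline{y}')$ the one for $\underline{L}'$. The aim is a sign-preserving bijection $\mathcal{M}_\delta\to\mathcal{M}'$ for $\delta$ small, obtained via Gromov compactness (using the uniform energy bound of Remark~\ref{monotonicity}) combined with a pre-gluing plus Newton iteration argument reversing the limit. Sphere bubbles in the $M_j$ and disc bubbles on the $L_{j(j+1)}$ are excluded as in the absolute case of \cite{MW}, by monotonicity, the minimal Chern and Maslov constraints built into Definition~\ref{defcat}, simple connectedness of the correspondences, the index $1$ assumption, and the vanishing intersection constraint $\underline{u}\cdot\underline{R}=0$. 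The remaining, and genuinely new, source of non-compactness is \emph{figure-eight bubbling} at the shrinking seam.

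The main obstacle is therefore to exclude these figure-eight bubbles, and this is precisely the purpose of the hypothesis on $L_{(i-1)i}\circ L_{i(i+1)}$. After a conformal rescaling, such a bubble produces exactly a pseudo-holomorphic quilted cylinder of the type depicted in Figure~\ref{cylinder}, with seam conditions $L_{(i-1)i}$, $L_{i(i+1)}$ and $L_{(i-1)i}\circ L_{i(i+1)}$. The assumption that every such cylinder has intersection number with $(R_{i-1},R_i,R_{i+1})$ strictly less than $-2$, combined with the monotonicity relation of Remark~\ref{monotonicity} and the additivity of intersection numbers under Gromov convergence, is incompatible with the constraint $\underline{u}_\nu\cdot\underline{R}=0$ on the approximating trajectories, and so forbids any non-trivial figure-eight bubble. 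Hence every Gromov limit is a single quilted trajectory in $\mathcal{M}'$; combined with the gluing construction this yields the desired bijection, and the orientation analysis of \cite{WWorient}, using the relative spin condition of Definition~\ref{defcat}, makes it sign-preserving. The resulting map is therefore a chain isomorphism, and invariance of quilted Floer homology under the auxiliary choices (\cite[Theorem~6.5]{MW}) promotes it to the claimed canonical isomorphism $HF(\underline{L})\cong HF(\underline{L}')$.
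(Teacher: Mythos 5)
The paper does not prove this theorem: it is quoted from Manolescu and Woodward (\cite[Theorem 6.7]{MW}) and used as a black box, so there is no in-paper argument to compare against. Your strip-shrinking outline faithfully reconstructs their argument, adapting Wehrheim--Woodward's geometric composition theorem to the hypersurface-relative setting of Definition~\ref{defcat}, and you correctly identify that the quilted-cylinder hypothesis is precisely what kills the zero-area figure-eight bubbles that the degenerate form $\tilde{\omega}$ could otherwise produce (this being the genuinely new ingredient relative to the purely monotone setting). The one place you compress too much is the final contradiction: from additivity of the intersection number one gets $\underline{u}_\infty\cdot\underline{R} = -\sum_k \underline{b}_k\cdot\underline{R} \geq 2\cdot(\text{number of bubbles})$, and one then needs the tangency-codimension estimate (cf.\ Lemma~\ref{codimtangence}) to conclude that the principal component $\underline{u}_\infty$ meets $\underline{R}$ transversally at some point with no bubble attached, which is impossible for a $C^\infty_{\mathrm{loc}}$-limit of curves satisfying $\underline{u}_\nu\cdot\underline{R}=0$. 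Describing this simply as ``incompatible with the constraint'' hides that extra step.
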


\subsection{Cobordisms with vertical boundaries and connected Cerf theory}\sectionmark{Cerf theory}\label{sec:connectedcerf}

We start by defining the cobordism category that will be the source of the Floer field theory functor.

\begin{defi}[Category of cobordisms with vertical boundaries]\label{chemincob}
We will call \emph{category  of cobordisms with vertical boundaries, with degree one $\Z{2}$-homology class}, and will denote it $\Cob$, the category whose:

\begin{itemize}

\item  objects are pairs $(\Sigma,p)$, where $\Sigma$ is a compact connected oriented surface, with connected boundary, and $p\colon \rr/\zz \rightarrow \partial\Sigma$ is a diffeomorphism (parametrization).

\item  morphisms from $(\Sigma_0,p_0)$ to $(\Sigma_1,p_1)$ are diffeomorphism classes  of 5-tuples $(W, \pi_{\Sigma_0},  \pi_{\Sigma_1}, p,  c)$, where $W$ is a compact oriented 3-manifold with boundary, $\pi_{\Sigma_0}$,  $\pi_{\Sigma_1}$ and $p$ are embeddings of $\Sigma_0$, $\Sigma_1$ and $\rr/\zz \times [0,1]$ into $\partial W$, the first  reversing the orientation, the two others preserving it, and such that:
\begin{itemize}
\item $\partial W = \pi_{\Sigma_0}(\Sigma_0)\cup  \pi_{\Sigma_1}(\Sigma_1)\cup  p(\rr/\zz \times [0,1])$,
\item $\pi_{\Sigma_0}(\Sigma_0)$  and $\pi_{\Sigma_1}(\Sigma_1)$ are disjoint,
\item for $i=0,1$, $p(s,i) = \pi_{\Sigma_i}(p_i(s))$, and \[\pi_{\Sigma_i}(\Sigma_i)\cap p(\rr/\zz \times [0,1])  = \pi_{\Sigma_i}(p_i(\rr/\zz)) = p(\rr/\zz \times \lbrace i\rbrace),\]
\item $c \in H_1 (W, \Z{2})$.
\end{itemize}

We will refer to   $p(\rr/\zz \times [0,1]) $ as the vertical part of $\partial W$, and will denote it $\partial^{vert} W$.

Two such 5-tuples $(W, \pi_{\Sigma_0},  \pi_{\Sigma_1}, p,  c)$ and $(W', \pi'_{\Sigma_0},  \pi'_{\Sigma_1}, p',  c')$ will be identified if there exists a diffeomorphism $\varphi \colon W \rightarrow W'$ compatible with  the embeddings and preserving the classes. 

\item  composition of morphisms consists in gluing along the embeddings, and adding the homology classes.
\end{itemize}

\end{defi}

We will not assign a Lagrangian correspondence to any cobordism, but only to certain elementary ones, in the sense of Morse theory. We will see that an arbitrary  cobordism always decomposes into a finite number of such elementary cobordisms, hence giving rise to a sequence of Lagrangian correspondences, namely a  morphism of $\Symp$. The aim of this paragraph is to prepare the proof of the fact that this construction doesn't depend on the decomposition of the cobordism. We will essentially adapt the results of \cite{connectedcerf} to the framework of $\Cob$. Recall its principal one, which is false in dimension $1+1$:

\begin{theo}[\cite{connectedcerf}]\label{GWWtheo}
Let $n\geq 2$,
\begin{enumerate}

\item Every connected $(n+1)$-cobordism  between connected $n$-manifolds  admits a   decomposition into elementary cobordisms such that each  intermediate levels are  connected. Such a  decomposition will be called a \emph{Cerf decomposition}.

\item Given two such decompositions, it is always possible to go from one to the other by a finite number of \emph{Cerf moves}, namely a  diffeomorphism equivalence, a cylinder creation, a cylinder cancellation, a critical point creation, a critical point cancellation or a critical point switch (see \cite{connectedcerf} for the  definitions, or Definition~\ref{mvtcerf}).

\end{enumerate}
\end{theo}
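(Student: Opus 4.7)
The plan is to deduce both statements from standard Morse--Cerf theory, adapted to track the connectivity of level sets.

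For part (1), I would start with an arbitrary Morse function $f\colon W\to [0,1]$ with $\Sigma_i = f^{-1}(i)$, reorder critical points by increasing index, and then eliminate all index $0$ and index $n+1$ critical points. Each $0$-handle introduces a spurious $S^n$ component in some level set, but since $W$ is connected there must exist a $1$-handle one of whose attaching feet sits on that sphere; using $n\geq 2$ one isotopes the other foot into general position and cancels the pair. Dualizing this argument eliminates $(n+1)$-handles against $n$-handles. What remains is to order the surviving handles so that each intermediate level is connected: the key observation is that a $1$-handle, attached along two disjoint $n$-disks, can only merge two components or add a tube within one component, and hence can never disconnect. So a greedy strategy, always first attaching those $1$-handles that merge distinct components, makes every level between the $0$ and $1$ stages connected. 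Middle handles (index $2, \ldots, n-1$) do not affect connectedness at all, and $n$-handles are treated dually via the reverse Morse function $1-f$.

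For part (2), I would invoke Cerf's theorem: any two Morse functions on $W$ (relative to the fixed boundary structure) are joined by a generic $1$-parameter family $(f_t)_{t\in[0,1]}$ for which $f_t$ is Morse--Smale except at finitely many parameters, at which exactly one of the following codimension-one events occurs: a birth or death of a cancelling critical pair, or the coincidence of two distinct critical values. These translate directly to the elementary moves of cylinder creation/cancellation, critical point creation/cancellation, and critical point switches, with the residual isotopies of attaching maps absorbed by diffeomorphism equivalence. To upgrade this path to one passing through \emph{connected} Cerf decompositions, I would locally perturb the family near each event by applying the part (1) procedure, killing any transient $0$- or $(n+1)$-handle and reordering $1$- and $n$-handles that momentarily disconnect a level. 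Genericity guarantees that each such local modification can be realized by a finite sequence of elementary Cerf moves.

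The main obstacle is maintaining connectivity \emph{along} the $1$-parameter family in part (2). A single Morse function can be repaired handle by handle, but along a path one needs a uniform argument ensuring that none of the ambient isotopies crosses a bad configuration. This is precisely where the hypothesis $n\geq 2$ enters: attaching spheres of $1$-handles (and dually belt spheres of $n$-handles) are $0$-dimensional, and when $n=1$ there is simply no room to isotope them off the transverse intersections with dual spheres, so that the connectedness property genuinely fails --- already in dimension $2$, a positive-genus cobordism from a circle to a circle must pass through a disconnected level.
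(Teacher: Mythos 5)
Note first that the paper does not actually prove Theorem~\ref{GWWtheo}: it is imported verbatim from \cite{connectedcerf} as a black box, and the only nearby argument in the paper is the proof of Proposition~\ref{cerfchemin}, which adapts the statement to cobordisms with vertical boundary and a $\Z{2}$-homology class and itself defers to \cite{connectedcerf} for the underlying one-parameter Morse theory. So there is no ``paper's own proof'' to compare against; I can only assess your sketch on its own terms.

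For part (1) your outline is essentially correct, but the greedy step is superfluous. Once all index $0$ and index $n+1$ critical points have been cancelled and the surviving critical points have been reordered by increasing index, the intermediate level sets are automatically connected for $n\ge 2$: a $1$-handle (attaching sphere $S^0$) can only merge components or add a tube, so it never disconnects; a $k$-handle with $2\le k\le n-1$ has attaching and belt spheres of codimension at least $2$ in the level, so it cannot change the number of components; only an $n$-handle, attached along a codimension-one $S^{n-1}$, can disconnect. If some intermediate level were disconnected, the first one to become so would follow an $n$-handle, and the first one to become reconnected afterwards would follow a $1$-handle, contradicting the index ordering. This is precisely the argument the paper runs for the $n+1=3$ case inside the proof of Proposition~\ref{cerfchemin}; it makes the ``attach merging $1$-handles first'' ordering you propose unnecessary (once the minima are gone the level below the $1$-handles is already connected, so there are no components to merge). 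Your appeal to $n\ge 2$ for cancelling the index-$0$ and index-$(n+1)$ handles is also misplaced: that cancellation works in all dimensions; $n\ge 2$ is needed for the connectivity statement just sketched, and your closing example of a positive-genus surface cobordism correctly shows the failure at $n=1$.

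Part (2) is where the substance of the theorem lies, and your sketch does not supply it. Invoking Cerf's one-parameter theorem gives a generic path of Morse functions and translates the codimension-one singularities into birth/death and critical-value crossings, with ambient isotopies absorbed into diffeomorphism equivalence; that much is fine. The gap is in the final step, where you propose to ``locally perturb the family near each event by applying the part (1) procedure'' and then assert that ``genericity guarantees that each such local modification can be realized by a finite sequence of elementary Cerf moves.'' That assertion is the theorem. Applying part (1) separately to each $f_t$ gives no control over how the choices vary in $t$; making the repair parameter-coherent requires new arguments about how the cancellations and reorderings interact across each crossing and each birth/death, together with a termination argument showing the process closes up into finitely many of the listed moves. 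This is exactly what \cite{connectedcerf} proves, and it does not follow from Cerf's theorem plus the static statement of part (1). As written, your part (2) is a plan that restates the difficulty rather than a proof.
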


We define an intermediate cobordism category:

\begin{defi}[Category  of elementary cobordisms with vertical boundaries]\label{chemincobelem}

Let $\Cobelem$ stand for the category  whose objects are the same as those of $\Cob$, and whose morphisms are strings of 6-tuples 
\[(W_k, \pi_{\Sigma_k},  \pi_{\Sigma_{k+1}}, p_k, f_k,  c_k),\]
where $(W_k, \pi_{\Sigma_k},  \pi_{\Sigma_{k+1}},p_k, c_k)$ is a cobordism from $(\Sigma_k,p_k)$ to $(\Sigma_{k+1},p_{k+1})$ as in Definition~\ref{chemincob}. The map  $f_k\colon W_k \rightarrow [0,1]$ is a Morse function such that, for $i=0,1$, $f_k^{-1}(i) = \pi_{\Sigma_{k+i}}(\Sigma_{k+i})$, admitting at most one critical point in the interior of $W_k$ and no critical point on $\partial W_k$, and $f(p(s,t)) = t \in [0,1]$. Finally $c_k \in H_1 (W_k, \Z{2}))$. We will  refer to such cobordisms as \emph{elementary}, and  denote a sequence of such cobordisms by:
\[\underline{W} = (W_0, \pi_{\Sigma_0},  \pi_{\Sigma_{1}}, p_0, f_0,  c_0) \odot  \cdots \odot (W_k, \pi_{\Sigma_k},  \pi_{\Sigma_{k+1}}, p_k, f_k,  c_k).\]
Their composition consist in concatenating, and will be denoted $\odot$.
\end{defi}

\begin{remark}\begin{enumerate}
\item When there will be no ambiguity, we will sometimes omit the embeddings and the Morse functions, and write simply  $(W,c)$ instead of $(W, \pi_{\Sigma_0},  \pi_{\Sigma_1}, f, p,  c)$.

\item There is a functor $\Cobelem\rightarrow\Cob$ that doesn't change the objects  and  consists in gluing altogether a string of cobordisms into one cobordism, adding classes, and forgetting Morse function.

\end{enumerate}
\end{remark}

In order to obtain in Proposition~\ref{cerfchemin} a similar result of Theorem~\ref{GWWtheo} for cobordisms with vertical boundaries endowed with a  degree 1 homology class with $\Z{2}$ coefficients, we define similar moves for such cobordisms.

\begin{defi}[Cerf moves] \label{mvtcerf}
If $\underline{W} = W_1 \odot \cdots \odot W_k$ and $\underline{W'} = W_1' \odot \cdots\odot W_l'$ are morphisms of  $\Cobelem$, we will call the replacement of $\underline{W}$ by $\underline{W'}$ a \emph{Cerf move} if one of the following modifications is done: 

\begin{enumerate}
\item[$(i)$] \emph{Diffeomorphism equivalence}: replacing a $W_i =(W, \pi_{\Sigma_0},  \pi_{\Sigma_1}, f, p,  c)$ by $(W', \pi'_{\Sigma_0},  \pi'_{\Sigma_1}$, $ f', p',  c')$, provided there exists  a diffeomorphism $\varphi $ from $ W$ to $ W'$ such that $f'\circ\varphi = f$, $\varphi \circ \pi_{\Sigma_i} = \pi'_{\Sigma_i}$, for $i=0,1$, $p' = \varphi \circ p$, and $c' = \varphi_* c$.
 
\item[$(ii)$] \emph{Cylinder creation, cylinder cancellation}: adding or removing a cobordism $(W, \pi_{\Sigma_0},  \pi_{\Sigma_1}, f, p,  c)$, with: 
\begin{itemize}
\item $W = \Sigma \times [0,1]$,
\item $\pi_{\Sigma_i} = id_{\Sigma} \times \lbrace i \rbrace$,
\item $f(s,t) = t$,
\item $p(s,t) = (p(s),t)$,
\item $c = 0$.
\end{itemize}

\item[$(iii)$] \emph{Critical point creation, critical point cancellation}:  A birth-death pair is a  string \[(W, \pi_{\Sigma_0},  \pi_{\Sigma_1}, f, z,  0) \odot(W', \pi_{\Sigma_1}',  \pi_{\Sigma_0}', f', z', 0),\] with $f$ and $f'$ having exactly one critical points, and such that the union $(W\cup_{\Sigma_1} W',f\cup f',0)$ is diffeomorphism equivalent to a cylinder.

\item[$(iv)$] \emph{Critical point switch}: Let  $s_1$ and $s_2$ be two disjoint attaching spheres in $\Sigma$, $h_1, h_2$ the corresponding handles, $W_1$ the cobordism corresponding to attaching $h_1$, $W_2$ the cobordism corresponding to attaching $h_2$ after having attached $h_1$, $W_2'$ the cobordism corresponding to attaching $h_2$, and $W_1'$ the cobordism corresponding to attaching $h_1$ after having attached $h_2$. The move consists in replacing $W_1 \odot W_2$ by $W_2' \odot W_1'$.

\item[$(v)$] \emph{homology class slide}:  If $c_i + c_{i+1} = d_i + d_{i+1}$ in $H_1(W_i \cup W_{i+1};\Z{2})$, and $W_i$ or $W_{i+1}$ is a cylinder, replacing $(W_i, c_i)\odot (W_{i+1}, c_{i+1})$ by $(W_i, d_i)\odot (W_{i+1}, d_{i+1})$ in $(\underline{W}, \underline{c})$.
\end{enumerate}
\end{defi}

\begin{prop}\label{cerfchemin}

\begin{enumerate}
\item Every cobordism with vertical boundary $(W,p,c)$ admits a  Cerf decomposition (i.e. $\Cobelem \rightarrow \Cob$ is surjective).
\item Once glued, two strings  of elementary cobordisms define  the same morphism in $\Cob$ if and only if one can pass from one to another by a finite sequence of the previous Cerf moves.
\end{enumerate}
\end{prop}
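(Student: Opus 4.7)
The plan is to bootstrap from the connected Cerf theorem of \cite{connectedcerf} (Theorem~\ref{GWWtheo}) by adapting it to account for two new features: the vertical-boundary parametrization and the $\Z{2}$-homology class.

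For part (1), given $(W, \pi_{\Sigma_0}, \pi_{\Sigma_1}, p, c)$, I first construct a Morse function $f\colon W\to[0,1]$ satisfying $f(p(s,t))=t$ on the vertical collar and $f^{-1}(i) = \pi_{\Sigma_i}(\Sigma_i)$. One produces such an $f$ by starting with the projection on $\mathrm{Im}(p)$, extending smoothly to all of $W$, and perturbing to be Morse while keeping the collar fixed (this is possible since the collar is critical-point free). Applying Theorem~\ref{GWWtheo}(1) yields a Cerf decomposition with connected intermediate slices; each slice meets the vertical circle, so it is automatically a compact connected surface with connected boundary, hence an object of $\Cob$. To distribute the class $c$, iterated Mayer--Vietoris with $\Z{2}$-coefficients (valid since every intermediate $\Sigma_i$ is connected) shows that $\bigoplus_i H_1(W_i;\Z{2}) \to H_1(W;\Z{2})$ is surjective; I pick any preimages $c_i$ of $c$.

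For part (2), the forward direction is immediate: each of the five Cerf moves manifestly preserves the glued cobordism and the class. For the converse I proceed in two stages. First, forgetting the classes, Theorem~\ref{GWWtheo}(2) produces a finite sequence of moves $(i)$--$(iv)$ passing from the underlying sequence of $\underline W$ to that of $\underline W'$; I lift each move to the class-carrying category by transporting the class (pushforward under diffeomorphisms, assigning class $0$ to created cylinders and to both halves of a birth--death pair, preserving labels under a critical point switch). This reduces the problem to two sequences with the same underlying decomposition but possibly different class distributions $(c_i)$ and $(c_i')$ having equal image in $H_1(W;\Z{2})$.

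In the second stage I reconcile the distributions using only moves $(ii)$ and $(v)$. Insert, via move $(ii)$, a cylinder $C_i$ of class $0$ between each consecutive pair $(W_i, W_{i+1})$. Since each $C_i$ is a cylinder, the slide move $(v)$ applies on either of its sides and realises any exchange of classes with its neighbour whose total in $H_1(C_i \cup W_{i\pm 1};\Z{2})$ is preserved. A further Mayer--Vietoris computation shows that the kernel of $\bigoplus_i H_1(W_i;\Z{2}) \to H_1(W;\Z{2})$ is generated by elementary exchanges of the form $(\ldots,0,\gamma,\gamma,0,\ldots)$ with $\gamma \in H_1(\Sigma_i;\Z{2})$ (over $\Z{2}$ signs disappear), and each such exchange is implementable by a single slide through an inserted cylinder $C_i$. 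The auxiliary cylinders are finally cancelled with move $(ii)$. The main subtlety --- the step I expect to require the most care --- is this final $\Z{2}$-homology bookkeeping: one must verify that iterating Mayer--Vietoris along the decomposition really yields only the elementary exchanges as generators of the kernel, which rests on the connectedness of every $\Sigma_i$ and on the vanishing of the relevant connecting maps in $\Z{2}$-coefficients.
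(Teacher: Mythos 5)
Your proposal is correct in its broad outline and essentially all of its claims check out, but it takes a genuinely different route from the paper on the class-distribution steps. Where the paper distributes $c$ geometrically --- choose a one-cycle representative of $c$ transverse to the intermediate surfaces, put it in general position so it meets each $\Sigma_i$ in an even number of points, and cancel those points in pairs --- you argue algebraically via Mayer--Vietoris, first for surjectivity of $\bigoplus_i H_1(W_i;\Z{2})\to H_1(W;\Z{2})$ in part (1) and then for the computation of its kernel (elementary exchanges through the $\Sigma_i$, signs vanishing over $\Z{2}$) in part (2). The paper handles the class-reconciliation issue in part (2) with a single assertion (``one can isolate the homology classes outside a birth-death pair''), whereas your insert-cylinder/slide/cancel mechanism plus the explicit kernel description makes the $\Z{2}$-bookkeeping transparent. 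Both approaches are valid; yours is more algebraic and self-contained, the paper's is closer in spirit to the geometric handle calculus being used elsewhere.

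There is, however, an organizational issue to fix. Moves $(iii)$ and $(iv)$ are defined only with zero classes, so your stage~1 --- ``lift each underlying Cerf move and then reconcile classes afterwards'' --- cannot be performed unconditionally: when the Cerf path reaches a birth--death cancellation or a critical-point switch on pieces carrying nonzero classes, that move is simply not available in $\Cobelem$, and ``preserving labels under a critical point switch'' is not one of the permitted moves. You must \emph{interleave} the reconciliation with the Cerf moves: before each offending $(iii)$ or $(iv)$, slide the obstructing classes onto an auxiliary inserted cylinder (exactly the stage-2 trick, noting that for a cylinder-equivalent piece or a pair of disjoint handles the relevant inclusion of a boundary surface is surjective on $H_1$), perform the move, then collect the classes again; only afterwards run a final reconciliation to the target distribution. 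A second, smaller point: you cite Theorem~\ref{GWWtheo}(2) directly, but that theorem concerns cobordisms without vertical boundary; as in your part (1) you need the path of Morse functions to keep the vertical-boundary values fixed, and this deserves a word (a pseudo-gradient parallel to $\partial^{vert}W$ as in part (1) does the job). Neither point is fatal --- you have all the right ingredients --- but both need to be made explicit for the proof to close.
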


\begin{proof}

1. As in the case without vertical boundary  (\cite[Lemma 2.5]{connectedcerf}) such a decomposition is obtained from an ``excellent'' Morse function (that is, injective on the set of its critical points), without critical points of index $0$ and $3$,  and  such that if $p$ and $q$ are two critical points such that $ ind~p <ind~q $, then $f(p) < f(q)$.  Then, if $b_0 = min~f < b_1 <\cdots b_k = max~f$ is a sequence of regular values such that $[b_i, b_{i+1} ]$ contains at most one critical point, $W_i = f^{-1}([b_i, b_{i+1} ])$ is a connected cobordism  (provided there are no index $0$ and $3$ critical points) between connected surfaces. Indeed, the first disconnected surface would correspond to  a 2-handle attachment, and the next first connected one would correspond to  a 1-handle attachment, but we assumed that the  1-handles are attached before the 2-handles. 

We shall moreover assume that on the vertical boundary,  $f(p(s,t)) = K t$, for some constant $K >0$. We claim that it is possible to find such a function. Indeed, starting from a Morse function such that $f(p(s,t)) = K t$ in the neighborhood of the  vertical boundary, one can  rearrange the critical points and remove the  minimums and  maximums without modifying the function on the boundary: it suffices to take a  pseudo-gradient parallel to the vertical boundary, ensuring that the attaching spheres are confined on the interior of $W$, then the same proof as in the case without  vertical boundary applies.

Furthermore, the  class $c$ can be decomposed into classes $c_i \in H_1( W_i , \Z{2})$: to see it one can choose a 1-dimensional  representative $C\subset W$ which doesn't intersect the intermediate surfaces. A generic  representative  intersects a surface in an even number of points, that can be cancelled out in pairs.

2. Given two such Morse functions, it is possible to connect them by a path of functions having a finite number of birth-death degeneracies, or critical point switches, while keeping the same values on the vertical boundary. The end of the proof is completely analogous to the proof of \cite[Theorem 3.4]{connectedcerf}.

Notice that for the  moves $(iii), (iv)$, we assume that the homology class is zero. This can always be satisfied, up to adding  trivial cobordisms, and since one can isolate the homology classes outside a birth-death pair. 
\end{proof}

From Proposition~\ref{cerfchemin}, one obtains the following criterion, which  allows to factor a functor $\textbf{F}: \Cobelem \rightarrow  \Symp $ through the gluing functor $\Cobelem \rightarrow \Cob $.

\begin{cor}\label{factorisation} Let  $\textbf{F}: \Cobelem \rightarrow  \Symp $ be a functor satisfying:

\begin{enumerate}
\item[$(i)$] $\textbf{F}(W,c) = \textbf{F}(W',c)$ whenever $(W,c)$ and $(W',c)$ are diffeomorphism equivalent.

\item[$(ii)$] 

$\textbf{F}(W,0) = \Delta_{F(S)}$, if $W$ is a trivial cobordism. 

\item[$(iii)$] If $W\odot W'$ is a birth-death pair, the geometric composition $\textbf{F}(W,0) \circ \textbf{F}(W',0)$ satisfies the  hypotheses of Theorem~\ref{compogeom} and corresponds to the  diagonal $\Delta_{F(S)}$.

\item[$(iv)$] If $W_2' \odot W_1'$ is obtained from $W_1 \odot W_2$ by a critical point switch, then $\textbf{F}(W_1,0) \circ \textbf{F}(W_2,0) = \textbf{F}(W_2',0) \circ \textbf{F}(W_1',0)$ and these compositions satisfy the hypotheses of Theorem~\ref{compogeom}.

\item[$(v)$] If $c+c' =d+d'$, then $\textbf{F}(W,c) \circ \textbf{F}(W',c') = \textbf{F}(W,d) \circ \textbf{F}(W',d')$, and the left or right composition of $\textbf{F}(S\times I,c)$ with  any other morphism satisfies the  hypotheses of Theorem~\ref{compogeom}. 

\end{enumerate}

Then $\textbf{F}$ factors to a functor $\Cob \rightarrow \Symp $.

\end{cor}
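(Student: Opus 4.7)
The plan is a direct application of Proposition~\ref{cerfchemin}. Given a morphism $W$ in $\Cob$, choose by Proposition~\ref{cerfchemin}(1) any Cerf decomposition $\underline{W} = W_0 \odot \cdots \odot W_k$ lifting $W$ to $\Cobelem$, and set $\tilde{\mathbf{F}}(W) := \mathbf{F}(\underline{W}) = \mathbf{F}(W_0) \odot \cdots \odot \mathbf{F}(W_k)$, viewed as a morphism of $\Symp$ modulo the equivalence relation generated by embedded geometric composition. The content of the corollary is then the assertion that this definition does not depend on the chosen decomposition. Since by Proposition~\ref{cerfchemin}(2) any two lifts are related by a finite sequence of Cerf moves $(i)$--$(v)$, it suffices to verify invariance of $\mathbf{F}(\underline{W})$, as an element of $\mathrm{Hom}_{\Symp}$, under each move.

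I would then proceed move by move. Move $(i)$ is handled verbatim by hypothesis $(i)$. For move $(ii)$, hypothesis $(ii)$ identifies $\mathbf{F}$ of a trivial cobordism with the diagonal $\Delta_{\mathbf{F}(\Sigma)}$; since the geometric composition of any Lagrangian correspondence with the diagonal is tautologically embedded and equal to that correspondence, inserting or deleting a diagonal factor in a string is an instance of the defining equivalence relation on $\Symp$, hence preserves the morphism. Moves $(iii)$ and $(iv)$ are exactly the content of hypotheses $(iii)$ and $(iv)$: they explicitly assert both the \emph{equality} of the geometric compositions and the fact that these compositions satisfy the hypotheses of Theorem~\ref{compogeom}, which is precisely what is required for the pair of strings to be identified in $\Symp$. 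Move $(v)$ is handled by hypothesis $(v)$, using that one of the two cobordisms involved is a cylinder so that the composition of $\mathbf{F}(S \times I, c)$ with its neighbor is embedded by assumption. Functoriality of $\tilde{\mathbf{F}}$ with respect to the composition in $\Cob$ then comes for free, because concatenating Cerf decompositions of composable cobordisms yields a Cerf decomposition of their composite, and $\mathbf{F}$ is already a functor on $\Cobelem$.

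The only genuinely delicate point, and what the hypotheses are engineered to address, is that the equivalence relation on morphisms in $\Symp$ is not the naive one: it requires the intermediate geometric composition to be embedded, simply connected, $(R, R')$-compatible, spin outside $R \times R'$, and to satisfy the pseudo-holomorphic disc and quilted-cylinder intersection constraints of Definition~\ref{defcat} and Theorem~\ref{compogeom}. This is why hypotheses $(iii)$, $(iv)$, $(v)$ do not merely demand set-theoretic equality of compositions, but additionally impose the Floer-theoretic admissibility conditions; the verification that these conditions really do hold for the specific Lagrangian correspondences produced by the Floer field theory for HSI will be the substantive work, carried out later when the functor $\mathbf{F}$ is constructed in Section~\ref{sec:hsi}, but at the level of Corollary~\ref{factorisation} it is packaged cleanly into the list of hypotheses, so the proof itself reduces to the bookkeeping described above.
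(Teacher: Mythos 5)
Your proof is correct and takes the same route the paper intends: the corollary is stated without a written argument (the $\Box$ alone) precisely because it is the formal consequence of Proposition~\ref{cerfchemin} that you have spelled out, namely define the factored functor by picking any Cerf decomposition and then check move-by-move invariance against hypotheses $(i)$--$(v)$. One small imprecision worth flagging: for move $(ii)$ you justify the admissibility of inserting/removing a diagonal factor by calling the geometric composition with $\Delta$ ``tautologically embedded and equal to the original correspondence,'' but the defining equivalence relation of $\Symp$ also requires the quilted-cylinder intersection bound of Theorem~\ref{compogeom}, which is not literally tautological; the clean way to close this is to observe that hypothesis $(v)$, specialised to $c = c' = d = d' = 0$, explicitly asserts that left or right composition with $\textbf{F}(S\times I, 0) = \Delta_{\textbf{F}(S)}$ satisfies the hypotheses of Theorem~\ref{compogeom}, so move $(ii)$ is in fact already covered by hypothesis $(v)$. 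Also purely cosmetically, writing $\textbf{F}(W_0) \odot \cdots \odot \textbf{F}(W_k)$ reuses the $\Cobelem$ composition symbol for what is really the string of correspondences in $\Symp$; a comma-separated tuple (the generalized Lagrangian correspondence) is the intended meaning.
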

\arnaque

\section{Construction of twisted symplectic instanton homology}\label{sec:hsi}

In order to  construct a functor $\Cob \rightarrow \Symp $, we start by building a functor $\Cobelem \rightarrow \Symp $ (first three paragraphs), and we check that it factors through the gluing functor (fourth paragraph). Finally, in the last paragraph  we will define twisted symplectic instanton  homology groups $HSI(Y,c)$ associated to a 3-manifold $Y$ together with a class $c\in H_1(Y;\Z{2})$.

\subsection{The extended moduli space}\label{sec:extmod}

All the moduli spaces appearing will always be  associated to the Lie group $SU(2)$. We will denote  $\mathfrak{su(2)}$ its Lie algebra, identified with trace-free antihermitian $2\times 2$ matrices, and endowed with the usual scalar product $\langle a,b \rangle = \mathrm{Tr} (a b^*) = -\mathrm{Tr} (ab)$. We will always identify $\mathfrak{su(2)}$ and $\mathfrak{su(2)}^*$ via this scalar product.

Let $(\Sigma,p)$ be a surface with parametrized boundary as in Definition~\ref{chemincob}, one can associate to it the extended moduli space  $ \Mg (\Sigma,  p)$ defined by Jeffrey in \cite{jeffrey} (and also, independently, by Huebschmann \cite{huebschmann}). Recall its definition:

\begin{defi}(Extended moduli space associated to a surface, \cite[Def. 2.1]{jeffrey}) 
Define the following space of flat connections:
\[ \mathscr{A}_F^\mathfrak{g}(\Sigma)  = \lbrace A \in \Omega^{1} (\Sigma )\otimes \mathfrak{su(2)}\ |\ F_A = 0,\  A_{|\nu \partial \Sigma} = \theta ds \rbrace, \]
where $\nu \partial \Sigma$ is a non-fixed tubular  neighborhood of $\partial \Sigma$, $s$  the parameter of $\rr/\zz$, and the group
 
 \[ \Gc (\Sigma ) = \left\lbrace u \colon \Sigma \rightarrow SU(2)\ |\ u_{|\nu \partial \Sigma} = I \right\rbrace \] acts by gauge transformations.
 
The extended moduli space is then defined as the quotient \[  \Mg (\Sigma, p) = \mathscr{A}_F^\mathfrak{g}(\Sigma) /\Gc (\Sigma ),\]
\end{defi}

The following proposition gives an explicit description of this space:
\begin{prop}\label{riemannhilbert} (\cite[Prop. 2.5]{jeffrey})
Let $* \in \partial \Sigma$ be a base point (we will usually take $* = p(0)$). The value $\theta ds$ of the connection  in the  neighborhood of the boundary and  the holonomy gives an identification of $\Mg (\Sigma, p)$ with  \[ \left\lbrace  (\rho, \theta)\in Hom(\pi_1(\Sigma, *), SU(2)) \times \mathfrak{su(2)}\ |\ e^\theta = \rho(p) \right\rbrace .\]
In particular, a presentation of the fundamental group
\[ \pi_1 (\Sigma, *) = \langle \alpha_1 , \beta_1 , \cdots \alpha_h , \beta_h\rangle,\]
such that  $p = \prod_{i=1}^{h}{[\alpha_i , \beta_i ]} $ induces a homeomorphism
\begin{align*}
 &\Mg (\Sigma, p) \simeq \\ & \left\lbrace   (\theta , A_1 , B_1 , \cdots, A_h , B_h ) \in \mathfrak{su(2)} \times  SU(2)^{2h} ~|~ e^{2\pi \theta}  =  \prod_{i=1}^{h}{[A_i , B_i ]} \right\rbrace . \end{align*}
The element $\theta$ is such that the connection equals $\theta ds$ in the neighborhood of the boundary, $A_i$ (resp. $B_i$)  is  the holonomy of $A$ along the curve $\alpha_i$ (resp. $\beta_i$).
\end{prop}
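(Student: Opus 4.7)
The plan is to show that sending a flat connection to the pair (holonomy representation, boundary value) is a well-defined bijection on gauge equivalence classes. First, I would build the map $\Phi \colon \mathscr{A}_F^\mathfrak{g}(\Sigma) \to \lbrace(\rho,\theta) : e^\theta = \rho(p)\rbrace$ as follows: given $A \in \mathscr{A}_F^\mathfrak{g}(\Sigma)$, parallel transport along loops based at $* = p(0)$ defines a homomorphism $\rho_A \colon \pi_1(\Sigma,*) \to SU(2)$, and the value of $A$ near $\partial\Sigma$ determines the unique $\theta \in \mathfrak{su(2)}$ with $A|_{\nu\partial\Sigma} = \theta\, ds$. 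The compatibility $e^\theta = \rho_A(p)$ is then a direct holonomy computation around the boundary loop, which has unit length in the $s$-parameter. Gauge invariance under $\Gc(\Sigma)$ is immediate: transformations that are trivial on a collar preserve both $\theta$ and $\rho_A$ (they fix the basepoint $* \in \partial\Sigma$), so $\Phi$ descends to a map on $\Mg(\Sigma,p)$.

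For injectivity, I would take two flat connections $A, A'$ with $\Phi(A) = \Phi(A')$ and produce a gauge transformation $u \in \Gc(\Sigma)$ with $u \cdot A = A'$. Parallel transport on the universal cover and equality of holonomy representations yield a well-defined smooth $u \colon \Sigma \to SU(2)$ with $u(*) = I$ and $u \cdot A = A'$ globally. The nontrivial point is that $u \equiv I$ on some collar of $\partial\Sigma$, not only at $*$: after shrinking to a common collar on which $A = A' = \theta\, ds$, the intertwining equation $u^{-1}du + u^{-1}(\theta\,ds)u = \theta\,ds$ together with $u(*) = I$ forces $u = I$ first along $\partial\Sigma$ (by Picard uniqueness for the resulting ODE in $s$) and then throughout the collar (there is no $dt$ component, so $\partial_t u = 0$).

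For surjectivity, given $(\rho,\theta)$ with $e^\theta = \rho(p)$, I would construct a flat connection in two pieces: set $A = \theta\,ds$ on a chosen collar of $\partial\Sigma$, and pick any flat connection on the complement (which deformation-retracts onto a wedge of $2h$ circles representing $\alpha_i, \beta_i$) whose holonomy representation is $\rho$. The assembly is possible because both resulting holonomies along the inner collar boundary equal $e^\theta$; a gauge change supported near the interface glues them into a single element of $\mathscr{A}_F^\mathfrak{g}(\Sigma)$ mapping to $(\rho,\theta)$. Reading off $A_i = \rho(\alpha_i)$ and $B_i = \rho(\beta_i)$ then recovers the explicit coordinate description in the second displayed formula.

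The main obstacle I anticipate is the boundary analysis in the injectivity step: the tubular neighborhood $\nu\partial\Sigma$ in the definition of $\mathscr{A}_F^\mathfrak{g}(\Sigma)$ is not fixed across different connections, so one must shrink to a common collar and carefully check that the canonical gauge transformation obtained from matching holonomies genuinely lies in $\Gc(\Sigma)$ rather than merely being trivial at a single point. Once this boundary behavior is secured, the rest reduces to the standard dictionary between flat connections on a surface and their holonomy representations.
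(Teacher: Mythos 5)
The paper does not actually supply a proof of this proposition; it is cited directly from Jeffrey's paper \cite[Prop.\ 2.5]{jeffrey}. So there is no ``paper's own proof'' to compare against; I can only assess whether your sketch is sound, and it is. Your approach is the standard Riemann--Hilbert dictionary: well-definedness of the holonomy-plus-boundary-value map on $\Gc(\Sigma)$-orbits is immediate precisely because these gauge transformations are $I$ near $\partial\Sigma$; injectivity follows by reconstructing the gauge transformation via matching parallel transports, and you correctly identify the only real subtlety (showing that the reconstructed $u$ is $I$ on a collar, not merely at $*$), handling it correctly by working on a common collar where $A=A'=\theta\,ds$ and using that the collar transport is independent of $t$. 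For surjectivity your sketch is a little compressed: after choosing a flat $A_0$ with holonomy $\rho$, the cleanest way to say it is that $A_0$ and $\theta\,ds$ are gauge-equivalent on a collar by some $v$ with $v(*)=I$, and because $SU(2)$ is $2$-connected the restriction of $v$ to the inner collar boundary is nullhomotopic, so $v$ extends over $\Sigma$; applying the extended $v$ produces a connection in $\mathscr{A}_F^{\mathfrak{g}}(\Sigma)$ with the required boundary value while, since $v(*)=I$, the holonomy representation at $*$ is unchanged. Your phrasing in terms of a gauge change ``supported near the interface'' is gesturing at exactly this and could be fleshed out along those lines. One small remark on the statement itself: the displayed formula $e^{2\pi\theta}=\prod_i[A_i,B_i]$ appears to be a normalization slip; consistency with the first display $e^\theta=\rho(p)$ and with the description of $\N(\Sigma,p)$ as $\{|\theta|<\pi\sqrt2\}\cong\{\prod[A_i,B_i]\neq -I\}$ forces the exponent to be $\theta$, not $2\pi\theta$, when $s\in\rr/\zz$; you are implicitly (and correctly) using the former normalization.
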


Define \[ \N(\Sigma,p) = \left\lbrace (\theta , A_1 , B_1 , \cdots A_h , B_h ) \in \Mg (\Sigma, p)  ~|~ |\theta | < \pi \sqrt{2} \right\rbrace.  \]

This subspace is identified with the open subset of $SU(2)^{2h}$ of elements $( A_1 , B_1 , \cdots , A_h , B_h ) $ such that $\prod_{i=1}^{h}{[A_i , B_i ]} \neq -I $, and is therefore smooth.

Recall that this space is endowed with Huebschmann-Jeffrey's 2-form, similar to the Atiyah-Bott form for a closed surface: if $A$ is a connection representing a smooth point of $\Mg (\Sigma, p)$, the tangent space  may be identified with the quotient:

\[ T_{[A]} \Mg (\Sigma, p) = \frac{ \left\lbrace  \alpha\in \Omega^1(\Sigma) \otimes \mathfrak{su(2)}~|~ \alpha_{|\nu \partial \Sigma} = \eta ds,~d_A\alpha = 0 \right\rbrace }{ \left\lbrace d_A f ~|~ f\in \Omega^0(\Sigma) \otimes \mathfrak{su(2)},~  f_{|\nu \partial \Sigma} = 0 \right\rbrace }. \]
If  $\alpha = \eta\otimes a$ and $\beta = \mu\otimes b$, with  $\eta,\mu \in \mathfrak{su(2)}$ and $a,b$ real-valued 1-forms, denote  $\langle\alpha \wedge \beta\rangle$ the real-valued 2-form defined by \[\langle\alpha \wedge \beta\rangle = \langle\eta, \mu\rangle  a\wedge b.\]
The Huebschmann-Jeffrey form $\omega$ is then defined by:
\[ \omega_{[A]}([\alpha],[\beta]) = \int_{\Sigma'} \langle \alpha\wedge\beta \rangle   .\]
This form is symplectic on the open set $\N(\Sigma,p)$, see \cite[Prop. 3.1]{jeffrey}.

\subsection{Compactification by symplectic cutting}\label{sec:cutting}
In this paragraph  we  briefly recall  how Manolescu and Woodward obtain an  object  of $\Symp$ from the space $\Mg(\Sigma,p)$: this object $\Nc(\Sigma,p)$ is a compactification of $\N(\Sigma,p)$ obtained by symplectic cutting. We refer to \cite[Parag. 4.5]{MW} for more details.

The map $[A] \mapsto \theta \in \mathfrak{su(2)}$ is the moment of an $SU(2)$-Hamil\-tonian action, so $[A] \mapsto |\theta| \in \rr$ is the moment of a circle action (in the complement of $ \lbrace \theta = 0 \rbrace $). It is then possible to consider Lerman's symplectic cutting  at some value $\lambda \in \rr$, namely the symplectic reduction  
\[ \Mg(\Sigma,p)_{\leq \lambda} = \left( \Mg(\Sigma,p) \times \cc \right)  \red U(1)  \]
of the circle action with moment $\Phi([A],z) = | \theta| +\frac{1}{2} |z|^2 -\lambda$.

\begin{remark}The circle action is not defined on $\lbrace \theta = 0 \rbrace$, Manolescu and Woodward consider in reality the reduction \[\left( ( \Mg(\Sigma,p) \setminus  \lbrace \theta = 0 \rbrace) \times \cc \right)  \red U(1),\] which contains $\N(\Sigma,p) \setminus  \lbrace \theta = 0 \rbrace$, and then glue back $\lbrace \theta = 0 \rbrace$ to it.
\end{remark}

For $\lambda = \pi \sqrt{2}$, 

\[ \Mg(\Sigma,p)_{\leq \pi \sqrt{2}} = \N(\Sigma,p) \cup R, \]

with $R =  \lbrace |\theta| = \pi \sqrt{2} \rbrace / U(1) $.

This space will be denoted $\Nc(\Sigma,p)$, and $\tilde{\omega}$ is the induced 2-form from the reduction. This form is monotone (\cite[Proposition 4.10]{MW}), but degenerated on $R$ (\cite[Lemma 4.11]{MW}).

Besides, if one cuts at $ \lambda = \pi \sqrt{2} - \epsilon$ for $\epsilon$ small,  $\Mg(\Sigma,p)_{\leq \lambda }$ is still diffeomorphic to $\Nc(\Sigma,p)$. Let  $\varphi_\epsilon\colon \Nc(\Sigma,p) \rightarrow \Mg(\Sigma,p)_{\leq \pi \sqrt{2} - \epsilon }$ be a diffeomorphism with support contained in a neighborhood of $R$, and $\omega_\epsilon$  the  symplectic form of $\Mg(\Sigma,p)_{\leq \pi \sqrt{2} - \epsilon }$, then $\omega = \varphi_\epsilon ^* \omega_\epsilon$ is a non-monotone symplectic form on $\Nc(\Sigma,p)$.

Finally, $\tilde{J}$ is a ``reference'' almost complex structure on $\Nc(\Sigma,p)$, compatible with  $\omega$ and such that $R$ is a complex  hypersurface.

Recall  also the following result concerning the structure of the degeneracy locus $R$ of $\tilde{\omega}$, which will be useful in order to control bubbling phenomenas: 

\begin{prop} (\cite[Prop. 3.7]{MW})\label{degener} The hypersurface $R$ admits a 2-sphere fibration such that the kernel of $\tilde{\omega}$ corresponds to the tangent space of the fibers. Furthermore, the intersection number of a fiber with  $R$ in $\Nc(\Sigma,p)$ is -2.

\end{prop}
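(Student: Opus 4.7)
The plan is to prove Proposition~\ref{degener} by leveraging the explicit moduli description of $\Mg(\Sigma,p)$ given in Proposition~\ref{riemannhilbert} together with the symplectic cutting construction of Section~\ref{sec:cutting}. The argument naturally splits into four steps.

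First, I would describe $R$ explicitly. Using $\langle a,b\rangle = -\mathrm{Tr}(ab)$ one computes that any $\theta \in \mathfrak{su}(2)$ satisfies $\theta^2 = -\tfrac{1}{2}|\theta|^2 \cdot I$; hence $|\theta| = \pi\sqrt{2}$ forces $\theta^2 = -\pi^2 I$ and $e^{\theta} = \cos(\pi)\,I = -I$. Via the identification of Proposition~\ref{riemannhilbert}, the hypersurface $R$ therefore parametrises equivalence classes of pairs $(\theta,\rho)$ with $|\theta| = \pi\sqrt{2}$ and $\rho(p) = -I$, modulo the $U(1)$-action used in the symplectic cut (the stabiliser of the axis of $\theta$).

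Second, I would construct the sphere fibration as
\[ \pi \colon R \longrightarrow \M_{-I}(\Sigma), \quad [(\theta,\rho)] \longmapsto [\rho], \]
where $\M_{-I}(\Sigma) = \{ \rho \in \mathrm{Hom}(\pi_1\Sigma,SU(2)) : \rho(p) = -I \}/SU(2)$ is the moduli space of flat $SU(2)$-connections on $\Sigma$ with boundary holonomy $-I$. Over the irreducible stratum (where the stabiliser of $\rho$ is $\{\pm I\}$), the fibre is $\{\theta \in \mathfrak{su}(2) : |\theta| = \pi\sqrt{2}\}/\{\pm I\} \cong S^2$, since $\pm I$ acts trivially on $\mathfrak{su}(2)$ by conjugation. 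Third, the statement about $\ker\tilde{\omega}$ then follows from standard Marsden--Weinstein reduction: the form $\tilde{\omega}|_R$ is the pullback via $\pi$ of the natural symplectic form on $\M_{-I}(\Sigma)$ (the form with fixed boundary-holonomy conjugacy class, cf.~\cite{jeffrey,huebschmann}), whose kernel is therefore exactly the vertical tangent space of $\pi$.

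The intersection number is the main obstacle. I would compute it using a local model: by an equivariant Darboux argument near a sphere fibre, a neighbourhood of $F\subset R$ in $\Nc(\Sigma,p)$ should be symplectomorphic to the product of a Darboux chart of $\M_{-I}(\Sigma)$ with the local $SU(2)$-cut model transverse to the orbit. The expected local picture is the minimal resolution $T^*\mathbb{CP}^1 \cong \mathcal{O}_{\mathbb{CP}^1}(-2)$ of the $A_1$-singularity $\mathbb{C}^2/\{\pm 1\}$, whose exceptional $\mathbb{CP}^1$ is a $(-2)$-curve; this accounts for the $\{\pm I\}$-isotropy identified in Step 2 and is what distinguishes the picture here from the naive $SU(2)$-cut of $\mathbb{C}^2$ (whose boundary divisor $\mathbb{CP}^1 \subset \mathbb{CP}^2$ has self-intersection $+1$). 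Verifying this identification rigorously — that the orbifold double cover of the slice is genuinely $\mathbb{C}^2$ with the standard $SU(2)$-action, so that the cut produces the $A_1$-resolution rather than $\mathbb{CP}^2$ — is the technical heart of the argument, and will require tracking carefully the $\{\pm I\}$ stabiliser through the symplectic cutting procedure.
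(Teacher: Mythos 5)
The central problem is in your Step 2, and it propagates through Steps 3 and 4.

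\textbf{The fibration you propose does not have $S^2$ fibers.} Writing $h$ for the genus of $\Sigma$: the hypersurface $R$ has real codimension $2$ in $\Nc(\Sigma,p)$, so $\dim_\rr R = 6h-2$, whereas $\M_{-I}(\Sigma) = \{\rho : \rho(p) = -I\}/SU(2)$ has $\dim_\rr = (6h-3) - 3 = 6h - 6$. Your map $[(\theta,\rho)]\mapsto[\rho]_{SU(2)}$ therefore has $4$-dimensional fibers, not $2$-dimensional ones. The mistake lies in the claim that after fixing $[\rho]_{SU(2)}$, the only remaining datum is $\theta$ modulo $\{\pm I\}$. In $R$ one has quotiented only by the circle $U(1)_\theta$ (the stabiliser of $\theta$ used in the cut), not by all of $SU(2)$; thus for fixed $[\rho_0]_{SU(2)}$ and fixed $\theta$, the choice of actual representative $\rho = g\rho_0 g^{-1}$ still has an $S^2 \cong U(1)_\theta \backslash SU(2)$ of freedom. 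Concretely, the fiber of your map over an irreducible $[\rho_0]$ is an $S^2$-bundle over $S^2$ (via $(\theta,g)\mapsto g^{-1}\theta g$), not $S^2$. The base of the correct sphere fibration has dimension $6h-4$, two more than $\M_{-I}(\Sigma)$; it is a quotient of $\{\rho:\rho(p)=-I\}$ by a \emph{circle} rather than by all of $SU(2)$, and is itself an $S^2$-bundle over $\M_{-I}(\Sigma)$.

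\textbf{Step 3 cannot be salvaged as stated.} If $\tilde\omega|_R$ were the pullback of the symplectic form on $\M_{-I}(\Sigma)$ via your $\pi$, its kernel would be $4$-dimensional, contradicting the proposition, so that identity is false. More fundamentally, "Marsden--Weinstein reduction" alone does not explain the degeneracy: a Lerman cut of a symplectic manifold at a regular value with free circle action produces a \emph{symplectic} boundary divisor. What actually forces $\tilde\omega$ to degenerate on $R$ is that the Huebschmann--Jeffrey form $\omega$ on $\Mg(\Sigma,p)$ is itself degenerate on $\{|\theta|=\pi\sqrt{2}\}$ (Jeffrey's Prop.\ 3.1 only gives nondegeneracy on the open set $|\theta|<\pi\sqrt 2$), due to the $2$-dimensional kernel acquired by the derivative of $\exp\colon\mathfrak{su}(2)\to SU(2)$ at points with $|\theta|=\pi\sqrt{2}$; this kernel survives the cut. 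You need to bring this into the argument before the kernel/fibration identification can go through.

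\textbf{Step 4.} Your intuition that the $\{\pm I\}$-isotropy (the $SO(3)$ versus $SU(2)$ discrepancy, or equivalently the Euler number of $SO(3)\to S^2$, or the $A_1$-resolution picture) is what produces the $-2$ is the right heuristic, but you acknowledge the computation is not done, and in any case the local model would have to be redone relative to the corrected fibration from Step 2.
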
 

It follows in particular that the pseudo-holomorphic  curves of zero area for $\tilde{\omega}$ are branched covers of fibers of this fibration.

To sum up:

\begin{prop}[\cite{MW}] The 5-tuple ($\Nc(\Sigma,p)$, $\omega$, $\tilde{\omega}$,$R$,$\tilde{J}$) satisfies the hypotheses of  Definition~\ref{defcat}: it is an object  of $\Symp$.
\end{prop}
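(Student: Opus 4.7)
The plan is to verify, one at a time, the eight hypotheses $(i), (ii), (iii), (iv), (v), (x), (xi), (xii)$ of Definition~\ref{defcat}; each of them is either built into the construction of $(\Nc(\Sigma,p),\omega,\tilde\omega,R,\tilde J)$ or already proved in \cite{MW}, so the role of the proof is essentially to collect these observations.

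First, the three conditions $(i), (ii), (iii)$. Compactness of $\Nc(\Sigma,p)$ and non-degeneracy of $\omega$ both come from the symplectic cutting construction at $\lambda = \pi\sqrt{2} - \epsilon$: the cut space is compact and carries a well-defined symplectic form, and $\omega$ is its pullback by $\varphi_\epsilon$. The form $\tilde\omega$ comes from cutting at $\lambda = \pi\sqrt{2}$, so it is closed as a reduced form, and its degeneracy locus $R = \{|\theta|=\pi\sqrt{2}\}/U(1)$ is a symplectic hypersurface for $\omega$ because near $R$ the form $\omega$ is identified with the cut symplectic form at $\lambda = \pi\sqrt{2}-\epsilon$.

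Next, $(iv)$ is exactly \cite[Prop.~4.10]{MW}, and $(v)$ follows from the fact that $\omega$ and $\tilde\omega$ coincide outside the support of $\varphi_\epsilon$, so $\omega - \tilde\omega$ is supported in a tubular neighborhood of $R$ and is therefore exact on $M\setminus R$. Condition $(x)$ on the minimal Chern number being a positive multiple of $4$ is a standard computation carried out in \cite{MW}, using $\frac{1}{4}$-monotonicity of $\tilde\omega$ together with the explicit realization of $\N(\Sigma,p)$ as an open subset of $SU(2)^{2h}$.

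Finally, $(xi)$ is the defining property of $\tilde J$, and $(xii)$ is the main application of Proposition~\ref{degener}. Indeed, an index zero $\tilde J$-holomorphic sphere has zero $\tilde\omega$-area by $\frac{1}{4}$-monotonicity, hence lies in the degeneracy locus $R$. By Proposition~\ref{degener}, $R$ admits an $S^2$-fibration whose fibers span $\ker(\tilde\omega|_R)$ and intersect $R$ in $-2$, so any zero-area pseudo-holomorphic sphere is a $k$-fold branched cover of a fiber and its intersection number with $R$ equals $-2k$, a negative multiple of $2$, as required. No step presents a real obstacle: all the analytical work has been done in \cite{MW} and recalled in the previous subsections, and the content of the proposition is essentially a summary.
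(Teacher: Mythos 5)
Your verification-by-cases is the right and indeed only reasonable approach here, and the paper itself offers no proof at all: it simply cites \cite{MW} and relies on the preceding exposition of the symplectic cutting construction (compactness, the monotonicity of $\tilde\omega$ from \cite[Prop.~4.10]{MW}, the degeneracy locus from \cite[Lemma~4.11]{MW}, the sphere fibration on $R$ from Proposition~\ref{degener}). Your checks of $(i)$--$(iv)$, $(x)$--$(xii)$ are correct; the only point worth flagging is that in $(v)$ the step ``supported in a tubular neighborhood of $R$, therefore exact on $M\setminus R$'' silently invokes the Thom/Gysin argument that a closed $2$-form supported near a connected codimension-two submanifold represents a multiple of the Poincar\'e dual of $R$ in $H^2(M;\rr)$, which does lie in the kernel of the restriction to $H^2(M\setminus R;\rr)$ --- the conclusion is right, but this intermediate step deserves a word.
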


\subsection{Lagrangian correspondences from three-cobordisms}\label{sec:lagcob}

 As in Definition~\ref{chemincob}, Let $(W,\pi_{\Sigma_0},\pi_{\Sigma_1},p,c)$ be a cobordism with vertical boundary, and $C\subset int(W)$ a closed 1-dimensional submanifold, whose homology class in $H_1(W,\Z{2})$ is $c$. It will follow from property~\ref{reformul} that the next construction will only depend on $c$, rather than $C$.

We start by defining a  correspondence $L(W,\pi_{\Sigma_0},\pi_{\Sigma_1},p, C)$ between the sets $\Mg (\Sigma_0, p_0)$  and $\Mg (\Sigma_1, p_1)$, which will give rise,  when the cobordism is elementary, to a Lagrangian correspondence between the symplectic cuttings. We will denote it $L^c(\pi_{\Sigma_0},\pi_{\Sigma_1},p, C)$, and it will satisfy the hypotheses of the category  $\Symp$.

\begin{defi}
\begin{itemize}

\item[$(i)$] (Moduli space associated to a cobordism with vertical boundary $(W, \pi_{\Sigma_0},\pi_{\Sigma_1},p, C)$). Define the following space of connections, where $s$ stands for the $\rr/\zz$-coordinate of  $p(\rr/\zz \times [0,1])$, and  $\mu$ is an arbitrary meridian of $C$:

\begin{align*}&\mathscr{A}_F^\mathfrak{g}(W,C) =  \\ 
& \left\lbrace A \in \Omega^{1} (W \setminus C) \otimes \mathfrak{su(2)}\ |\ F_A = 0, \mathrm{Hol}_{\mu} A = -I, A_{|\nu p(\rr/\zz \times [0,1])} = \theta ds\right\rbrace.
\end{align*}

This space is acted on by the following gauge group:

\[\Gc (W\setminus C)  = \left\lbrace u \colon W \setminus C \rightarrow SU(2)\ |\ u_{|\nu p(\rr/\zz \times [0,1])} = I\right\rbrace,\]

We then define the quotient \[ \Mg (W, \pi_{\Sigma_0},\pi_{\Sigma_1},p, C) =  \mathscr{A}_F^\mathfrak{g}(W,C) /\Gc (W\setminus C ).\]

\item[$(ii)$] (Correspondence associated to a cobordism with vertical boundary.) Let  

\[ L(W,\pi_{\Sigma_0},\pi_{\Sigma_1},p, C) \subset \Mg (\Sigma_0, p_0)^- \times \Mg (\Sigma_1, p_1) \]
be the correspondence consisting of the pairs of connections that extend flatly  to $W\setminus C$, with  holonomy $-I$ around $C$:

\[L(W,\pi_{\Sigma_0},\pi_{\Sigma_1},p, C) = \{([A_{|\Sigma_0} ], [A_{|\Sigma_1} ])\ |\ A \in \Mg(W, C, *)\}\]

\end{itemize}
\end{defi}

\begin{remark}This definition depends on the parametrization $p$ of the vertical part of $\partial W$, this dependence will be described in example~\ref{exemreparam}. 

\end{remark}

\subsubsection{Homology classes as twisted SO(3)-bundles}\label{sec:so3bundles}

We now explain how one can alternatively define the Lagrangian correspondences by using a twisted $SO(3)$-bundle rather than removing the submanifold $C$. It will follow  that these depend on $C$ only through its class $c$ in $H_1(W; \Z{2})$, which is dual to the second Stiefel-Whitney class of the bundle. This fact could also have been proven directly by observing that the moduli space doesn't change when one removes a crossing of $C$ and replace it by two parallel arcs, nevertheless this point of view highlights the analogy of our exact sequence with Floer's initial exact triangle for instanton homology.

Let $P$ be an $SO(3)$-principal bundle over $W$ defined by gluing the trivial bundles over $ W\setminus \nu C$ and $ \nu C$ along the boundary $\partial \nu C$ by a  transition function  $f\colon \partial \nu C \rightarrow SO(3)$ such that the image of every meridian of $C$ is non-trivial in $\pi_1(SO(3))$: 

\[ P =  SO(3) \times (W\setminus \nu C) \cup_f SO(3) \times  \nu C .\]

We will denote $\tau\colon P_{| W\setminus \nu C}\rightarrow ( W\setminus \nu C ) \times SO(3)$ the trivialization of $P$ on $W\setminus \nu C$.

\begin{lemma} The second Stiefel-Whitney  class of $w_2(P)\in H^2(W; \Z{2})$ is Poincare-dual  to the image of $c$ in $H_1(W, \partial W; \Z{2})$.
\end{lemma}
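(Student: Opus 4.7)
The plan is to use Poincaré--Lefschetz duality to reformulate the claim as a mod-$2$ intersection number computation, and then compute $w_2(P)$ directly via obstruction theory applied to test surfaces transverse to $C$.

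First I would translate the claim using duality. With $\Z{2}$ coefficients, the universal coefficient theorem gives $H^2(W;\Z{2}) \cong \mathrm{Hom}(H_2(W;\Z{2}), \Z{2})$, so $w_2(P)$ is determined by its evaluations on $H_2(W;\Z{2})$. Under Poincaré--Lefschetz duality $H^2(W;\Z{2}) \cong H_1(W, \partial W;\Z{2})$, the image of $c$ corresponds to the functional $[S] \mapsto [C] \cdot [S] \bmod 2$. It therefore suffices to show that for every closed embedded surface $S \subset W$ (every class in $H_2(W;\Z{2})$ admits such a representative, and by transversality we may arrange $S$ to meet $C$ in a finite set of interior points),
\[\langle w_2(P), [S] \rangle = \#(S \cap C) \bmod 2.\]

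Next, by naturality of $w_2$, the left-hand side equals $\langle w_2(P|_S), [S] \rangle$, reducing the problem to computing the mod-$2$ obstruction class of the $SO(3)$-bundle $P|_S$ on the closed surface $S$. I would choose a CW structure on $S$ in which each intersection point $p \in S \cap C$ lies in its own open $2$-cell $D_p \subset S \cap \nu C$, while all other $2$-cells are contained in $S \setminus \nu C$. The trivialization $\tau$ gives a trivialization of $P|_S$ over $S \setminus \nu C$, hence over the $1$-skeleton and over every $2$-cell not of the form $D_p$. The obstruction cocycle for $w_2(P|_S)$ therefore vanishes on these cells, while on each $D_p$ it equals the class in $\pi_1(SO(3)) = \Z{2}$ of $f$ restricted to $\partial D_p$; that boundary is a meridian of $C$, and its image is non-trivial by the defining property of $f$. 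Summing over cells yields $\langle w_2(P|_S), [S]\rangle = \#(S\cap C) \bmod 2$, which is exactly the required formula.

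The main technical point I expect to have to handle carefully is the naturality of Poincaré--Lefschetz duality for a manifold with boundary and the identification of the mod-$2$ intersection pairing with cup-product evaluation: this is standard but requires bookkeeping between absolute and relative homology, since $c$ originates in $H_1(W;\Z{2})$ while the Poincaré dual of $w_2(P)$ naturally lives in $H_1(W,\partial W;\Z{2})$. Once this is under control, the remaining obstruction-theoretic computation uses only that $P$ is trivialized on $W\setminus \nu C$ via $\tau$ and that meridians of $C$ are sent by $f$ to the non-trivial element of $\pi_1(SO(3))$.
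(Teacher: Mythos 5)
Your proof is correct, but the route is genuinely different from the paper's. The paper stays inside \v{C}ech cohomology from start to finish: it fixes an acyclic cover compatible with $C$, writes down explicit $SU(2)$-lifts $\widetilde{\alpha}_{ij}$ of the transition functions, and shows that the resulting \v{C}ech $2$-cocycle representing $w_2(P)$ is supported on the triples of open sets meeting $\partial\nu C$; pairing this cocycle with an arbitrary $2$-cycle then gives $[C]\cdot[F]$ directly, and duality is only implicit. You instead reduce via Poincar\'e--Lefschetz duality (together with the vanishing of $\mathrm{Ext}$ over the field $\Z{2}$, so that evaluation against $H_2(W;\Z{2})$ is faithful) to a surface-by-surface computation, and then compute $w_2(P|_S)$ by the obstruction-theoretic description of $w_2$ as the primary obstruction to trivializing an $SO(3)$-bundle over a surface, using the trivialization $\tau$ on $S\setminus\nu C$ and the fact that $f$ has nontrivial class in $\pi_1(SO(3))$ on each meridian disc $\partial D_p$. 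Both proofs hinge on the same geometric fact (the transition function is nontrivial exactly on meridians of $C$), but yours gets to the answer without constructing a global cocycle, at the price of invoking the duality bookkeeping you flag at the end and the identification of $w_2$ with the primary obstruction; the paper's argument is more elementary in its prerequisites but heavier on explicit cover and cocycle manipulations. One small point to tighten if you write this up: note explicitly that $\partial D_p$ \emph{is} a meridian of $C$ because $S$ is transverse to $C$ at $p$, so the small disc $D_p\subset S\cap\nu C$ is (isotopic to) a meridian disc of the tubular neighborhood; as written you assert it slightly too quickly.
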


\begin{proof}

We start by recalling the construction of $w_2(P)$ in \v{C}ech homology: fix $\lbrace U_i\rbrace_i$ an acyclic cover of $W$ compatible with  $C$ in the  following sense: in the neighborhood of $C$, the cover is modeled on 4 open subsets $V_0, V_1, V_2, V_3$ as in figure~\ref{recouvrement}. Each connected component  of $\nu C$ is covered by 3 (or more) open subsets of type $V_0$, and $\partial \nu C$ is covered by $V_1, V_2, V_3$.

\begin{figure}[!h]
    \centering
    \def\svgwidth{.65\textwidth}
    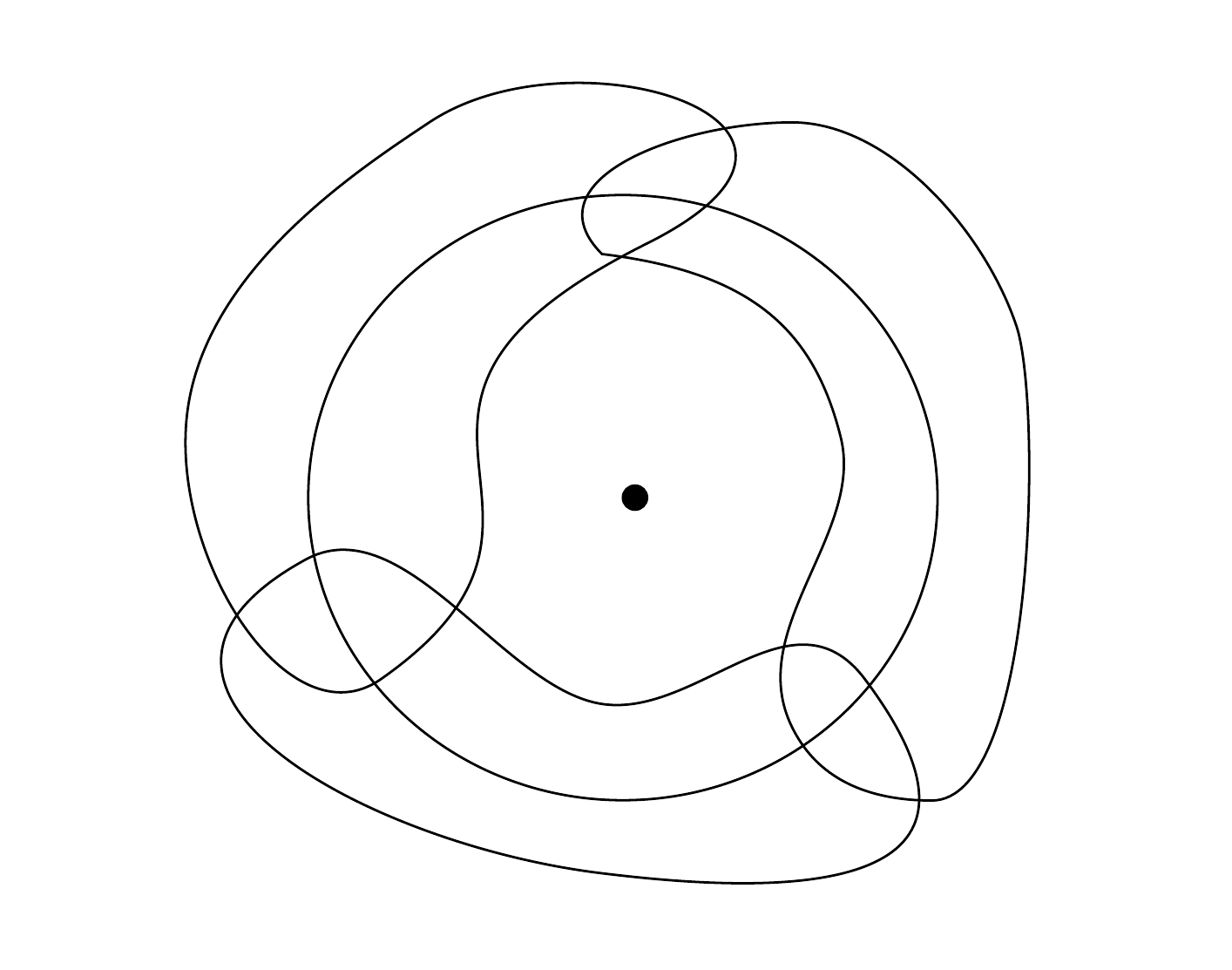
      \caption{The covering in the neighborhood of $C$.}
      \label{recouvrement}
\end{figure}

The bundle $P$  is given by transition  functions of  $\alpha_{ij}\colon U_i \cap U_j \rightarrow SO(3)$. These functions satisfies $\alpha_{ij} \alpha_{ji} = I$ and $\alpha_{ij} \alpha_{jk} \alpha_{ki} = I$. Let  \[\widetilde{\alpha}_{ij}\colon U_i \cap U_j \rightarrow SU(2)\] be lifts of $\alpha_{ij}$ to $SU(2)$, the second former relation becomes \[\widetilde{\alpha}_{ij} \widetilde{\alpha}_{jk} \widetilde{\alpha}_{ki} \in \Z{2}\] and allows one to define a  2-cocycle in \v{C}ech homology:
\[ (c_{ijk} \colon U_i \cap U_j \cap U_k \rightarrow \Z{2})_{ijk} \in \check{\mathrm{C}}^2(W,\Z{2}), \]
and its class in $\check{\mathrm{H}}^2(W,\Z{2})$ is then $w_2(P)$, by definition.

By  construction of the bundle $P$, the transition functions can be chosen, if one still denotes by $f$ an extension of $f$ to a neighborhood of $\partial \nu C$, as $\alpha_{ij}(x) = f(x) $ if $U_i = V_0$ and $U_j \in \lbrace V_1, V_2, V_3 \rbrace$, and $\alpha_{ij} = I$ otherwise. 

By hypothesis, the transition function  $f$ doesn't lift to a function from $\partial \nu  C$ to $SU(2)$. However, it is possible to chose lifts $\widetilde{f}_j \colon V_j\rightarrow SU(2)$ for each $j = 1,2, 3$. One can assume that 
$\widetilde{f}_1 = \widetilde{f}_2 $ on $V_1 \cap V_2 $, $\widetilde{f}_2 = \widetilde{f}_3 $ on $V_2 \cap V_3 $, and $\widetilde{f}_3 = - \widetilde{f}_1 $ on $V_3 \cap V_1 $. Take  then  $\widetilde{\alpha}_{ij}(x) = \widetilde{f}_j(x) $ if $U_i = V_0$ and $U_j \in \lbrace V_1, V_2, V_3 \rbrace$, and $\alpha_{ij} = I$ otherwise. The cocycle $(c_{ijk}) _{ijk}$ takes  then the following values:

\[ c_{ijk} = \begin{cases} -I\text{ if } \lbrace U_i, U_j, U_k\rbrace = \lbrace V_0, V_1, V_3 \rbrace\\ I\text{ otherwise,} \end{cases}  \]
where $V_0$, $V_1$ and $V_3$ are  open subsets of the former  type.

Take now a cycle \[F = \sum_{ \lbrace i,j,k\rbrace\in I_F}{U_{i}\cap U_j \cap U_k } \in \check{\mathrm{C}}_2(W,\Z{2}), \] then \[\langle w_2(P), [F]\rangle = \sum_{\lbrace i,j,k\rbrace\in I_F}{c_{ijk}} = [C] . [F].\]

\end{proof}

Let $\A(W,P)$ be the space of flat connections on  $P$ of the form $\theta ds$ in the neighborhood of $\partial ^{vert} W$, where one has  identified connections with $\mathfrak{su(2)}$-valued 1-forms via the trivialization $\tau$, and $s$ is the circular parameter  of $\partial ^{vert} W$. This space is acted on by the group $\G^0(W,P)$ of gauge  transformations  which are trivial in the neighborhood of $\partial ^{vert} W$ and homotopic to the identity (i.e. the   connected component of the identity of the group of gauge transformations  trivial in the neighborhood of $\partial ^{vert} W$). Let \[\Mg(W,P) =  \A(W,P)/\G^0(W,P)\] be the corresponding orbit space. The trivialization $\tau$ allows one to define a map \[  \Mg(W,P) \rightarrow  \N(\Sigma_0) \times \N(\Sigma_1)\] by restriction to the boundaries and pullback to $SU(2)\times ( \Sigma_0 \sqcup \Sigma_1 )$. We denote $L(W,P) \subset \N(\Sigma_0) \times \N(\Sigma_1)$ its image.

\begin{remark}\label{dependiso} The moduli space $\Mg(W,P)$ only depends on the isomorphism type of $P$, i.e. the class $c$, and the correspondence $L(W,P)$ only depends on the restriction of $\tau$ to $\partial W$. It follows that $L(W,P)$ only depends on  $C$  via $c$.
\end{remark}

\begin{prop} \label{reformul}

The moduli space $\Mg(W,C)$ is canonically identified with $\Mg(W,P)$. It follows that $L(W,P) = L(W,C)$, and from remark~\ref{dependiso} $L(W,C)$ only depends on the class $c$.

\end{prop}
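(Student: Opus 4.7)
The plan is to establish a canonical bijection directly at the level of flat connections, which descends to the quotients by the respective gauge groups. The guiding principle is that the Lie algebra isomorphism $\mathfrak{su}(2) \cong \mathfrak{so}(3)$ induced by the double cover makes flat $\mathfrak{su}(2)$-valued 1-forms interchangeable with flat $\mathfrak{so}(3)$-valued 1-forms; the only global difference between the two moduli spaces is captured by the holonomy around meridians of $C$, and the condition $\mathrm{Hol}_\mu A = -I$ in the definition of $\mathscr{A}_F^\mathfrak{g}(W,C)$ matches exactly the twisting of $P$ encoded in the transition function $f$ (which does not lift from $SO(3)$ to $SU(2)$ around meridians, by construction of $P$).

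I would first construct a map $\Phi \colon \mathscr{A}_F^\mathfrak{g}(W,C) \to \mathscr{A}(W,P)$. Given $A \in \mathscr{A}_F^\mathfrak{g}(W,C)$, its associated $\mathfrak{so}(3)$-valued 1-form $\bar{A}$ on $W \setminus C$ is flat and has trivial $SO(3)$-holonomy around every meridian of $C$, since $-I \in SU(2)$ projects to $I \in SO(3)$. Via $\tau$, one views $\bar{A}|_{W\setminus \nu C}$ as a flat $SO(3)$-connection on $P|_{W\setminus \nu C}$. Because its holonomy is trivial around the meridians (which bound disks transverse to $C$ inside $\nu C$), it extends uniquely as a flat connection to $P|_{\nu C}$, yielding $\Phi(A) \in \mathscr{A}(W,P)$. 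The boundary condition $A|_{\nu \partial^{vert} W} = \theta\, ds$ is preserved since, by definition, $\tau$ is used to identify connections with $\mathfrak{su}(2)$-valued 1-forms on both sides.

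For the inverse $\Psi \colon \mathscr{A}(W,P) \to \mathscr{A}_F^\mathfrak{g}(W,C)$, given $B \in \mathscr{A}(W,P)$ I would restrict to $P|_{W\setminus \nu C}$, trivialize via $\tau$, and use the Lie algebra isomorphism to obtain a flat $\mathfrak{su}(2)$-valued 1-form on $W \setminus \nu C$, extended to $W \setminus C$ via the product structure of $\nu C \setminus C$. Its $SU(2)$-holonomy around any meridian $\mu$ projects to the trivial $SO(3)$-holonomy of $B$, hence equals $\pm I$; the fact that it equals $-I$ follows from the nontriviality of $P$ around $\mu$, i.e.\ from the fact that $f|_{\partial \nu C}$ does not lift to $SU(2)$ around the meridian. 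The two maps $\Phi$ and $\Psi$ are mutually inverse by construction.

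The main obstacle I anticipate is verifying gauge equivariance: the bijection on connections should intertwine the $\mathscr{G}^c(W \setminus C)$-action on the source with the $\mathscr{G}^0(W,P)$-action on the target. This requires showing that the projection $SU(2) \to SO(3)$ induces a bijection between $\mathscr{G}^c(W \setminus C)$-orbits and $\mathscr{G}^0(W,P)$-orbits, which in turn amounts to analyzing when a gauge transformation of $P$ (trivial near $\partial^{vert} W$, homotopic to the identity) lifts to an $SU(2)$-valued map on $W\setminus C$ trivial near $\partial^{vert} W$, and when conversely such an $SU(2)$-lift descends to a global gauge transformation of $P$. The obstruction in each direction is measured precisely by the twisting class of $P$, so the two cancel out. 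Once this is established, $\Phi$ descends to the claimed canonical identification $\mathscr{M}^\mathfrak{g}(W,C) \simeq \mathscr{M}^\mathfrak{g}(W,P)$. The equality $L(W,P) = L(W,C)$ then follows immediately, because both are obtained by restricting the same flat connections to the two boundary surfaces (where $P$ is trivialized via $\tau$, so the boundary data agree tautologically); combined with Remark~\ref{dependiso}, this shows $L(W,C)$ depends on $C$ only through the class $c$.
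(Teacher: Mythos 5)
Your proposal follows essentially the same route as the paper's proof: construct maps in both directions exploiting that $SU(2)$-connections with holonomy $-I$ around meridians of $C$ project to $SO(3)$-connections with trivial meridional holonomy that extend flatly across the twisted bundle $P$, and conversely. The one bookkeeping difference is that the paper defines $\Phi_1,\Phi_2$ directly on the quotients $\Mg(W,C)$, $\Mg(W,P)$ (choosing a representative, building the other connection, and asserting the class is independent of choices), whereas you set up $\Phi,\Psi$ on the connection spaces $\mathscr{A}_F^\mathfrak{g}(W,C)$, $\mathscr{A}(W,P)$ and then must pass to the quotients. This shifts but does not remove the real work, and in fact creates a small wrinkle you should address: your $\Phi$ and $\Psi$ are not literally mutually inverse at the level of connections, because $\Phi$ extends a connection arbitrarily over $\nu C$ (by a horizontal/flat extension) and $\Psi$ then restricts to $W\setminus\nu C$ and re-extends radially on $\nu C\setminus C$ — composing the two only gives back the original connection up to a gauge transformation supported near $\nu C$. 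So the "mutually inverse by construction" claim really only holds after descending to the quotients, which is precisely the gauge-equivariance point you flag as "the main obstacle" without resolving. The paper's formulation (working on classes, noting the resulting class is well-defined modulo $\G^0(W,P)$ resp.\ $\Gc(W\setminus C)$) absorbs both issues into a single well-definedness check. Either presentation is fine, but you should either spell out the equivariance between $\Gc(W\setminus C)$ and $\G^0(W,P)$ (including the extension across $\nu C$ and the homotopy-to-identity constraint defining $\G^0$) or switch to working on equivalence classes as the paper does.
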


\begin{proof}

In order to prove this, we will construct two maps which will be inverses from one another:
\[ \begin{cases}
\Phi_1\colon \Mg(W,C) \rightarrow \Mg(W,P) \\ 
\Phi_2\colon \Mg(W,P) \rightarrow \Mg(W,C). 
\end{cases} \]

\textbf{1. The map $\Phi_1$.} Let $[A] \in \Mg(W,C)$ and take $A \in [A]$  a representative. The connection $A$ descends to a flat connection $\widehat{A}$ on $SO(3) \times (W\setminus\nu C)$, and if $\mu$ is a meridian of $C$, $\mathrm{Hol}_\mu \widehat{A} = I$. This fact, together with the choice of a parametrization $p\colon C\times \rr/\zz \rightarrow \partial \nu C$ allows one to define a transition  function   $f\colon \partial \nu C\rightarrow SO(3)$ by \[f(c,s) = \mathrm{Hol}_{\lbrace c\rbrace\times [0,s]}\widehat{A},\] with  $[0,s]\subset \rr/\zz$ an oriented arc from 0 to $s$.

This transition function allows one to glue the flat bundle  \[(SO(3) \times (W\setminus\nu C), \widehat{A})\] with  the horizontal bundle  $(SO(3) \times \nu C, A_{horiz})$. Let $(Q, A_Q)$ be the resulting flat  bundle. The  function $f$ satisfies the same  hypothesis as the one used to define  the bundle $P$, thus  the bundles $Q$ and $P$ are isomorphic. Let $\varphi\colon Q\rightarrow P$ be an isomorphism, such that $\tau \circ\varphi$ is the identity on $SO(3) \times (W\setminus\nu C)$.

Define finally $\Phi_1([A]) = [\varphi_* A_Q]\in \Mg(W,P)$. This class is independent on the choices made, modulo an element of $\G^0(W,P)$.

\textbf{2. The map $\Phi_2$.} Let  $[A] \in \Mg(W,P)$, and $A\in [A]$ a representative. The push-forward $\tau_*  A_{|W\setminus \nu C}$ defines a connection on $SO(3)\times W\setminus \nu C$, call $\widetilde{A}$ the connection on $SU(2)\times ( W\setminus \nu C)$ pulled-back by the quotient map. This connection satisfies $\mathrm{Hol}_\mu \widetilde{A} = -I$ for every meridian $\mu$ of $C$, indeed in the trivialization over $\nu C$, the loop $\gamma\colon s\mapsto \mathrm{Hol}_{[0,s]} A$ is nullhomotopic in $SO(3)$, as $\mu$ bounds a disc. It follows that the loop $\widetilde{\gamma} \colon s\mapsto \mathrm{Hol}_{[0,s]} A$ defined in the  trivialization over $W\setminus \nu C$ is not nullhomotopic, as $\gamma$ and $\widetilde{\gamma}$ differs by the transition  function $f$. The connection $\widetilde{A}$ defines consequently an element $\Phi_2([A])$ of  $ \Mg(W,C)$, independent on the choices modulo the action of $\Gc(W,C)$.

These two maps are inverses from one another by construction, and thus identify $\Mg(W,C)$ with $\Mg(W,P)$.
\end{proof}

\subsubsection{Correspondences associated to an elementary cobordism}
The moduli space $\Mg (W, \pi_{\Sigma_0},\pi_{\Sigma_1},p, c)$ associated to an arbitrary vertical cobordism might not be smooth, and the map induced by the inclusion \[\Mg (W, \pi_{\Sigma_0},\pi_{\Sigma_1},p, c) \rightarrow \Mg (\Sigma_0)  \times \Mg (\Sigma_1)\] might not be an embedding, therefore the correspondence $L(W, \pi_{\Sigma_0} , \pi_{\Sigma_1} ,p ,  c)$ might not be a  Lagrangian submanifold. We will see that these problems don't appear for elementary  cobordisms. We now describe the correspondences associated to such cobordisms, and then prove that they actually are  Lagrangian submanifolds in Proposition~\ref{correspcobelem}.

\begin{exam}[Trivial cobordism]\label{exemcobtriv}
Let $(\Sigma,p)$ be a surface with para\-metrized boundary, and $W$ the cobordism with vertical boundary $\Sigma \times [0,1]$, endowed with the embeddings $\pi_i(x) = (x,i)$ and $p(s,t) = (p(s) , t)$.

\begin{itemize}
\item If $C = \emptyset$, $L(W,C)$ is the diagonal $\Delta_{\Mg(\Sigma,x,*)}$.
\item If $C \neq \emptyset$, $L(\Sigma \times [0,1] , C)$ is the graph of the diffeomorphism whose representation-theoretic expression is given by: 
\[\begin{cases}
A_i \mapsto (-1)^{\alpha_i . C'} A_i\\
B_i \mapsto (-1)^{\beta_i . C'} B_i,
\end{cases}\]
 where $\pi_1 (\Sigma , p(0)) = \langle\alpha_1 , \cdots , \beta_h\rangle$, $C'$ is the projection of $C$ on $\Sigma$ and $\alpha_i . C'$, $\beta_i . C'$ denotes the intersection numbers in $\Sigma$ modulo 2.
\end{itemize}

In particular, if $a_i = [\alpha_i]\in H_1(\Sigma \times [0,1], \Z{2})$ and $b_i = [\beta_i]$, then $L(\Sigma \times [0,1] , a_i, *)$ corresponds to the diffeomorphism sending $B_i$ to $-B_i$ and preserving the other holonomies, and $L(\Sigma \times [0,1] , b_i, *)$ corresponds to the diffeomorphism sending $A_i$ to $-A_i$ and preserving the other holonomies.

\end{exam}

\begin{proof} Take for the link $C$  a simple curve contained in the surface $\Sigma\times \lbrace \frac{1}{2}\rbrace$, so that the complement $W\setminus C$  retracts to the union of $\Sigma\times \lbrace 0\rbrace$ and a torus envelopping $C$ and touching $\Sigma\times \lbrace 0\rbrace$ in $C\times \lbrace 0\rbrace$. From the Seifert-Van Kampen Theorem, $\pi_1(W\setminus C, *) \simeq (\zz \lambda \oplus \zz \mu ) * F_{2g-1}$, where $\lambda$  and $\mu$ denotes a longitude and a meridian of $C$. Therefore, the representations of $\pi_1(W\setminus C, *) $ sending $\mu$ to $-I$ are in one-to-one correspondence with representations of $(\zz \lambda  ) * F_{2g-1} \simeq \pi_1(\Sigma, *)$, as $-I$ is in the center of $SU(2)$. It follows that $\Mg(W,C) \simeq \Mg(\Sigma \times \lbrace 0\rbrace,p)$.

Let us now look at the map $\Mg(W,C) \rightarrow \Mg(\Sigma \times \lbrace 1\rbrace,p)$ induced by the inclusion. If $\gamma$ is a based loop in $\Sigma$, the square $\gamma\times [0,1]$ meets $C$ $\gamma \cdot C'$ times,  therefore the holonomy of a connection $A$ around its boundary is $(-1)^{\gamma \cdot C'}$. Besides, this holonomy is also equal to $\mathrm{Hol}_{\gamma \times \lbrace 1\rbrace} A (\mathrm{Hol}_{\gamma \times \lbrace 0\rbrace}A)^{-1}$.

\end{proof}

\begin{exam}[Reparametrization of the vertical cylinder]\label{exemreparam}
Suppose that $W$ and the embeddings $\pi_0$, $\pi_1$ are as in the previous example, but $p(s,t) = (p(s) +\psi (t) , t)$, for a function $\psi\colon [0,1] \rightarrow \rr$. Then the correspondence $L(W, \pi_{\Sigma_0} , \pi_{\Sigma_1} , C, p)$ is the graph of the following diffeomorphism:
\[ (\theta, A_1, B_1, \cdots) \mapsto (\theta, Ad_{e^{\alpha \theta}}A_1, Ad_{e^{\alpha \theta}}B_1, \cdots),\]
with $\alpha = \psi(1) - \psi(0)$. (This corresponds to rotating the boundary of $\Sigma$ with an angle $\alpha$.) 

\end{exam}

\begin{proof} When reproducing the same reasoning as in the previous example, the holonomies along the two remaining sides of the square $\gamma\times [0,1]$ are $e^{\alpha \theta}$ and $e^{-\alpha \theta}$ respectively.
\end{proof}

\begin{exam}[Diffeomorphism of a surface]\label{exemdiffeo}

Let $\varphi$ be a diffeomorphism of $(\Sigma,p)$ equal to the identity near the boundary, $W = \Sigma \times  [0, 1]$, $\pi_0 =  id_\Sigma \times  \lbrace 0\rbrace$, $\pi_1 = \varphi  \times  \lbrace 1\rbrace$, and $p'(s,t) = (p(s),t)$. If $\pi_1 (\Sigma' , *) = \langle\alpha_1 , \cdots , \beta_h\rangle$ is the free group with $2h$ generators, let  $ u_i (\alpha_1 , \cdots , \beta_h )$ be the word in $\alpha_1 , \cdots , \beta_h$ corresponding to $\varphi_* \alpha_i$, and $v_i (\alpha_1 , \cdots , \beta_h )$ the word corresponding to $\varphi_* \beta_i$. Then  $L(W, \pi_{\Sigma_0} , \pi_{\Sigma_1} ,  p', 0)$ is the graph of the diffeomorphism:

\[ (\theta, A_1, B_1, \cdots) \mapsto (\theta, u_1(A_1, B_1, \cdots), v_1(A_1, B_1, \cdots), \cdots).\]

In particular, the Dehn twist along a curve freely  homotopic to $\beta_1$ is the graph of the diffeomorphism:

\[ (\theta, A_1, B_1, \cdots) \mapsto (\theta, A_1 B_1,  B_1,  \cdots).\]

\end{exam}

\begin{proof}This follows from example~\ref{exemcobtriv} and the formula giving  the holonomy along a product of loops.

\end{proof}

The  following example illustrates the necessity of considering the category  $\Cob$ rather than a category  of cobordisms without vertical boundaries. Indeed, if one closes the following cobordism by gluing a tube along the vertical boundary, one obtains a trivial cobordism  from the torus to itself, identical with the one one would have obtained from the cobordism of example~\ref{exemcobtriv}, but their associated Lagrangian correspondences aren't the same.

\begin{exam}[A change of ``base path'']\label{exemchgtchemin}
Let  $\Sigma$ be the 2-torus with a small disc $D$ removed, $*$ a base point on its  boundary,  $\alpha_1$ and $\beta_1$ two simple curves  forming a  basis of its fundamental group, and $W = (T^2\times[0,1]) \setminus S$, where $S$ is a tubular neighborhood  of the path $(\alpha_1(t),t)$, (with its vertical boundary  parametrized without spinning) see figure~\ref{exemchgtcheminfig}. $L(W,p, 0)$ is the graph of:

\[ (\theta, A_1, B_1) \mapsto (\theta, A_1 , A_1^{-1} B_1 A_1).\]

\end{exam}

\begin{figure}[!h]
    \centering
    \def\svgwidth{.65\textwidth}
    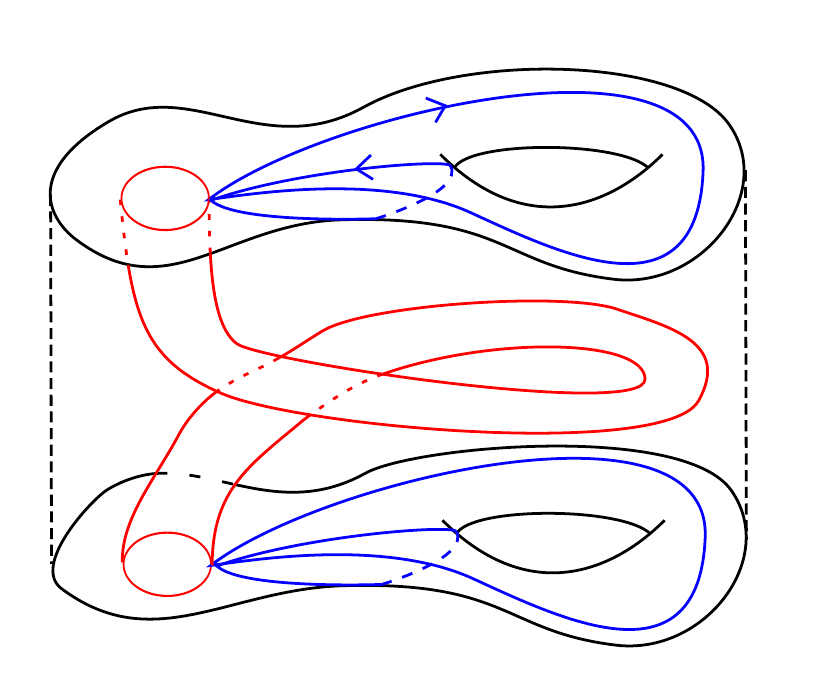
      \caption{A change of ``base path''.}
      \label{exemchgtcheminfig}
\end{figure}

\begin{proof} Identify $\alpha_1$ and $\beta_1$  with the corresponding curves in $\Sigma \times\lbrace 0\rbrace$, and denote $\tilde{\alpha}_1$ and $\tilde{\beta}_1$ the corresponding curves in $\Sigma \times\lbrace 1\rbrace$. Denote $*$ and $\tilde{*}$ the corresponding base points  and $\gamma$ the vertical arc  going from $*$ to $\tilde{*}$. The claim follows from the fact that  $\tilde{\alpha}_1$ (resp.$\tilde{\beta}_1$ ) is homotopic to $\gamma \alpha_1 \gamma^{-1}$ (resp. $\gamma \alpha_1^{-1} \beta_1 \alpha_1 \gamma^{-1}$).

\end{proof}

\begin{exam}[2-handle attachment]\label{exemhandle}

Let $s\subset int(\Sigma)$ be a simple curve freely homotopic to $\beta_{1}$, and $W\colon \Sigma \rightarrow S$ the cobordism corresponding to the attachment of a 2-handle along $s$,  then $\pi_1(S) = \langle \alpha_2, \beta_2, \cdots \rangle$, and

\[ L(W, p, 0) = \lbrace (\theta , A_1 , I , A_2, B_2,  \cdots A_h , B_h ), (\theta , A_2 , B_2 , \cdots A_h , B_h )  \rbrace, \]
where $A_1 \in SU(2)$ and $(\theta , A_2 , B_2 , \cdots A_h , B_h ) \in \Mg(S)$.

\end{exam}

\begin{proof} The cobordism $W$ retracts to the wedge of $S$ and the circle $\alpha_1$ corresponding to the co-sphere of the handle. It follows that $\Mg(W) \simeq \Mg(S) \times SU(2)$.

Furthermore, under this identification, the map $\Mg(W) \rightarrow \Mg(S)$ is the projection on the first factor, and $\Mg(W) \rightarrow \Mg(\Sigma)$ sends the pair $(A_1, [A] )$ to the connection such that $\mathrm{Hol}_{\beta_{1}} = I$, $\mathrm{Hol}_{\alpha_{1}} = A_1$, and  whose other  holonomies are the same as those of $A$.

\end{proof}

\begin{prop}\label{correspcobelem}If there exists, as in Definition~\ref{chemincob}, a  Morse function $f$ on $W$, constant on the  boundaries $\Sigma_0$ and $\Sigma_1$, and with  at most one critical point, then in restriction to the symplectic part  of the moduli spaces, $L(W, \pi_{\Sigma_0} , \pi_{\Sigma_1} , p, c)$ is a   Lagrangian correspondence.
\end{prop}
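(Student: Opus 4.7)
The plan is to split into cases by the number and index of the critical points of $f$: either $f$ has no critical point (cylindrical cobordism), or it has a single one, of index $1$ or $2$.

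When $f$ has no critical point, $W$ is diffeomorphic to $\Sigma_0\times[0,1]$ and $L(W,\pi_{\Sigma_0},\pi_{\Sigma_1},p,c)$ is the graph of a diffeomorphism $\varphi$ between open subsets of $\Mg(\Sigma_0)$ and $\Mg(\Sigma_1)$, as computed in Examples~\ref{exemcobtriv}, \ref{exemreparam}, \ref{exemchgtchemin} and \ref{exemdiffeo}; a general cylindrical $\varphi$ arises as a composition of these building blocks. To show that $\varphi$ is a symplectomorphism I would check each building block individually: each is the descent of an operation on the space of flat connections $\mathscr{A}_F^\mathfrak{g}(\Sigma)$, namely the pullback by a diffeomorphism of $\Sigma$ fixing the parametrized boundary, a change of base path (conjugation of holonomies), a collar rotation $e^{\alpha\theta}$, or multiplication of holonomies by central elements $\pm I$ dictated by $c$. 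All four operations preserve the pointwise pairing $\langle\cdot\wedge\cdot\rangle$, hence preserve the Huebschmann--Jeffrey form $\omega$.

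For the handle-attachment cases, reversing the Morse function by $f\mapsto 1-f$ sends $W$ to $W^T$, exchanges Morse indices $1$ and $2$, and sends $L(W,\ldots)$ to its transpose; since transposition preserves the Lagrangian property, it suffices to handle the $2$-handle case. For a $2$-handle attached along a curve $s\subset\mathrm{int}(\Sigma)$ freely homotopic to some $\beta_1$, Example~\ref{exemhandle} identifies $\Mg(W,p,c)\simeq \Mg(S)\times SU(2)$ via $[A]\mapsto([A|_S],\mathrm{Hol}_{\alpha_1}A)$; on the symplectic part this is smooth and embeds into $\Mg(\Sigma)^-\times\Mg(S)$ as a submanifold of half the ambient dimension. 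It only remains to verify isotropy, which I would do by a standard Stokes' theorem computation: a tangent vector to $L(W,p,c)$ lifts to a closed $\mathfrak{su(2)}$-valued $1$-form $\alpha$ on $W$ (in the $SO(3)$-bundle description of Proposition~\ref{reformul}, which avoids any puncture along $C$), satisfying the collar condition $\eta\,\mathrm{d}s$ on $\partial^{vert}W$. Applying Stokes' theorem to the $2$-form $\langle\alpha\wedge\beta\rangle$ on $W$, the contributions from the vertical boundary cancel because of the collar condition, leaving
\[
\int_\Sigma \langle \alpha|_\Sigma \wedge \beta|_\Sigma \rangle \;=\; \int_S \langle \alpha|_S \wedge \beta|_S \rangle,
\]
which is exactly the isotropy relation for $\omega_\Sigma\oplus(-\omega_S)$.

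The main obstacle will be the analysis of the $2$-handle case: proving smoothness of $\Mg(W,p,c)$ and the embedded nature of $L(W,p,c)$ on the symplectic locus. This amounts to controlling reducible flat connections and checking vanishing of the degree-two deformation cohomology, which the restriction to the open part $\{|\theta|<\pi\sqrt{2}\}$, together with the $SO(3)$-bundle viewpoint of Proposition~\ref{reformul}, should be able to handle.
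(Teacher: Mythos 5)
Your proposal follows the same broad strategy as the paper -- smoothness and dimension are read off from the explicit holonomy descriptions in Examples~\ref{exemcobtriv} and~\ref{exemhandle}, and isotropy is obtained from Stokes' theorem on $W$ -- but you make it more complicated than necessary in two places, and the obstacle you flag at the end is not actually an obstacle.

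First, the Stokes argument you deploy only in the $2$-handle case in fact works verbatim for all three types of elementary cobordism, trivial cylinders included: nothing in it uses that the cobordism has a critical point. So the separate ``each building block preserves $\omega$ pointwise'' verification for the cylindrical case, and the factorisation of a general cylinder into those blocks, is extra work you can drop. (It is also slightly awkward to justify that every trivial cobordism with arbitrary embeddings, parametrization, and $C$ decomposes into exactly those building blocks, whereas the Stokes argument sidesteps this.) Second, your closing worry about ``controlling reducible flat connections and checking vanishing of the degree-two deformation cohomology'' is already answered by the example you cite: Example~\ref{exemhandle} identifies $\Mg(W,p,c)$ with $\Mg(S)\times SU(2)$ and exhibits the two restriction maps explicitly, so on the open part $\N(\cdot)$ the correspondence is manifestly a smooth half-dimensional submanifold; no deformation-theoretic argument is needed, and the paper invokes none. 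The one genuine technical point you should be careful about is how to extend the tangent $1$-forms $\alpha,\beta$ across $C$ so that Stokes applies on all of $W$. Your route via the $SO(3)$-bundle of Proposition~\ref{reformul} is fine; the paper instead chooses gauge representatives for which $\alpha,\beta$ vanish near $C$ (using that any nearby flat connection can be normalised to $\eta_0\,ds$ there), which is a slightly more elementary fix.
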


\begin{proof}

An elementary cobordism  can be either a trivial cobordism, a 1-handle or a 2-handle attachment. According to examples~\ref{exemcobtriv} and~\ref{exemhandle}, in each case, $L(W, \pi_\Sigma , \pi_S , C, p)$ is smooth and of maximal dimension (in fact, 1-handle and 2-handle attachments are symmetric).

We prove that these correspondences are isotropic for the symplectic form. Let $[A]\in \Mg (W, C, p)$, take a representative $A\in [A]$ of the form $\eta_0 ds$ in the neighborhood of $C$, where $s\in \rr/\zz$ is the parameter of a meridian, and $\eta_0\in \mathfrak{su(2)}$ a fixed element  such that $\exp(\eta_0) = -I$.

 Let  $\alpha$, $\beta$ be representatives of two tangent vectors in $T_{[A]}\Mg (W, C, p)$, namely $\mathfrak{su(2)}$-valued 1-forms   satisfying $d_A\alpha = d_A\beta = 0$, and of the form $\theta ds$ in the neighborhood of $\partial^{vert} W$. Since every flat connection near $A$ can be written, up to a gauge transform, in the form $\eta_0 ds$ in the neighborhood of $C$, one can assume furthermore that $\alpha$ and $\beta$ vanish in the neighborhood $C$. In particular  $\alpha$ and $\beta$ can be extended flatly to $W$.

If we denote $\tilde{A} \in L(W, \pi_\Sigma , \pi_S , C, p)$ a representative of the image of $[A]$ by the embedding

\[ \Mg (W, C, p) \rightarrow \Mg (\Sigma_0, p_0) \times \Mg (\Sigma_1, p_1), \]
and $\tilde{\alpha},\tilde{\beta} \in T_{\tilde{A}}L(W, \pi_\Sigma , \pi_S , C, p)$ the corresponding tangent vectors,

\[ \omega_{\tilde{A}}(\tilde{\alpha},\tilde{\beta}  ) = \int_{\Sigma_1}{\langle \alpha \wedge \beta \rangle } -  \int_{\Sigma_0}{\langle \alpha \wedge \beta \rangle }. \]

According to Stokes formula: 

\begin{align*}
 0 &= \int_{W}{d \langle \alpha \wedge \beta \rangle }  \\ 
 &= \int_{\Sigma_1}{\langle \alpha \wedge \beta \rangle } -  \int_{\Sigma_0}{\langle \alpha \wedge \beta \rangle } +  \int_{\partial^{vert} W}{\langle \alpha \wedge \beta \rangle } \\
\end{align*}
And the last term vanishes since $\alpha$ and $\beta$ are proportional to $ds$ on the vertical part.
\end{proof}

It follows from Proposition~\ref{correspcobelem} that the diffeomorphisms appearing in the previous examples are symplectomorphisms. Only the last kind of  correspondences (example~\ref{exemhandle}) doesn't come from a symplectomorphism, but from a fibered coisotropic submanifold. The following statement, which can be found in \cite[Example 6.3]{MW} for a spherically fibered coisotropic submanifold, gives an useful criterion for cutting Lagrangian correspondences, and applies to all the previous examples (indeed, for a symplectomorphism it suffices to consider $C = M_0$ and  $\varphi = \pi$).

\begin{remark}To be completely rigorous, the 2-form can degenerate, however ``Hamiltonian action'' continues to make sense as long as  the equation ``$\iota _{X_\xi} \omega =  d\langle H,\xi\rangle$'' holds. The following statement still holds in this case.
\end{remark}

\begin{prop}
\label{decoupcoiso}Let $M_0$ be a symplectic manifold endowed with a $U(1)$-Hamiltonian action  with moment $\varphi_0\colon M_0\rightarrow \rr$, together with a coisotropic  submanifold  $C\subset M_0$ admitting a fibration $\pi\colon C\rightarrow M_1$ over a symplectic manifold $M_1$ such that the image of $C$,  $L =(\iota \times \pi) (C) \subset M_0^- \times M_1$ is a Lagrangian correspondence.

Let $\lambda\in \rr$ be a regular value of $\varphi_0$  such that the action of $U(1)$ on $\varphi_0^{-1}(\lambda)$ is free. One can then take Lerman's symplectic cutting $M_{0,\leq \lambda} = M_{0,< \lambda}\cup R_0$.

Assume furthermore that $C$ is $U(1)$-equivariant, and intersects $\varphi_0^{-1}(\lambda)$ transversely. The  $U(1)$-action descends to a Hamiltonian action  with moment $\varphi_1 \colon M_1 \rightarrow \rr$,  for which $\lambda$ is a regular  value. We denote $M_{1,\leq \lambda} = M_{1,< \lambda}\cup R_1$ Lerman's symplectic cutting.

Then, the closure $L^c$ of $L\cap  (M_{0,< \lambda}^- \times M_{1,< \lambda} )$ in $M_{0,\leq \lambda}^- \times M_{1,\leq \lambda}$ defines  a  $(R_0, R_1)$-compatible Lagrangian correspondence.

If furthermore $M_{0,\leq \lambda}$ and $M_{1,\leq \lambda}$ are objects of $\Symp$, and if \[L\cap  (M_{0,< \lambda}^- \times M_{1,< \lambda} )\] is simply connected and spin, then $L^c$ is a morphism of  $\Symp$: every nonconstant pseudo-holomorphic disc $(u_0,u_1)\colon (D^2,\partial D^2) \rightarrow (M_{0,\leq \lambda}^- \times M_{1,\leq \lambda}, L^c)$ of zero area has an intersection number with  $(R_0, R_1)$ strictly smaller than -2.

\end{prop}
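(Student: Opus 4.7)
The plan is to handle the proposition in two stages: first show that $L^c$ is a smooth Lagrangian correspondence compatible with the pair $(R_0,R_1)$, and then establish the intersection-number inequality for pseudo-holomorphic discs. I would begin by giving an explicit description of $L^c$. On the open part $M_{0,<\lambda}^-\times M_{1,<\lambda}$, it agrees with the image $(\iota\times\pi)\bigl(C\cap\varphi_0^{-1}((-\infty,\lambda))\bigr)$, so smoothness and the Lagrangian condition are inherited from $L$ by hypothesis. The new boundary piece inside $R_0\times R_1$ consists of the image of $C\cap\varphi_0^{-1}(\lambda)$ modulo the $U(1)$-action: because $C$ is $U(1)$-equivariant and $\pi$ intertwines the $U(1)$-action on $M_0$ with the induced one on $M_1$, the map $(\iota\times\pi)$ descends to a smooth map $(C\cap\varphi_0^{-1}(\lambda))/U(1)\to R_0\times R_1$.

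To verify smoothness of $L^c$ across this new piece, I would use an equivariant local normal form for Lerman's cut near $\varphi_0^{-1}(\lambda)$: in standard coordinates the cutting replaces a $U(1)$-principal bundle neighborhood by its associated disc bundle, and the transverse intersection $C\pitchfork\varphi_0^{-1}(\lambda)$ implies that $C$ descends to a smooth submanifold across the cut on each factor. Lagrangianness on all of $L^c$ then follows by density, since the vanishing of $\omega|_{L^c}$ is a closed condition. For $(R_0,R_1)$-compatibility in the sense of Definition~\ref{correspcompat}, note that by construction a point of $L^c$ lies in $R_0\times M_{1,\leq\lambda}$ exactly when it comes from $\varphi_0^{-1}(\lambda)\cap C$, and equivariance of $\pi$ forces its $M_1$-coordinate to lie in $R_1$; the two intersections therefore coincide, and both are transverse in view of the same local normal form.

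For the second part, I would adapt the argument sketched in \cite[Example 6.3]{MW} for spherically fibered coisotropic submanifolds. Given a pseudo-holomorphic disc $(u_0,u_1)\colon(D^2,\partial D^2)\to(M_{0,\leq\lambda}^-\times M_{1,\leq\lambda},L^c)$ of zero area, monotonicity together with compatibility of $\tilde J_i$ with $\tilde\omega_i$ outside $R_i$ forces each $u_i$ to be either constant or to map into $R_i$. By Proposition~\ref{degener}, any nonconstant holomorphic component in $R_i$ factors through the $2$-sphere fibration, i.e.\ is a branched cover of a fiber with intersection number $-2\cdot\mathrm{deg}$ against $R_i$. The seam condition on $L^c\cap(R_0\times R_1)$ glues the two fiber-covers along their boundaries into a closed object living in a fiber of the combined fibration; since at least one factor is nonconstant and the $L^c$-boundary imposes additional matching constraints, the total degree is at least $2$ (or an extra boundary contribution appears), yielding an intersection number strictly below $-2$.

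The main obstacle is precisely the last step: the careful bookkeeping of interior and boundary intersection contributions from the two patches via the quilted intersection formula of Lemma~2.6, and in particular ruling out the borderline case of a single fiber of degree one. The key input is that $L^c\cap(R_0\times R_1)$ is the graph-like image of $(C\cap\varphi_0^{-1}(\lambda))/U(1)$, so that a putative disc of intersection exactly $-2$ would lift to a spurious flat section of the fibration along the seam; the transversality and simple connectedness hypotheses on $L\cap(M_{0,<\lambda}^-\times M_{1,<\lambda})$ should exclude this, forcing the strict inequality.
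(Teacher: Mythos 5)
Your first half — smoothness, Lagrangianness, and $(R_0,R_1)$-compatibility of $L^c$ — is correct, but you and the paper take different technical routes to the same conclusion. You argue via an equivariant local normal form for Lerman's cut near $\varphi_0^{-1}(\lambda)$ together with density to transfer the Lagrangian condition across $R_0\times R_1$. The paper instead works entirely inside the explicit reduction model $M_{i,\leq\lambda}=(M_i\times\cc)\red U(1)$: it sets $\widetilde{L}=(L\times\cc^2)\cap(\Phi_0^{-1}(0)\times\Phi_1^{-1}(0))$, checks that this intersection is transverse and the $U(1)^2$-action on it is free, so that $L^c=\widetilde{L}/U(1)^2$ is automatically a smooth Lagrangian, and reads off compatibility with $(R_0,R_1)$ from the relation $\varphi_0(m_0)=\varphi_1(m_1)$ on $L$ together with $v_1=d\pi_{x_0}\,v_0$ for tangent vectors. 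Both routes are sound; the paper's has the advantage that smoothness, Lagrangianness, and compatibility all drop out of the same transversality statement, with no separate density argument needed.

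On the disc estimate, the gap you flag yourself is real and, as you suspected, the ``borderline degree-one case'' cannot be excluded by the arguments you propose — but you should not try to exclude it. Two points. First, your intermediate claim that a nonconstant zero-area component $u_i$ contained in $R_i$ is ``a branched cover of a fiber'' is off: a holomorphic disc with zero $\tilde\omega_i$-area that lands in $R_i$ is contained in a single fiber of the $S^2$-fibration of Proposition~\ref{degener}, but it is a \emph{disc in} that sphere, not a branched cover of it, so there is no degree to count at this stage. Second, and more important, the paper's own proof does not establish the strict inequality ``$<-2$'' written in the statement: it glues $\pi\circ u_0$ (well-defined because the fiber carrying $u_0$ meets $C^c$, hence lies in it) with $u_1$ into a single nonconstant zero-area pseudo-holomorphic \emph{sphere} in $M_{1,\leq\lambda}$, and then invokes condition $(xii)$ of Definition~\ref{defcat} for $M_{1,\leq\lambda}$, which only says that such a sphere meets $R_1$ in a positive multiple of $-2$, i.e.\ $-2,-4,\dots$. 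That matches exactly what the morphism axiom of $\Symp$ requires; the ``strictly smaller than $-2$'' in the proposition statement appears to be a slip, and chasing a strict inequality via ``spurious flat sections along the seam'' leads nowhere. You should adopt the paper's sphere-gluing argument and aim for the non-strict, even-multiple bound, which is all that is used downstream.
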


\begin{proof}

Denote $\Phi_i \colon M_i \times \cc \rightarrow \rr$ the moments of the $U(1)$-action, defined by \[\Phi_i(m,z) = \varphi_i (m) + \frac{1}{2}|z|^2 - \lambda,\] which will give rise to the cuttings $M_{i,\leq \lambda} = \Phi_i^{-1}(0) / U(1)$. Denote also 
$Q_i = \varphi_i^{-1}(\lambda) $, so that $R_i = Q_i /U(1)$. Finally, denote \[\widetilde{L} = (L\times \cc^2 ) \cap ( \Phi_0^{-1}(0) \times \Phi_1^{-1}(0)) \subset M_0 \times \cc \times M_1 \times \cc .\]

We will show that $ \widetilde{L} / U(1)^2 \subset M_{0, \leq \lambda} \times M_{1, \leq \lambda}$ is a smooth Lagrangian correspondence, and is compatible with  the hypersurfaces. This correspondence contains $L\cap  (M_{0,< \lambda}^- \times M_{1,< \lambda} )$ as an   open  dense subset, it will follow that $L^c = \widetilde{L} / U(1)^2$.

First, $\Phi_0^{-1}(0)$ and $ \Phi_1^{-1}(0)$ are smooth, provided $\lambda$ is a regular value of $\varphi_0$ and $\varphi_1$. The intersection $(L\times \cc^2 ) \cap ( \Phi_0^{-1}(0) \times \Phi_1^{-1}(0))$ is transverse in $M_0 \times \cc \times M_1 \times \cc$, indeed $(\lbrace 0\rbrace \times \cc^2 ) \cap ( \Phi_0^{-1}(0) \times \Phi_1^{-1}(0)) = \lbrace 0\rbrace$.  Finally, the action of $U(1)^2$ on $\tilde{L}$ is free, provided the action of $U(1)$ is free on $\cc\setminus 0$ and on $\varphi_0^{-1}(\lambda)$, by assumption. It follows that $\tilde{L}/U(1)^2$ is smooth.

Let us prove now the compatibility with the hypersurfaces. 
First, let $(m_0,m_1) \in L$. Since $\varphi_0 (m_0) = \varphi_1 (m_1)$, it follows that
\[L\cap (M_0 \times Q_1 ) = L\cap (Q_0 \times M_1) = L\cap (Q_0 \times Q_1),\] 
and 
\[\widetilde{L}\cap (M_0 \times Q_1 \times \cc^2) = L\cap (Q_0 \times M_1\times \cc^2) = L\cap (Q_0 \times Q_1\times \cc^2),\] 
and then: 
\[L^c\cap (M_{0, \leq \lambda} \times R_1  ) = L\cap (R_0 \times M_{1, \leq \lambda}) = L\cap (R_0 \times R_1).\]

Let us check now that these intersections are transverse. One should prove that
 $\forall x \in L^c\cap (R_0 \times R_1 ),$ \[T_x L^c\cap T_x (M_0 \times R_1 ) =T_x  L^c\cap T_x (R_0 \times M_1) = T_x ( L\cap (R_0 \times R_1) ).\]
Let $x \in L\cap (Q_0 \times Q_1 )$ and $(v_0,v_1) \in T_x  L$. It follows from $v_1 = d\pi_{x_0}.v_0$ that:
\[ T_x L\cap T_x (M_0 \times Q_1 ) =T_x  L\cap T_x (Q_0 \times M_1) = T_x ( L\cap (Q_0 \times Q_1) ),\]
and the claim follows.

Finally, let us prove the property about the zero-area pseudo-holomorphic discs. Let $(u_0,u_1)\colon D^2\to \times M_{0, \leq \lambda}^- \times M_{1, \leq \lambda}$ be such a disc, with boundary in $L^c$. Then $\pi\circ u_0$ and $u_1$ coincide on the boundary of $D^2$, and  $\pi\circ u_0 \cup u_1\colon D^2\cup_{\partial D^2} D^2 \rightarrow M_{1, \leq \lambda}$ defines a zero-area nonconstant pseudo-holomor\-phic sphere of $M_{1, \leq \lambda}$, which intersects $R_0$ in a  positive  multiple of -2, since by assumption $M_{1, \leq \lambda}$ is an object of $\Symp$.

\end{proof}
Notice that if $(W,p,c)$ is an elementary cobordism,  \[ L(W,p,c) \cap \left( \N (\Sigma_0,p_0)^- \times \N (\Sigma_1,p_1) \right)\] may be identified either with  $\N (\Sigma_i,p_i) \times SU(2)$, with  $i=0$ or 1, or with $\N (\Sigma_0,p_0)$. In both cases, this is an open subset of a product of copies of $SU(2)$, therefore its  second Stiefel-Whitney  class vanishes.

Define now the Lagrangian correspondences between the cut spaces:

\begin{defi}
If $(W,p,c)$ is an elementary cobordism with vertical boun\-dary  from $(\Sigma_0,p_0)$ to $(\Sigma_1,p_1)$, then the correspondence $L(W,p,c)$ satisfies the hypotheses of Proposition~\ref{decoupcoiso}. One can then define \[L^c(W,p,c)\subset \Nc(\Sigma_0,p_0)^- \times \Nc(\Sigma_1,p_1)\] as the closure  of 
$ L(W,p,c) \cap \left( \N (\Sigma_0,p_0)^- \times \N (\Sigma_1,p_1) \right)$,
which is a morphism of $\Symp$ according to Proposition~\ref{decoupcoiso}.
\end{defi}

\subsection{Cerf moves invariance}\label{sec:cerfinvar}
The following fact holds true for every cobordism with vertical boundary, elementary or not:

\begin{prop}[Composition formula]\label{compocob} Let  $\Sigma,S,T$ be three surfaces with parametrized boundaries, and  $(W_1, c_1)$, $(W_2, c_2 )$ two cobordisms with vertical boundary, going respectively from $\Sigma$ to $S$, and from $S$ to $T$. Then,
\[L(W_1 \cup_S W_2, c_1 + c_2) = L(W_1, c_1) \circ  L(W_2, c_2 ).\]
\end{prop}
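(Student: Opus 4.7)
The plan is to use the representation-theoretic description of the moduli spaces provided by Proposition~\ref{riemannhilbert}, together with van Kampen's theorem. Fix a base point $*\in\partial S$ (which sits on the vertical boundary of both $W_1$ and $W_2$ after gluing) and choose disjoint one-dimensional representatives $C_i\subset\mathrm{int}(W_i)$ of $c_i$ away from $\nu S$; then $C=C_1\cup C_2$ is a representative of $c_1+c_2$ in $W_1\cup_S W_2$. Recall that a point of $\Mg(W_i,C_i)$ (resp.\ of $\Mg(W_1\cup_S W_2,C)$) is the data of a representation of $\pi_1$ of the complement of $C_i$ (resp.\ of $C$) based at $*$, sending all meridians of the link to $-I$, together with a $\theta\in\mathfrak{su}(2)$ equal to the logarithm of the boundary holonomy.

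The inclusion $L(W_1\cup_S W_2,c_1+c_2)\subseteq L(W_1,c_1)\circ L(W_2,c_2)$ is essentially tautological: a flat connection $A$ on the complement of $C$ in $W_1\cup_S W_2$ representing a pair $([A|_{\Sigma_0}],[A|_T])$ restricts to flat connections on each $(W_i\setminus C_i)$ whose restrictions to $S$ coincide pointwise, so $[A|_S]\in\Mg(S,p_S)$ is a common witness and $([A|_{\Sigma_0}],[A|_T])$ lies in the image under $\pi_{02}$ of $L(W_1,c_1)\times L(W_2,c_2)$ intersected with the diagonal over $\Mg(S)$.

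For the reverse inclusion, since $C_i$ is disjoint from a bicollar of $S$, van Kampen gives
\[
\pi_1\bigl((W_1\cup_S W_2)\setminus C,*\bigr) \cong \pi_1(W_1\setminus C_1,*)\ *_{\pi_1(S,*)}\ \pi_1(W_2\setminus C_2,*).
\]
Given $([A_\Sigma],[A_T])\in L(W_1,c_1)\circ L(W_2,c_2)$ with witness $[A_S]=(\rho_S,\theta_S)\in\Mg(S,p_S)$, pick representations $\rho_i\colon\pi_1(W_i\setminus C_i,*)\to SU(2)$ and boundary data $\theta_i$ lifting $[A_\Sigma],[A_T]$ on the outer boundaries and $[A_S]$ on $S$. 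Because $\Mg(S,p_S)$ is a quotient by gauge transformations trivial at $*$, the restrictions of $\rho_1$ and $\rho_2$ to $\pi_1(S,*)$ literally equal $\rho_S$ (not just up to conjugation), so the universal property of the amalgamated product produces a single representation $\rho$ of $\pi_1((W_1\cup_S W_2)\setminus C,*)$. This $\rho$ automatically sends every meridian of $C=C_1\cup C_2$ to $-I$, and together with the boundary datum obtained by gluing $\theta_\Sigma$ and $\theta_T$ along $\partial S$ (where they agree because both exponentiate to $\rho_S(p_S)$), it defines a point of $\Mg(W_1\cup_S W_2,C)$ projecting to $([A_\Sigma],[A_T])$.

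The main technical subtlety is bookkeeping of base points and of the $\theta$ component along the vertical boundary: one must verify that the lifts $\rho_i$ can be chosen so that their restrictions to $\pi_1(S,*)$ coincide on the nose, and that the $\theta_i$ patch into a single $\mathfrak{su}(2)$-valued germ along $\partial^{vert}(W_1\cup_S W_2)$. Both follow from the fact that the extended gauge group $\Gc$ is trivial on the vertical boundary, so once a trivialization at $*\in\partial S$ is fixed, the representation and the boundary datum of a class in $\Mg(S,p_S)$ are uniquely determined. Invariance of the moduli space under the choice of 1-submanifold representing $c_1+c_2$ is guaranteed by Proposition~\ref{reformul}, and compatibility with the vertical-boundary parametrization follows from the fact that $p$ on $W_1\cup_S W_2$ is obtained by concatenating $p$ on $W_1$ and $W_2$.
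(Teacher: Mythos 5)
Your proof is correct and takes essentially the same approach as the paper: the paper's two-sentence proof says exactly that flat connections on $W_1\setminus C_1$ and $W_2\setminus C_2$ which coincide on $S$ glue, and you have simply fleshed this out in the equivalent holonomy/van Kampen picture. Your discussion of why the restrictions of $\rho_1$ and $\rho_2$ to $\pi_1(S,*)$ coincide on the nose (and not merely up to conjugation) is a useful clarification of a point the paper leaves implicit — it is precisely the fact that gauge transformations in $\Gc$ are required to be trivial near the vertical boundary, hence at $*$, which makes the holonomy representation of a class in $\Mg(S,p_S)$ canonical and allows the amalgamated-product argument to go through without a conjugation ambiguity.
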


\begin{proof}The inclusion of $L(W_1 \cup_S W_2, c_1 + c_2)$ in the  composition is obvious. The reverse inclusion comes from the fact that, if $C_1$ and $C_2$ are submanifolds representing the classes $c_1$ and $c_2$,  two flat connections on  $W_1\setminus C_1$ and $W_2\setminus C_2$ which  coincide on $S$ can be glued together to a flat connection on $W_1\setminus C_1 \cup W_2\setminus C_2 $.
\end{proof} 

\begin{remark}This geometric composition is not embedded in general.
\end{remark}

\begin{theo}\label{hyptechniques} The following functor from $\Cobelem$ to $\Symp$  factors to a functor from $\Cob$ to $\Symp$.
\[ \left\lbrace \begin{aligned}
(\Sigma,p) & \mapsto \Nc(\Sigma,p) \\
(W,f,p,c) & \mapsto L^c(W,f,p,c).
\end{aligned}
\right. \]
We will denote $\underline{L}(W,p,c)$ the image of a cobordism by this functor.
\end{theo}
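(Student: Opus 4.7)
The plan is to verify the five conditions of Corollary~\ref{factorisation} for the functor $\textbf{F}\colon \Cobelem\rightarrow\Symp$ defined by $(\Sigma,p)\mapsto \Nc(\Sigma,p)$ and $(W,f,p,c)\mapsto L^c(W,f,p,c)$. The set-theoretic backbone for every compatibility identity is the composition formula of Proposition~\ref{compocob}, which reads $L(W_1\cup_S W_2, c_1+c_2) = L(W_1,c_1)\circ L(W_2,c_2)$. Once this is in hand, each Cerf move translates into an explicit equality of correspondences that can be read off the descriptions of Examples~\ref{exemcobtriv}--\ref{exemhandle}.

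Conditions (i), (ii), and (v) are immediate. For (i), a diffeomorphism of elementary cobordisms respecting the parametrizations, the Morse functions, and pushing forward the homology class induces a bijection between the relevant spaces of flat connections and gauge groups, which descends to the compactified correspondences. Condition (ii) is the content of Example~\ref{exemcobtriv} in the case $C=\emptyset$. Condition (v) is obtained by applying Proposition~\ref{compocob} to both sides of the asserted equality: since $c+c' = d+d'$ in $H_1(W\cup W';\Z{2})$, both compositions equal $L(W\cup W', c+c')$. Conditions (iii) and (iv) have a similar flavour: a birth--death pair $W\odot W'$ glues to a cobordism diffeomorphism-equivalent to a cylinder, so Proposition~\ref{compocob} together with (i)--(ii) gives $L(W,0)\circ L(W',0) = \Delta$, while a critical point switch produces the same total cobordism after gluing, so the two orderings again yield the same composition.

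What remains, and constitutes the main difficulty, is to check that in each of the situations above the geometric composition satisfies the technical hypotheses of Theorem~\ref{compogeom} needed to pass from a length-two correspondence to a single one: embeddedness, simple connectedness, $(R_{i-1},R_{i+1})$-compatibility, spinness outside the hypersurfaces, and a bound of at most $-2$ on the intersection number with $R$ of every zero-area pseudo-holomorphic disc or quilted cylinder. Embeddedness follows from uniqueness of the flat connection on a glued cobordism once boundary values are prescribed. The correspondences of Examples~\ref{exemcobtriv}, \ref{exemreparam}, and \ref{exemdiffeo} are graphs of symplectomorphisms, while Example~\ref{exemhandle} is a cut $SU(2)$-bundle; in all cases the intersection with the symplectic part $\N^-\times\N$ is an open subset of a product of $SU(2)$ factors, which takes care of simple connectedness and of the spin condition. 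The $(R,R')$-compatibility and smoothness of the composition near the hypersurfaces are handled by Proposition~\ref{decoupcoiso}. The most delicate step is the bound on the intersection numbers with $R$: by Proposition~\ref{degener}, every zero-area pseudo-holomorphic curve is a branched cover of an $S^2$-fiber of the degeneracy locus, and the proof will proceed by a case-by-case analysis of how these fibers sit inside the composed correspondences, combined with positivity of intersections. This is the same type of argument carried out in \cite{MW} for the stabilisation invariance of HSI, and will follow its pattern.
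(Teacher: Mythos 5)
Your proposal follows the same overall strategy as the paper: verify the five conditions of Corollary~\ref{factorisation}, using Proposition~\ref{compocob} for the set-theoretic identities and the explicit examples for the structure of the correspondences. Two points where your sketch is thinner than what is actually required:

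First, you assert that embeddedness of the compositions ``follows from uniqueness of the flat connection on a glued cobordism once boundary values are prescribed.'' That gives injectivity of the projection $\pi_{02}$, but embedded geometric composition also demands that $L_{01}\times M_2$ and $M_0\times L_{12}$ meet transversely in $M_0\times M_1\times M_2$. In the paper's check of condition $(iv)$ this is exactly the nontrivial step: one identifies the two Lagrangian correspondences with the coisotropics $C_1 = \{A_1=I\}$ and $C_2 = \{A_2=I\}$ inside $\N(\Sigma_0)$ and verifies that $C_1$ and $C_2$ intersect transversely, which is what makes the composed correspondence smooth and the projection an embedding. Your argument would need to supply this transversality rather than infer it from uniqueness.

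Second, your treatment of the intersection-number bound on zero-area quilted cylinders is too vague to be sure it closes. It is not enough that each patch is a branched cover of an $S^2$-fiber; the argument must show that the three pieces $u_0, u_1, u_2$ in the cylinder of figure~\ref{cylinder} project (via the coisotropic fibrations) to one and the same fiber of $R_2$ and therefore glue to a single pseudo-holomorphic sphere in $R_2$, whose intersection number with $R_2$ is then $\leq -2$ and equals $\underline{u}\cdot(R_0,R_1,R_2)$. The key input is that a fiber of $R_0$ touching $C_1\cap C_2$ is entirely contained in it, so that it projects to a fiber of $R_2$; this observation is what makes the gluing possible. Also note the paper handles condition $(iii)$ simply by citing \cite[Lemma 6.11]{MW} rather than rederiving it, which is a legitimate shortcut your route would have to reproduce. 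With these points made precise, your approach is the paper's.
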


\begin{proof}It suffices to check that the functor satisfies the assumptions of Proposition~\ref{factorisation}. The assumptions $(i)$ and $(ii)$ are clearly  satisfied, and  $(iii)$ follows from \cite[Lemma 6.11]{MW}. It remains to check the hypotheses $(iv)$ and $(v)$.

Concerning assumption $(iv)$: let $(\Sigma_0,p_0)$  be a surface with parametrized boundary  of genus $g\geq 2$, and $s_1,\ s_2$ two disjoint, non-separating, attaching circles in $\Sigma_0$. Let \[\alpha_1, \cdots , \alpha_g, \beta_1, \cdots ,\beta_g\] be a  system of generators of $\pi_1(\Sigma_0, p_0(0))$ such that $\partial \Sigma_0  $ is the product of the commutators of the  $\alpha_i$ and $\beta_i$, and such that $s_i$ is freely homotopic to $\alpha_i$ ($i=1,\ 2$).

Let  $W_1$ be the cobordism between $\Sigma_0$ and $\Sigma_1$ corresponding  to the attachment of a 2-handle along $s_1$ ($\Sigma_1$ has genus $g-1$), $W_2$ the cobordism between $\Sigma_1$ and $\Sigma_2$ corresponding to the attachment of a 2-handle along $s_2$ ($\Sigma_2$ has genus $g-2$).

Let $W_1'$ be the cobordism between $\Sigma_0$ and $\widetilde{\Sigma_1}$ corresponding to the attachment of a 2-handle along $s_2$ ($\widetilde{\Sigma_1}$ has genus $g-1$), $W_2'$ the cobordism between $\widetilde{\Sigma_1}$ and $\Sigma_2$ corresponding to the attachment of a 2-handle along $s_1$.

\[\xymatrix{ & \Sigma_1 \ar[rd]^{W_2} &   \\  \Sigma_0 \ar[ru]^{W_1} \ar[rd]_{W_1 '}    &  & \Sigma_2  \\  & \widetilde{\Sigma_1} \ar[ru]_{W_2'} &  }\]

Let $C_1 = \lbrace A_1 = I \rbrace\subset \N(\Sigma_0 )$ be the coisotropic submanifold corresponding to $W_1$, and $C_2 = \lbrace A_2 = I \rbrace\subset \N(\Sigma_0 )$ the coisotropic submanifold corresponding to $W_1'$.

The submanifolds $C_1$ and $C_2$ intersect transversely in $\N(\Sigma_0 )$, the compositions $L(W_1) \circ L(W_2)$ and $L(W_1') \circ L(W_2')$ are then embedded, and  coincide since they both correspond  to the coisotropic submanifold $C_1 \cap C_2 \subset \N(\Sigma_0 )$, which is fibered over $\N(\Sigma_2 )$, and simply connected since diffeomorphic to $SU(2)^2\times \N(\Sigma_2 )$. Then, according to Proposition~\ref{decoupcoiso}, its closure in $\Nc(\Sigma_0 )$ defines a morphism of $\Symp$.

Following the same reasoning as in the proof of \cite[Lemma 6.11]{MW}, we check the quilted cylinders assumption for the composition $L(W_1) \circ L(W_2)$  (the assertion concerning $L(W_1') \circ L(W_2')$ is similar). Let us show that every quilted cylinder  intersects the triplet $(R_0 , R_1 , R_2)$ in a positive multiple  of $-2$. Let $ u = (u_0,u_1,u_2)$ be an index zero pseudo-holomorphic quilt as  in figure~\ref{cylinder}, such that $u_i$ takes values in $\Nc(\Sigma_i)$, and with seam conditions given by $L^c(W_1)$ , $L^c(W_2)$ and  $L^c(W_1) \circ L^c(W_2)$.

By monotonicity, the area of the discs $u_i$ for the monotone 2-forms $\tilde{\omega_i}$ is zero, and thus $u_i$ is contained in a fiber of the degeneracy locus $R_i$. Indeed, recall  that $R_i$ admits a sphere fibration whose vertical bundle exactly corresponds  to the kernel of $\tilde{\omega_i}$, see  Proposition~\ref{degener}. Furthermore, the fiber containing $u_0$ meet the submanifold  $ C_1 \cap C_2$ and is then entirely included in this last one. Thus, it projects to a fiber of $R_2$. The same applies for $u_1$: it is contained in $\lbrace A_2 = I \rbrace \subset \Nc(\Sigma_1 )$ and projects to a fiber of $R_2$.
 
Therefore, $u_2$ and the images of $u_0$ and $u_1$ by the projections on $\Nc(\Sigma_2 )$ glue together to a pseudo-holomorphic sphere of $R_2$, this sphere intersects $R_2$ in a positive multiple of -2, and this intersection number is precisely $u.(R_0 , R_1, R_2 )$.

We now check the hypothesis $(v)$: note that if $c_i + c_{i+1} = d_i + d_{i+1}$, then according to Proposition~\ref{compocob}, $L(W_i,c_i) \circ L(W_{i+1},c_{i+1})$ and $L(W_i,d_i) \circ L(W_{i+1}, d_{i+1})$ coincide with   $L(W_i \cup W_{i+1}, c_i + c_{i+1}) $. Finally, the correspondence associated to a trivial cobordism  $(\Sigma\times [0,1],c)$ is the graph of a symplectomorphism, hence its left/right composition   with  every other  correspondence satisfies the hypotheses of Theorem~\ref{compogeom}. 

\end{proof}

\subsection{Definition of twisted symplectic instanton homology}
\label{sec:defhsi}

Let  $Y$ be a closed oriented  3-manifold, $c\in H_1(Y; \Z{2})$ and $z\in Y$. Denote $W$ the manifold with boundary obtained from $Y$ by performing a real oriented blow-up at $z$, namely $W = (Y\setminus z)\cup S^2$, and $p\colon \rr/\zz \times [0,1] \rightarrow S^2$ an oriented embedding. $(W,p,c)$ is a morphism in the category  $\Cob$ from the disc to itself. The set  of generalised intersection points  $\I(\underline{L}(W,p,c))$ is contained in the product of the zero levels of the moment maps ``$\theta_i = 0$'', hence in $\mathrm{int}\left\lbrace \omega_i = \tilde{\omega}_i \right\rbrace$. One can then consider their quilted Floer homology. It follows from Theorem~\ref{hyptechniques}:
\begin{cor}The $\Z{8}$-relatively graded  abelian group $HF(\underline{L}(W,p,c))$, up to isomorphism, only depends on the topological type  of $Y$, the point $z$, and the class $c$. We denote it $HSI(Y,c,z)$.
\end{cor}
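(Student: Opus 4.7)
The plan is straightforward: interpret the statement as saying that the functorial construction $(Y,z,c)\mapsto \underline{L}(W,p,c)$ descends to an isomorphism class of quilted Floer homology groups, and reduce it to the results already in hand. First I would observe that the real oriented blow-up $W=(Y\setminus z)\cup S^2$ is a compact oriented 3-manifold whose boundary $S^2$ naturally decomposes into two closed discs joined by a cylindrical annulus, so that $(W,\pi_{\Sigma_0},\pi_{\Sigma_1},p,c)$ is a morphism of $\Cob$ from the disc $D^2$ (with standard boundary parametrization) to itself. By Theorem~\ref{hyptechniques} this yields a well-defined generalized Lagrangian correspondence $\underline{L}(W,p,c)$ of $\Symp$ from $pt$ to $pt$, unique up to the equivalence relation that defines $\Symp$.

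Next I would verify that $HF(\underline{L}(W,p,c))$ is defined: the observation recorded in the excerpt shows that the generalized intersection points lie in the product of the open loci $\mathrm{int}\{\omega_i=\tilde\omega_i\}$, so the construction of Section~\ref{defhomolgiequilted} applies. For generic Hamiltonian perturbations and time-dependent almost complex structures, the quilted Floer homology is a well-defined $\Z{8}$-relatively graded abelian group, and Theorem~\ref{compogeom} ensures that its isomorphism class is preserved under the equivalence relation of $\Symp$, so in particular does not depend on the intermediate Cerf data. The residual dependence on the reference almost complex structures $\tilde J_i$ is inessential because, as noted in the paper, these are chosen in a contractible space.

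The key content of the corollary is therefore that the diffeomorphism class of $(W,\pi_{\Sigma_0},\pi_{\Sigma_1},p,c)$ in $\Cob$ depends only on the triple $(Y,z,c)$. The pair $(W,c)$ is determined up to canonical isomorphism by $(Y,z,c)$ since $Y$ is recovered from $W$ by capping off $S^2$, and $H_1(W;\Z{2})\to H_1(Y;\Z{2})$ is an isomorphism. What requires proof is independence from the auxiliary choices $(\pi_{\Sigma_0},\pi_{\Sigma_1},p)$ describing how $\partial W=S^2$ splits into two discs and a parametrized connecting annulus. Given two such choices, I would use the connectedness of $\mathrm{Diff}^+(S^2)$ together with the standard fact that any two oriented embedded annuli in $S^2$ with matching boundary parametrizations are ambient isotopic in $S^2$ to produce an orientation-preserving diffeomorphism of $S^2$ sending one decomposition to the other; extending by the identity across a collar neighborhood of $\partial W$ yields the diffeomorphism of $W$ needed to realize the equivalence in $\Cob$.

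The main obstacle is this last step, namely controlling the dependence on the cylindrical parametrization of $\partial W$; Example~\ref{exemreparam} shows that the resulting symplectomorphism genuinely varies with $p$, but the argument above shows these variations are all absorbed into a diffeomorphism of $W$. Once that is in place, stringing together Theorem~\ref{hyptechniques}, Theorem~\ref{compogeom}, and the definition of $HF$ for morphisms from $pt$ to $pt$ gives the corollary.
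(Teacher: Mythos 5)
Your proposal is correct and takes essentially the same route as the paper: apply the functor of Theorem~\ref{hyptechniques} to the morphism $(W,p,c)$ of $\Cob$, and observe that the isomorphism class of the resulting quilted Floer homology is an invariant of that morphism. The paper leaves implicit the point you make explicit, namely that different choices of the decomposition of $\partial W = S^2$ into two discs and a parametrized annulus yield diffeomorphism-equivalent morphisms in $\Cob$ because $\mathrm{Diff}^+(S^2)$ is connected, so the diffeomorphism of $\partial W$ matching the two decompositions extends over a collar (via the isotopy to the identity) to a diffeomorphism of $W$ that is homotopic to the identity, in particular fixes $c$ in $H_1(W;\Z{2})$.
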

\arnaque

\begin{remark}[Naturality] We expect that the groups $HSI(Y,c,z)$ are natural, i.e. canonically defined as groups, and not only up to isomorphism, see \cite[Section~7.1]{these}. If so, one could alternatively see $\bigcup_z HSI(Y,c,z)$ as a bundle over $Y$, (and in particular over a Heegaard splitting as in \cite[Parag. 5.3]{MW}). We will sometimes denote $HSI(Y,c)$ instead of $HSI(Y,c,z)$.
\end{remark}

\begin{remark}\label{functorainfini}The functor from $\Cob$ to $\Symp$ we constructed in this section allowed us to define HSI homology by taking the quilted Floer homology. Nevertheless, such a functor might contain more information and should give rise to other kind of invariants, which algebraic form might be more sophisticated. Namely, in \cite{WWfunctoriality} Wehrheim and Woodward  associate  to a Lagrangian correspondence  $L\subset M_0^- \times M_1$ a functor between two categories  $Don^\#(M_0)$ and $Don^\#(M_1)$ called ``extended Donaldson categories''. One can hope that their construction give rise to   invariants for  3-manifolds with boundaries endowed with isotopy classes of paths   between their boundaries, similar to those appearing in Fukaya's recent work  \cite{Fukayaboundary}. Such invariants would motivate the   construction and study of the corresponding categories for the moduli spaces $\Nc(\Sigma)$. Similar functors between their derived  Fukaya categories should  also exist.
\end{remark}

\section{First properties}\label{sec:firstprop}

\subsection{Computation of HSI(Y,c) from a Heegaard splitting}\label{sec:compusplitting}

Let $Y = H_0 \cup_{\Sigma} H_1$ be a given genus $g$ Heegaard splitting  of $Y$,   $z\in \Sigma$ a base point, and $c \in H_1(Y;\Z{2})$ a homology class, which can be decomposed as the sum  of two classes $c = c_0 + c_1$, with  $c_0 \in H_1(H_0;\Z{2})$ and $c_1 \in H_1(H_1;\Z{2})$.

\begin{remark}The maps $H_1(H_i;\Z{2}) \rightarrow H_1(Y;\Z{2})$ induced by the inclusions are both surjective, one can always assume either that  $c_0 = 0$ or $c_1 = 0$.
\end{remark}

Denote by $W$, $\Sigma'$, $ H_0'$ and $H_1'$ respectively the  blow-ups of $Y$, $\Sigma$, $ H_0$ and $H_1$ at $z$, so that  $ W = H_0' \cup_{\Sigma'} H_1'$.

Given a parametrization $p\colon \rr/\zz \times [0,1] \rightarrow \partial W$ such that $p(\rr/\zz \times \frac{1}{2}) = \partial \Sigma'$, we denote $p_0$ (resp. $p_1$) the restriction of $p$ to $\rr/\zz \times [0,\frac{1}{2}]$ (resp. $\rr/\zz \times [\frac{1}{2},1]$). Hence, in the category  $\Cob$, \[(H_0, p_0, c_0)\in Hom(D^2, \Sigma')\text{, and } (H_1, p_1, c_1)\in Hom(\Sigma', D^2).\] Let  $f_0, f_1$ be Morse functions on $H_0$ and $H_1$ respectively, adapted  to the parametrizations $p_0$ and $p_1$ (so that they are  vertical), and having each one exactly $g$ critical points  (of index 1 for $f_0$ and index 2 for $f_1$). They  decompose $H_0$ and $H_1$ into $g$ elementary cobordisms: $H_0 = H_0^1 \odot H_0^2 \odot \cdots \odot H_0^g$, $H_1 = H_1^1 \odot H_1^2 \odot \cdots \odot H_1^g$.

\begin{lemma}
For all $i$ from  $2$ to $g$, the composition $L(H_0^1 \cup  \cdots \cup H_0^{i-1}  ) \circ L(H_0^i)$ is embedded, satisfies the assumptions of Theorem~\ref{compogeom}, and corresponds to $L(H_0^1 \cup  \cdots \cup H_0^{i}  )$. 
\end{lemma}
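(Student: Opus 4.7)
The plan is to proceed by induction on $i$, exploiting the explicit description of each $L(H_0^j)$ as a fibered coisotropic submanifold given by Example~\ref{exemhandle} read in the reverse direction: the $1$-handle attachment $H_0^j$ from the genus $j{-}1$ intermediate surface $\Sigma^{j-1}$ to the genus $j$ intermediate surface $\Sigma^j$ gives a correspondence $L(H_0^j)\subset \N(\Sigma^{j-1})^- \times \N(\Sigma^j)$ consisting of pairs $([A_{j-1}],[A_j])$ for which the holonomy around the new meridian $\mu_j$ is $I$ while the holonomy around the new longitude ranges freely over $SU(2)$. This correspondence fibers coisotropically over $\N(\Sigma^{j-1})$ with fibers diffeomorphic to $SU(2)$. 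By the composition formula (Proposition~\ref{compocob}) the set-theoretic equality $L(H_0^1\cup\cdots\cup H_0^{i-1})\circ L(H_0^i) = L(H_0^1\cup\cdots\cup H_0^i)$ is automatic, so the remaining work is structural: I must verify that the composition is embedded, and check the hypotheses of Theorem~\ref{compogeom}.

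For the embedded composition, I would check transversality of $L(H_0^1\cup\cdots\cup H_0^{i-1})\times \N(\Sigma^i)$ with $\N(D^2)\times L(H_0^i)$ inside the triple product. By the induction hypothesis, $L(H_0^1\cup\cdots\cup H_0^{i-1})$ is the subset of $\N(\Sigma^{i-1})$ cut out by requiring trivial holonomy around each of the already-attached meridians $\mu_1,\ldots,\mu_{i-1}$, and is diffeomorphic to an open subset of $SU(2)^{i-1}$. Since $L(H_0^i)$ submerses onto $\N(\Sigma^{i-1})$ via the restriction map, transversality is immediate, and the resulting intersection is the subset of $L(H_0^i)$ whose base coordinate lies in $L(H_0^1\cup\cdots\cup H_0^{i-1})$. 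The projection onto $\N(D^2)\times\N(\Sigma^i)$ is injective because the $\N(\Sigma^{i-1})$-coordinate of any point of $L(H_0^i)$ is determined by its $\N(\Sigma^i)$-coordinate via restriction. Consequently the composed correspondence is smooth, diffeomorphic to an open subset of $SU(2)^i$, hence simply connected and spin, and its compatibility with the pair of hypersurfaces follows since each $L^c(H_0^j)$ is obtained by the symplectic cutting construction of Proposition~\ref{decoupcoiso}, whose compatibility conditions are preserved under embedded composition.

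The main obstacle is the quilted cylinder condition: I must show that every index zero pseudo-holomorphic quilted cylinder as in Figure~\ref{cylinder}, with patches in $(\Nc(D^2),\Nc(\Sigma^{i-1}),\Nc(\Sigma^i))$ and seam conditions $L^c(H_0^1\cup\cdots\cup H_0^{i-1})$, $L^c(H_0^i)$ and $L^c(H_0^1\cup\cdots\cup H_0^i)$, has triple intersection number with $(R_{D^2},R_{\Sigma^{i-1}},R_{\Sigma^i})$ strictly smaller than $-2$. I would adapt the argument employed in the critical point switch case of the proof of Theorem~\ref{hyptechniques}: monotonicity of $\tilde{\omega}$ forces each patch of such a zero-area quilt to be contained in a fiber of the sphere fibration of Proposition~\ref{degener}. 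The coisotropic fibration structure of $L(H_0^i)$ over $\N(\Sigma^{i-1})$, together with the embedded description of $L(H_0^1\cup\cdots\cup H_0^i)$ as a submanifold of $\Nc(\Sigma^i)$, then allows one to push the first two patches forward into $\Nc(\Sigma^i)$ and glue them along the seams with the third patch into a single pseudo-holomorphic sphere in $R_{\Sigma^i}$. Since $\Nc(\Sigma^i)$ is an object of $\Symp$, this sphere intersects $R_{\Sigma^i}$ in a negative multiple of $2$, which equals the triple intersection number $\underline{u}\cdot(R_{D^2},R_{\Sigma^{i-1}},R_{\Sigma^i})$ and hence yields the required inequality.
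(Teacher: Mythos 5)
Your first two paragraphs are essentially correct and follow the paper's line of reasoning: Proposition~\ref{compocob} handles the set-theoretic equality, the explicit holonomy description shows the intersection is transverse, the projection is an embedding whose image is an open subset of $SU(2)^i$ (hence simply connected and spin), and compatibility with the hypersurfaces reduces, since one endpoint is a point, to the observation that the composed correspondence is disjoint from $R_i$. This also immediately takes care of the zero-area pseudo-holomorphic disc condition, which you did not explicitly address but which is needed for Theorem~\ref{compogeom}.

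The last paragraph, however, has a genuine gap. You propose to ``push the first two patches forward into $\Nc(\Sigma^i)$ and glue them\ldots into a single pseudo-holomorphic sphere in $R_{\Sigma^i}$,'' but this cannot work, for two reasons. First, the fibration goes the wrong way: as you yourself observe in your first paragraph, $L(H_0^i)$ fibers coisotropically over $\N(\Sigma^{i-1})$, with $SU(2)$ fibers. There is no map pushing a point of $\N(\Sigma^{i-1})$ forward to $\N(\Sigma^i)$; a seam value of $u_1$ in $\Nc(\Sigma^{i-1})$ determines the seam value of $u_2$ in $\Nc(\Sigma^i)$ only up to the $SU(2)$-fiber. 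You can only project $u_2$ \emph{down} to $\Nc(\Sigma^{i-1})$. Second, even after correctly projecting $u_2$ down and gluing with $u_1$, the resulting object is a \emph{disc} in $\Nc(\Sigma^{i-1})$ with boundary (coming from the boundary conditions $L(H_0^1\cup\cdots\cup H_0^{i-1})$ and the image of $L(H_0^1\cup\cdots\cup H_0^i)$ under the projection, which coincide), not a sphere. This is unlike the critical-point-switch case in the proof of Theorem~\ref{hyptechniques}, where all three patches are genuine moduli spaces and all three edges are seams. The correct conclusion, as in the paper, is that this glued disc would be a non-constant, zero-$\tilde{\omega}$-area disc in $\Nc(\Sigma^{i-1})$ and therefore contained in a fiber of the sphere fibration of $R_{i-1}$ by Proposition~\ref{degener}; but its boundary lies in $L(H_0^1\cup\cdots\cup H_0^{i-1})$, which is disjoint from $R_{i-1}$. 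This contradiction shows no such non-constant quilted cylinder exists, so the required intersection-number condition holds vacuously.
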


\begin{proof}Let $\alpha_1,\cdots \alpha_i, \beta_1,\cdots \beta_i$ be a generating system of the fundamental group of the genus $i$    boundary component of $H_0^i$ such that $H_0^i$ corresponds to the attachment of a 2-handle along $\beta_i$, and such that the curves $\alpha_1,\cdots \alpha_{i-1}$, $\beta_1,\cdots \beta_{i-1}$ induces  a generating system for the genus $i-1$    boundary component. Under the following representation-theoretic descriptions of the moduli spaces  

\begin{align*}\N(\Sigma_0^{i-1}) &= \lbrace  (A_1,B_1, \cdots,  A_{i-1}, B_{i-1})~|~ [A_1,B_1] \cdots  [A_{i-1}, B_{i-1}] \neq -I \rbrace  \\
 \N(\Sigma_0^{i}) &= \lbrace  (A_1,B_1, \cdots,  A_{i}, B_{i})~|~ [A_1,B_1] \cdots  [A_{i}, B_{i}] \neq -I \rbrace ,
\end{align*}
the correspondences are given by:
\begin{align*}
 & L(H_0^1 \cup  \cdots \cup H_0^{i-1}  ) =\lbrace (A_1, \epsilon_1 I, A_2, \epsilon_2 I, \cdots ) \rbrace, \text{where } \epsilon_i = \pm 1   \\
 & L(H_0^{i}  ) = \lbrace (A_1,B_1, \cdots,  A_{i-1}, B_{i-1}),(A_1,B_1, \cdots,  A_{i-1}, B_{i-1}, A_i, \epsilon_i I)\rbrace .
\end{align*}
The intersection $ ( L(H_0^1 \cup  \cdots \cup H_0^{i-1}  ) \times \N(\Sigma_0^{i}) )\cap L(H_0^{i}  )$ is hence transverse in $ \N(\Sigma_0^{i-1}) \times \N(\Sigma_0^{i}) $, and corresponds to 
\[ \left\lbrace (A_1, \epsilon_1 I, \cdots,  A_{i-1},  \epsilon_{i-1} I),(A_1,\epsilon_1 I, \cdots,  A_{i-1}, \epsilon_{i-1} I, A_i, \epsilon_i I)\right\rbrace \simeq SU(2)^i .\] 
The projection to $\N(\Sigma_0^{i})$ induces  an embedding to $ L(H_0^1 \cup  \cdots \cup H_0^{i}  )$, which is simply connected, and compatible (because disjoint) with  the hypersurface $R_i$.

Furthermore, the zero area pseudo-holomorphic disc  assumption  is automatically satisfied because $ L(H_0^1 \cup  \cdots \cup H_0^{i}  )$ is disjoint from $R_i$, and the one concerning cylinders can be checked as in the proof of Lemma~\ref{hyptechniques}: provided one of the three patches is sent to a point, one can remove it,  and the quilted cylinder  corresponds then to a quilted disc as in figure~\ref{disque}, with boundary conditions in $  L(H_0^1 \cup  \cdots \cup H_0^{i-1}  )$ and $  L(H_0^1 \cup  \cdots \cup H_0^{i}  )$,  and seam conditions in $L(H_0^{i}  )$. Such a quilted disc  projects to a zero area disc of $\N(\Sigma_0^{i-1})$  with boundary in $  L(H_0^1 \cup  \cdots \cup H_0^{i-1}  )$, which cannot exist because this last  Lagrangian is disjoint from the degeneracy locus $R_{i-1}$ of the 2-form $\tilde{\omega}_{i-1}$.

\begin{figure}[!h]
    \centering
    \def\svgwidth{.65\textwidth}
    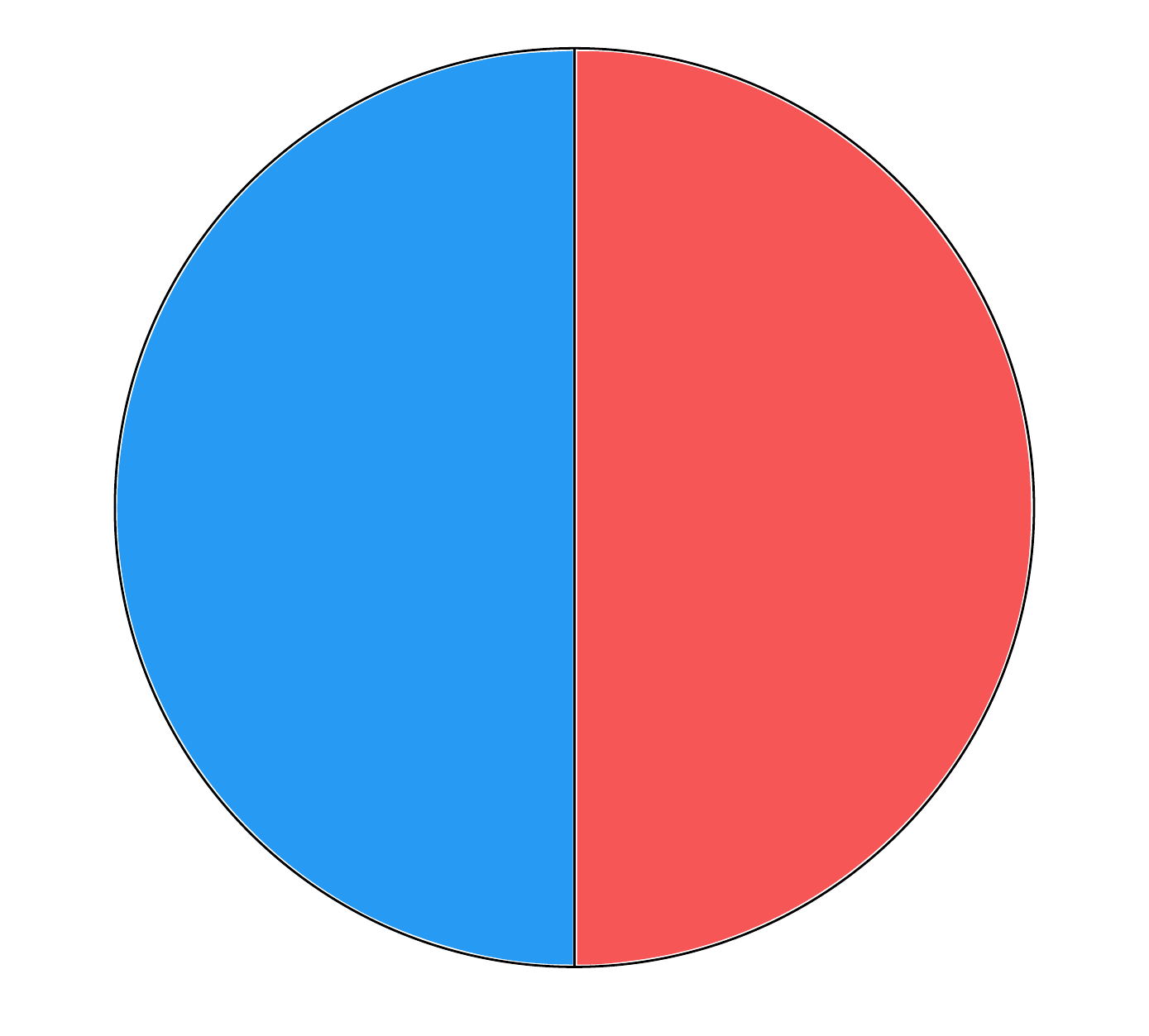
      \caption{A quilted disc.}
      \label{disque}
\end{figure}

\end{proof}

Hence, the generalized Lagrangian correspondence $\underline{L}(H_0, p_0,c_0)$ is equivalent in $\Symp$ to the Lagrangian $L(H_0, p_0,c_0)$, and the same holds for $H_1$. As a consequence of Theorem~\ref{compogeom}, we then get:
\begin{prop}\label{calculscindement} Under these conditions, $HSI(Y,c,z) \simeq HF(L_0,L_1; R)$, where $L_i = L(H_i,c_i,p_i) \subset \Nc(\Sigma,p)$. In particular, for $c=0$,  $HSI(Y,0,z)$ corresponds to the group $HSI(Y,z)$ defined by Manolescu and Woodward.
\end{prop}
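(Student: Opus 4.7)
The plan is to reduce $\underline{L}(W,p,c)$ to a length-two generalized Lagrangian correspondence $(L_0, L_1)$ by iteratively applying Theorem~\ref{compogeom} to the handle decompositions of $H_0$ and $H_1$, and then invoke the definition of quilted Floer homology.

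First I would write the cobordism in $\Cobelem$ as
\[
W \;=\; H_0^1 \odot H_0^2 \odot \cdots \odot H_0^g \odot H_1^1 \odot \cdots \odot H_1^g ,
\]
where the Morse functions $f_0, f_1$ and the parametrization $p$ turn each $H_i^j$ into an elementary cobordism with vertical boundary. Applying the functor of Theorem~\ref{hyptechniques} yields $\underline{L}(W,p,c)$ as a length $2g$ generalized Lagrangian correspondence from $pt$ to $pt$, and by the very definition of HSI in Section~\ref{sec:defhsi}, $HSI(Y,c,z) = HF(\underline{L}(W,p,c))$.

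The second step is to collapse the $H_0$-side of the correspondence. Starting from $i=2$ and iterating, the preceding Lemma shows that each composition $L^c(H_0^1 \cup \cdots \cup H_0^{i-1}) \circ L^c(H_0^i)$ is embedded, simply connected, disjoint from the hypersurface, and satisfies the quilted cylinder hypothesis of Theorem~\ref{compogeom}; moreover the geometric composition equals $L^c(H_0^1 \cup \cdots \cup H_0^i)$ by Proposition~\ref{compocob}. Theorem~\ref{compogeom} then identifies the quilted Floer homology obtained before and after each such reduction. After $g-1$ iterations the $H_0$-block of $\underline{L}(W,p,c)$ is replaced by the single Lagrangian $L_0 = L^c(H_0,p_0,c_0) \subset \Nc(\Sigma',p)$.

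The third step repeats the same reduction on the $H_1$-block. The representation-theoretic description used in the Lemma's proof is symmetric in the roles of the meridians and longitudes of the handlebody, so the same argument (exchanging the roles of the $\alpha_i$ and $\beta_i$ relative to the attaching circles of $H_1$) shows that the successive compositions are embedded and satisfy the hypotheses of Theorem~\ref{compogeom}, collapsing that block to $L_1 = L^c(H_1,p_1,c_1)$. We are left with the length-two correspondence $pt \xrightarrow{L_0} \Nc(\Sigma',p) \xrightarrow{L_1} pt$, whose quilted Floer homology is by definition $HF(L_0,L_1;R)$. Combining all intermediate isomorphisms gives $HSI(Y,c,z) \simeq HF(L_0, L_1; R)$.

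Finally, for the comparison with Manolescu--Woodward's original invariant, one checks that when $c=0$ the Lagrangians $L_0, L_1$ coincide, via the representation-theoretic identification of $\Nc(\Sigma',p)$ in Proposition~\ref{riemannhilbert}, with the Lagrangians used in \cite[Section~5.3]{MW}. The main obstacle in the above plan is really the bookkeeping in the third step: although the embedded composition property is morally symmetric between $H_0$ and $H_1$, one must verify carefully that the intermediate Lagrangians remain disjoint from the degeneracy loci $R_i$, so that the zero-area disc and quilted cylinder conditions hold at every stage; this is carried out by the same monotonicity and sphere-fibration argument as in the Lemma, using Proposition~\ref{degener}.
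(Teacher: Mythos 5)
Your proposal is correct and follows the same route as the paper: the paper's own proof is just the observation that the preceding Lemma plus Theorem~\ref{compogeom} collapse $\underline{L}(H_0,p_0,c_0)$ to $L(H_0,p_0,c_0)$, ``and the same holds for $H_1$,'' which is precisely your steps two and three. Your extra care about verifying the $H_1$-side and the $c=0$ identification is sound but does not depart from the paper's argument.
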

\arnaque

\subsection{Orientation reversal}\label{sec:orreversal}

Let $\underline{L}\in Hom_{\Symp}(pt,pt)$ be a generalized Lagrangian correspondence. One can define the cohomology $HF^*(\underline{L})$, i.e. the homology of the dual complex  $CF^*(\underline{L}) = \mathrm{Hom}_\zz ( CF_*(\underline{L}), \zz)$ of $CF_*(\underline{L})$, equipped with the dual differential.

Recall that one denotes $\underline{L}^T$ the generalised Lagrangian correspondence obtained by reversing the arrows. The following fact is a quilted generalization of the duality $HF^*(L_0,L_1)\simeq HF_*(L_1,L_0)$.
\begin{prop}
$HF_*(\underline{L}^T) \simeq HF^*(\underline{L})$.
\end{prop}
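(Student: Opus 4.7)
The plan is to construct an explicit chain-level isomorphism $\Phi \colon CF_*(\underline{L}^T) \to CF^*(\underline{L})$ and pass to homology. Writing $\underline{L} = (L_{01}, \ldots, L_{k(k+1)})$ with $M_0 = M_{k+1} = pt$, so that $\underline{L}^T = (L_{k(k+1)}^T, \ldots, L_{01}^T)$, I would first identify generators via the map $\underline{x} = (x_1, \ldots, x_k) \mapsto \underline{x}^T := (x_k, \ldots, x_1)$, which gives a bijection $\I(\underline{L}) \to \I(\underline{L}^T)$ because $(x_i, x_{i+1}) \in L_{i(i+1)}$ is equivalent to $(x_{i+1}, x_i) \in L_{i(i+1)}^T$. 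Choosing as auxiliary data for $\underline{L}^T$ the reflected Hamiltonian perturbations (roughly $H^T_j(x,t) := -H_{k+1-j}(x, 1-t)$) and time-dependent almost complex structures $J^T_j(t) := J_{k+1-j}(1-t)$, this bijection extends to perturbed generalized intersection points.

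The second step identifies Floer trajectories via the reflection $(s, t) \mapsto (-s, 1-t)$. Given an index-one quilted strip $\underline{u} = (u_1, \ldots, u_k)$ for $\underline{L}$ from $\underline{y}$ to $\underline{x}$, define $\underline{v} = (v_1, \ldots, v_k)$ by $v_j(s, t) := u_{k+1-j}(-s, 1-t)$. A direct computation verifies that $\underline{v}$ satisfies the Cauchy--Riemann equation for the reflected data, has the correct seam conditions $(v_j(s,1), v_{j+1}(s,0)) \in L_{(k-j)(k+1-j)}^T$ (these are precisely the seam conditions of $\underline{u}$ read backwards, at the parameter $-s$), and has limits $\underline{x}^T$ at $s \to -\infty$ and $\underline{y}^T$ at $s \to +\infty$. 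Since the reflection is orientation-preserving on each patch, the Maslov index and the intersection number with $\underline{R}$ are preserved. This yields a bijection between the moduli space $\mathcal{M}(\underline{y}, \underline{x})$ for $\underline{L}$ and the moduli space $\mathcal{M}^T(\underline{x}^T, \underline{y}^T)$ for $\underline{L}^T$.

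Setting $\Phi(\underline{x}^T) := \underline{x}^*$ (the dual generator), the preceding bijection identifies the differential of $CF_*(\underline{L}^T)$ on $\underline{x}^T$, namely $\sum_{\underline{y}} \#\mathcal{M}^T(\underline{x}^T, \underline{y}^T)\,\underline{y}^T$, with the dual differential of $CF^*(\underline{L})$ on $\underline{x}^*$, namely $\sum_{\underline{y}} \#\mathcal{M}(\underline{y}, \underline{x})\,\underline{y}^*$. Hence $\Phi$ is a chain isomorphism, and the result follows by taking homology. The main obstacle will be sign compatibility: the counts $\#\mathcal{M}$ use the orientation scheme of \cite{WWorient} which depends on a relative spin structure on $\underline{L}$. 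One must verify that transposition induces a canonical relative spin structure on $\underline{L}^T$ for which the reflection bijection preserves the orientation $o(\underline{u})$; this is a routine (though technical) functoriality check, parallel to the one for the classical unquilted duality $HF^*(L_0, L_1) \simeq HF_*(L_1, L_0)$.
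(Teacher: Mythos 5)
Your construction — transposing the tuple of intersection points and reflecting the domain by $(s,t)\mapsto(-s,1-t)$ — is exactly the one the paper uses. The one place you go astray is the sign analysis, which you defer as a "routine functoriality check" under the expectation that orientations are \emph{preserved}. The paper actually computes the sign change. Using the orientation scheme of Wehrheim--Woodward, the change of sign in passing from $\underline{u}$ to $\underline{u}^T$ is the sign of the reordering isomorphism
\[
\det(D_{\underline{x}})\otimes\det(D_{\underline{u}})\otimes\det(D_{\underline{y}})
\longrightarrow
\det(D_{\underline{y}})\otimes\det(D_{\underline{u}})\otimes\det(D_{\underline{x}}),
\]
which is $(-1)^{\operatorname{Ind}D_{\underline{x}}\operatorname{Ind}D_{\underline{y}} + \operatorname{Ind}D_{\underline{x}}\operatorname{Ind}D_{\underline{u}} + \operatorname{Ind}D_{\underline{u}}\operatorname{Ind}D_{\underline{y}}}$. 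Since $\operatorname{Ind}D_{\underline{u}}=1$ and $\operatorname{Ind}D_{\underline{x}}+\operatorname{Ind}D_{\underline{u}}+\operatorname{Ind}D_{\underline{y}}=0$, this sign is $-1$, uniformly over all trajectories. So the orientation is \emph{not} preserved, and your map $\Phi$, as written, satisfies $\Phi\,\partial^T = -\partial^*\,\Phi$ rather than $\Phi\,\partial^T = \partial^*\,\Phi$; it is not literally a chain map. This is harmless for the conclusion — $(C,\partial)$ and $(C,-\partial)$ have identical cycles and boundaries, hence canonically isomorphic homology — but you should make that last step explicit rather than asserting orientation preservation, which is false.
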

\begin{proof}
Assume that $\underline{L}$ has transversal self-intersection, so that  $\I(\underline{L})$ and $  \I(\underline{L}^T)$ are finite sets, which are canonically identified. From this identification, the complexes $CF^*(\underline{L})$ and $CF_*(\underline{L}^T)$ are identified as $\zz$-modules, denote respectively  $\partial$ and $\partial^T$ their differentials.

Let $\underline{x},\underline{y}\in \I(\underline{L})$ be two  generalised intersection points. A quilted trajectory  $\underline{u}$ with seam conditions in  $\underline{L}$ and going from $\underline{x}$ to $\underline{y}$ can be seen, by applying the holomorphic involution $(s,t)\mapsto(-s, 1-t)$ to the domain, as a quilted trajectory $\underline{u}^T$ with seam conditions in  $\underline{L}^T$ and going from  $\underline{y}$ to $\underline{x}$, hence contributing to $\partial^T$.

Consequently, the moduli spaces involved for defining $\partial$ and $\partial^T$ are in one-to-one correspondence, it remains to compare their orientations. Recall these are defined after specifying a \emph{relative spin structure} on $\underline{L}$ (which in our case exists and is essentially unique), an \emph{end datum} at each strip-like end, to which are associated Fredholm operators $D_{\underline{x}}$ and $D_{\underline{y}}$, and an orientation for each of these operators, we refer to \cite{WWorient} for more detail and the corresponding definitions of these notions.

When passing from $\underline{u}$ to $\underline{u}^T$ the sign change is given by the orientation of the isomorphism 

\[ det(D_{\underline{x}}) \otimes det(D_{\underline{u}}) \otimes det(D_{\underline{y}}) \to det(D_{\underline{y}}) \otimes det(D_{\underline{u}}) \otimes det(D_{\underline{x}})  ,\]
which is $(-1)^{Ind (D_{\underline{x}})  Ind (D_{\underline{y}}) + Ind (D_{\underline{x}} ) Ind (D_{\underline{u}}) + Ind( D_{\underline{u}})  Ind (D_{\underline{y}})}$. Furthermore, the index of $\underline{u}$ is 1, and $Ind (D_{\underline{x}}) + Ind (D_{\underline{u}}) + Ind (D_{\underline{y}}) = 0$, since the total Lagrangian boundary condition is required to be spin. Hence the sign change is $-1$, which means that $\partial^T$ corresponds to 
 $-\partial$ under the former identification, and the homology groups are isomorphic.
\end{proof}

If $Y,c$ is a 3-manifold endowed with a homology class, $z$ a base point, denote $W$ the blow up, and $\overline{W}$ the blow-up with reversed orientation. Then, one has $\underline{L}(\overline{W},c) = \underline{L}(W,p,c)^T$. Then,  denoting $HSI_*$ what we denoted  $HSI$ so far, and $HSI^*$ the cohomology, one has:

\begin{prop}$HSI_*(\overline{Y},c,z) \simeq HSI^*(Y,c,z)$
\end{prop}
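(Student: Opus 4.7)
The plan is to reduce everything to the preceding proposition, which identifies $HF_*(\underline{L}^T)$ with $HF^*(\underline{L})$. The only geometric input needed is the identity $\underline{L}(\overline{W},c)=\underline{L}(W,p,c)^T$ stated in the paragraph preceding the result, which I would now justify carefully.

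First I would fix a Cerf decomposition $W = W_1 \odot \cdots \odot W_k$ of the blow-up of $Y$, giving
\[ \underline{L}(W,p,c)=\left(\xymatrix{ pt \ar[r]^{L^c_1} & \Nc(\Sigma_1) \ar[r]^{L^c_2} & \cdots \ar[r]^{L^c_k} & pt}\right),\]
where $L^c_i = L^c(W_i,c_i)$. Reversing the orientation of $Y$ (equivalently of $W$) produces the Cerf decomposition
$ \overline{W} = \overline{W_k}\odot \cdots \odot \overline{W_1},$ in which each $\overline{W_i}$ is the same underlying smooth manifold regarded as a cobordism from $\overline{\Sigma_{i+1}}$ to $\overline{\Sigma_i}$ (the vertical boundary parametrization gets reversed accordingly).

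The key step is then to check that for each elementary piece one has an equality of correspondences
\[ L^c(\overline{W_i},c_i) = L^c(W_i,c_i)^T\subset \Nc(\Sigma_{i+1})^- \times \Nc(\Sigma_i).\]
For this I would note that $\A^\mathfrak{g}_F(W_i,C_i)$, the gauge group $\Gc(W_i\setminus C_i)$, and hence the extended moduli space $\Mg(W_i,C_i)$, depend only on the underlying smooth manifold and the $1$-submanifold $C_i$ representing $c_i$, not on the orientation of $W_i$. The map to $\Mg(\Sigma_i)\times \Mg(\Sigma_{i+1})$ is induced by restriction; reversing the orientation of $W_i$ swaps the roles of incoming and outgoing boundary, and since $\Nc(\overline{\Sigma_j})=\Nc(\Sigma_j)^-$ as objects of $\Symp$, the image of $\Mg(\overline{W_i},C_i)$ in $\Nc(\Sigma_{i+1})^-\times \Nc(\Sigma_i)$ is precisely the transpose of the image in $\Nc(\Sigma_i)^-\times \Nc(\Sigma_{i+1})$. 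Passing to the symplectic cutting commutes with this swap, so the claimed equality of Lagrangian correspondences holds.

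Concatenating these identifications, and reversing the order of the sequence to account for $\odot$ reversing under orientation reversal, yields $\underline{L}(\overline{W},c)=\underline{L}(W,p,c)^T$. Applying the previous proposition then gives
\[ HSI_*(\overline{Y},c,z) = HF_*\bigl(\underline{L}(\overline{W},c)\bigr) = HF_*\bigl(\underline{L}(W,p,c)^T\bigr) \simeq HF^*\bigl(\underline{L}(W,p,c)\bigr) = HSI^*(Y,c,z). \]
The only non-bookkeeping point is the transpose identity for elementary cobordisms; once the orientation and parametrization conventions are pinned down, everything else is formal.
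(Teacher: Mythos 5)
Your proposal is correct and follows exactly the route the paper intends: reduce $HSI_*(\overline{Y},c,z)\simeq HSI^*(Y,c,z)$ to the preceding duality $HF_*(\underline{L}^T)\simeq HF^*(\underline{L})$ via the identity $\underline{L}(\overline{W},c)=\underline{L}(W,p,c)^T$, which the paper asserts without proof and you justify elementary-cobordism by elementary-cobordism. One small notational wrinkle: reversing the orientation of $W_i$ does not reverse the orientations of the boundary surfaces $\Sigma_j$ (only $\partial W_i$ flips, so the embeddings trade their orientation-reversing/preserving roles), so there is no need to introduce $\overline{\Sigma_j}$ nor to invoke $\Nc(\overline{\Sigma_j})=\Nc(\Sigma_j)^-$ — the transpose simply comes from the restriction map $\Mg(W_i,C_i)\to\Mg(\Sigma_i)\times\Mg(\Sigma_{i+1})$ having its two factors swapped when $W_i$ is read in the opposite direction — but this does not affect the validity of your conclusion.
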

\arnaque

\subsection{Connected sum}\label{sec:connectedsum}

In order to prove Theorem~\ref{sommecnx}, recall the Künneth formula for quilted Floer homology, see \cite[Theorem 5.2.6]{WWqfc}  for the monotone non-relative setting, whose proof straightforwardly generalises to the setting of $\Symp$:

\begin{prop}(Künneth formula, \cite[Theorem 5.2.6]{WWqfc})
\label{kunnethfloer}
 Let  $\underline{L}$ and $\underline{L'}$ be two generalized Lagrangian correspondences from $pt$ to $pt$, then \[HF (\underline{L},  \underline{L'} ) \simeq HF (\underline{L}) \otimes HF (\underline{L'})\oplus \mathrm{Tor}(HF (\underline{L}) , HF (\underline{L'}))[-1],\] where $\mathrm{Tor}$ stands for  the $\mathrm{Tor}$ functor, and $[-1]$ means a $-1$ shift in degrees.
\end{prop}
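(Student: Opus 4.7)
The plan is to reduce the statement to the algebraic Künneth formula by exhibiting an isomorphism of chain complexes
\[ CF(\underline{L},\underline{L'}) \simeq CF(\underline{L}) \otimes_\zz CF(\underline{L'}), \]
following the strategy of \cite[Theorem~5.2.6]{WWqfc} and checking that the additional structure required by Definition~\ref{defcat} (the two $2$-forms, the hypersurface $R$, the reference $\tilde J$) passes through products without trouble.

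First I would describe the ambient data for the concatenation. Writing $\underline{L} = (L_{01},\ldots, L_{k(k+1)})$ with intermediate manifolds $(M_i,\omega_i,\tilde\omega_i, R_i,\tilde J_i)$ and similarly $\underline{L'} = (L'_{01},\ldots, L'_{k'(k'+1)})$ with data $(M'_j,\omega'_j,\tilde\omega'_j, R'_j,\tilde J'_j)$, the concatenation $(\underline{L},\underline{L'})$ is a generalized correspondence from $pt$ to $pt$ whose intermediate objects are exactly the $M_i$ and $M'_j$ appearing in order. In particular the generalized intersection set splits as $\I(\underline{L},\underline{L'}) = \I(\underline{L})\times \I(\underline{L'})$, and after choosing product Hamiltonian perturbations $(\underline{H},\underline{H'})$ the same identification holds for the perturbed intersection points $\I_{(\underline{H},\underline{H'})}(\underline{L},\underline{L'})$.

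Second I would pick product almost complex structures $\underline{J}\sqcup \underline{J'}$. The crucial observation is that the Cauchy-Riemann-Floer equation for the concatenation splits: any quilted strip $\underline{u}$ for $(\underline{L},\underline{L'})$ decomposes as a pair $(\underline{v},\underline{v'})$ where $\underline{v}$ is a quilted strip for $\underline{L}$ and $\underline{v'}$ one for $\underline{L'}$, with indices adding, areas adding, and the intersection number with the combined degeneracy locus being the sum of the two intersection numbers. Thus, provided the data on each side are regular (which holds for a comeagre set by \cite[Theorem~6.5]{MW}), Gromov compactness on each factor and the product structure of the linearized operator give that the pair $(\underline{J},\underline{J'})$ is regular for the concatenation; moreover the relative spin structure on $(\underline{L},\underline{L'})$ is the product of the two relative spin structures, and the sign conventions of \cite{WWorient} give that the orientation on a product trajectory is the product of the orientations (possibly with a sign depending only on the indices of the endpoints, which is absorbed into the isomorphism of complexes). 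Finally, zero-area quilted cylinder and disc hypotheses as well as the $(R,R')$-compatibility are inherited from the factors: each summand of $R\sqcup R'$ is a symplectic hypersurface in the corresponding factor and the product $L_{i(i+1)}\times L'_{j(j+1)}$ meets it in the expected way, so the concatenation is a bona fide morphism in $\Symp$.

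The combination of these two steps yields, on the nose, a filtered isomorphism of $\zz$-chain complexes $CF(\underline{L},\underline{L'}) \simeq CF(\underline{L})\otimes_\zz CF(\underline{L'})$ with relative $\Z{8}$-grading given by the sum. The proposition then follows by applying the algebraic Künneth theorem for chain complexes of free abelian groups, which supplies the $\mathrm{Tor}$-term shifted by $-1$.

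The main obstacle I expect is not the algebra but the transversality/regularity step: one must verify that generic product data are regular for the product Floer equation, and that the zero-intersection-with-$\underline{R}$ condition splits correctly. Both follow from the product structure of the linearized operator and the additivity of the intersection number under the decomposition of the degeneracy locus of $(\tilde\omega\oplus\tilde\omega')$, so the argument of \cite[Theorem~5.2.6]{WWqfc} goes through verbatim in the framework of Definition~\ref{defcat}.
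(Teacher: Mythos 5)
Your proposal is correct and follows exactly the route the paper intends: the paper gives no proof of its own here, but merely cites \cite[Theorem~5.2.6]{WWqfc} and asserts that the argument ``straightforwardly generalises to the setting of $\Symp$.'' Your write-up simply makes explicit the chain-level tensor factorization, the product-regularity and sign checks, and the verification that the hypersurface/two-form/compatibility conditions of Definition~\ref{defcat} are preserved under concatenation --- precisely the steps the paper leaves implicit.
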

\arnaque

\begin{proof}[Proof of Theorem~\ref{sommecnx}] Let $\underline{L}$ and $\underline{L'}$ be the generalized Lagrangian correspondence associated to $(Y, c)$ and $(Y' , c' )$, which are both  morphisms of $\Symp$ from $pt$ to $pt$. Then $\underline{L}, \underline{L'}$ is a generalized Lagrangian correspondence associated to $(Y, c)\# (Y' , c' )$. The claim follows from   Proposition~\ref{kunnethfloer}. 
\end{proof}

\subsection{Euler characteristic}\label{sec:eulercar}
As the groups $HSI(Y,c,z)$ are relatively $\Z{8}$-graded, their Euler  characteristic $\chi(HSI(Y,c,z))$ is defined only up to a sign.

\begin{prop}\label{eulercara} If $b_1(Y) = 0$, $\left|  \chi(HSI(Y,c,z)) \right| =  \mathrm{Card} H_1(Y;\zz)$, otherwise $\chi(HSI(Y,c,z)) = 0$.

\end{prop}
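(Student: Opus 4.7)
The plan is to use the Heegaard-splitting description of $HSI(Y,c,z)$ from Proposition~\ref{calculscindement} to express $\chi$ as a signed Lagrangian intersection number, and then analyze that count according to $b_1(Y)$. Since $HSI(Y,c,z)\simeq HF(L_0,L_1;R)$ for Lagrangians $L_0,L_1\subset\Nc(\Sigma',p)$ coming from a Heegaard splitting $Y=H_0\cup_\Sigma H_1$ with twisting curves $C_0,C_1$, the Euler characteristic (defined up to sign by the relative $\Z{8}$-grading) equals $\pm[L_0]\cdot[L_1]$, the algebraic intersection number in $\Nc(\Sigma',p)$. By Proposition~\ref{reformul}, after a Hamiltonian perturbation making the intersection transverse, the points of $L_0\cap L_1\subset\N(\Sigma',p)$ correspond bijectively to conjugacy classes of representations $\rho\colon\pi_1(Y\setminus C)\to SU(2)$ sending every meridian of $C$ to $-I$, equivalently to flat $SO(3)$-connections on the twisted bundle $P_c\to Y$.

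For the case $b_1(Y)>0$, I would produce a Hamiltonian isotopy displacing $L_0$ from $L_1$. Choose a primitive integral class $\gamma\in H^1(Y;\zz)$. The associated family of central twists $t\mapsto\exp(2\pi it\langle\gamma,\cdot\rangle)$ gives a $U(1)$-action on the positive-dimensional space of reducible flat connections, which lifts to a Hamiltonian $S^1$-action on a neighborhood of the reducible locus in $\Nc(\Sigma',p)$, preserving $R$ and with nowhere-zero infinitesimal generator (no nontrivial character of $H_1(Y)$ becomes trivial after restriction to $\Sigma$). Flowing $L_0$ by the action for a generic time yields a Hamiltonian isotopy to a Lagrangian disjoint from $L_1$ near the reducible locus. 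Irreducible intersection points, to which central twisting does not directly apply, are paired by the anti-holomorphic involution $\rho\leftrightarrow\bar\rho$ (complex conjugation in $SU(2)$): this involution preserves both $L_0$ and $L_1$, acts freely on irreducibles, and a spectral-flow computation shows that conjugate pairs contribute opposite signs, so they cancel in $[L_0]\cdot[L_1]$.

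For the case $b_1(Y)=0$, I would argue inductively using the surgery exact triangle (Theorem~\ref{trianglechir}). The base case $Y=S^3$ gives $HSI(S^3,0,z)\simeq\zz$, hence $|\chi|=1=|H_1(S^3;\zz)|$. For the inductive step, present $Y$ as one term of a surgery triad $(Y_\alpha,Y_\beta,Y_\gamma)$ in which the other two members are strictly simpler (e.g.\ obtained by blow-downs of a surgery presentation); the exact triangle yields an alternating-sum relation for Euler characteristics,
\[\pm\chi(HSI(Y_\alpha,c_\alpha+k_\alpha))\pm\chi(HSI(Y_\beta,c_\beta))\pm\chi(HSI(Y_\gamma,c_\gamma))=0,\]
which matches the Mayer--Vietoris relation $\pm|H_1(Y_\alpha)|\pm|H_1(Y_\beta)|\pm|H_1(Y_\gamma)|=0$ for the cardinalities (with the convention that any term corresponding to a manifold with $b_1>0$ contributes $0$, consistent with the first case). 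The identity $|\chi|=|H_1|$ thus propagates from $S^3$ to every rational homology sphere.

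The hard part will be the $b_1(Y)>0$ case: rigorously constructing the twisting $S^1$-action as a Hamiltonian circle action on $\Nc(\Sigma',p)$ that is compatible with the structure of $\Symp$ (Definition~\ref{defcat}), in particular preserving the hypersurface $R$ and the monotone form $\tilde\omega$, and then carrying out the spectral-flow bookkeeping for the conjugation involution on the irreducible locus within the quilted-Floer framework. A secondary subtlety is the sign matching in the inductive step: the relative $\Z{8}$-grading shifts across the connecting homomorphism of the surgery triangle must be tracked precisely to ensure that the signs of the three-term relation for $\chi(HSI)$ coincide with those of the corresponding relation among $|H_1|$.
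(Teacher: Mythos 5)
Your opening step — reducing $\chi(HSI(Y,c,z))$ to the Lagrangian intersection number $\pm[L_0]\cdot[L_1]$ via Proposition~\ref{calculscindement} — matches the paper, but from there the two arguments diverge completely, and yours has real gaps. The paper observes that $[L_0]\cdot[L_1]$ is a purely topological quantity: it is unchanged under \emph{any} isotopy of the Lagrangians, Hamiltonian or not. Since $L(H_i,c_i)$ is cut out in $SU(2)^{2h}$ by equations of the form $A_j = \epsilon_j I$ with $\epsilon_j=\pm 1$, one can slide each $-I$ to $+I$ along a path in $SU(2)$, isotoping $L(H_i,c_i)$ to $L(H_i,0)$. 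This reduces all cases to $c=0$, where the intersection number is already known (Manolescu--Woodward, quoting Akbulut--McCarthy's Casson-invariant computation). No circle action, no involution, no induction.

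Your proposed direct computation is not just more elaborate; several steps do not hold up. The ``nowhere-zero infinitesimal generator'' claim for the $S^1$-action fails at the trivial connection when $c=0$, so the displacement argument for $b_1(Y)>0$ breaks exactly where it is most needed. The assertion that conjugate pairs of irreducibles ``contribute opposite signs'' by a spectral-flow computation is a substantive claim that is neither obvious nor performed — and even making the anti-holomorphic involution act on the \emph{perturbed} intersection requires choosing the Hamiltonian perturbation equivariantly, which is an additional (unaddressed) constraint. Finally, the inductive scheme for $b_1(Y)=0$ has no specified complexity measure guaranteeing the triad terms are strictly simpler, and the $\pm$ bookkeeping you defer is genuinely delicate because of the $\Z{8}$-grading ambiguity. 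The single observation you missed — that an arbitrary continuous isotopy suffices to compute an intersection number — collapses the entire problem.
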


\begin{proof}The case when $c=0$ has been established by Manolescu and Woodward, \cite[Parag. 7.1]{MW}: for $c_0 =  c_1 = 0$, the Euler characteristic is given by the intersection number $[L(H_0,c_0 )] . [L(H_1,c_1 )]$  of the two Lagrangians inside the moduli space of the splitting, and this number is computed in \cite[Prop. III.1.1, (a),(b)]{cassonpup}. If $c\neq 0$, the intersection number remains unchanged, indeed $L(H_i,c_i)$ can be sent to $L(H_i, 0)$ through a (non-Hamiltonian) isotopy  of $SU(2)^{2h}$ in the following way: for some fixed  presentation of the fundamental group, $L(H_i,c_i)$ is defined by equations  
\[ \left\lbrace (A_1,B_1,\cdots)\ |\ A_1 = \epsilon_1 I, A_2 = \epsilon_2 I, \cdots \right\rbrace, \]
where $\epsilon_i = \pm 1$. It then suffices to take a path in $SU(2)$ going from $-I$ to  $I$ to set all the $\epsilon_i$ equal to $+1$.

\end{proof}

\subsection{Manifolds of Heegaard genus 1}\label{sec:genusone}

Manolescu and Woodward computed the HSI homology groups for manifolds of Heegaard genus  1 when the class $c$ is zero, see \cite[Prop. 7.2, 7.3]{MW}. Their computations can be extended to all classes.

\begin{prop}\label{genreun}\begin{itemize}

\item[$(i)$] For $Y=S^2 \times S^1$ and $c\in H_1(Y; \Z{2})$,

\[ HSI(Y,c) = \begin{cases} \zz[0] \oplus \zz[3]\text{ if  }c=0, \\ \lbrace 0\rbrace \text{ otherwise.} \end{cases} \]

\item[$(ii)$] $HSI(L(p,q),c)$ has rank $p$ for every class $c$. Furthermore, all non-zero classes have the same parity in degree.
\end{itemize}
\end{prop}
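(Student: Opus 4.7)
The strategy is to apply Proposition~\ref{calculscindement} to a genus one Heegaard splitting $Y=H_0\cup_{T^2} H_1$ and perform an explicit intersection count in the extended moduli space $\Nc(T^2)$. Choose a basis $\alpha,\beta$ of $\pi_1(T^2,*)$ such that $\beta$ bounds in $H_0$, and (in case (ii)) such that $p\alpha+q\beta$ bounds in $H_1$. By Examples~\ref{exemcobtriv}, \ref{exemdiffeo} and~\ref{exemhandle} together with Proposition~\ref{decoupcoiso}, the handlebody Lagrangian $L(H_0,c_0)$ coincides in $\N(T^2)$ with $\lbrace B=\epsilon_0 I\rbrace$, where $\epsilon_0\in\lbrace \pm 1\rbrace$ is determined by whether $c_0$ is zero or the core class of $H_0$, and similarly $L(H_1,c_1)$ is the pullback of $\lbrace B=\epsilon_1 I\rbrace$ by the gluing diffeomorphism. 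All these Lagrangians are contained in $\lbrace\theta=0\rbrace$, hence disjoint from the degeneracy locus $R$.

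For part (i), both meridians of $H_0$ and $H_1$ are $\beta$, so $L(H_1,c_1)=\lbrace B=\epsilon_1 I\rbrace$. When $c\neq 0$, exactly one of $c_0,c_1$ is non-zero, hence $\epsilon_0\neq\epsilon_1$, so $L(H_0,c_0)$ and $L(H_1,c_1)$ are disjoint and $HSI(S^2\times S^1,c)=0$. When $c=0$, the two Lagrangians coincide and the Manolescu-Woodward Morse-Bott computation gives $H^*(S^3;\zz)=\zz[0]\oplus\zz[3]$.

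For part (ii) the only case not covered by Manolescu-Woodward is $p$ even with $c\neq 0$, since otherwise $H_1(L(p,q);\Z{2})$ is trivial. Representing $c$ by $c_0$ the core class of $H_0$ and $c_1=0$, the intersection $L(H_0,c_0)\cap L(H_1,c_1)$ in $\N(T^2)$ reduces to $\lbrace B=-I,\ A^p=(-1)^q I\rbrace$. A conjugacy-class count shows that the Morse-Bott solution set is a disjoint union of isolated central points and $2$-sphere orbits of the conjugation action which, after the same type of Morse-Bott perturbation used by Manolescu-Woodward in the untwisted case, yield exactly $p$ transverse intersection points. The Floer chain complex therefore has rank at most $p$; combining this with $|\chi(HSI(L(p,q),c))|=p$ from Proposition~\ref{eulercara} forces rank exactly $p$ and all generators lying in a single parity class of the relative $\Z{8}$-grading.

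The principal obstacle is to verify that the Morse-Bott perturbation of Manolescu-Woodward lifts to the twisted case compatibly with the divisor $R$ and with the monotonicity setup of Definition~\ref{defcat}. Since the twisted Lagrangian differs from the untwisted one by the central symplectomorphism $B\mapsto -B$, which fixes $R$ and preserves all the structures of Definition~\ref{defcat}, this verification is essentially formal; the substance of the argument remains the elementary combinatorial count of roots of $A^p=\pm I$ in $SU(2)$, which in all cases produces $p$ points after perturbation.
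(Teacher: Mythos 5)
Your proof is correct and follows essentially the same route as the paper: a genus-one Heegaard splitting via Proposition~\ref{calculscindement}, disjointness of the Lagrangians $\lbrace B=I\rbrace$ and $\lbrace B=-I\rbrace$ for $c\neq 0$ on $S^2\times S^1$, and for lens spaces a clean-intersection count of $\lbrace B=\pm I\rbrace\cap\lbrace A^pB^{-q}=\pm I\rbrace$ perturbed to $p$ transverse points, combined with the Euler-characteristic identity of Proposition~\ref{eulercara} to kill the differential. The one remark worth making is that your final paragraph about lifting the Morse-Bott perturbation is not actually needed: once the Hamiltonian perturbation produces $p$ transverse generators and the Euler characteristic is $\pm p$, the parity constraint already forces the differential to vanish, exactly as the paper argues.
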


\begin{proof}
$(i)$: For $c=0$, $HSI(S^2 \times S^1)$ has been computed by Manolescu and Woodward. For $c\neq 0$, with  $\Sigma$ the genus 1 splitting   and $A$, $B$ the holonomies along a basis of the fundamental group of $\Sigma$ whose first curve bounds a disc in both solid tori, the two Lagrangians $\lbrace A=I\rbrace$ and $\lbrace A= -I\rbrace$ are disjoint.

Concerning $L(p,q)$, one can take a Heegaard splitting  and a coordinate system such that the two Lagrangians are defined by: $L_0 = \lbrace B = I \rbrace$ and $L_1 = \lbrace A^p B^{-q}  = \pm I\rbrace$. They intersect cleanly in a union of copies of $S^2$ and, depending on the parity of $p$, one or two points. One can displace one of the  two Lagrangians by a Hamiltonian isotopy in a way that  the intersection becomes transverse, and each copy of $S^2$ gives rise to two points. There are now $p$ intersection points,  besides  one knows that this number corresponds to the intersection number of the two Lagrangians. Hence the chain complex doesn't have nontrivial summands of consecutive degree, and its differential is trivial.

\end{proof}

\section{Dehn surgery}\label{sec:surgery}

In this section we prove the surgery exact sequence, Theorem~\ref{trianglechir}. Recall its setting: $Y$ is an oriented compact 3-manifold with boundary, whose boundary is a 2-torus, $c$ is a class in $H_1(Y;\Z{2})$, $\alpha$, $\beta$ and $\gamma$ are three oriented simple closed curves in $\partial Y$ such that $\alpha . \beta = \beta . \gamma = \gamma .\alpha = -1$, $Y_\alpha$, $Y_\beta$ and $Y_\gamma$ denote the Dehn fillings of $Y$ along these curves. For $\delta\in \lbrace\alpha, \beta, \gamma\rbrace$, let $c_\delta \in H_1(Y_\delta;\Z{2})$ denote the push-forward of $c$ by the inclusions, and $k_\delta \in H_1(Y_\delta;\Z{2})$ the class corresponding to the core of the solid torus. The aim of Theorem~\ref{trianglechir} is to prove a long exact sequence:
\[ \cdots\rightarrow HSI(Y_\alpha ,c_\alpha+ k_\alpha) \rightarrow HSI(Y_\beta,c_\beta) \rightarrow HSI(Y_\gamma,c_\gamma)\rightarrow \cdots . \]

\begin{remark}By cyclic symmetry of the three curves, the modification $k_\alpha$ can also be put on $Y_\beta$ or $Y_\gamma$. It is also possible to prove a more symmetric sequence:
\[ \cdots\rightarrow HSI(Y_\alpha ,c_\alpha+ k_\alpha) \rightarrow HSI(Y_\beta,c_\beta+ k_\beta) \rightarrow HSI(Y_\gamma,c_\gamma+ k_\gamma)\rightarrow \cdots.\]
 Indeed, let $d$ be the class of the curve $\alpha$ in $H_1(Y;\Z{2})$. Its induced classes on $Y_\alpha$, $Y_\beta$ and $Y_\gamma$ are respectively 0, $k_\beta$ and $k_\gamma$. The exact sequence of the theorem applied with $c+d$ instead of $c$ gives the announced exact sequence. 
\end{remark}

In order to prove this theorem, we will see that a Dehn twist of the punctured torus $T'$ along a non-separating simple closed curve induces a symplectomorphism of the moduli space $\Nc ({T} ')$. This symplectomorphism can be expressed as a Hamiltonian flow   outside the Lagrangian sphere  corresponding to the  connections whose holonomy along the curve $\gamma$ is $-I$. Whereas this is not a priori a generalized Dehn twist, it is nevertheless  possible to build such a twist that permits to obtain the exact sequence, by applying an analog of Seidel's exact sequence (Theorem~\ref{quilttri} ) for quilted Floer homology.

\subsection{Generalized Dehn twists and quilted Floer homology}\sectionmark{generalized Dehn twists}\label{sec:twistexactseq}

All the symplectic manifolds, Lagrangians, and Lagrangian correspondences  appearing by now will satisfy, unless explicitly stated, the assumptions of the category  $\Symp$. Let  $M_0$, $M_1$, ..., $M_k$ be objects of $\Symp$,  

\[\underline{L} = \left(  \xymatrix{   M_0 \ar[r]^{L_{01}} &  M_1 \ar[r]^{L_{12}} &  M_2 \ar[r]^{L_{23}} & \cdots \ar[r]^{L_{(k-1)k}} & M_k \ar[r]^{L_{k}} & pt} \right), \]
a generalized Lagrangian correspondence, $S \subset M_0$ a Lagrangian sphere  disjoint from the hypersurface $R_0$, and $\tau_S\in Symp(M_0)$ a generalized Dehn twist along $S$, as defined in section~\ref{sec:generalizedtwists} (or \cite[Section 1.2]{Seidel}). The aim  of this section is to prove the   following theorem:

\begin{theo}
\label{quilttri} Let  $L_0\subset M_0$ be a Lagrangian submanifold, $S$ and $\underline{L}$ as before. Assume further that $\dim S >2$. There exists a long exact sequence:

\[ \ldots \rightarrow HF(\tau_S L_0, \underline{L}) \rightarrow HF(L_0, \underline{L}) \rightarrow HF(L_0,S^T, S, \underline{L})\rightarrow \cdots \]

\end{theo}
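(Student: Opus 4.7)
The approach is to adapt Seidel's original proof of the exact triangle to the quilted setting of Wehrheim--Woodward, treating the $\underline{L}$-part of each quilt as passive seam data which plays no essential role in the bubbling analysis beyond verifying monotonicity. All three maps of the triangle will be constructed geometrically, each as a count of pseudo-holomorphic quilts over a suitable punctured Riemann surface equipped with $M_0$-seam conditions that mimic Seidel's classical domains in the $M_0$-factor alone.

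Concretely, I would first build the three morphisms. The map $HF(L_0, S^T, S, \underline{L}) \to HF(\tau_S L_0, \underline{L})$ is the analogue of Seidel's key map: it is defined via quilts whose $M_0$-patch is the total space of a Lefschetz fibration over a disc with a single interior critical value, whose vanishing cycle is $S$ and whose monodromy around that critical value realises $\tau_S$; the seam conditions along the remaining boundary of the $M_0$-patch encode the rest of $\underline{L}$. The morphism $HF(\tau_S L_0, \underline{L}) \to HF(L_0, \underline{L})$ is a quilted continuation map associated with a Hamiltonian isotopy from $\tau_S L_0$ to $L_0$ supported outside a neighbourhood of $S$. The third morphism $HF(L_0, \underline{L}) \to HF(L_0, S^T, S, \underline{L})$ arises from a ``node-insertion'' quilt in which a small arc of the $M_0$-patch is pinched, inserting the pair $(S^T,S)$ together with a factor isomorphic to $HF(S,S) \cong H^*(S)$ whose unit is paired off.

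Exactness is then proved as in Seidel by studying one-parameter families of quilted moduli spaces: the boundary of the parameter space yields the composition of two consecutive morphisms, which is accordingly nullhomotopic; a further parametric family provides the contracting chain homotopy that ensures the third map is a quasi-isomorphism at the level of mapping cones. The dimension hypothesis $\dim S > 2$ ensures that $S$ is simply connected, carries the relative spin structure needed to make $HF(S,S)$ well-defined in $\Symp$, and gives $HF(S,S) \cong H^*(S)$ the cohomological structure required for Seidel's splitting of the third term.

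The main obstacle is establishing the compactness, regularity, and monotonicity of all the quilted moduli spaces within the framework of $\Symp$. One must rule out disc bubbles on any seam Lagrangian and sphere bubbles into each degeneracy locus $R_i$ by controlling the intersection numbers $\underline{u}\cdot\underline{R}$, and one must verify that the node-insertion of the pair $(S^T,S)$ and the transition between $L_0$ and $\tau_S L_0$ both fit the embedded composition hypotheses of Theorem~\ref{compogeom} so that the intermediate quilted Floer groups behave as expected. This is the quilted counterpart of the monotonicity analysis of \cite[Section 6]{MW}, and once it is carried out for each of the three chain maps and the contracting homotopies witnessing exactness, the long exact sequence of the theorem follows.
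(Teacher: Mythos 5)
Your outline misplaces the key technical ingredients in two ways that matter, and omits the single most important step that distinguishes this theorem from Seidel's exact setting.

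First, your second map cannot be a continuation map. You write that $HF(\tau_S L_0, \underline{L}) \to HF(L_0, \underline{L})$ ``is a quilted continuation map associated with a Hamiltonian isotopy from $\tau_S L_0$ to $L_0$ supported outside a neighbourhood of $S$.'' But $\tau_S L_0$ and $L_0$ are \emph{not} Hamiltonian isotopic in general; if they were, the two Floer groups would be canonically isomorphic and the exact triangle would degenerate. The Dehn twist is a non-Hamiltonian symplectomorphism. The correct construction of this map (the paper's $C\Phi_2$) is the relative invariant of a quilted Lefschetz fibration over a strip with a single interior critical value, whose monodromy accounts for the twist: the $M_0$-boundary conditions differ on the two sides of the critical value by $\tau_S$, and you absolutely cannot replace the Lefschetz fibration by a Hamiltonian isotopy here. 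You have, moreover, attached the Lefschetz-fibration picture to the wrong arrow: the map out of $HF(L_0, S^T, S, \underline{L})$ is given by a quilted pair of pants with no critical point at all (the paper's $C\Phi_1$, Figure~\ref{quiltedpants}); the critical-value fibration builds the arrow $HF(\tau_S L_0, \underline{L}) \to HF(L_0, \underline{L})$.

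Second, ``exactness is then proved as in Seidel'' skips the central obstruction this theorem has to overcome. Seidel's argument filters the complexes by an $\rr$-valued action functional, because his ambient geometry is exact. In the monotone, relative framework of $\Symp$, the quilted action is defined only modulo a period $M = \kappa N$, so the filtration does not exist and one cannot directly say ``low-energy leading terms give an isomorphism of associated graded.'' The paper deals with this by working over the group ring $\Lambda$ of $\rr$, encoding area in the exponent of a formal variable $q$; proving a low-energy estimate (Proposition~\ref{basseenergy}) saying that the $q$-leading parts of $C\Phi_1$, $C\Phi_2$ and of the homotopy $h$ for $C\Phi_2\circ C\Phi_1 \simeq 0$ single out the identity injections $i_1, i_2$; and then invoking Perutz's completion lemma (Lemma~\ref{lemmaperutz}) to conclude acyclicity of the mapping cone over $\hat{\Lambda}$, followed by a monotonicity argument and the universal coefficient theorem to descend from $\hat{\Lambda}$ back to $\zz$. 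Your proposal is silent on all of this, yet it is the analytic heart of the proof. Relatedly, the paper does not construct all three arrows directly: it constructs $C\Phi_1, C\Phi_2$, proves $C\Phi_2\circ C\Phi_1 \simeq 0$, and gets the third map for free from the long exact sequence of the mapping cone after showing $(h,-C\Phi_2)$ is a quasi-isomorphism. Your ``node-insertion'' map is not used and would require its own delicate definition.

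Finally, the role of $\dim S > 2$ is primarily to guarantee that the standard Lefschetz fibration $E_S$ built from the twist is itself monotone (see \cite[Prop.~4.9]{WWtriangle}); your attribution to spin/simply-connectedness considerations is secondary.

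Your bubbling/monotonicity paragraph is directionally right — one does need to rule out disc and sphere bubbles by controlling intersection with $\underline{R}$, exactly as in the paper's Lemma~\ref{nobubbling} — but as written it is supporting scaffolding around a skeleton that has the wrong maps and is missing the filtration strategy that makes exactness provable at all.
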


\begin{remark}
\label{remtriww} 
This theorem has been established by Wehrheim and Woodward in the monotone setting \cite[Theorem 1.3]{WWtriangle} for the case of fibered Dehn twists. Our proof in the setting of the category  $\Symp$ follows the same lines, the main additional point to check is that there is no bubbling on the divisors any time a reasoning involving 1-dimensional moduli spaces appears.
\end{remark}
\begin{remark}The assumption $\dim S >2$ involved in this theorem ensures the monotonicity of a Lefschetz fibration. A similar statement probably holds, however we will limit ourselves to this case, since $\dim S =3$ would suffice to prove Theorem~\ref{trianglechir}.
\end{remark}

\subsubsection{Generalized Dehn twists in symplectic manifolds}\label{sec:generalizedtwists}

We briefly review some material concerning generalized Dehn twists, and refer to \cite[Section 1]{Seidel} for more details.

\paragraph{\bf Dehn twist inside $T^*S^n$}
Consider the cotangent bundle  $T = T^*S^n$ endowed with its standard symplectic form  $\omega = \sum_i{dq_i \wedge dp_i}$. If  $S^n$ in endowed with the round metric, $T$ may be identified with \[\left\lbrace (u,v) \in \rr^{n+1} \times \rr^{n+1}~|~ |v| = 1, \langle u.v \rangle = 0  \right\rbrace. \] Denote $T(\lambda) = \lbrace (u,v) \in T ~|~ |u| \leq \lambda \rbrace$, In particular $T(0)$ refers to the zero section.

The function $\mu (u,v) = |u|$ generates a circle action on the complement of the zero section, and its flow at time $t$ is given by:

\[ \sigma_t (u,v) = (cos(t) u -sin(t) |u|v, cos(t) v + sin(t) \frac{u}{|u|}),  \]
and the time $\pi$ flow extends to the zero section by the antipodal map, which we will denote $\mathbb{A}$.

Let $\lambda > 0$, and $R\colon \rr\rightarrow \rr$ a smooth function vanishing for $t\geq \lambda$, and such that $R(-t) = R(t) - t$. Consider the Hamiltonian $H = R\circ \mu $  on $T(\lambda) \setminus T(0)$: its time $2\pi$ flow is given by $\varphi^H_{2\pi} (u,v) =  \sigma_t (u,v)$, with  $t = R'(|u|)$, and extends smoothly to the zero section by the antipodal map. The symplectomorphism obtained $\tau$ is a ``model Dehn twist'', with angle function $R'(\mu(u,v))$.

\begin{defi}\label{concave}A model  Dehn twist will be said to be concave if the function $R$ involved in the  definition  is strictly concave and decreasing, that is, satisfying  $R'(t)\geq 0$ and $R''(t)< 0$ for all $t\geq 0$.
\end{defi}

Seidel proves  the following result in a slightly general case, allowing the angle  functions  to oscillate slightly, in a  ``$\delta$-wobbly'' way, with  $0\leq \delta <\frac{1}{2}$.  The following statement, corresponding to $\delta = 0$, will be enough for our purpose.

\begin{lemma} \label{intertwistbis} (\cite[Lemma 1.9]{Seidel}) Suppose that the twist $\tau$ is concave. Let  $F_0 = T(\lambda)_{y_0}$ and $F_1 = T(\lambda)_{y_1}$ be fibers over two points $y_0,y_1 \in S^n$. Then $\tau(F_0)$ and $F_1$ intersect transversely in a single point $y$. Moreover, this point satisfies \[2\pi R'(y) = d(y_0,y_1),\] where $d$ stands for the standard distance on $S^n$.
\end{lemma}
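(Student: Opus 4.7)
The plan is to make the intersection condition $\tau(F_0) \cap F_1$ explicit using the formula for $\sigma_t$, read off uniqueness from the strict concavity of $R$, and finally verify transversality by a direct computation of $d\tau$. In the coordinates introduced in the text, a point of $T^*S^n$ is a pair $(u,v)$ with $|v|=1$ and $\langle u,v\rangle=0$, and $F_i = \{(u,y_i) : u \in T^*_{y_i} S^n,\ |u|\leq \lambda\}$. For $(u,y_0) \in F_0$ with $u \neq 0$, the formula for $\sigma_t$ shows that the base component of $\tau(u,y_0)=\sigma_{2\pi R'(|u|)}(u,y_0)$ is
\[ \cos(2\pi R'(|u|))\, y_0 + \sin(2\pi R'(|u|))\, \tfrac{u}{|u|}, \]
namely the unit vector obtained by rotating $y_0$ through angle $2\pi R'(|u|)$ in the plane $\mathrm{span}(y_0,u/|u|)$.

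Imposing that this equals $y_1$ and assuming first $y_0 \neq \pm y_1$, the two-plane is forced to be $\mathrm{span}(y_0,y_1)$, the unit direction $u/|u|$ must be the initial tangent to the minimizing geodesic from $y_0$ to $y_1$, and the rotation angle must satisfy $2\pi R'(|u|) = d(y_0,y_1)$. The relation $R(-t)=R(t)-t$ gives $R'(0)=1/2$, and by the concavity hypothesis $R'$ is strictly decreasing on $[0,\lambda]$ with $R'(\lambda)=0$, so $2\pi R'$ is a bijection $[0,\lambda] \to [0,\pi]$; since $d(y_0,y_1)\in (0,\pi)$, the radius $|u|$ is uniquely determined, yielding a single intersection point. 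In the antipodal case $y_1=-y_0$, the angle is $\pi$ and forces $|u|=0$; the antipodal extension $\tau(0,y_0)=(0,-y_0)=(0,y_1)$ then gives the unique intersection point on the zero section, again matching $2\pi R'(0)=\pi=d(y_0,y_1)$.

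For transversality at an intersection point $(\bar u,y_1)$ with $\bar u\neq 0$, I would directly compute $d\tau$ on $T_{(u,y_0)}F_0 \cong T^*_{y_0}S^n$. Writing an infinitesimal deformation as $\xi = a\hat u + \xi_\perp$ with $\hat u=u/|u|$ and $a=\langle \hat u,\xi\rangle$, differentiation of $\sigma_{2\pi R'(|u|)}(u,y_0)$ in $u$ yields, after setting $t_0 = d(y_0,y_1)$, the base component of $d\tau(\xi)$:
\[ 2\pi R''(|u|)\, a\,(\cos t_0\,\hat u - \sin t_0\, y_0) \;+\; \tfrac{\sin t_0}{|u|}\,\xi_\perp. \]
The first term lies in $\mathrm{span}(y_0,y_1) \cap T_{y_1}S^n$ and the second in its orthogonal complement inside $T_{y_1}S^n$. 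Tangency to $F_1$ requires the base component to vanish; since $R''(|u|)<0$ and $\sin t_0 > 0$, both terms must vanish independently, forcing $a=0$ and $\xi_\perp=0$, hence $\xi=0$. Combined with the dimension count ($\dim F_0 + \dim F_1 = 2n$), this gives transversality. For the antipodal case the same conclusion will follow by linearizing $\tau$ near the zero section via the expansion $2\pi R'(|u|) = \pi + 2\pi R''(0)|u| + O(|u|^2)$ and $\sigma_\pi = \mathbb{A}$: the $R''(0)$-correction is the non-degenerate ingredient that ensures $d\tau_{(0,y_0)}(T^*_{y_0}S^n)$ meets $T^*_{y_1}S^n$ transversally inside $T_{(0,y_1)}T^*S^n$.

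The main technical obstacle I expect is precisely the transversality check at the zero section, because one must interpret the tangent space of $T^*S^n$ along the zero section and argue that the antipodal extension of $\tau$ is smooth with a sufficiently non-degenerate derivative there. Outside that point the argument is a bookkeeping exercise with the explicit formula for $\sigma_t$ and the bijectivity of $2\pi R'$ on $[0,\lambda]$.
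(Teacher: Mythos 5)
Your proof is correct. The paper cites this statement from Seidel's long exact sequence paper (\cite[Lemma~1.9]{Seidel}) without reproducing the argument, so there is no internal proof to compare against; your direct computation with the explicit formula for $\sigma_t$ --- reading off uniqueness of the intersection from the bijectivity of $2\pi R'\colon [0,\lambda]\to[0,\pi]$, and verifying transversality by differentiating the base component, with the expansion $2\pi R'(|u|)=\pi+2\pi R''(0)|u|+O(|u|^2)$ handling the antipodal/zero-section case --- is precisely the approach in Seidel's original.
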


\paragraph{\bf Dehn twist along a Lagrangian sphere}
Let $S\subset M$ be a Lagrangian sphere,  it admits a Weinstein neighborhood, namely a symplectic embedding  $\iota\colon T(\lambda) \rightarrow M$ for some $\lambda >0$, with  $\iota(T(0)) = S$. Hence, a model Dehn twist of $ T(\lambda)$ defines a symplectomorphism of $M$, denoted by $\tau_S$, with support contained in $\iota (T(\lambda))$. 

A symplectomorphism of $M$ is called a generalized Dehn twist along $S$ if it is Hamiltonian isotopic to such a model  Dehn twist.

\begin{remark}While two model Dehn twists of $T(\lambda)$ always differ from a Hamiltonian isotopy  of $T(\lambda)$, a Dehn twist along $S$ may depend on the parametrization of $S$, see \cite{rizell}.
\end{remark}

\subsubsection{Homology with coefficients in the group ring of $\rr$}\label{sec:coeffs}

The principal ingredient in the proof of Seidel's theorem relies in  the fact that the Floer complexes are $\rr$-graded by the symplectic action, since both  symplectic manifolds and  Lagrangians he considers are exact. The leading order terms of the morphisms involved in the exact sequence   with respect to this filtration induced by this  grading correspond to small energy pseudo-holomorphic curves. It then suffices  suffices to prove that these induce an exact sequence.

When the symplectic manifolds and  Lagrangians aren't exact anymore but only monotone, the symplectic action is only defined  modulo $M = \kappa N$, with  $\kappa$ the monotonicity constant  and $N$ the minimal Maslov number, see section~\ref{sec:smallcontrib}. Wehrheim and Woodward's approach  consist in  encoding this energy in the power of a formal parameter $q$, via the group ring of $\rr$:

\[ \Lambda =  \left\lbrace \left. \sum_{k=1}^{n}{a_k q^{\lambda_k}} ~\right|~ n\geq 1,~a_k \in \zz,~ \lambda_k \in \rr  \right\rbrace .\]

The Floer complex with coefficients in this ring is then the free $\Lambda$-module  $CF (\underline{L};\Lambda):= CF(\underline{L})\otimes_\zz \Lambda$, endowed with the differential $\partial_\Lambda$ defined by:

\[ \partial_\Lambda x_- = \sum_{x_+}{\sum_{\underline{u}\in \mathcal{M}(x_-,x_+)}{o(\underline{u}) q^{A(\underline{u})}} x_+}, \]
where $x_+,x_-\in \I(\underline{L})$ are generalized intersection points, $\mathcal{M}(x_-,x_+)$ denotes the moduli space of index 1 generalized Floer trajectories with zero intersection  with $\underline{R}$ (modulo translation), $o(\underline{u}) = \pm 1$ is the orientation of the point $\underline{u}$ in the moduli space constructed in \cite{WWorient} from of the unique relative spin structure on $\underline{L}$, and $A(\underline{u})$ is symplectic area  for the monotone forms  $\tilde{\omega}_i$.

The homology of $(CF(\underline{L};\Lambda),\partial_\Lambda)$ is then the $\Lambda$-module denoted $HF(\underline{L};\Lambda)$. Generally, this homology may differ from the homology with $\zz$-coefficients, we will however see in section~\ref{sec:prooftriangle} that the monotonicity of $\underline{L}$ ensures that $ CF(\underline{L};\Lambda) \simeq CF(\underline{L};\zz) \otimes_\zz \Lambda$, and $ HF(\underline{L}) \simeq HF(\underline{L};\Lambda) / ( q-1)$.

\subsubsection{Short exact sequence at the chain level}\label{sec:shortexactseq}

The following proposition follows from Lemma~\ref{intertwistbis}:

\begin{prop}\label{inter}
Let $\iota\colon T(\lambda) \rightarrow M_0$ be a  symplectic embedding, $\tau_S$ a concave model Dehn twist   associated to $\iota$. Assume:
\begin{itemize}

\item[i)] that $\I(L_0, \underline{L})$ is disjoint from $\iota\left(  T(\lambda) \right)$, 

\item[ii)] that  $L_0 \cap \iota( T(\lambda))$ is a union of fibers:

\[  \iota^{-1}( L_0) = \bigcup_{y\in \iota^{-1}( L_0\cap S)}{T(\lambda)_y} \subset T(\lambda)\]

\item[iii)] that  $L_{01}$ and $S\times M_1$ intersect transversely in $M_0\times M_1$, and that, denoting $\pi \colon  \iota\left(  T(\lambda) \right) \rightarrow S$ the projection,

\[ L_{01}\cap \left(  \iota\left(  T(\lambda)  \right)  \times M_1 \right) = (\pi\times id_{M_1})^{-1}(L_{01} \cap \left( S\times M_1 \right) ). \]  

\end{itemize}

Then, there exists two natural injections \[i_1\colon \I(\tau_S L_0,S^T, S, \underline{L}) \rightarrow \I(\tau_S L_0, \underline{L})\] and \[i_2\colon \I( L_0, \underline{L}) \rightarrow \I(\tau_S L_0, \underline{L})\] such that

\[ \I(\tau_S L_0, \underline{L}) = i_2 \left( \I(L_0, \underline{L}) \right)  \sqcup  i_1 \left( \I(\tau_S L_0,S^T, S, \underline{L}) \right) . \]

\end{prop}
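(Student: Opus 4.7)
The plan is to exploit the locality of the Dehn twist: since $\tau_S$ is the identity outside $\iota(T(\lambda))$, every generalized intersection point $(x_0,x_1,\ldots,x_k)\in\I(\tau_S L_0,\underline L)$ splits into two cases depending on whether $x_0$ lies in $\iota(T(\lambda))$. First I would handle the ``outside'' case: if $x_0\notin\iota(T(\lambda))$, then $\tau_S(x_0)=x_0$, so $x_0\in L_0$ and the tuple yields an element of $\I(L_0,\underline L)$. Conversely, by assumption $(i)$, every element of $\I(L_0,\underline L)$ has first coordinate outside $\iota(T(\lambda))$, hence automatically lies in $\I(\tau_S L_0,\underline L)$. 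This defines $i_2$ as the tautological inclusion, with image exactly those tuples whose first coordinate is outside $\iota(T(\lambda))$.

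For the ``inside'' case, I would first note that $\tau_S$ preserves $S$, acting on it as the antipodal map $\mathbb A$; combined with assumption $(ii)$, this yields
\[\tau_S L_0\cap\iota(T(\lambda))=\bigcup_{y\in L_0\cap S}\tau_S\bigl(T(\lambda)_y\bigr),\qquad \tau_S L_0\cap S=\mathbb A(L_0\cap S).\]
Given a tuple with $x_0\in\iota(T(\lambda))$, set $y_0':=\pi(x_0)\in S$; by assumption $(iii)$, $(y_0',x_1)\in L_{01}$, so $(y_0',x_1,\ldots,x_k)\in\I(S,\underline L)$. Moreover, the displayed decomposition shows that $x_0$ lies in a unique fiber $\tau_S(T(\lambda)_y)$ with $y\in L_0\cap S$, so I set $y_0:=\mathbb A(y)\in\tau_S L_0\cap S$. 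The assignment $(x_0,\ldots,x_k)\mapsto(y_0,y_0',x_1,\ldots,x_k)$ then gives a tuple in $\I(\tau_S L_0,S^T,S,\underline L)$. To produce $i_1$ in the reverse direction, given $(y_0,y_0',x_1,\ldots,x_k)$, I would recover $y:=\mathbb A(y_0)\in L_0\cap S$ and apply Lemma~\ref{intertwistbis} to the two cotangent fibers $T(\lambda)_y$ and $T(\lambda)_{y_0'}$: it provides a unique intersection $x_0\in\tau_S(T(\lambda)_y)\cap T(\lambda)_{y_0'}\subset\tau_S L_0$, and since $\pi(x_0)=y_0'$, assumption $(iii)$ ensures $(x_0,x_1)\in L_{01}$. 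The two constructions are manifestly mutual inverses, and their images are disjoint because they are characterized by whether $x_0\in\iota(T(\lambda))$ or not.

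The heart of the argument is Lemma~\ref{intertwistbis}, which gives both existence and uniqueness of the ``inside'' intersection point $x_0$; this is precisely where the concavity hypothesis on the model twist enters, without which $\tau_S(T(\lambda)_y)\cap T(\lambda)_{y_0'}$ could contain several points and the bijection would fail. A secondary subtlety, needed to justify $\tau_S L_0\cap S=\mathbb A(L_0\cap S)$, is that $\tau_S$ really acts on the zero section as $\mathbb A$ (and not merely up to isotopy); this follows directly from the explicit local model for $\tau_S$ recalled in Section~\ref{sec:generalizedtwists}.
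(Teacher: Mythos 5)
Your proof is correct and follows essentially the same route as the paper's: split $\I(\tau_S L_0,\underline L)$ by whether the first coordinate lies in $\iota(T(\lambda))$, take $i_2$ to be the tautological inclusion on the outside part using hypothesis $(i)$, and on the inside part use $(ii)$, $(iii)$, and the single-intersection Lemma~\ref{intertwistbis} for concave twists to build the bijection $i_1$ with $\I(\tau_S L_0,S^T,S,\underline L)$. If anything you are slightly more careful than the paper in tracking that $\tau_S L_0\cap S=\mathbb A(L_0\cap S)$ and passing explicitly through the antipodal map, which corrects a small notational slippage in the paper's own description of the inverse of $i_1$.
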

\begin{proof}

Denote $\nu S = \iota\left(  T(\lambda) \right)$,

\begin{align*}
\I(\tau_S L_0, \underline{L}) = & \I(\tau_S L_0, \underline{L}) \cap \left( M_0\setminus \nu S\right) \times M_1 \times \cdots \times M_k \\
  &\sqcup \I(\tau_S L_0, \underline{L}) \cap \nu S \times M_1 \times \cdots \times M_k .
\end{align*}
 
According to $i)$ and the fact that $\tau_S$ has support contained in $\nu S$, 

\[ \I(\tau_S L_0, \underline{L}) \cap \left( M_0\setminus \nu S\right) \times M_1 \times \cdots \times M_k = \I(L_0, \underline{L}). \]
The map $i_2$ can then be chosen to be the identity. From $ii)$, one has:

\[\I(\tau_S L_0, \underline{L}) \cap \nu S \times M_1 \times \cdots \times M_k = \bigcup_{x_0 \in L_0\cap S} {\I( \tau_S \left( T(\lambda)_{x_0}\right) , \underline{L} ) }. \]

Let $\underline{y} \in \I(S, \underline{L})$, by assumption $T(\lambda)_{y_0} \times  \left\lbrace  y_1 \right\rbrace \subset L_{01}$, and by Lemma~\ref{intertwistbis}, $\tau_S \left( T(\lambda)_{x_0}\right)$ and $T(\lambda)_{y_0}$ intersect in exactly one point $z$. One then defines $i_1$ by taking $i_1(x_0, y_0, y_1, \cdots) = (z, y_1, \cdots)$. This map realizes  a bijection between $\I(\tau_S L_0,S^T, S, \underline{L})$ and \[\bigcup_{x_0 \in L_0\cap S} {\I( \tau_S \left( T(\lambda)_{x_0}\right) , \underline{L}) }. \]
Indeed its inverse map is given by the map $(z, y_1, \cdots) \mapsto (x_0, y_0, y_1, \cdots),
$ where $ x_0 = \pi (z) $ and $ y_0 = \pi ( \tau_S^{-1} (z))$.

\end{proof}

\begin{remark}\label{justifinter}
Up to displacing the Lagrangians by Hamiltonian isotopies and taking $\lambda$ sufficiently small, one can always assume that the hypotheses of  Proposition~\ref{inter} are satisfied. Indeed all the  intersections can be made transverse, and then one can choose the embedding $\iota$ in order to have $ii)$ and $iii)$.

\end{remark}

Hence one has the  direct sum decomposition of the following $\Lambda$-modules: 

\[ CF (\tau_S L_0, \underline{L};\Lambda) = CF (\underline{L};\Lambda) \oplus CF (\tau_S L_0,S^T, S, \underline{L};\Lambda), \]
and a $\Lambda$-modules short exact sequence (and not necessarily chain complexes):
\begin{equation}\label{suitecourte}
0 \rightarrow CF (\tau_S L_0,S^T, S, \underline{L};\Lambda ) \rightarrow CF (\tau_S L_0, \underline{L} ;\Lambda) \rightarrow CF (L_0, \underline{L};\Lambda) \rightarrow   0 .
\end{equation}

\begin{remark} The sphere $S$ being invariant by the twist, one has the following isomorphisms:
 \begin{align*} CF (\tau_S L_0,S^T, S, \underline{L};\Lambda) &\simeq CF (\tau_S L_0,\tau_S S^T, S, \underline{L};\Lambda) \\ 
  &\simeq CF (L_0,S^T, S, \underline{L};\Lambda).  
\end{align*}

\end{remark}

\subsubsection{Quilted Lefschetz fibrations}\label{sec:quiltlefschetz}

The strategy for proving the long exact sequence consists in  approximating the maps of the short exact sequences by chain complexes  morphisms. In order to commute with  the differentials, these morphisms will be constructed by counting pseudo-holomorphic quilts, more precisely pseudo-holomorphic sections  of quilted Lefschetz fibrations. We recall the definitions of these objects, taken from \cite{WWquilts} and adapted to the framework of the category $\Symp$. 

\begin{defi}\label{fiblefschetz}
Let $S$ be a compact Riemann surface, possibly with boun\-dary. A \emph{Lefschetz fibration over $S$}, in the framework of  $\Symp$, consists of a tuple $(E, \pi, \omega, \tilde{\omega}, R, \tilde{J})$, with:

\begin{itemize}
\item $E$  a compact orientable manifold  of dimension $2n +2$,
\item $\pi\colon E\rightarrow S$  a surjective  differentiable map, such that $\partial E = \pi^{-1} (\partial S)$, which is a submersion except at a finite number of critical points $E^{crit}$, disjoint from $\partial E$,
\item  $\tilde{J}$  an almost complex structure on $E$, integrable in a neighborhood of $E^{crit}$, such that the differential of $\pi$ is $\cc$-linear, and that in a neighborhood of each critical point, in  holomorphic charts, $\pi$ can be written:

\[\pi(z_0, \cdots, z_n) = \sum_{i}{z_i^2},\]
\item $\omega$ and $\tilde{\omega}$  two closed 2-forms on  $E$ which are  non-degenerated in the neighborhood of the critical points,

\item $R$  an almost complex  hypersurface for $\tilde{J}$, disjoint from $E^{crit}$, transverse to the fibers of $\pi$, and such that for every regular fiber $F$ of $\pi$, $(F, \omega_{|F}, \tilde{\omega}_{|F}, R \cap F, \tilde{J}_{|F})$ is an object of $\Symp$.
\end{itemize}

 \end{defi}

\begin{remark}
We assume that $\tilde{\omega}$ is monotone only along the fibers, however according to \cite[Prop. 4.6]{WWtriangle}, provided $n\geq 2$, this implies that the form $\tilde{\omega}$ is  monotone on $E$.

\end{remark}

\begin{defi} A \emph{quilted surface with strip-like ends} consists of:
\begin{enumerate}

\item a compact quilted   surface   $\underline{S}$,

\item a finite set of incomings and outgoing marked points,  \[\mathcal{E} = \mathcal{E}_- \sqcup \mathcal{E}_+ \subset \partial \underline{S}.\]

\item  strip-like ends associated  to each marked point $e \in \mathcal{E}$, namely quilted holomorphic maps
\[ \epsilon_e \colon  \begin{cases}
 [0, + \infty ) \times [0,N_e] \rightarrow \underline{S}  &\text{if $e \in \mathcal{E}_+$ is an outgoing end}\\
 ( -\infty ,0] \times [0,N_e] \rightarrow \underline{S} &\text{if $e \in \mathcal{E}_-$ is an incoming end}
\end{cases}  \]
having for limit $e$ in $\pm \infty$, and whose image's closures is a neighborhood of $e$ in $\underline{S}$. If $N_e$ represents the number of patches $S_1, S_2, \cdots$, $S_{N_e}$ touching $e$, $[0, \pm \infty ) \times [0,N_e]$ can be seen as a quilted surface  with  $N_e$ parallel strips of width $1$ seamed altogether. The map $ \epsilon_e$ corresponds to $N_e$ maps:
\[ \epsilon_{k,e} \colon [0, \pm \infty ) \times [k-1,k] \rightarrow S_k. \]
\end{enumerate}

\end{defi}

\begin{defi} \label{quiltfiblefschetz}
 Let $\underline{S}$ be a  quilted surface with strip-like ends, a \emph{quilted Lefschetz fibration over $\underline{S}$, with  seam  and boundary conditions}, consists of:
\begin{enumerate}

\item For each patch $S_k$, a Lefschetz fibration  $\pi_k \colon E_k \rightarrow S_k$  as in Definition~\ref{fiblefschetz}.

\item A  set of   Lagrangian seam  and boundary conditions, denoted $\underline{F}$, consisting of:

\begin{itemize}

\item[$(a)$] for a seam $\sigma = \lbrace I_{k_0,b_0},I_{k_1,b_1} \rbrace \in \mathcal{S}$, a submanifold 
\[ F_{\sigma} \subset {E_{k_0}}_{|I_{k_0,b_0}} \times_{|I_{k_0,b_0}} \varphi_{\sigma}^* {E_{k_1}}_{|I_{k_1,b_1}}, \] which is isotropic for the forms $\tilde{\omega_i}$, transverse to the fibers, and such that the intersection with  every  fiber is a  Lagrangian correspondence satisfying  the assumptions of $\Symp$. Recall that $\varphi_\sigma \colon I_{k,b} \rightarrow I_{k',b'}$ refers to  the real analytic   diffeomorphism which identifies the seams.

\item[$(b)$] for a boundary $I_{k,b}\notin \bigcup_{\sigma\in \mathcal{S}}\sigma$, a submanifold $F_{k,b} \subset {E_k} |_{I_{k,b}}$, transverse to the fibers, and such that its intersection with  every fiber is a Lagrangian submanifold satisfying the hypotheses of $\Symp$.

\end{itemize}
\item trivializations over the ends $\epsilon_{k,e}$:
\[{\epsilon_{k,e}}  ^*{(E_k)}\simeq (E_k)_e \times [0, \pm \infty ) \times [k-1,k],  \]
such that the seam  and boundary conditions are   constant in these identifications: \[F_{\sigma} \simeq (F_{\sigma})_e \times [0,\pm \infty ) \times \lbrace k\rbrace\text{,  and }F_{k,b} \simeq (F_{k,b})_e \times [0,\pm \infty ) \times \lbrace k\rbrace.\]

\end{enumerate}

\end{defi}

\paragraph{\bf Relative invariant  associated to a quilted Lefschetz fibration}  
Let $\underline{\pi}\colon (\underline{E},\underline{F}) \rightarrow \underline{S}$ be a quilted Lefschetz fibration as before, and $\underline{J} = ( J_k)_k$ a family of almost complex structures on $\underline{E}$, which coincide with  the reference almost complex structures   $\widetilde{\underline{J}}$ in a neighborhood of the hypersurfaces $\underline{R}$, and such that the projections are pseudo-holomorphic, and compatible with  the 2-forms $\omega_k$ along the fibers,

If $u\colon (\underline{S},\mathcal{S})\rightarrow (\underline{E},\underline{F})$ is a  pseudo-holomorphic section, its associated linearized Cauchy-Riemann operator is defined by:
\[
 D_u \colon \begin{cases}  \Omega^0(u^*T^{vert}\underline{E}, u^*T^{vert}\underline{F} ) \rightarrow &  \Omega^{0,1}(u^*T^{vert}\underline{E}) \\

  \xi \mapsto & \frac{d}{dt}_{t=0} \Pi_{t\xi}^{-1}\overline{\partial}_J \exp_u (t \xi) ,

\end{cases}
\]
where $\Omega^0(u^*T^{vert}\underline{E}, u^*T^{vert}\underline{F} )$  denotes the space of quilted sections  of the fibration $u^*T^{vert}\underline{E}$ with  values in $u^*T^{vert}\underline{F}$ over the seams (for suitable Sobolev norms),  $\Omega^{0,1}(u^*T^{vert}\underline{E})$ stands for the  $(0,1)$-forms with values in this fibration, $\overline{\partial}_J u = \frac{1}{2} (du + J(u)\circ du \circ j)$ is the Cauchy-Riemann operator  associated to $J$, and $\Pi_{t\xi} \colon T_{u(x)}{M}\rightarrow T_{\exp_{u(x)} (t\xi)}{M}$ refers to a parallel transport.

As soon as the end conditions are transverse, $D_u$ is a Fredholm operator, see \cite[Lemma 3.5]{WWquilts}, and is surjective for generic almost complex structures, see \cite[Theorem 4.11]{WWtriangle}.

For such almost complex structures, the moduli space  of $\underline{J}$-holomorphic sections $  \underline{s}\colon  \underline{S} \rightarrow \underline{E}$, with seam/boundary conditions given by $\underline{F}$, with zero intersection  with  the family of hypersurfaces $\underline{R}$, and having for  limits \[\underline{x} \in \prod_{e\in \mathcal{E}_-(\underline{S})} \I(L^{(k_{e,0},b_{e,0})}  , \cdots , L^{(k_{e,l(e)},b_{e,l(e)})}), \]
and  
\[\underline{y} \in \prod_{e\in \mathcal{E}_+(\underline{S})} \I(L^{(k_{e,0},b_{e,0})}  , \cdots , L^{(k_{e,l(e)},b_{e,l(e)})})\] at the   corresponding ends is the union of smooth manifolds  $ \mathcal{M}(\underline{E},\underline{F},\underline{J},\underline{x},\underline{y})_k$ of dimensions $k\geq 0$. Their dimension corresponds to  the index of the operator $D_u$. This index generalizes the  Maslov index and can be computed from topological data, see \cite{WWorient}.

In this setup, $ \mathcal{M}(\underline{E},\underline{F},\underline{J},\underline{x},\underline{y})_0$ is a compact manifold  of dimension 0, which allows one to define a map:

\begin{align*}
C\Phi_{\underline{E},\underline{F}} \colon & \bigotimes_{e\in \mathcal{E}_-(\underline{S})} CF(L^{(k_{e,0},b_{e,0})}  , \cdots , L^{(k_{e,l(e)},b_{e,l(e)})})\\
&\rightarrow \bigotimes_{e\in \mathcal{E}_+(\underline{S})} CF(L^{(k_{e,0},b_{e,0})} , \cdots , L^{(k_{e,l(e)},b_{e,l(e)})} )
\end{align*}
by the following formula:

\[ C\Phi_{\underline{E},\underline{F}}( \otimes_{e\in \mathcal{E}_-(\underline{S})}(x_e^0, \cdots x_e^{l(e)}) ) = \sum_{\otimes y_e^i} \sum_{\underline{s}\in \mathcal{M}(\underline{E},\underline{F},\underline{x},\underline{y})}{o(\underline{s}) q^{A(\underline{s})} \otimes_e (y_e^i)_i}, \]

For generic almost complex structures, this map commutes  with  the differential. To prove this fact, one applies the following standard argument in Floer theory: one observes  that the coefficients of  $\partial C\Phi_{\underline{E},\underline{F}} - C\Phi_{\underline{E},\underline{F}} \partial$ corresponds to  the cardinal of the boundary of a compact 1-dimensional manifold.

\begin{lemma} There  exists a comeagre subset of almost complex structures on $\underline{E}$ for which the Gromov compactification of 
 $ \mathcal{M}(\underline{E},\underline{F},\underline{J},\underline{x},\underline{y})_1$ is a  compact one-dimensional manifold with boundary, and its  boundary is identified with:

\begin{align*}
 \partial \mathcal{M}(\underline{E},\underline{F},\underline{J},\underline{x},\underline{y})_1  = &\bigcup_{\underline{x}'}{  \widetilde{\mathcal{M}}(\underline{x},\underline{x}')_1 \times  \mathcal{M}(\underline{E},\underline{F},\underline{J},\underline{x}',\underline{y})_0  } \\
 \cup& \bigcup_{\underline{y}'}{     \mathcal{M}(\underline{E},\underline{F},\underline{J},\underline{x},\underline{y}')_0 \times \widetilde{\mathcal{M}}(\underline{y}',\underline{y})_1}  ,
\end{align*}
where $\underline{x}'$ (resp. $\underline{y}'$)  runs over the generating set of the source (resp. target) chain complex,  $\widetilde{\mathcal{M}}(\underline{x},\underline{x}')_1 $ and $\widetilde{\mathcal{M}}(\underline{y}',\underline{y})_1$ denote the  quotients by $\rr$ of the  spaces of index 1  quilted Floer   trajectories, i.e. the coefficients of the differentials of the corresponding complexes.

\end{lemma}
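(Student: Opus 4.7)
The plan is to run the standard Floer-theoretic compactness-and-gluing argument in the quilted/fibered setting of \cite{WWquilts}, while using the structural assumptions built into $\Symp$ to rule out the various bubbling phenomena that would otherwise contribute extra boundary components.

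First I would choose $\underline{J}$ in a comeagre subset so that the operator $D_{\underline{s}}$ is surjective at every section $\underline{s}$ of index $\le 1$ with seam/boundary conditions in $\underline{F}$ and zero intersection with $\underline{R}$; this is the transversality theorem \cite[Theorem 4.11]{WWtriangle} applied patch by patch. Shrinking the set further one arranges that every simple $\widetilde{J}_k$-holomorphic sphere in a fiber, every simple pseudo-holomorphic disc with boundary on a seam/boundary Lagrangian $F_\sigma$ or $F_{k,b}$, and every nodal configuration that could appear in a Gromov limit, is also regular. This makes $\mathcal{M}(\underline{E},\underline{F},\underline{J},\underline{x},\underline{y})_1$ a smooth $1$-manifold and makes all lower-dimensional strata where bubbles appear transversely cut out.

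Next I would take a sequence $\underline{s}_n$ in this moduli space and apply quilted Gromov compactness (\cite[Sec. 5]{WWquilts}): it converges, after passing to a subsequence, to a configuration consisting of a principal section $\underline{s}_\infty$ together with possibly some sphere bubbles in the fibers, disc bubbles on the fiberwise Lagrangians, and broken quilted Floer strips at the strip-like ends. The key step is to exclude every component of this limit except strip-breaking. Sphere bubbles in a regular fiber cost at least $N \ge 8$ Maslov index by the monotonicity condition (x) of Definition \ref{defcat}, and disc bubbles cost at least $N/2$; since our index budget is $1$, none of these can occur while leaving a principal section of non-negative index. Bubbles hitting $\underline{R}$ are controlled by the intersection-number hypothesis: by Proposition \ref{degener} and condition (xii) of Definition \ref{defcat}, every candidate bubble with a component in $\underline{R}$ would contribute a strictly negative multiple of $2$ to $\underline{s}_n\cdot \underline{R}$, contradicting $\underline{s}\cdot\underline{R}=0$. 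Finally, the Lefschetz critical points are accounted for by choosing the integrable model near $E^{crit}$ and observing, as in \cite[Lemma 4.3]{WWtriangle}, that sections cannot concentrate mass at a node in the base.

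Having excluded bubbling, the only possible limits are configurations in which the energy escapes along the strip-like ends. Standard quilted soft-rescaling analysis at each end $e\in\mathcal{E}_\pm$ then extracts a broken quilted Floer trajectory in $\widetilde{\mathcal{M}}(\underline{x},\underline{x}')_1$ (at an incoming end) or in $\widetilde{\mathcal{M}}(\underline{y}',\underline{y})_1$ (at an outgoing end), together with a limit section in $\mathcal{M}(\underline{E},\underline{F},\underline{J},\underline{x}',\underline{y})_0$ (resp.\ $\mathcal{M}(\underline{E},\underline{F},\underline{J},\underline{x},\underline{y}')_0$). Conversely, a pregluing plus Newton iteration, identical to the one in \cite[Sec. 5]{WWquilts}, shows that every such broken configuration is the limit of a unique smooth $1$-parameter family of actual sections; this identifies the stated set with the boundary of the compactified moduli space and endows it with a smooth manifold-with-boundary structure. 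The main obstacle in the argument is the bubbling analysis along $\underline{R}$: one has to verify that the intersection-number bookkeeping of Lemma [MW, Lemma 6.4] passes through the Gromov limit with the correct sign, so that the assumption $\underline{s}\cdot\underline{R}=0$ really does rule out any limiting component landing in $\underline{R}$.
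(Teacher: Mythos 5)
Your overall plan — transversality from \cite[Theorem 4.11]{WWtriangle}, quilted Gromov compactness, exclusion of bubbling, then pregluing/Newton iteration via \cite[Theorem 3.9]{WWquilts} — is the same strategy the paper uses, which routes the ``no extra degeneracies'' step through Lemma~\ref{nobubbling}. But your treatment of the bubbling step contains a genuine gap, and you have not closed the part you yourself flag as ``the main obstacle.''

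The problem is with the sentence ``Sphere bubbles in a regular fiber cost at least $N\ge 8$ Maslov index by the monotonicity condition (x) of Definition~\ref{defcat} \dots; since our index budget is 1, none of these can occur.'' Condition (x) controls the Chern number only through the monotone form $\tilde\omega$, and $\tilde\omega$ \emph{degenerates} on $R$. In the category $\Symp$ there exist nonconstant pseudo-holomorphic spheres of index \emph{zero}: by condition (xii) they are precisely the ones contained in $R$, and Proposition~\ref{degener} exhibits them concretely as fibers of the sphere fibration on $R$, with $\tilde\omega$-area zero. An index-zero bubble is perfectly compatible with an index-$1$ principal section, so the monotonicity/Maslov-index bookkeeping alone rules out nothing. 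The paper's proof of Lemma~\ref{nobubbling} takes exactly this into account: one first shows that any bubble or disc in the Gromov limit has index zero (hence zero $\tilde\omega$-area), concludes that it is contained in $\underline R$, eliminates disc bubbles because the boundary Lagrangians are disjoint from $\underline R$, and then runs the intersection-number argument
\[
\underline{s}_\infty\cdot\underline R \;=\; \underline u_\infty\cdot\underline R \;+\;\sum_k \underline b_k\cdot\underline R \;=\;0,\qquad \underline b_k\cdot\underline R\le -2,
\]
which forces $\underline u_\infty\cdot\underline R>0$ and contradicts the fact that $\underline u_\infty$ is a $C^\infty_{\mathrm{loc}}$ limit (away from nodes) of curves disjoint from $\underline R$. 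Your proposal gestures at this (``every candidate bubble with a component in $\underline R$ would contribute a strictly negative multiple of 2'') but then claims a direct contradiction with $\underline s\cdot\underline R=0$ without explaining why the principal component cannot absorb the negative contribution; this is precisely the point you leave open as ``the main obstacle.'' To make the argument go through you need to (a) note that the relevant bubbles have \emph{zero} index and lie in $\underline R$, rather than having index $\ge 8$, and (b) finish the intersection-number argument along the lines above. With that repaired, your proposal matches the paper's proof in substance.
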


\begin{proof} The fact that the right hand side is contained in the left hand side is a standard gluing result, see \cite[Theorem 3.9]{WWquilts}. The fact that there is no other kind of degeneracies comes from Gromov compactness and the following Lemma~\ref{nobubbling}:
\end{proof}
\begin{lemma}\label{nobubbling}There exists a comeagre subset of almost complex structures on $\underline{E}$ for which no bubbling appears in the moduli spaces of sections of index  smaller or equal to 1.

\end{lemma}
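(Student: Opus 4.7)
The plan is to apply standard Gromov compactness together with a case analysis ruling out each possible bubbling scenario, mirroring the strategy of \cite[Theorem~6.5]{MW} but now in the richer setting of a quilted Lefschetz fibration. First, by applying Sard-Smale to the universal moduli spaces of simple pseudo-holomorphic sections, simple sphere bubbles in the fibers, and simple disk bubbles attached to the seam/boundary conditions $\underline{F}$, I would produce a comeagre subset of almost complex structures $\underline{J}$ for which all these relevant moduli spaces are cut out transversely. The Gromov compactification of the $\leq 1$-dimensional stratum of $\mathcal{M}(\underline{E},\underline{F},\underline{J},\underline{x},\underline{y})$ is then exhausted by strip breakings at the ends (which we want) together with possible nodal degenerations obtained by bubbling off sphere or disk components; the goal is to show the latter never occur.

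The case analysis uses monotonicity in an essential way. Any non-constant bubble has non-negative $\tilde{\omega}$-area, and by the monotonicity relation $[\tilde{\omega}]=\tfrac14 c_1$ on each fiber together with the assumption that the minimal Chern number (outside $R$) is a positive multiple of $4$, a sphere bubble of strictly positive $\tilde{\omega}$-area in a fiber contributes at least $8$ to the index of the limit configuration. The analogous statement holds for disk bubbles on the Lagrangian seam/boundary conditions, whose Maslov indices are positive multiples of $8$ by condition $(x)$ applied to the correspondences of $\Symp$. Since the total index of the limit equals $1$, any such positive-area bubble forces the index of the principal section to be $\leq -7$; for generic $\underline{J}$ the corresponding moduli spaces are empty by transversality, so these bubbles cannot occur.

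The delicate case, which I expect to be the main obstacle, is that of zero $\tilde{\omega}$-area bubbles. By Proposition~\ref{degener}, a zero-area sphere in a fiber is a branched cover of a fiber of the $S^2$-fibration of $R$ and so has intersection number $-2d$ with $R$, where $d$ is its degree; the analogous category hypothesis bounds the $(R,R')$-intersection of a zero-area disk by a negative multiple of $2$. Such a bubble has index $0$ and therefore cannot be excluded by the index argument alone. Here the hypothesis $\underline{u}\cdot\underline{R}=0$ enters: the intersection number $\underline{u}\cdot\underline{R}$ is invariant under Gromov convergence, and positivity of intersection for the remaining pseudo-holomorphic components (transverse to the almost-complex hypersurface $R$ for generic $\underline{J}$) would force the principal section to cross $R$ transversely at a strictly positive number of points, contradicting the fact that it is itself a limit of maps with image in $\underline{E}\setminus\underline{R}$. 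One then verifies, by combining the dimension formula for the space of pairs (principal section passing through a point of $R$, node mapped to $R$) with the codimension-$2$ condition of meeting $R$, that this stratum is cut out transversely of negative expected dimension for generic $\underline{J}$, so it is empty.

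Assembling these three exclusions gives, for a comeagre subset of $\underline{J}$, the absence of any bubbling in the Gromov compactification of $\mathcal{M}(\underline{E},\underline{F},\underline{J},\underline{x},\underline{y})_k$ for $k\leq 1$. The subtle point to be careful about, which I flag as the hardest technical step, is the control of multiply-covered bubbles: the simple bubbles are transverse by the universal moduli space argument, but multiple covers of simple spheres or disks lying in $R$ have to be handled separately, either by noting that each cover inherits the negative-intersection bound multiplicatively and so the same topological argument applies, or by refining the perturbation to an $R$-preserving family of almost-complex structures as in \cite[Section 4]{MW}.
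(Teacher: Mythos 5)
Your overall strategy matches the paper's: Gromov compactification, monotonicity plus the minimal Chern/Maslov assumptions to force any bubble to have index zero and hence zero $\tilde{\omega}$-area, and then the $\underline{u}\cdot\underline{R}=0$ constraint to kill the resulting zero-area bubbles contained in $\underline{R}$. The paper's route for the first step is exactly your monotonicity estimate (it cites \cite[Lemma~2.9]{MW} for non-negative index of fiberwise bubbles, rather than a Sard--Smale argument on bubbles), and it dispenses with disc bubbles on true boundary components immediately, since those Lagrangians are disjoint from $\underline{R}$. So far so good.

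Where your write-up has a genuine gap is the final contradiction for zero-area bubbles. You assert that a principal section with positive transverse intersection with $\underline{R}$ contradicts its being a limit of curves disjoint from $\underline{R}$. That inference is only valid at points where no bubble is attached: at a node, energy concentration means the approximating curves can pass by without touching $\underline{R}$, so a nodal intersection creates no contradiction. The paper closes this gap with a counting argument you don't state: each bubble contributes $\leq -2$ to $\underline{s}_\infty\cdot\underline{R}=0$, so the principal component contributes $\geq 2N$ where $N$ is the number of bubbles; but, thanks to Lemma~\ref{codimtangence}, all its intersections with $\underline{R}$ are transverse of multiplicity one, and at most $N$ of them can be at nodes. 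Hence some intersection is away from the nodes, and \emph{that} is what contradicts $C^\infty_{\mathrm{loc}}$ convergence of maps avoiding $\underline{R}$. Your proposed backup route via a dimension count on the stratum of nodal configurations runs into exactly the multiple-cover transversality issues you flag at the end; the paper's purely topological intersection-number argument (which only needs positivity of intersection and the $\leq -2$ bound from the $\Symp$ axioms, both of which behave well under covers) avoids them entirely, and this is the decisive reason to prefer it.
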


The argument is analogous to the one appearing in the  proof of \cite[Prop. 2.10]{MW}. We recall  it here in this setup. It is based on the following lemma, which ensures that every   pseudo-holomorphic section appearing  in these spaces  intersect the hypersurface transversely.

\begin{lemma}(see \cite[Lemma 2.3]{MW})\label{codimtangence}
There exists a  comeagre subset of regular almost complex structures on $\underline{E}$ for which the moduli spaces of  pseudo-holomorphic sections are smooth, and the subspaces  consisting of sections meeting  $\underline{R}$ at points of order of tangency  $k$  are  contained in the finite union of codimension $2k$ submanifolds.
\end{lemma}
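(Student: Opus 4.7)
The plan is to run the standard universal moduli space / Sard--Smale scheme, as in the proof of \cite[Lemma 2.3]{MW}, adapted to the quilted Lefschetz setting. Let $\mathcal{J}^\ell$ denote a Banach manifold of $C^\ell$-almost complex structures on $\underline{E}$ satisfying the compatibility conditions of Definition~\ref{quiltfiblefschetz} and agreeing with $\widetilde{\underline{J}}$ in a fixed open neighborhood of $\underline{R}$. Form the universal moduli space
\[ \mathcal{M}^{univ}(\underline{x},\underline{y}) = \{(\underline{s},\underline{J}) : \underline{J}\in\mathcal{J}^\ell,\ \overline{\partial}_{\underline{J}}\underline{s}=0,\ \underline{s} \text{ has limits } \underline{x},\underline{y}\}. \]
Standard arguments (compare \cite[Theorem 4.11]{WWtriangle}) show that $\mathcal{M}^{univ}(\underline{x},\underline{y})$ is a Banach manifold and that the projection to $\mathcal{J}^\ell$ is Fredholm, so Sard--Smale yields a comeagre subset of regular values for which the fiber $\mathcal{M}(\underline{E},\underline{F},\underline{J},\underline{x},\underline{y})$ is a smooth manifold of the expected dimension. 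This handles the first assertion.

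For the tangency statement, for each integer $k\geq 1$ consider the $k$-jet bundle $J^k(\underline{S},\underline{E})$ of sections and the subspace $T_k\subset J^k(\underline{S},\underline{E})$ consisting of jets $j^k_z\underline{s}$ whose underlying point $\underline{s}(z)$ lies in $\underline{R}$ with order of contact at least $k$. Since $\underline{R}$ is an almost complex hypersurface and $\underline{s}$ is pseudo-holomorphic, a local trivialization with holomorphic defining section $\rho$ of the normal bundle of $\underline{R}$ reduces the tangency condition to vanishing of the holomorphic one-variable function $\rho\circ\underline{s}$ to order $k$ at $z$; this is a complex codimension $k$ condition, so $T_k$ is a submanifold of real codimension $2k$ in $J^k(\underline{S},\underline{E})$.

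Now introduce the $k$-jet evaluation map
\[ ev^k \colon \mathcal{M}^{univ}(\underline{x},\underline{y})\times\underline{S} \longrightarrow J^k(\underline{S},\underline{E}),\qquad (\underline{s},\underline{J},z)\longmapsto j^k_z\underline{s}. \]
The main obstacle is the transversality of $ev^k$ to $T_k$, because the allowed perturbations $\delta\underline{J}$ are required to vanish near $\underline{R}$, precisely where the tangency occurs. The argument, carried out as in \cite[Lemma 2.3]{MW}, is to use perturbations $\delta\underline{J}$ supported in a small ball close to $\underline{s}(z)$ but just outside the frozen neighborhood of $\underline{R}$: since $\rho\circ\underline{s}$ is a nonzero holomorphic function vanishing to finite order at $z$, unique continuation together with the standard freedom in prescribing the $(0,1)$-part of $\overline{\partial}_{\underline{J}}\underline{s}$ at such a nearby point permits us to realize any prescribed perturbation of $j^k_z\underline{s}$ modulo $T(T_k)$.

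Once transversality of $ev^k$ to $T_k$ is established, $(ev^k)^{-1}(T_k)$ is a Banach submanifold of $\mathcal{M}^{univ}(\underline{x},\underline{y})\times\underline{S}$ of real codimension $2k$. A second application of Sard--Smale to the projection of this submanifold to $\mathcal{J}^\ell$ yields a comeagre set of $\underline{J}$ for which the fiber is a submanifold of codimension $2k$; projecting to $\mathcal{M}(\underline{E},\underline{F},\underline{J},\underline{x},\underline{y})$ gives the finite union of submanifolds containing the tangency-$k$ locus. Intersecting the resulting comeagre sets over the finitely many $(\underline{x},\underline{y},k)$ relevant for the bounded-index moduli spaces in sight, and promoting from $C^\ell$ to $C^\infty$ by the usual Taubes trick, produces the required comeagre set of smooth regular almost complex structures.
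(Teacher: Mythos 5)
Your overall strategy---universal moduli space, Sard--Smale, and a codimension count for the order-$k$ tangency stratum---matches the paper's, which simply invokes \cite[Proposition~6.9]{CieliebakMohnke} and \cite[Lemma~2.3]{MW}, states that the tangency locus inside the universal moduli space $\{(\underline{u},\underline{J}):\overline{\partial}_{\underline{J}}\underline{u}=0\}$ is a Banach submanifold of codimension $2k$, and applies Sard--Smale to the projection $(\underline{u},\underline{J})\mapsto\underline{J}$. You also correctly identify the key technical obstacle in this setting: the allowed perturbations $\delta\underline{J}$ are frozen near $\underline{R}$ (since $J$ must agree with $\widetilde{\underline{J}}$ outside the monotone region), and the remedy is perturbations supported nearby but outside that frozen neighborhood, combined with unique continuation for $\rho\circ\underline{s}$.

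One step does not hold as written. You claim $T_k$ has real codimension $2k$ in the full jet bundle $J^k(\underline{S},\underline{E})$. But $J^k(\underline{S},\underline{E})$ parametrizes jets of arbitrary (not necessarily pseudo-holomorphic) sections, and the condition that a $\cc$-valued function of two real variables vanish to order $k$ at $z$ imposes $k(k+1)$ real conditions, not $2k$; the count $2k$ is correct only on the subspace of jets obeying the linearized Cauchy--Riemann relations, where the antiholomorphic derivatives are already determined, or equivalently if one imposes only the vanishing of the $k$ holomorphic Taylor coefficients. As stated, transversality of $ev^k$ to $T_k\subset J^k(\underline{S},\underline{E})$ would therefore give the wrong codimension for the preimage. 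The cleaner formulation, which is the one \cite{CieliebakMohnke} uses and the paper's one-line proof relies on, is to define the tangency stratum directly inside the universal moduli space and show by induction on $\ell$ that the order-$\ell$ locus is a codimension-$2$ Banach submanifold of the order-$(\ell-1)$ locus: the $\ell$-th holomorphic Taylor coefficient of $\rho\circ\underline{u}$ is then a single $\cc$-valued function, and one shows its differential is surjective on the tangent space of the order-$(\ell-1)$ stratum, which is exactly where the unique-continuation argument with perturbations of $\underline{J}$ away from $\underline{R}$ is used. With this repackaging your argument is essentially the paper's.
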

\begin{proof} This is an analog of \cite[Proposition~6.9]{CieliebakMohnke}, applied to each patch of the quilt. The proposition is stated for surfaces without boundary, yet the proof adapts to our framework: the subset of the universal moduli space  $\lbrace (\underline{u},\underline{J})  ~ | ~ \overline{\partial}_{\underline{J}} \underline{u} = 0   \rbrace $ consisting of pairs such that $u_i$ admits an order $k$ tangency point with $R_i$ is a Banach submanifold of  codimension $2k$. The claim then follows from Sard-Smale  theorem applied  to the projection $(u,J)\mapsto J$ defined on this space.

\end{proof}

\begin{proof}[Proof of Lemma~\ref{nobubbling}]

According to  Lemma~\ref{codimtangence}, for generic almost-complex  structures, every curve in the 0 and 1-dimensional moduli spaces intersect the hypersurfaces transversely.

A compactness theorem  analog to the one concerning non-quilted pseudo-holomorphic curves is still valid, see \cite[Theorem 3.9]{WWquilts}. Let $\underline{s}_\infty$ be a limit of quilted sections: it consists a priori of a nodal quilted map \[\underline{s}_\infty = \underline{u}_\infty \cup \bigcup_{k}{\underline{b}_k} \cup \bigcup_{l}{d_l}, \] with a principal component $\underline{u}_\infty$ (which might be broken),  some bubbles $(\underline{b}_k)_k$, possibly quilted if attached along a seam, attached to it, and discs $(d_l)_l$ attached to the boundaries, as depicted in  figure~\ref{bubbling}).
\begin{figure}[!h]
    \centering
    \def\svgwidth{\textwidth}
    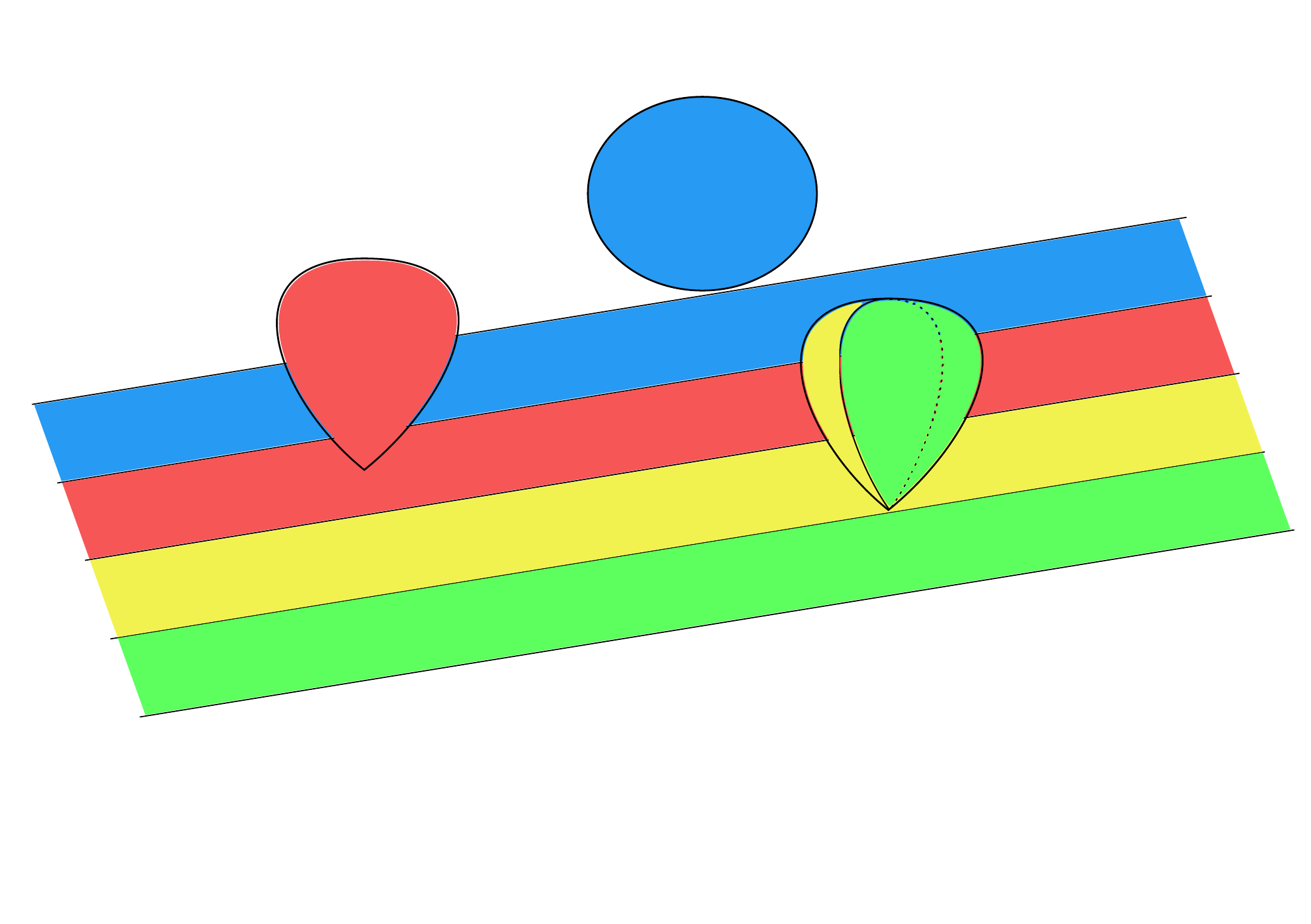
      \caption{A section with  bubbling.}
      \label{bubbling}
\end{figure}

Each disc and  bubble, which comes from a zoom in the neighborhood of a point in the base, is necessarily  contained in  a fiber of $\underline{E}$. Hence, according to \cite[Lemma 2.9]{MW}, every  disc and  bubble has non-negative index. This index is hence zero,  otherwise it would be greater  than 4 (which  divides the minimal Maslov number), which is impossible since the initial configuration  is of index smaller than 2. The area of these discs and bubbles for the monotone forms $\tilde{\omega_i}$  are thus zero: they are consequently contained in the  hypersurfaces $\underline{R}$. Since the Lagrangian submanifolds associated to a boundary are disjoint from the hypersurfaces, the nodal map contains no discs $d_l$.

Hence, the only kind of possible bubbling would be spheres,  quilted or not, contained in the hypersurfaces $\underline{R}$. Each sphere would have an intersection number with  the hypersurfaces smaller than -2, by definition of the category  $\Symp$.  On the other hand the total intersection number  $\underline{s}_\infty . \underline{R}$ is zero, but we have \[\underline{s}_\infty . \underline{R} =  \underline{u}_\infty . \underline{R} +\sum_{k}{\underline{b}_k . \underline{R}} = 0,\]
which implies that $\underline{u}_\infty . \underline{R}$ is greater than twice the number of bubbles. It follows that $\underline{u}_\infty$ intersects $\underline{R}$ transversely at  points to which no bubbles are  attached, which is impossible for  a limit of curves that do not intersect $\underline{R}$.
\end{proof}

One can now show that the map $C\Phi_{\underline{E},\underline{F}}$ commutes with  the differentials of the  complexes, and induces a morphism $\Phi_{\underline{E},\underline{F}} $ at the level of homology  groups, which are independent on the regular almost complex structures $\underline{J}$, and invariant under Hamiltonian isotopies. The proof of these two facts is a standard argument, similar to the one given in section~\ref{sec:homotopy}, and consisting in joining two almost complex structures by a path and  considering a one-dimensional parametrized moduli space, which can be compactified in a  manifold with boundary, and provides a homotopy between the two corresponding chain maps.

\paragraph{\bf Lefschetz fibration associated to a generalized Dehn twist}
A Lefschetz fibration is endowed with its canonical symplectic connection  \cite[Formula (2.1.5.)]{Seidel} on the complement of the critical set, 
\[T^hE = (\mathrm{Ker}~D_e\pi)^{\omega}.\]
One can then define the monodromy along a path of the base avoiding the critical values.

As noticed by Arnold in \cite{Arnold}, the monodromy of a  Lefschetz fibration around a critical value  is a generalized Dehn twist. Conversely, if $\tau _S$ is a model Dehn twist  along a Lagrangian sphere  $S\subset M$ (disjoint from the hypersurface $R$), there exists  a Lefschetz fibration $E_S$, called the standard fibration  associated to $\tau _S$, over the disc, with  a single  critical point over $0$, whose fiber over $1$ is $M$, and  monodromy around $0$ corresponds to this twist, see for example  \cite[Lemma 1.10, Prop. 1.11]{Seidel}. If $M$ is monotone, $E_S$ is also monotone as long as  $S$ has dimension greater than 2, according to \cite[Prop. 4.9]{WWtriangle}. We refer  to  \cite[Lemma 1.10]{Seidel} for the construction of this fibration.

Recall  the two following definitions, taken from \cite{Seidel}:

\begin{defi}An almost complex structure $J$ on $E$ is called horizontal if it preserves the decomposition $TE = T^vE \oplus T^hE$ on the complement of the critical set.
\end{defi}

\begin{defi}A quilted Lefschetz fibration is said to have positive curvature if for all horizontal tangent vector $v$, $\omega(v, J v) \geq 0$. 
\end{defi}

These ensures the following proposition:

\begin{prop}\label{areaposit}Let $(\underline{E},\underline{F})$ be a quilted Lefschetz fibration with positive curvature, $\underline{J}$ a family of horizontal almost complex structures, and $\underline{u}$ a $\underline{J}$-holomorphic section. Then $\underline{u}$ has positive area: $\sum_i\int  u_i^* \omega_i \geq 0$.
\end{prop}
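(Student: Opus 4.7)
The plan is to prove the inequality pointwise on each patch by decomposing $du_i$ into its horizontal and vertical components, and checking that each contribution to $u_i^*\omega_i$ is non-negative. Since the integral $\sum_i\int u_i^*\omega_i$ is a sum of contributions over patches (seams play no role here), it suffices to show that $u_i^*\omega_i\geq 0$ as a 2-form on each $S_i$.

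Fix a patch $S_i$, and work away from the finite set of points where $u_i$ hits a critical point of $\pi_i$ (a measure-zero set, so irrelevant for the integral). On this open set we have the canonical symplectic connection $T^hE_i=(\ker D\pi_i)^{\omega_i}$, giving at each point of $\operatorname{im}(u_i)$ the splitting $TE_i=T^vE_i\oplus T^hE_i$. Writing $du_i=(du_i)^v+(du_i)^h$ and choosing a local holomorphic coordinate $z=s+it$ on $S_i$, we have $u_i^*\omega_i=\omega_i(\partial_s u_i,\partial_t u_i)\,ds\wedge dt$. The key observation is that the defining property $T^hE_i=(\ker D\pi_i)^{\omega_i}$ kills the cross terms: $\omega_i(\xi^h,\eta^v)=0$ for any horizontal $\xi^h$ and vertical $\eta^v$. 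Hence
\[
\omega_i(\partial_s u_i,\partial_t u_i)=\omega_i\bigl((\partial_s u_i)^v,(\partial_t u_i)^v\bigr)+\omega_i\bigl((\partial_s u_i)^h,(\partial_t u_i)^h\bigr).
\]

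Now I use the two hypotheses. Since $J_i$ is horizontal, it preserves the splitting, so the $J_i$-holomorphicity condition $\partial_t u_i=J_i\partial_s u_i$ implies both $(\partial_t u_i)^v=J_i(\partial_s u_i)^v$ and $(\partial_t u_i)^h=J_i(\partial_s u_i)^h$. The vertical term therefore becomes $\omega_i\bigl((\partial_s u_i)^v,J_i(\partial_s u_i)^v\bigr)\geq 0$, by compatibility of $J_i$ with $\omega_i$ along the fibres (built into Definition~\ref{quiltfiblefschetz}). The horizontal term becomes $\omega_i\bigl((\partial_s u_i)^h,J_i(\partial_s u_i)^h\bigr)\geq 0$, this time by the positive curvature assumption on the fibration.

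Summing both non-negative contributions gives $u_i^*\omega_i\geq 0$ pointwise on $S_i$ (away from the isolated critical points, where the integrand extends by continuity or is simply ignored, having measure zero). Integrating over each patch and summing over $i$ yields the claim. The only subtle point is the vanishing of the cross terms, which is precisely the content of the definition of the symplectic connection; once this is in hand the rest is essentially a rewriting of the standard fact that holomorphic curves in a symplectic manifold have non-negative area, applied separately to the vertical and horizontal parts.
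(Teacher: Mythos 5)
Your proof is correct and takes essentially the same approach as the paper's: both decompose tangent vectors into horizontal and vertical components, use that the horizontal distribution is the $\omega_i$-orthogonal of the vertical one (so cross terms vanish), and conclude non-negativity of the vertical part from fiberwise compatibility and of the horizontal part from positive curvature. The paper phrases it as positive semi-definiteness of the bilinear form $\omega_i(\cdot,J_i\cdot)$, while you unpack this directly on $du_i$, but the content is the same.
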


\begin{proof}
Let $v+h \in T_x E_i = T_x E_i^v \oplus T_x E_i^h$ be a tangent vector  to the total space. $\omega_i (v+h, J_i (v+h)) \geq 0$, Indeed it is the sum of the 4 following terms:

$\omega_i (v, J_i v) \geq 0$, since $\omega_i$ is symplectic in restriction to the fibers, and $J_i$ is compatible with $\omega_i$.

$\omega_i (h, J_i h) \geq 0$, since the fibration has positive curvature.

$\omega_i (v, J_i h)  = \omega_i (h, J_i v) = 0$, since $J_i$ is horizontal, and by definition  $T_xE_i^h$ is the orthogonal of $T_xE_i^v$ for $\omega_i$.

It follows that the bilinear form  $\omega_i(., J_i .)$ is positive, hence the claim.
\end{proof}

\begin{remark}The standard Lefschetz fibrations  $E_S$ associated to model Dehn twists  have positive  curvature, according to \cite[Lemma 1.12, (iii)]{Seidel}.
\end{remark}

\paragraph{\bf Composition  of relative invariants} 
 Let  $\underline{\pi}_1 \colon \underline{E}_1 \rightarrow \underline{S}_1$ and $\underline{\pi}_2 \colon \underline{E}_2 \rightarrow \underline{S}_2$ be quilted Lefschetz fibrations   as in Definition~\ref{quiltfiblefschetz}, with boundary and seam conditions  respectively $\underline{F}_1$ and $\underline{F}_2$. Suppose there exists a bijection between the incoming ends $\mathcal{E}_{2,-}$ of $\underline{S}_2$ and the outgoing ends $\mathcal{E}_{1,+}$ of $\underline{S}_1$ such that $\underline{\pi}_1$ and $\underline{\pi}_2$ coincide on each end, i.e.  the number  of patches, the symplectic manifolds and the correspondences associated to the seams correspond.

Let $\rho >0$, denote by $\underline{S}_1 \cup_\rho \underline{S}_2$ the quilted surface  obtained by  gluing the patches \[ [0, \rho ] \times [k-1,k] \subset [0, + \infty ) \times [k-1,k]\] and \[ [- \rho,0]  \times [k-1,k] \subset  (- \infty,0] \times [k-1,k],\] and $\underline{E}_1 \cup_\rho \underline{E}_2$ the glued quilted fibration. 

The following proposition is the analog of \cite[Theorem 4.18]{WWtriangle}, its proof is identical. 
\begin{prop}\label{compoinvrelat} For $\rho$ sufficiently large, there exists a comeagre subset of product almost complex structures  for which the  spaces of index 0 and 1   pseudo-holomorphic sections  are smooth  and may be identified with the fibered  products:
 \begin{align*} \mathcal{M}(\underline{E}_1 \cup_\rho \underline{E}_2, \underline{F}_1 \cup_\rho \underline{F}_2)_0  & \simeq \mathcal{M}(\underline{E}_1, \underline{F}_1)_0 \times_{ev_1, ev_2} \mathcal{M}(\underline{E}_2, \underline{F}_2)_0, \\
\mathcal{M}(\underline{E}_1 \cup_\rho \underline{E}_2, \underline{F}_1 \cup_\rho \underline{F}_2)_1 & \simeq \mathcal{M}(\underline{E}_1, \underline{F}_1)_0 \times_{ev_1, ev_2} \mathcal{M}(\underline{E}_2, \underline{F}_2)_1 \\ &\cup \mathcal{M}(\underline{E}_1, \underline{F}_1)_1 \times_{ev_1, ev_2} \mathcal{M}(\underline{E}_2, \underline{F}_2)_0 ,
 \end{align*}
where $ev_i\colon \mathcal{M}(\underline{E}_i, \underline{F}_i)\rightarrow \I (\mathcal{E}_{1,+})$ is the map sending a section to its  limits at the incoming (resp. outgoing) ends for $\underline{E}_2$ (resp.  $\underline{E}_1$).  

It follows that $C\Phi_{\underline{E}_1 } \circ C\Phi_{\underline{E}_2 }= C\Phi_{\underline{E}_1 \cup_\rho \underline{E}_2}$

\end{prop}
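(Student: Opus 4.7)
The plan is to establish the identification of moduli spaces by a standard gluing/compactness argument in quilted Floer theory, and then deduce the composition formula for $C\Phi$ by comparing the defining counts.

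First I would construct the map from right to left, i.e.\ the pregluing/gluing map. Given sections $\underline{s}_1 \in \mathcal{M}(\underline{E}_1, \underline{F}_1)_0$ and $\underline{s}_2 \in \mathcal{M}(\underline{E}_2, \underline{F}_2)_0$ with matching limits at the glued ends (so that they lie in the fibered product over $ev_1$ and $ev_2$), I would pre-glue them using cut-off functions on the strip-like neck $[-\rho,\rho] \times [k-1,k]$ obtained from the two ends. Since every end uses the same trivialization of $\underline{E}$, the pregluing produces an approximately $\underline{J}$-holomorphic section of $\underline{E}_1 \cup_\rho \underline{E}_2$. Transversality of the linearized Cauchy-Riemann operator on each factor (which follows from the index-0 assumption together with \cite[Theorem 4.11]{WWtriangle}) allows one to invoke a quantitative inverse function theorem for the linearized operator on the glued domain, as in \cite[Thm.~4.18]{WWtriangle}; for $\rho$ sufficiently large this produces a unique genuine $\underline{J}$-holomorphic section near the preglued one, yielding an element of $\mathcal{M}(\underline{E}_1 \cup_\rho \underline{E}_2, \underline{F}_1 \cup_\rho \underline{F}_2)_0$.

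Second, I would prove surjectivity of this map by a compactness argument. Given a sequence of $\underline{J}$-holomorphic sections $\underline{s}_n$ of the $\rho_n$-glued fibration with $\rho_n \to \infty$ and fixed limits, Gromov compactness in the quilted setting (\cite[Thm.~3.9]{WWquilts}) produces a limiting nodal configuration. The no-bubbling argument of Lemma~\ref{nobubbling} applies verbatim here: since the fibrations are objects of $\Symp$ along the fibers, any would-be bubble would intersect $\underline{R}$ with multiplicity $\leq -2$, but the total intersection of $\underline{s}_n$ with $\underline{R}$ is zero and a principal component intersecting $\underline{R}$ transversely at bubble points is incompatible with being a limit of non-intersecting sections. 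Hence the limit is a pair $(\underline{s}_1, \underline{s}_2)$ of honest index-$0$ sections matching at the neck, lying in the fibered product. Combined with an injectivity argument (two distinct pairs cannot glue to nearby sections for large $\rho$), this shows the gluing map is a bijection for $\rho$ large enough.

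Third, I would repeat the analysis in the index-1 case. The same pregluing applies, but now one factor is a $1$-dimensional moduli space and the other is $0$-dimensional, giving the two pieces of the claimed decomposition. The key additional point is to rule out broken configurations where the breaking occurs inside the neck: by the energy/index accounting together with Proposition~\ref{areaposit} (positive area for horizontal almost complex structures) and the absence of index-$0$ quilted Floer strips (quotiented by $\rr$), no strip-breaking in the interior of the neck survives for generic $\underline{J}$. Bubbling in the 1-dimensional stratum is excluded by Lemma~\ref{nobubbling} applied to index $\leq 1$.

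Finally, the composition formula $C\Phi_{\underline{E}_1} \circ C\Phi_{\underline{E}_2} = C\Phi_{\underline{E}_1 \cup_\rho \underline{E}_2}$ follows by unwinding definitions: the matrix coefficient of the left-hand side on generators $\underline{x},\underline{y}$ sums, over matching intermediate generators $\underline{z}$, the products $o(\underline{s}_1) o(\underline{s}_2) q^{A(\underline{s}_1)+A(\underline{s}_2)}$ over $\mathcal{M}(\underline{E}_1,\underline{F}_1)_0 \times_{\underline{z}} \mathcal{M}(\underline{E}_2,\underline{F}_2)_0$, which by the bijection above and additivity of area/orientation under gluing equals the coefficient of $C\Phi_{\underline{E}_1 \cup_\rho \underline{E}_2}$. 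The main obstacle is the usual one in such results: the quantitative gluing estimate, made delicate here by the need to handle multiple patches simultaneously and to control the seam conditions uniformly as $\rho \to \infty$; this is already carried out in \cite[Thm.~4.18]{WWtriangle} and adapts without essential change to $\Symp$, once the no-bubbling lemma guarantees we never see curves meeting $\underline{R}$.
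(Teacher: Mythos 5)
Your proposal is correct and takes essentially the same route as the paper: the paper simply declares that ``the proof is identical'' to that of \cite[Theorem 4.18]{WWtriangle}, and your argument is exactly the fleshed-out version of that gluing argument, with the one genuinely new ingredient — invoking Lemma~\ref{nobubbling} to exclude bubbling into the divisors in the $\Symp$ setting — being precisely the point the author flags in Remark~\ref{remtriww} as the main thing to check.
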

\arnaque 

\subsubsection{Construction of the maps}\label{sec:mapstriangle}

in order to  construct the two chain complex morphisms
\begin{align*}
C\Phi_1 & \colon CF (\tau_S L_0,S^T, S, \underline{L};\Lambda) \rightarrow CF (\tau_S L_0, \underline{L};\Lambda) \\
C\Phi_2 & \colon CF (\tau_S L_0, \underline{L};\Lambda) \rightarrow CF (L_0, \underline{L};\Lambda) 
\end{align*}
which will  approximate the maps of the short exact sequence~\ref{suitecourte} induced by the inclusions of the intersection points  from Proposition~\ref{inter}, we apply the previous construction to the  two quilted Lefschetz fibrations  described here.

\paragraph{\bf Definition of $C\Phi_1$:} 
The map $C\Phi_1$ is defined as being the relative invariant  associated to the quilted Lefschetz fibration $(\underline{E}_1, \underline{F}_1) \rightarrow \underline{S}_1$ described in figure~\ref{quiltedpants}: the quilted surface  $\underline{S}_1$ consists of $k$  parallel strips  $[0,1]\times \rr$ seamed altogether, and a pair of pants seamed to the others along one of its boundaries, see figure~\ref{quiltedpants}. The fibration $\underline{E}_1$ is trivial on each patch, its various fibers  $M_0$, ... ,  $M_k$ are specified in the figure. The Lagrangian conditions $\underline{F}_1$ are  constant in these  trivializations and correspond to $\underline{L}$ on the  parallel strips, $S$ on the boundary component joining the  two incoming ends, and $\tau_S L_0$ on the last  boundary of the pair of pants.

\begin{figure}[!h]
    \centering
    \def\svgwidth{\textwidth}
    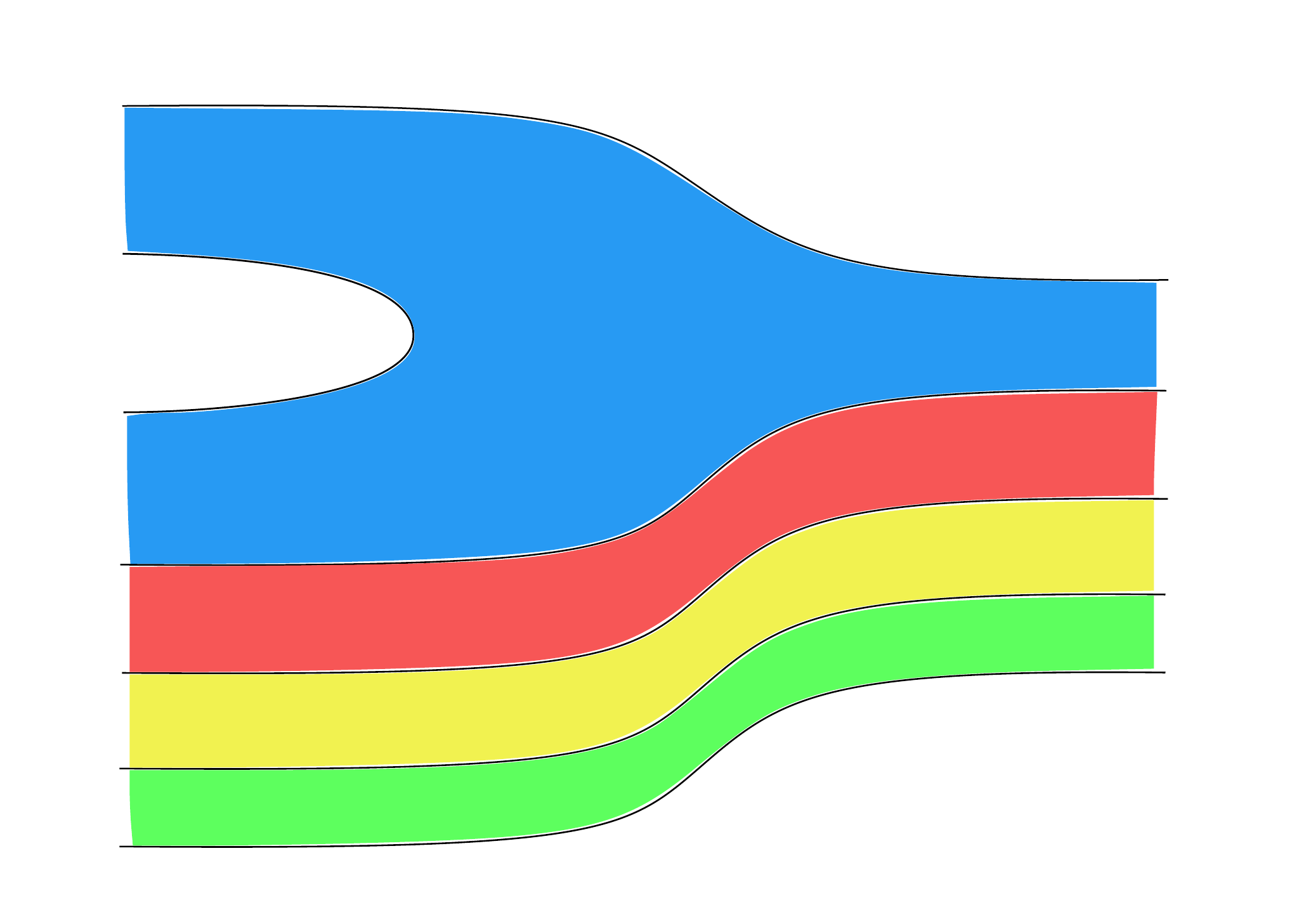
      \caption{Quilted surface  defining $C\Phi_1$.}
      \label{quiltedpants}
\end{figure}

Denote by \[\Phi_1\colon HF (\tau_S L_0,S^T, S, \underline{L};\Lambda) \rightarrow HF (\tau_S L_0, \underline{L};\Lambda) \] the map induced by   $C\Phi_1$ in homology.

\paragraph{\bf Definition of $C\Phi_2$:}
The map $C\Phi_2$ is defined as being the relative invariant  associated to the quilted Lefschetz fibration $(\underline{E}_2, \underline{F}_2) \rightarrow \underline{S}_2$ described in figure~\ref{quiltedlefschetz}: $\underline{S}_2$ consists of $k + 1$ parallel strips  seamed altogether. The restriction of $\underline{E}_2$ over the first strip corresponds to $E_S$, the standard fibration  associated to $S$, and is trivial over the other strips, with fibers specified in the figure. The Lagrangian conditions  $\underline{F}_2$ are constant in the trivializations of all but the first patches, and correspond to $\underline{L}$. On the patch corresponding to $M_0$, as in \cite{Seidel}, we have drawn a dashed line connecting the critical value with a boundary point, and we trivialize the  fibration on the complement of this path. In this trivialization, the Lagrangian conditions on the two sides of the path differ from the monodromy of this fibration, namely the twist $\tau_S$.

\begin{figure}[!h]
    \centering
    \def\svgwidth{\textwidth}
    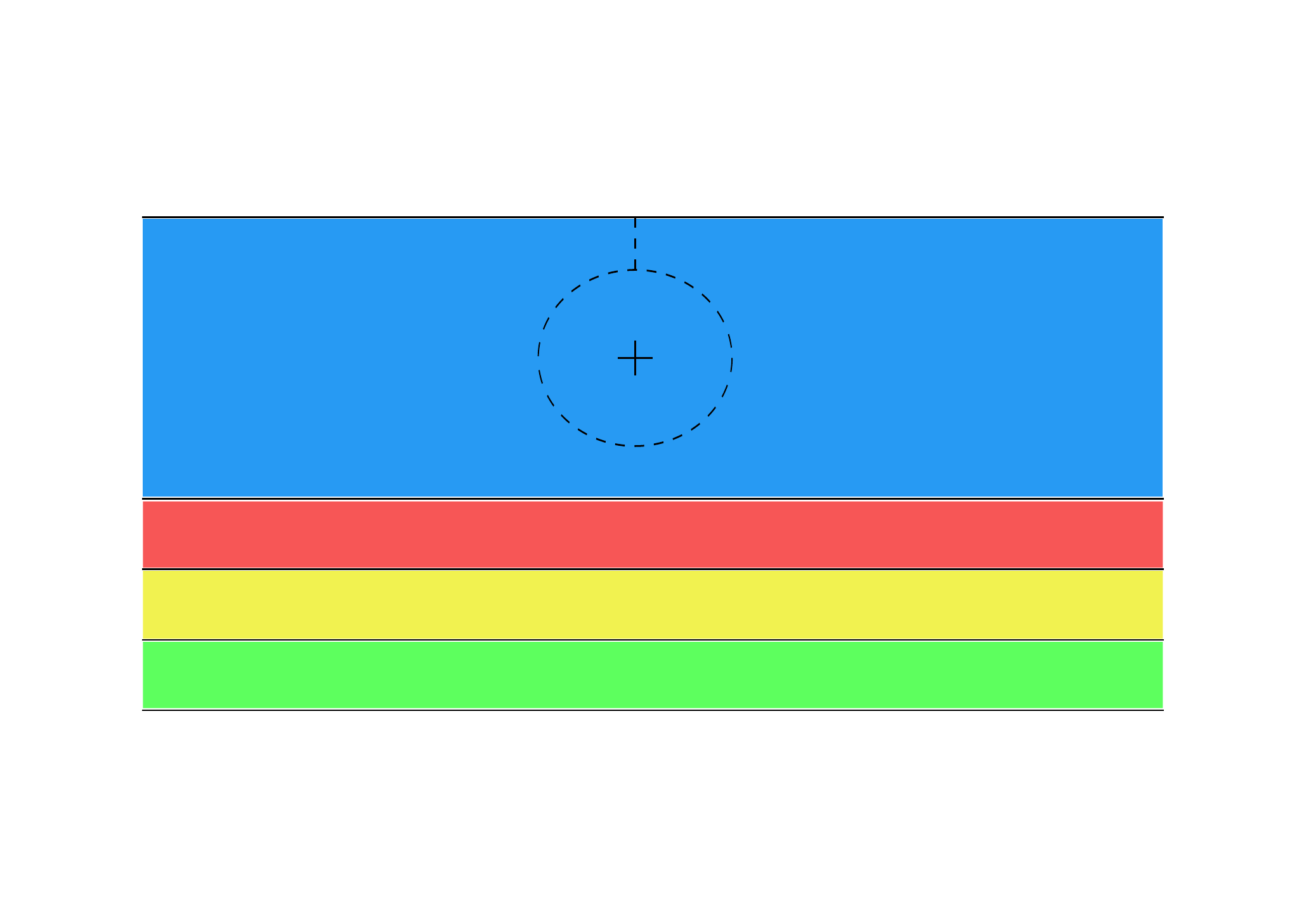
      \caption{Quilted Lefschetz fibration  defining $C\Phi_2$.}
      \label{quiltedlefschetz} 
\end{figure}

We denote by $\Phi_2\colon HF (\tau_S L_0, \underline{L};\Lambda) \rightarrow HF (L_0,\underline{L};\Lambda) $ the induced map in homology.

\subsubsection{The composition is homotopic to zero}\label{sec:homotopy}

According to Proposition~\ref{compoinvrelat}, the composition $C\Phi_{2} \circ C\Phi_{1}$ corresponds to the relative invariant  associated to the glued fibration $\underline{S}_1 \cup_\rho \underline{S}_2$, for a sufficiently large gluing parameter $\rho$. By deforming the base surface, we will show that $C\Phi_{2} \circ C\Phi_{1}$ is homotopic to the composition $C\Phi_{4} \circ C\Phi_{3}$ of two relative invariants, then we will see that  the morphism $C\Phi_{3}$ is homotopic to 0.

\begin{figure}[!h]
    \centering
    \def\svgwidth{\textwidth}
    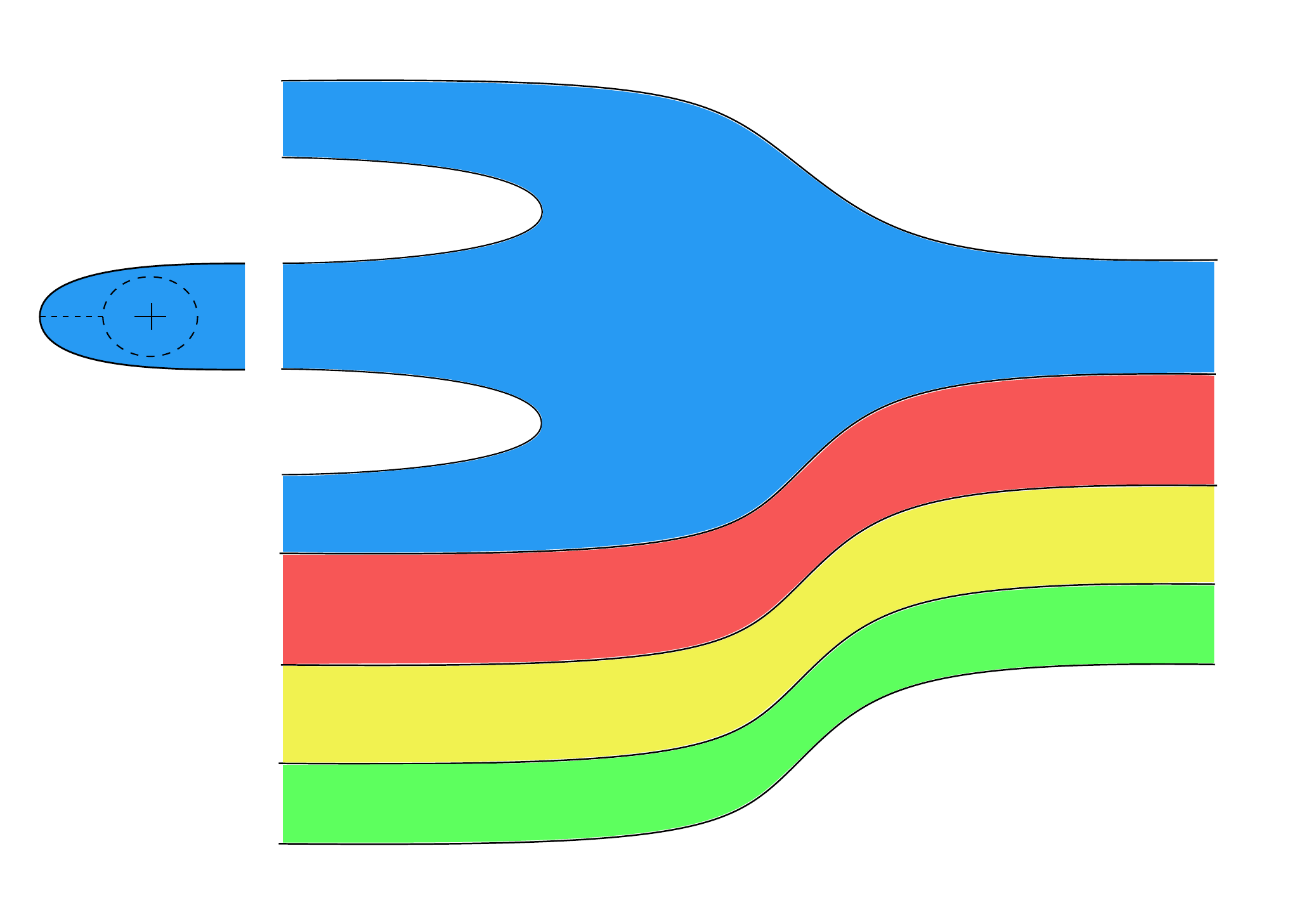
      \caption{Quilted fibration  defining $C\Phi_4$ and $C\Phi_3$.}
      \label{PHI_4_PHI_3}
\end{figure}

 Let  $\underline{E}_3 \rightarrow \underline{S}_3$ and $\underline{E}_4 \rightarrow \underline{S}_4$ be as in figure~\ref{PHI_4_PHI_3}, and $\rho'$ a sufficiently large gluing parameter so that, by Proposition~\ref{compoinvrelat}, $C\Phi_{4} \circ C\Phi_{3} = C\Phi_{\underline{E}_3 \cup_{\rho'} \underline{E}_4 } $. 

The fibrations $ \underline{E}_1 \cup_{\rho} \underline{E}_2$ and $ \underline{E}_3 \cup_{\rho'} \underline{E}_4$ are diffeomorphic. As a smooth manifold, denote $\underline{E}$ their common total space, $\underline{S}$ their common base and $\underline{\pi}$ the common projection.

We  describe  a one parameter family  of almost complex structures on this  fibration, $(\underline{E}_t, \underline{S}_t)_{t\in [0,1]}$, that interpolates from $ \underline{E}_1 \cup_{\rho} \underline{E}_2$ to $ \underline{E}_3 \cup_{\rho'} \underline{E}_4$:

Let $(\underline{j}_t)_{t\in [0,1]}$ be a one parameter family of complex structures on $ \underline{S}_1 \cup_\rho \underline{S}_2 \simeq \underline{S}_3 \cup_{\rho'} \underline{S}_4 $ such that $\underline{j}_0$ corresponds to the complex structure of  $ \underline{S}_1 \cup_\rho \underline{S}_2$ and $\underline{j}_1$ corresponds to the one from $\underline{S}_3 \cup_{\rho'} \underline{S}_4$. 

Let $(\underline{J}_t)_{0\leq t \leq 1}$ be a one parameter family of complex structures on the total space  $\underline{E}_t$ such that $\underline{J}_0$ corresponds to the almost complex structure of $ \underline{E}_1 \cup_{\rho} \underline{E}_2$, $\underline{J}_1$ corresponds to the almost complex structure of $ \underline{E}_3 \cup_{\rho'} \underline{E}_4$, and such that for all $t$, the projection $\pi$ is $(\underline{J}_t,\underline{j}_t)$-holomorphic.

The standard following reasoning, see for example \cite[Th.~3.1.6]{McDuSal}, enables one to prove that such a generic deformation induces a  homotopy between the maps $C\Phi_{2} \circ C\Phi_{1}$ and $C\Phi_{4} \circ C\Phi_{3}$. One considers the following parametrized moduli space: for $k = -1$ or $0$, let $\mathcal{M}^k_{param} = \bigcup_{t}{ \lbrace t\rbrace \times \mathcal{M}_t^k  }$, where $\mathcal{M}_t^k $ stands for the union over all $\underline{x}\in \I(L_0,S,S^T, \underline{L})$, $ \underline{y}\in \I(L_0,\underline{L})$, of the moduli spaces  $\mathcal{M}_t(\underline{x},\underline{y})_k $ of pseudo-holomorphic sections  of $\underline{E}_t$ of index $k$,  having for  limits $\underline{x}$ and $\underline{y}$ at the ends. This space corresponds to the vanishing locus of a  section of a Banach bundle, whose linearization near a solution is a Fredholm operator: the linearized parametrized Cauchy-Riemann  operator, see \cite[Def. 3.1.6]{McDuSal}). For a generic choice of families $\underline{j}_t$ and $\underline{J}_t$, it is surjective. In these conditions,  $\mathcal{M}^k_{param}$ is a manifold with boundary of dimension $k+1$.

Hence $\mathcal{M}^{-1}_{param}$ has dimension zero and provides a map \[h\colon CF(L_0,S,S^T, \underline{L};\Lambda) \rightarrow CF(L_0,\underline{L};\Lambda)\] 
defined by:
\[ h ( \underline{x} ) = \sum_{\underline{y}}{\sum_{\underline{u}\in \mathcal{M}^{-1}_{param}(\underline{x},\underline{y})}{o(\underline{u}) q^{A(\underline{u})}} \underline{y}}, \]
and $\mathcal{M}^{0}_{param}$ has dimension 1, and furnishes a cobordism which can be used for proving that $h$ is a homotopy. Indeed it can be compactified in a compact manifold with boundary, whose  boundary may be identified with the disjoint union:
\[\overline{\mathcal{M}}_0 \sqcup \mathcal{M}_1 \bigsqcup_{\underline{x}'}{ \widetilde{\mathcal{M}}(\underline{x},\underline{x}') \times \mathcal{M}_{par}^0(\underline{x}',\underline{y}) } \bigsqcup_{\underline{y}'}{\mathcal{M}_{par}^0(\underline{x},\underline{y}') \times \widetilde{\mathcal{M}}(\underline{y}',\underline{y})}, \]
where $\widetilde{\mathcal{M}}$ stands for the moduli space of pseudoholomorphic strips  involved in the differentials,   and $\underline{x}'$, $\underline{y}'$ run over the generating   sets of the source and target Floer complexes. 

\begin{proof}[Proof of the compactification of $\mathcal{M}^{0}_{param}$]No bubbling can occur on the Lagrangians and on the hypersurfaces, for the same reasons as in the proof of  Lemma~\ref{nobubbling}.
\end{proof}

It follows that \[ -C\Phi_{\underline{E}_1 \cup_{\rho} \underline{E}_2}+ C\Phi_{\underline{E}_3 \cup_{\rho'} \underline{E}_4} +  \partial h + h \partial = 0,\] which proves that $C\Phi_{2} \circ C\Phi_{1}$ and $C\Phi_{4} \circ C\Phi_{3}$ are homotopic.

It remains to show that $C\Phi_{3}$ is homotopic to $0$. This follows from \cite[Cor. 4.23]{WWtriangle}: on the one hand, for $r>0$ sufficiently small, the standard fibration  over the disc of radius $r$ doesn't admit index zero pseudo-holomorphic sections, since there   exists a family  of sections of index $c-1$, with  $c$ the dimension of the sphere $S$, which  is strictly greater than 2, and whose area  tends to $0$ when $r\rightarrow 0$. By monotonicity, every other section of smaller index has  negative area and cannot be  pseudo-holomorphic, since the fibration has positive curvature. Hence the sections over the disc of some fixed radius are cobordant to the empty set,  a cobordism being given by a parametrized  moduli space   $\bigcup_{r\in [r_0, 1]}{ \lbrace r\rbrace \times \mathcal{M}_r  }$ union of moduli spaces  corresponding to index zero sections  of the standard fibration over the disc of radius $r$, and $r_0$ sufficiently small in order   to have $\mathcal{M}_{r_0} = \emptyset$. 

\subsubsection{Small energy contributions}\label{sec:smallcontrib}
The aim of this section (Proposition~\ref{basseenergy}) is to describe the low degree (in $q$) part of the maps $C\Phi_1$ and $C\Phi_2$ when the Dehn twist is ``sufficiently thin''. An analogous statement in  Wehrheim and Woodward's framework is \cite[Theorem 5.5]{WWtriangle}. In our case, we prove Proposition~\ref{basseenergy} by adapting the original proof of Seidel (\cite[Parag. 3.2-3.3]{Seidel}).

\paragraph{\bf Preliminaries}
While Wehrheim and Woodward's proof  involves analytic arguments such as the mean value inequality, Seidel's proof in the exact case is based on a priori area computations with action functionals $a_{L_0,L_1}$ associated to  pairs of Lagrangians $(L_0,L_1)$. For quilted Floer homology, the analog of these functionals is the quilted action functional, \cite[Parag. 5.1]{WWqfc}. Recall  its definition:

\begin{defi}[Quilted action]

Let $\widetilde{\underline{L}}\colon pt\rightarrow M_0 \rightarrow \cdots \rightarrow pt$ be a generalized Lagrangian correspondence satisfying the assumptions of  Definition~\ref{defcat}.

\begin{itemize}
\item[$(i)$] Denote \[\mathcal{P}(\widetilde{\underline{L}}) = \lbrace \underline{\alpha} = (\alpha_i \colon [0,1] \rightarrow M_i\setminus R_i)_i\ |\ (\alpha_i(1),\alpha_{i+1}(0))\in L_{i,i+1}  \rbrace .\] The intersection points  $\I(\widetilde{\underline{L}})$ are identified with the constant paths.
Notice that $\mathcal{P}(\widetilde{\underline{L}})$ is arc-connected since the Lagrangian  correspondences are connected, and the manifolds $M_i$ are simply connected.

\item[$(ii)$] The \emph{symplectic action} is the functional $a_{\widetilde{\underline{L}}}\colon \mathcal{P}(\widetilde{\underline{L}})\rightarrow \rr/M\zz$, where $M = \kappa N $\footnote{N is the minimal Maslov number, $\kappa = \frac{1}{4}$ is the monotonicity constant} is the minimal area of a  sphere with positive area, defined as follows. 

Fix a  base path $\underline{\alpha}^{bas}$ in $\mathcal{P}(\widetilde{\underline{L}})$. If $\underline{\alpha}\in \mathcal{P}(\widetilde{\underline{L}})$, pick a path $\underline{\alpha}_t$ joining $\underline{\alpha}^{bas}$ and $\underline{\alpha}$ in $\mathcal{P}(\widetilde{\underline{L}})$, which can be seen as a quilted surface  

\[ \widetilde{\underline{\alpha}} = (\widetilde{\alpha_i}\colon [0,1]\times[0,1]\rightarrow M_i\setminus R_i). \]

Define then \[a_{\widetilde{\underline{L}}}(\underline{\alpha}) = \sum_{i}{\int_{[0,1]^2}{\widetilde{\alpha_i}^{*}\tilde{\omega}_i}}, \] where $\tilde{\omega}_i$ denotes the monotone  form  of $M_i$.

\end{itemize}
\end{defi}

From the  monotonicity of $M_i\setminus R_i$ and  simply-connectedness of the $L_{i,i+1}$, this quantity is well-defined modulo $M\zz$ (if one picks another path $\underline{\beta}_t$ joining $\underline{\alpha}^{bas}$ and $\underline{\alpha}$ in $\mathcal{P}(\widetilde{\underline{L}})$, then after capping each corresponding seams by discs in the Lagrangian correspondences, one can see that the two areas differ by a sum of areas of spheres inside each symplectic manifold). The action functional is then well-defined, up to a constant, depending on the   choice of the base path. 

Hence, if $\underline{u}$ is a  quilted strip joining $\underline{x},\underline{y}\in \I(\widetilde{\underline{L}})$, its  symplectic area modulo $M$ is given by the difference:
\[ A(\underline{u}) =  a_{\widetilde{\underline{L}}}(\underline{y}) - a_{\widetilde{\underline{L}}}(\underline{x}).\]

In our framework, define $a_{L_0,\underline{L}}$, $a_{S,\underline{L}}$ and $a_{L_0,S}$ so that, if $\tilde{x}_0\in L_0 \cap S$, $\underline{x}\in \I(S,\underline{L})$ and $\underline{y}\in \I(L_0, \underline{L})$, the quantity \[ \chi (\tilde{x}_0, \underline{x}, \underline{y}) = a_{L_0,\underline{L}}(\underline{y}) -  a_{L_0,S}(\tilde{x}_0) - a_{S,\underline{L}}(\underline{x}) \] coincide modulo $M$ with  the area of a quilted triangle whose seam conditions are specified in  figure~\ref{trianglechi}. This holds true in the following case: choose  base paths  for $a_{L_0,S}$ and $a_{S,\underline{L}}$ with the endpoint of the first coinciding with the beginning of the second, then take the concatenation as a base path for $a_{L_0,\underline{L}}$.

\begin{figure}[!h]
    \centering
    \def\svgwidth{\textwidth}
    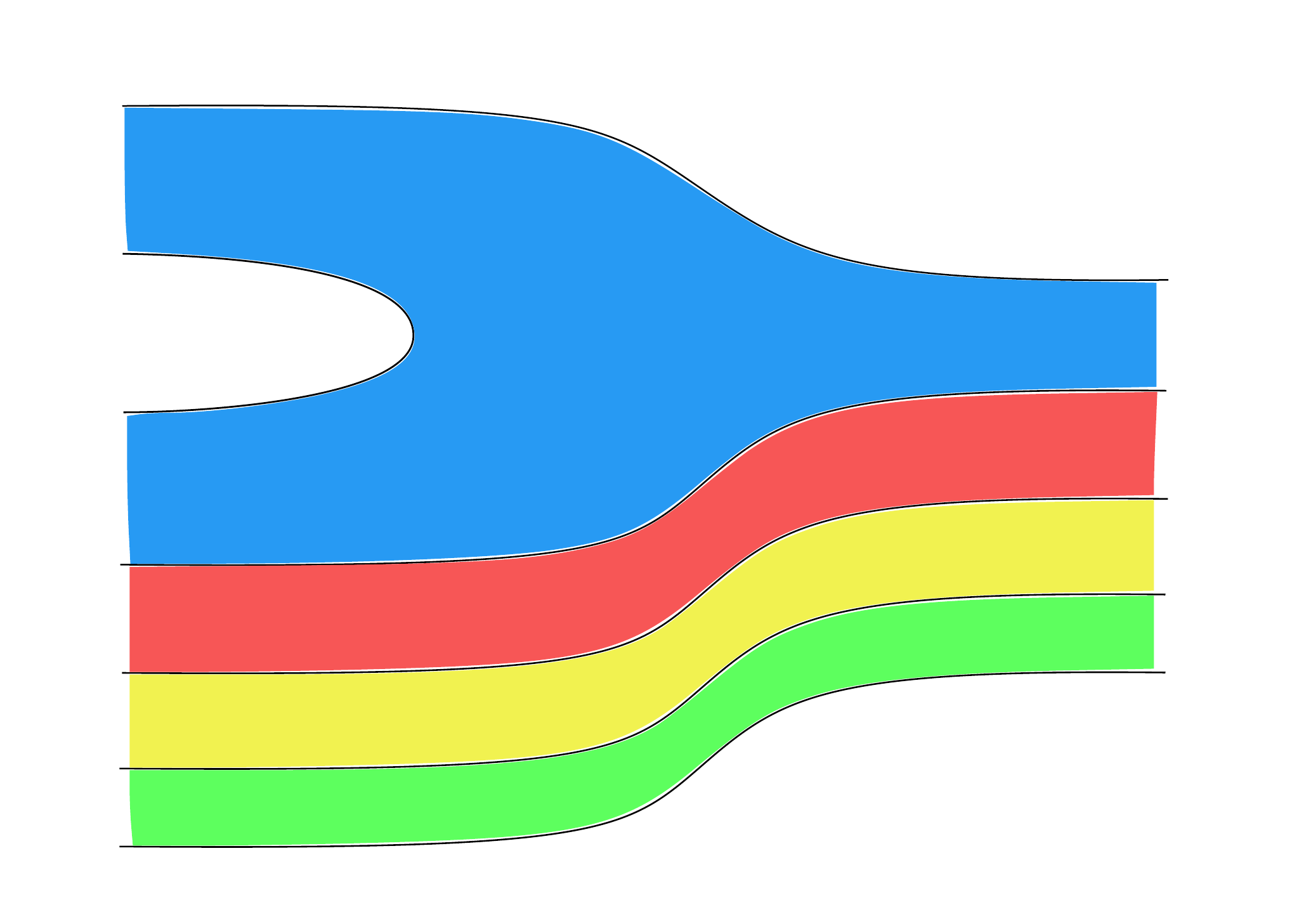
      \caption{Triangle of area $ \chi (\tilde{x}_0, \underline{x}, \underline{y})$ modulo $M$.}
      \label{trianglechi}
\end{figure}

Define now  $a_{\tau_S L_0,S}$ and $a_{\tau_S L_0,\underline{L}}$ so that they coincide with  $a_{ L_0,S}$ and $a_{ L_0,\underline{L}}$ for paths whose $M_0$  component  is outside $\iota(T(\lambda))$. In this way, if $\tilde{x}_0\in \tau_S L_0 \cap S$, $\underline{x}\in \I(S,\underline{L})$ and $\underline{y}\in \I(\tau_S L_0,\underline{L})$, the quantity

\[ \chi_{\tau_S} (\tilde{x}_0, \underline{x}, \underline{y}) = a_{\tau_S L_0,\underline{L}}(\underline{y}) -  a_{\tau_S L_0,S}(\tilde{x}_0) - a_{S,\underline{L}}(\underline{x}) \] 
represents the area of a quilted triangle  as in figure~\ref{quiltedpants} defining the map $C\Phi_1$. We want to  express this quantity from the function $R$ and the data before the twist.

\begin{prop}\label{areatriangles} Assume:
\begin{itemize}
\item[$(i)$] That the  hypotheses of Proposition~\ref{inter} are satisfied, in order to have: 

\[ \I(\tau_S L_0,\underline{L}) = i_2(I( L_0,\underline{L})) \cup i_1( (L_0 \cap S)\times \I(S,\underline{L})).\]

\item[$(ii)$] That $L_{01}$ is a product in a neighborhood of each intersection point of $\I(S,\underline{L})$, namely: 

\[\forall \underline{x}\in \I(S,\underline{L}), \exists U_0, U_1 \colon U_0\times U_1 \cap L_{01} = T(\lambda)_{x_0} \times L_1(\underline{x}),\]
with $\underline{x} = (x_0, x_1, \cdots )$, $U_0$ (resp. $U_1$)  a neighborhood of $x_0$ in $M_0$ (resp. of $x_1$ in $M_1$), and $L_1(\underline{x})\subset U_1$ a Lagrangian (depending on $\underline{x}$).
\end{itemize}
Then, 
\begin{enumerate}

\item If $\tilde{x}_0\in \tau_S L_0 \cap S$, $\underline{x}\in \I(S,\underline{L})$ and $y_0$ denotes  the $M_0$ coordinate of $i_1(\tilde{x}_0, \underline{x})$, 

\[\chi_{\tau_S} (\tilde{x}_0, \underline{x}, i_2(\tilde{x}_0, \underline{x}) ) = K_{\tau_S}(y_0) - 2\pi R(0) \ \ (\text{mod }M),\]
with $K_{\tau_S}(y_0) = 2\pi ( R'(\mu(y_0)) \mu(y_0) - R(\mu(y_0)))$ the function associated to the  twist as in \cite{Seidel}, and as in sections~\ref{sec:generalizedtwists}, $\mu$ and $R$ refer respectively to the norm of a  covector and the function used for  defining the twist (primitive of the angle function). 

Moreover, $K_{\tau_S}(y_0) - 2\pi R(0)$ is exactly the area of an index zero triangle.

\item If $\tilde{x}_0\in \tau_S L_0 \cap S$, $\underline{x}\in \I(S,\underline{L})$ and $\underline{y}\in \I(L_0,\underline{L}) = i_2(\I(L_0,\underline{L}))$,

\[ \chi_{\tau_S} (\tilde{x}_0, \underline{x}, \underline{y}) = \chi (\mathbb{A}(\tilde{x}_0), \underline{x}, \underline{y}) - 2\pi R(0) \ \ (\text{mod }M).\]

\item If $\tilde{x}_0\in \tau_S L_0 \cap S$, $\underline{x}\in \I(S,\underline{L})$ and $\underline{y} = i_2(\tilde{z}_0, \underline{z})\in  i_2(\I(L_0,\underline{L}))$,

\begin{align*}
\chi_{\tau_S} (\tilde{x}_0, \underline{x}, \underline{y}) &= \chi_{\tau_S} (\tilde{z}_0, \underline{z}, i_2(\tilde{z}_0, \underline{z})) + a_{L_0,S}(\mathbb{A}(\tilde{z}_0)) + a_{S,\underline{L}}(\underline{z}) \\ 
 &- ( a_{L_0,S}(\mathbb{A}(\tilde{x}_0)) + a_{S,\underline{L}}(\underline{x}) ) \ \ (\text{mod }M).
\end{align*}

\end{enumerate}

\end{prop}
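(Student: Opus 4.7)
The plan is to reduce the three area computations to local calculations inside the Weinstein neighborhood $\iota(T(\lambda))$ of $S$, exploiting hypothesis $(ii)$ to decouple the $M_0$-patch of each quilted triangle from the patches on $M_1,\dots,M_k$, and then to invoke the model computation in $T^*S^n$ already carried out in \cite[Parag.~3.2]{Seidel}.

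First I fix a consistent choice of base paths. For $a_{L_0,S}$ and $a_{\tau_S L_0, S}$ I take base paths whose $M_0$-component lies in $\iota(T(\lambda))$; for $a_{S, \underline{L}}$ I fix any base path starting at the common endpoint on $S$ where the previous two end; the base paths for $a_{L_0, \underline{L}}$ and $a_{\tau_S L_0, \underline{L}}$ are the obvious concatenations. With this normalization, each $\chi$ literally measures modulo $M$ the symplectic area of any bounding quilted triangle with the prescribed seam/boundary conditions, which is what makes the identities meaningful.

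For part $(1)$, hypothesis $(ii)$ lets me choose a bounding quilted triangle for $\chi_{\tau_S}(\tilde{x}_0, \underline{x}, i_2(\tilde{x}_0, \underline{x}))$ whose components on $M_1,\dots,M_k$ are constant at $x_1,\dots,x_k$, so that only the $M_0$-patch contributes area. That patch is a triangle in $T(\lambda)$ bounded by $\tau_S(T(\lambda)_{\mathbb{A}(\tilde{x}_0)}) \subset \tau_S L_0$, the zero section $S$, and the fiber $T(\lambda)_{x_0}$; its third vertex is the point $y_0$ produced by Lemma~\ref{intertwistbis}. A direct computation in the cotangent bundle, as in the proof of \cite[Proposition~3.3]{Seidel}, evaluates its area as $K_{\tau_S}(y_0)-2\pi R(0)$; the shift $-2\pi R(0)$ originates from the normalization $R(-t)=R(t)-t$ used in building $\tau_S$. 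The final sentence of $(1)$ is then a Fredholm-index count in this model triangle: the output $y_0$ is forced by Lemma~\ref{intertwistbis}, so the linearized Cauchy-Riemann operator has index zero.

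For part $(2)$, I homotope the $M_0$-component of a bounding quilted disc along a path inside $\tau_S L_0$ from $\tilde{x}_0$ to the region of the zero section on which $\tau_S$ acts as the antipodal map, producing the point $\mathbb{A}(\tilde{x}_0)\in L_0\cap S$. Since $S$ is $\tau_S$-invariant and the homotopy is confined to the Weinstein neighborhood while the patches on $M_1,\dots,M_k$ are unchanged, the $\omega$-area of the triangle picks up exactly the universal constant $-2\pi R(0)$ encoding the action shift between $\tau_S L_0$ and $L_0$ along $S$; this yields the claimed identity modulo $M$. Part $(3)$ is then purely formal: writing out $\chi_{\tau_S}(\tilde{x}_0,\underline{x},\underline{y})$ and $\chi_{\tau_S}(\tilde{z}_0,\underline{z},i_2(\tilde{z}_0,\underline{z}))$ from the definitions of the action functionals and subtracting produces the announced telescoping formula. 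The principal difficulty is bookkeeping: one must check that the base paths and the deformation in $(2)$ stay inside $\iota(T(\lambda))$ until the controlled transition across $S$ via $\mathbb{A}$, so that the shift $-2\pi R(0)$ appears exactly once; Seidel's symmetric choice of $R$ makes this transparent in the local model, and hypothesis $(ii)$ propagates it to the quilted setting.
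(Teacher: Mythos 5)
Your proposal is correct and takes essentially the same approach as the paper: you use hypothesis $(ii)$ to decouple the $M_0$ patch from the patches on $M_1,\dots,M_k$ (so that $\chi_{\tau_S}$ and $\chi$ differ only by a local $M_0$-contribution), and then reduce to Seidel's cotangent-bundle model computation. The paper's proof is a one-line reference to formula~(3.7) in the proof of Seidel's Lemma~3.2, which is the same local area/action bookkeeping you carry out by hand; your unpacking of the local triangle, of the role of $\mathbb{A}(\tilde{x}_0)$, and of the telescoping in part $(3)$ (which also quietly uses that $\tau_S$ is a symplectomorphism sending $(L_0,S,\mathbb{A}(\tilde{x}_0))$ to $(\tau_S L_0,S,\tilde{x}_0)$, so the relevant action differences agree) is consistent with the intended argument.
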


\begin{proof} As $L_{01}$ is a product in the neighborhood of the points of $\I(S,\underline{L})$, the  part in $M_1, M_2, \cdots, M_k$ of the quilted  triangles  involved in the computation of $\chi_{\tau_S}$ is the same as the one appearing in $\chi$: only the $M_0$ part can change its area, and the computation reduces to Seidel's  one, see formula (3.7) in the proof of \cite[Lemma 3.2]{Seidel}.
\end{proof}

\begin{remark}These formulas are illustrated in figure~\ref{triangle}:  
 $\chi_{\tau_S} (\tilde{x}_0, \underline{x}, \underline{y})$ represents the  area of a quilted triangle  as in figure~\ref{quiltedpants}. It is the sum  of the area of a polygon  independent on the twist (the empty polygon for the purple triangle, a triangle for the green triangle, and a rectangle for the yellow triangle) and a small quantity which depends on the  primitive $R$ of the angle function.
\end{remark}

\begin{figure}[!h]
    \centering
    \def\svgwidth{\textwidth}
    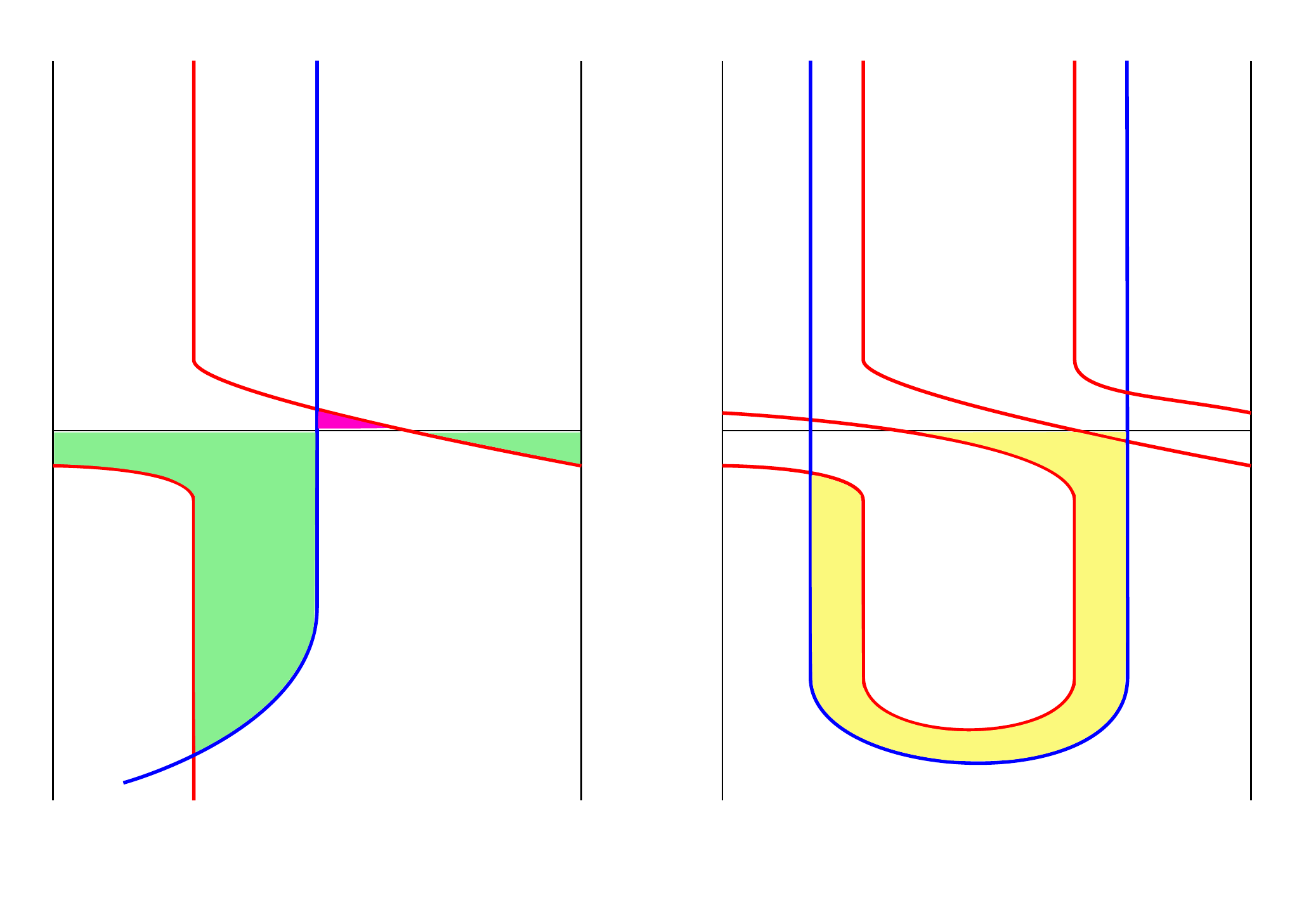
      \caption{Three triangles whose  area is given by $\chi_{\tau_S}$.}
      \label{triangle}
\end{figure}

\begin{prop}[Small energy contributions] \label{basseenergy}
Let $\epsilon >0$  be sufficiently small. Assume:
\begin{itemize}

\item[$(i)$] that the hypotheses of Proposition~\ref{areatriangles} are satisfied,

\item[$(ii)$] \begin{itemize} 

\item[$(a)$] $\forall \underline{x} \neq \underline{y} \in \I(L_0,\underline{L}),\ a_{L_0,\underline{L}}(\underline{x}) - a_{L_0,\underline{L}}(\underline{y})\notin (-3\epsilon,3\epsilon),$ 

\item[$(b)$]$\forall (\tilde{x}_0, \underline{x}) \neq (\tilde{z}_0, \underline{z}) \in (L_0 \cap S) \times \I(S,\underline{L}),$
\[a_{L_0,S}(\tilde{z}_0) + a_{S,\underline{L}}(\underline{z}) - ( a_{L_0,S}(\tilde{x}_0) + a_{S,\underline{L}}(\underline{x}) )\notin (-3\epsilon,3\epsilon),\]
 
\item[$(c)$]$\forall \tilde{x}_0 \in L_0 \cap S,  \underline{x} \in  \I(S,\underline{L}), \underline{y} \in  \I(L_0,\underline{L})$,  \[  a_{L_0,\underline{L}}(\underline{y}) - ( a_{L_0,S}(\tilde{x}_0) + a_{S,\underline{L}}(\underline{x}) )\notin (-5\epsilon,5\epsilon).\] 

\end{itemize}

\item[$(iii)$] $0 \geq 2\pi R(0) > -\epsilon,$ and $\tau_S$ is concave, in the sense of Definition~\ref{concave}.

\end{itemize}

Then, under these hypotheses,
\begin{itemize}

\item[$(a)$] $C\Phi_1 = C\Phi_{1,\leq \epsilon} + C\Phi_{1,\geq 2\epsilon} $, with:
\begin{itemize}

\item[$(i)$] $C\Phi_{1,\leq \epsilon}(\underline{x}) =\pm q^{A(\underline{x})}i_{1}(\underline{x})$, where $A(\underline{x})$ is a number satisfying $ 0 \leq A(\underline{x}) \leq \epsilon ,$

\item[$(ii)$] $C\Phi_{1,\geq 2\epsilon}$ is of order greater than $2\epsilon$.

\end{itemize}

\item[$(b)$] $C\Phi_2 = C\Phi_{2,\leq \epsilon} + C\Phi_{2,\geq 2\epsilon} $, with:
\begin{itemize}
\item[$(i)$] $C\Phi_{2,\leq \epsilon}(i_1(\underline{x})) = 0 $ and $\tilde{C\Phi_2} (i_2(\underline{x})) =  \pm \underline{x} $, 

\item[$(ii)$] $C\Phi_{2,\geq 2\epsilon} $ is of order greater than $2 \epsilon$.

\end{itemize}
\item[$(c)$] The homotopy $h$, and the three differentials are  of order greater than $2\epsilon$.
\end{itemize}

\end{prop}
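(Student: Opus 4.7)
The plan is to combine the exact area formulas of Proposition~\ref{areatriangles} with the separation hypotheses $(ii)$ and the concavity condition $(iii)$ to show that every index-$0$ pseudo-holomorphic section contributing to $C\Phi_1$, $C\Phi_2$, to the homotopy $h$, and to the differentials of the three chain complexes, has symplectic area either in $[0,\epsilon]$ (a case we can make completely explicit) or at least $2\epsilon$.

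For $C\Phi_1$: every index-$0$ pseudo-holomorphic section contributing a coefficient of $C\Phi_1(\underline{x})$ is a quilted triangle whose area modulo $M$ is given by one of the three formulas of Proposition~\ref{areatriangles}. When the output is $i_1(\underline{x})$ (Type~1), the area equals $K_{\tau_S}(y_0) - 2\pi R(0)$, which by concavity of $R$ satisfies $K_{\tau_S}(y_0)\geq 0$; by choosing $\lambda$ small enough (which we may do by Remark~\ref{justifinter}) we have $K_{\tau_S}(y_0)<\epsilon+2\pi R(0)$, so the area lies in $[0,\epsilon]$. When the output sits in $i_2(\I(L_0,\underline{L}))$ (Type~2) or when the input differs from the output's $i_1$-preimage (Type~3), the $\chi$ term in items (2)--(3) of Proposition~\ref{areatriangles} is excluded from $(-5\epsilon,5\epsilon)$ by $(ii)(c)$ and $(ii)(b)$; after incorporating $-2\pi R(0)\in (0,\epsilon]$ and reducing modulo $M$ (which we may take much larger than $\epsilon$ by rescaling), the area lies outside $(-4\epsilon,4\epsilon)$, and the positivity of area on $\underline{J}$-holomorphic sections with horizontal $\underline{J}$ (Proposition~\ref{areaposit}) forces it to be at least $2\epsilon$. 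Existence and signed count $\pm 1$ of the Type~1 triangle proceeds by adapting Seidel's explicit local model in $T^*S^n$ (\cite[Prop.~3.3]{Seidel}): under the product assumption $(ii)$ of Proposition~\ref{areatriangles}, the triangle decomposes as a product whose $M_0$-component is Seidel's small-area triangle and whose remaining components are constant; a transversality argument shows this persists for sufficiently thin twists, giving $C\Phi_{1,\leq\epsilon}(\underline{x})=\pm q^{A(\underline{x})}i_1(\underline{x})$.

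A parallel analysis proves $(b)$: the constant section of $\underline{E}_2$ sitting over an intersection point $i_2(\underline{x})\in\I(\tau_S L_0,\underline{L})$ avoids the critical fiber, has area $0$, and is regular; these are the only low-area sections, and they contribute $\pm\underline{x}$. A section with input $i_1(\underline{x})$ would necessarily wind once around the critical value of the Lefschetz fibration, intersecting the vanishing thimble nontrivially, and hence its area is bounded below by the same $\chi_{\tau_S}$-type computation, giving a contribution $\geq 2\epsilon$. For $(c)$, the homotopy $h$ counts index-$(-1)$ parametrized sections whose areas are controlled by $\chi_{\tau_S}$-type expressions and so by $(ii)(c)$ cannot lie in $(-2\epsilon,2\epsilon)$; non-identity differentials on $CF(L_0,\underline{L};\Lambda)$, $CF(\tau_SL_0,\underline{L};\Lambda)$, and $CF(L_0,S^T,S,\underline{L};\Lambda)$ correspond to non-constant quilted strips whose areas are action differences controlled by $(ii)(a)$ and $(ii)(b)$ to be at least $2\epsilon$.

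The main obstacle is rigorously establishing, in the Type~1 case of $C\Phi_1$ and in the constant-section case of $C\Phi_2$, both the existence of the small-area section and its count with sign $\pm 1$. Both rely on a precise local product description of pseudo-holomorphic sections in the neighborhood of the Lagrangian sphere $S$, obtained by adapting Seidel's analysis to the quilted setting, and on verifying that the relevant moduli spaces are transversely cut out; once this is settled, the remaining area comparisons reduce to the arithmetic of hypotheses $(ii)$ and $(iii)$.
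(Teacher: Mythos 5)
Your overall strategy matches the paper's: filter by $q$-area, compute the small-energy contributions from Proposition~\ref{areatriangles}, and push the rest to order $\geq 2\epsilon$ using hypotheses $(ii)$ and $(iii)$; the area arithmetic for the tails is essentially identical. You also correctly identify the count of small-energy sections as the main obstacle, but the route you sketch to close it is not the one that works directly. The local product form of $L_{01}$ near $\I(S,\underline{L})$ controls the seam condition but does not by itself force the $M_1,\ldots,M_k$ patches of a quilted triangle to be constant, so there is no immediate product decomposition of the triangle itself; and Seidel's Proposition 3.3 counts intersection points, not triangles. The paper instead runs a cobordism argument modelled on Seidel's Proposition 3.4: interpolate through a one-parameter family $(f_t)$ of parametrizations of $S$ so that the endpoint parametrization sends the antipode $\mathbb{A}(x_0)$ onto $\tilde{x}_0$, producing a moduli space at $t=1$ consisting of exactly the constant triangle; the parametrized moduli space is a compact one-dimensional cobordism by Lemma~\ref{nobubbling} together with the observation that there is not enough energy for positive-area breaking, yielding the count $\pm 1$ without ever needing the triangle to split as a product. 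For $C\Phi_2$ the paper likewise replaces your winding heuristic by choosing a horizontal almost-complex structure, so that positive curvature makes constant sections the only zero-area ones, and Seidel's explicit action formula then bounds the non-constant contributions below by $2\epsilon$ via $(ii)(a)$ and $(iii)$. For the bound on the homotopy you invoke $(ii)(c)$, which is defensible, though the paper rewrites the action difference so that $(ii)(b)$ already suffices.
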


\begin{remark}\label{justif}

One can always assume that the  hypotheses of  Proposition~\ref{basseenergy} are satisfied. Indeed, it suffices as a fist step to perturb $L_0$, $L_{01}$ and $S$ by Hamiltonian  isotopies  and take $\epsilon$ small enough  in order to  guarantee the inequalities, then slightly perturb  and eventually decrease $\lambda$ to ensure the assumptions of  Proposition~\ref{inter}.

\end{remark}

\begin{proof}
(a) Denote $\mathcal{M}(\tilde{x}_0, \underline{x},\underline{y})_0$ the moduli space of index zero quilted triangles as in  figure~\ref{quiltedpants}, having for limits $\tilde{x}_0$, $\underline{x}$ and $\underline{y}$ at the ends. Suppose $\underline{y} = i_1(\tilde{x}_0, \underline{x})$, and  $ \underline{u} \in \mathcal{M}(\tilde{x}_0, \underline{x},\underline{y})_0$. One has 

\[ \langle C\Phi_1 (\tilde{x}_0, \underline{x}),\underline{y} \rangle = \# \mathcal{M}(\tilde{x}_0, \underline{x},\underline{y}) q^{A(\underline{u})}.\]

On the one hand, \[A(\underline{u}) = \chi_{\tau_S} (\tilde{x}_0, \underline{x}, i_2(\tilde{x}_0, \underline{x}) ) = K_{\tau_S}(y_0) - 2\pi R(0) \in [0, \epsilon),\] by Proposition~\ref{areatriangles}. 

On the other hand, $\# \mathcal{M}(\tilde{x}_0, \underline{x},\underline{y})_0 = \pm 1$. This can be proven by a cobordism argument  similar to the one involved in the proof that $C\Phi_{2} \circ C\Phi_{1}$ and $C\Phi_{4} \circ C\Phi_{3}$ are homotopic.  We recall  briefly this argument, and refer to \cite[Prop. 3.4]{Seidel} for more details. Consider a  family $(f_t\colon S^n\rightarrow S)_{t\in [0,1]}$ of parametrizations of $S$ such that $f_0$ coincide with  the embedding $\iota\colon T(0)\rightarrow M$ and $f_1$ sends the antipode $\mathbb{A}(x_0)$ to the point  $\tilde{x}_0$. This  family allows one to  define a parametrized  moduli space $(\mathcal{M}_t)_{t\in [0,1]}$, where $\mathcal{M}_0 = \mathcal{M}(\tilde{x}_0, \underline{x},\underline{y})_0$ and $\mathcal{M}_t$ denotes the moduli space corresponding to the model Dehn twist induced by $f_t$. For a generic family $f_t$, this space is a 1-dimensional cobordism  between $\mathcal{M}_0$ and $\mathcal{M}_1$. Moreover, this cobordism is compact, according to Lemma~\ref{nobubbling} and since there is not enough energy for positive area breaking to appear, by assumption. In addition,  $\mathcal{M}_1$ consists of a single point, the constant triangle.

Suppose now $\underline{y} \neq i_1(\tilde{x}_0, \underline{x})$. By the hypotheses and Proposition~\ref{areatriangles}, 

\[ \chi_{\tau_S} (\tilde{x}_0, \underline{x},  \underline{y} ) \notin [0, \epsilon) \subset \rr/M\zz . \]

Hence, the area of every pseudo-holomorphic triangle, necessarily positive, is greater than $\epsilon$.

(b) For $C\Phi_2$, Seidel's proof applies similarly, see \cite[Section 3.3]{Seidel}. We recall it briefly: if $\underline{x}\in \I(\tau_S L_0,\underline{L})$ and  $\underline{y}\in \I( L_0,\underline{L})$, 
take an horizontal almost complex structure on $\underline{E}_2$,  which is possible by \cite[Lemma 2.9]{Seidel}). Since the fibration has positive curvature, the only zero area J-holomorphic sections are the constant sections, i.e. intersection points of $\I(L_0, S^T,S, \underline{L} )$. There is exactly one such section when $\underline{y} = i_2(\underline{x})$, and zero if $\underline{y}\neq i_2(\underline{x})$. Every other section has a strictly positive area, according to Proposition~\ref{areaposit}, since the fibration has positive curvature, and this area is given by $a_{L_0,\underline{L}}(\underline{y}) - a_{\tau_S L_0,\underline{L}}(\underline{x}) $. If one denotes $\underline{\tilde{x}}$ the point whose first coordinate is the image of the one  from $\underline{x}$ by the antipodal map, this quantity is given by, according to \cite[Formula (3.2)]{Seidel}:
\[ a_{L_0,\underline{L}}(\underline{y}) - a_{ L_0,\underline{L}}(\underline{\tilde{x}}) - 2\pi R(0) + 2\pi \int_0^{||y||}{(R'(||y||) - R'(t)    )dt},\]
where $y = \iota^{-1}(\tilde{x}_0)\in T(\lambda)$. By assumptions $(ii)$ $(a)$ and $(iii)$, this quantity is greater than $2\epsilon$.

(c) Similarly, an action computation permits to prove the claim for the order of the homotopy: let  $(\tilde{x}_0 ,\underline{x})\in \I(\tau_S L_0,S,S,\underline{L})$ and $\underline{y}\in \I(L_0,\underline{L})$, the area of a  section contributing to the coefficient $(h(\tilde{x}_0 ,\underline{x}), \underline{y})$ is:
\begin{align*}
 & a_{S,\underline{L}}(\underline{y}) + a_{\tau_S L_0,S}(\tilde{y}_0) - a_{S,\underline{L}}(\underline{x}) - a_{\tau_S L_0,S}(\tilde{x}_0)  \\ 
 & = a_{ L_0,S}({y}_0)- a_{ L_0,S}(\hat{x}_0) + a_{S,\underline{L}}(\underline{y})- a_{S,\underline{L}}(\underline{x}),  \end{align*}
denoting $\tilde{y}_0$ the antipode of the first coordinate  $y_0$ of $\underline{y}$, and $\hat{x}_0$ the antipode of $\tilde{x}_0$. This quantity is greater than $2\epsilon$ by assumption $(ii)(b)$.

Finally, the three differentials are of order $\geq 2\epsilon$ by $(ii)(b)$ for the one  from $CF(\tau_S L_0,S,S,\underline{L})$, and $(ii)(a)$ for the ones from $CF(\tau_S L_0,\underline{L})$ and $CF( L_0,\underline{L})$.
\end{proof}

\subsubsection{Proof of the triangle}\label{sec:prooftriangle}
One can now prove Theorem~\ref{quilttri},  following the same  strategy as in \cite[Parag. 5.2.3]{WWtriangle}. Suppose  now that the hypotheses of Proposition~\ref{basseenergy} are satisfied. Introduce the following notations: denote the  three chain complexes with coefficients in $\zz$ by:
\begin{align*}
 A_0 = & CF (\tau_S L_0,S^T, S, \underline{L};\zz) \\ 
 A_1 = & CF (\tau_S L_0, \underline{L};\zz) \\
 A_2 = & CF (L_0, \underline{L};\zz ),
\end{align*}
and denote  $C_i$ the complexes with coefficients in $\Lambda$, $C_i = A_i \otimes_\zz \Lambda$ as $\Lambda$-modules, and endowed with their respective differentials  $\partial_0$, $\partial_1$ and $\partial_2$. The maps $C\Phi_1$, $C\Phi_2$ and the homotopy $h$ between $C\Phi_2 \circ C\Phi_1$ and the zero map constructed in paragraph ~\ref{sec:homotopy} are specified in the following diagram:

\begin{equation*} 
\xymatrix{
 {C_0} \ar[r]^{C\Phi_1} \ar@/_1.5pc/[rr]^{ h }& {C_1} \ar[r]^{C\Phi_2} & {C_2.}
}
\end{equation*}

The mapping cone $\mathrm{Cone}~C\Phi_1$ is the chain complex $C_0\oplus C_1$ whose differential is given  by:\[ \partial_{\mathrm{Cone}~C\Phi_1} =
\left[ \begin{matrix}
 - \partial_0 & 0  \\
 - C\Phi_1 & \partial_1  
 
\end{matrix}\right] .  \] 
From the snake lemma, the short exact sequence on chain complexes induces the long exact sequence:
\[ \cdots \rightarrow H_*(C_0)\rightarrow H_*(C_1)\rightarrow H_*(\mathrm{Cone}~C\Phi_1 ) \rightarrow \cdots ,\]
where the first map is $\Phi_1$. It then suffices  to prove that the morphism of chain complexes \[(h, - C\Phi_2)\colon \mathrm{Cone}~C\Phi_1 \rightarrow C_2\]  induces an isomorphism in homology. This will be the case if and only if its mapping cone is acyclic. As a $\Lambda$-module, $\mathrm{Cone}~(h, - C\Phi_2) = C_0\oplus C_1\oplus C_2 $. In this decomposition, its differential is given by: 
\[  \partial = 
\left[ \begin{matrix}
\partial_0 & 0 & 0 \\
 C\Phi_1 & - \partial_1 & 0 \\
 - h & C\Phi_2 & \partial_2 
 
\end{matrix}\right]. \]
The following Lemma~\ref{lemmaperutz}   permits to prove  the acyclicity of a chain complex over $\Lambda$ from the leading term of its differential. Its conclusion only holds in the $q$-adic completion of $\Lambda $, i.e. the universal Novikov ring:

\[\hat{\Lambda} = \left\lbrace \sum_{k = 0}^{\infty}{ a_k q^{\lambda_k}}: a_k\in \zz,\, \lambda_k \in \rr,\, \lim_{k\to +\infty} \lambda_k = +\infty \right\rbrace\] 

Recall first some terminology for $\rr$-graded modules.

\begin{defi} An \emph{$\rr$-graded module} $A$ is a module endowed with a decomposition $A = \bigoplus_{r\in\rr }{A_r}$. Its \emph{support} is defined by $Supp A =  \lbrace r: A_r\neq 0 \rbrace$. Let $I\subset \rr$, $A$ is said to have \emph{gap $I$}  if $\forall r,r' \in Supp A, r-r' \notin I$.

If $r'\in \rr$, we denote $A[r]$ the \emph{shift} defined by $A[r]_s = A_{r+s}$. One has $Supp A[r] = Supp A -r$.

A linear map $f\colon A\to B$ between two graded modules  is said 
\begin{itemize}
\item to be \emph{of order $I$} if for all $r$, $f(A_r) \subset \bigoplus_{i\in I}{B_{r+i}}$.

\item to have \emph{gap $I$}  if for all $r$, the image $f(A_r)$ has gap $I$.
\end{itemize}
\end{defi}

\begin{lemma}(\cite[Lemma 5.3]{Perutzgysin})\label{lemmaperutz}
 Let  $\epsilon >0$, $(A,d)$ an $\rr$-graded module which has gap $[\epsilon, 2\epsilon)$, endowed with a differential $d$ of order $[0, \epsilon)$. Let $D = A\otimes_\zz \Lambda$, $\hat{D}= A\otimes_\zz \hat{\Lambda}$ be its completion, and $\partial$ a differential on $\hat{D}$ such that:
\begin{itemize}
\item[$(i)$] $\partial$ is $\hat{D}$-linear and continuous.
\item[$(ii)$]  $\partial(A) \subset A\otimes_\zz \hat{\Lambda}_+ $, where $\hat{\Lambda}_+ = \left\lbrace \sum_{k = 0}^{\infty}{ a_k q^{\lambda_k}} \in \hat{\Lambda}:  \lambda_k \in \rr_+ \right\rbrace $.
\item[$(iii)$] $\partial = \partial_{\leq \epsilon} + \partial_{\geq 2 \epsilon}$, where $\partial_{\leq \epsilon} = d\otimes \hat{\Lambda} $ is the differential induced by $d$, and $\partial_{\geq 2 \epsilon}$  of order $[2\epsilon, +\infty)$.
\item[$(iv)$] $(A,d)$ is acyclic.
\end{itemize}
Then,  $(\hat{D},\partial)$ is acyclic.
\end{lemma}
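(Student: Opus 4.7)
First, I would use the gap condition to produce a $\zz$-linear contraction $h:A\to A$ with $dh+hd=\mathrm{id}_A$ whose order is bounded. Because $\mathrm{Supp}(A)$ has gap $[\epsilon,2\epsilon)$, the relation $|r-r'|<\epsilon$ is transitive on $\mathrm{Supp}(A)$: if $|r-s|<\epsilon$ and $|s-t|<\epsilon$ then $|r-t|<2\epsilon$, and the gap forbids the interval $[\epsilon,2\epsilon)$, forcing $|r-t|<\epsilon$. Hence $\mathrm{Supp}(A)$ partitions into clusters of diameter strictly less than $\epsilon$, pairwise separated by at least $2\epsilon$. Since $d$ has order $[0,\epsilon)$ it cannot map one cluster into another, so $A$ splits as a direct sum of acyclic subcomplexes indexed by clusters; choosing a $\zz$-linear contraction on each cluster and summing yields $h$ of order contained in $(-\epsilon,\epsilon)$.

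The plan is then to invert $\partial$ on cycles by successive approximation using $H:=h\otimes\mathrm{id}_{\hat\Lambda}:\hat D\to\hat D$. By construction $\partial_{\leq\epsilon}H+H\partial_{\leq\epsilon}=\mathrm{id}$, whence $\partial H+H\partial=\mathrm{id}+E$ with $E:=\partial_{\geq 2\epsilon}H+H\partial_{\geq 2\epsilon}$. The order estimates on $\partial_{\geq 2\epsilon}$ and $H$ give $E$ of order contained in $(\epsilon,\infty)$, so $E$ raises the $\rr$-valuation on $\hat D$ by at least some $\delta>0$. For any cycle $x\in\ker\partial$, define inductively $y_0=Hx$ and $y_{n+1}=y_n-H r_n$ with $r_n:=\partial y_n-x$. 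A direct computation using $\partial H=\mathrm{id}+E-H\partial$ and $\partial E=E\partial$ (both consequences of $\partial^2=0$) shows $\partial r_n=0$ and $r_{n+1}=-Er_n$, hence $r_n=(-E)^n(Ex)$. Since $E^n$ raises valuation by at least $n\delta\to\infty$, the sequence $r_n$ tends to $0$ in $\hat D$; by continuity of $H$ (itself a consequence of hypothesis $(i)$) and completeness of $\hat\Lambda$, the partial sums $y_n$ converge to some $y_\infty\in\hat D$ with $\partial y_\infty=x$, proving acyclicity.

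The main obstacle is step one: an arbitrary contraction of an acyclic chain complex can shift grading arbitrarily downwards, in which case $E$ would not strictly raise the valuation and the Neumann-type iteration would diverge in $\hat D$. The gap hypothesis $[\epsilon,2\epsilon)$, matched with the order $[2\epsilon,\infty)$ of the perturbation $\partial_{\geq 2\epsilon}$, is precisely the input that produces the cluster decomposition, caps the negative order of $h$ by $-\epsilon$, and thereby forces $E$ to have strictly positive order; this is the quantitative reason the completion $\hat\Lambda$ is needed rather than $\Lambda$ itself.
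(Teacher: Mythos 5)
The paper does not prove this lemma; it is imported verbatim from Perutz with a citation, so there is no in-paper argument to compare against. Your Neumann-iteration strategy is the standard one and is, as far as I am aware, also how Perutz argues: build a contraction $h$ for the leading-order differential with controlled order, treat $\partial$ as a small perturbation, and invert it on cycles by a geometric series. The algebraic bookkeeping is correct — $r_0 = Ex$, $\partial r_n = 0$ because $\partial^2 = 0$ and $\partial x = 0$, hence $\partial H r_n = r_n + E r_n$ and $r_{n+1} = -E r_n$ — and your diagnosis that the gap hypothesis is exactly what caps the negative order of $h$ at $-\epsilon$ is the right observation.

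Two points need tightening. First, the existence of a $\zz$-linear contraction of an acyclic complex requires each $A_r$ to be projective (here: free) over $\zz$; this holds in the application since each $A_r$ is generated by Floer intersection points, but it is not among the hypotheses as you restated them, and an acyclic complex such as $0\to\zz\to\zz\to\Z{2}\to 0$ admits no $\zz$-linear contraction. Second, and more substantively, the phrase "$E$ of order contained in $(\epsilon,\infty)$, so $E$ raises the $\rr$-valuation on $\hat D$ by at least some $\delta>0$" conflates two different filtrations. The order of $E$ measures the shift in the combined $\rr$-degree ($A$-grading plus exponent of $q$), whereas convergence of the series $\sum H r_n$ in $\hat D = A\otimes_\zz\hat\Lambda$ requires the exponent of $q$ alone to go to infinity. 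If $\mathrm{Supp}(A)$ were unbounded, $E$ could indefinitely raise the $A$-degree with $q$-exponent zero, $E^nx$ would never tend to $0$ in the $q$-adic topology, and the conclusion would simply be false (one can build a two-term-per-degree example with $d e_n = f_n$, $\partial_{\geq 2\epsilon} e_n = f_{n+1}$ for which $f_0$ is a nonbounding cycle of $\hat D$). The passage from order to valuation therefore needs the boundedness of $\mathrm{Supp}(A)$, which does hold in the application ($\mathrm{Supp}\,A\subset[0,\epsilon)$, finitely many generators) but should be flagged. Finally, the continuity of $H$ does not follow from hypothesis $(i)$, which concerns only $\partial$; it follows from $H = h\otimes\mathrm{id}_{\hat\Lambda}$ being $\hat\Lambda$-linear on a module with finitely supported $A$-part.
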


In order to apply this lemma to the double mapping cone, equip $A= A_0 \oplus A_1 \oplus A_2$ with the following grading \footnote{Here  appears a  difference with Seidel's  proof in the exact setup:  the complex cannot be graded by the action, which is only defined  modulo $M$. The conclusion is then a priori weaker: the acyclicity only holds after completion.  We will however see that the monotonicity  hypotheses allows one to obtain  acyclicity over $\zz$.}: 
\begin{itemize}
\item $A_0$ is concentrated in  degree 0.
\item $A_1$, which is isomorphic to $A_0\oplus A_2$, is graded in the following way: its first component is graded so that $C\Phi_{2,\leq \epsilon}$ has degree 0: if $(\tilde{x}_0, \underline{x})\in \I(L_0, \underline{L})$, define $\deg i_2(\tilde{x}_0, \underline{x}) = \chi_{\tau_S} (\tilde{x}_0, \underline{x}, i_2(\tilde{x}_0, \underline{x}) )$. The second component is concentrated in degree 0.
\item $A_2$ is concentrated in degree 0.
\end{itemize}

According to propositon~\ref{basseenergy}, $\supp A\subset [0, \epsilon)$. Moreover, by construction, the differential
\[ d =\begin{pmatrix}
 0 & 0 & 0 \\
 C\Phi_{1,\leq \epsilon} & 0 & 0 \\
 0 & C\Phi_{2,\leq \epsilon} & 0 \end{pmatrix},\]
preserves this  grading.

The module $\hat{D}$ is then  identified  with $C = \mathrm{Cone }(h,-C\Phi_2)$. Its differential $\partial$ defined before satisfies the assumptions $(i)$, $(ii)$ and $(iii)$ of Lemma~\ref{lemmaperutz}, by Proposition~\ref{basseenergy}. Furthermore, $d$ is acyclic. Indeed,  in the decomposition $A_1 = A_0 \oplus A_2$, according to Proposition~\ref{basseenergy},

\[ C\Phi_{1,\leq \epsilon} =  \begin{pmatrix}
  Id_{A_0} \\0
  \end{pmatrix} \text{ and } C\Phi_{2,\leq \epsilon} \begin{pmatrix}
  0 &Id_{A_1}  
  \end{pmatrix},
\]
hence $d = \begin{pmatrix}
0 & 0 & 0 & 0 \\
Id_{A_0} & 0 & 0 & 0 \\
0 & 0 & 0 & 0 \\
0 & 0 & Id_{A_1}& 0 \end{pmatrix}$, which is clearly acyclic. 

We now explain  why the monotonicity of the Lagrangian correspondences ensures the acyclicity over $\zz$, hence the exact sequence of Theorem~\ref{quilttri}. Recall  that if $x,y$ are  generators of a chain  complex $C_i$, the symplectic area  of a strip $u$ going from $x$ to $y$ is given by $ A(u) = \frac{1}{8} I(u) + c(x,y)$\footnote{$\frac{1}{8} = \frac{1}{2} \kappa $, where $\kappa = \frac{1}{4}$ is the monotonicity constant of  the forms $\tilde{\omega}$.}, where $c(x,y)$  is a quantity independent from the strip $u$. These quantites satisfy $ c(x,z) = c(x,y)+ c(y,z)$. Similarly, if now $x$ and $y$ are generators  of different complexes  $C_i$ and $C_j$, there exists similar quantities $c(x,y)$ giving the area of section  of the fibrations defining $C\Phi_1$ and $C\Phi_2$, and these quantities satisfy the same additivity relation, by additivity of the area and of the index.

Recall  also that the  differentials $\partial_0$, $\partial_1$ and $\partial_2$ count index 1  strips, $C\Phi_1$ and $C\Phi_2$   index 0 sections, and $h$ index $-1$ sections. Hence, denoting $d(x,y) = c(x,y) +i(x) -i(y)$, where $i(x)  = 0,1,2$ refers to the subcomplex in which $x$ belongs, the coefficient $(\partial x, y)$ is of the form $m(x,y) q^\frac{d(x,y)}{2}$. 

Fix $x_0$ an arbitrary generator of $C$, and let $f\colon C\to C$ be defined on the  generators by: $f(x) = q^{d(x_0,x)} x$.

An elementary computation shows then that $\partial f = q^\frac{1}{8} f \partial_\zz$, with   $(\partial_\zz x, y) = m(x,y)$. It follows that $H_*(\hat{D},\partial)$ and $H_*(\hat{D},\partial_\zz)$ are isomorphic. Hence, by the universal coefficients theorem, 
\[ H_*(\hat{D},\partial) \simeq H_*(A,\partial_\zz)\otimes_\zz \hat{\Lambda} \oplus \mathrm{Tor}_\zz ( H_*(A,\partial_\zz), \hat{\Lambda} ) [-1].\]
Yet, $H_*(\hat{D},\partial) =0$ by Lemma~\ref{lemmaperutz}, hence $H_*(A,\partial_\zz) =0$ by the classification of finite type  abelian groups, and the fact that $(\Z{n})\otimes_\zz \hat{\Lambda} \neq 0$.

\subsection{Action of a Dehn twist on a surface}\label{sec:twistmodulispace}

The aim of this section is to study the geometric nature  of the transformation induced by a  Dehn twist on a surface $\Sigma$ along a non-separating curve on the moduli spaces  $\N(\Sigma)$. We will see in Proposition~\ref{expressionflot} that this  transformation can be expressed as a Hamiltonian flow on the complement of a coisotropic submanifold, and we will show in Theorem~\ref{twistinduit} that, when $\Sigma$ is a punctured torus, this transformation almost corresponds  to a Dehn twist, except that its support isn't compact in $\N(\Sigma)$. We will see however that a generalized Dehn twist can be built out from this symplectomorphism, which will allow us to prove  Theorem~\ref{trianglechir}.

The  group of  diffeomorphisms of $\Sigma$ which are the identity on the boundary acts in a natural way on $\N (\Sigma )$ by pulling-back. In this paragraph, we show that the symplectomorphism corresponding to a Dehn twist $\tau_K$ along a curve $K \subset  \Sigma$ can be expressed as the time 1 flow of a Hamiltonian  which is smooth outside the coisotropic submanifold $C_- = \{[A]|\mathrm{Hol}_K (A) = -I\}$.

We follow the strategy of \cite[Section 3]{WWtriangle}: we cut the surface  along $K$, introduce an intermediate moduli space  $\N (\Sigma_{cut} )$ associated to the cut suface $\Sigma_{cut}$ (see figure~\ref{sigma_sigmacut}), whose reduction for a natural Hamiltonian $SU(2)$-action yields  the complement $\N(\Sigma) \setminus C_-$ (see paragraph ~\ref{sec:gluingreduction}).

The fact that the Dehn twist $\tau_K$ is isotopic to the identity in $\Sigma_{cut}$ will enable us  to express its  pull-back as a Hamiltonian flow in $\N (\Sigma_{cut} )$ which is invariant for the previous $SU(2)$-action. This flow will thus descend to a Hamiltonian flow in the symplectic  quotient $\N(\Sigma) \setminus C_-$.

\subsubsection{Fundamental groups of $\Sigma$ and $\Sigma_{cut}$}\label{sec:fundamentalgroups}

\begin{figure}[!h]
    \centering
    \def\svgwidth{\textwidth}
    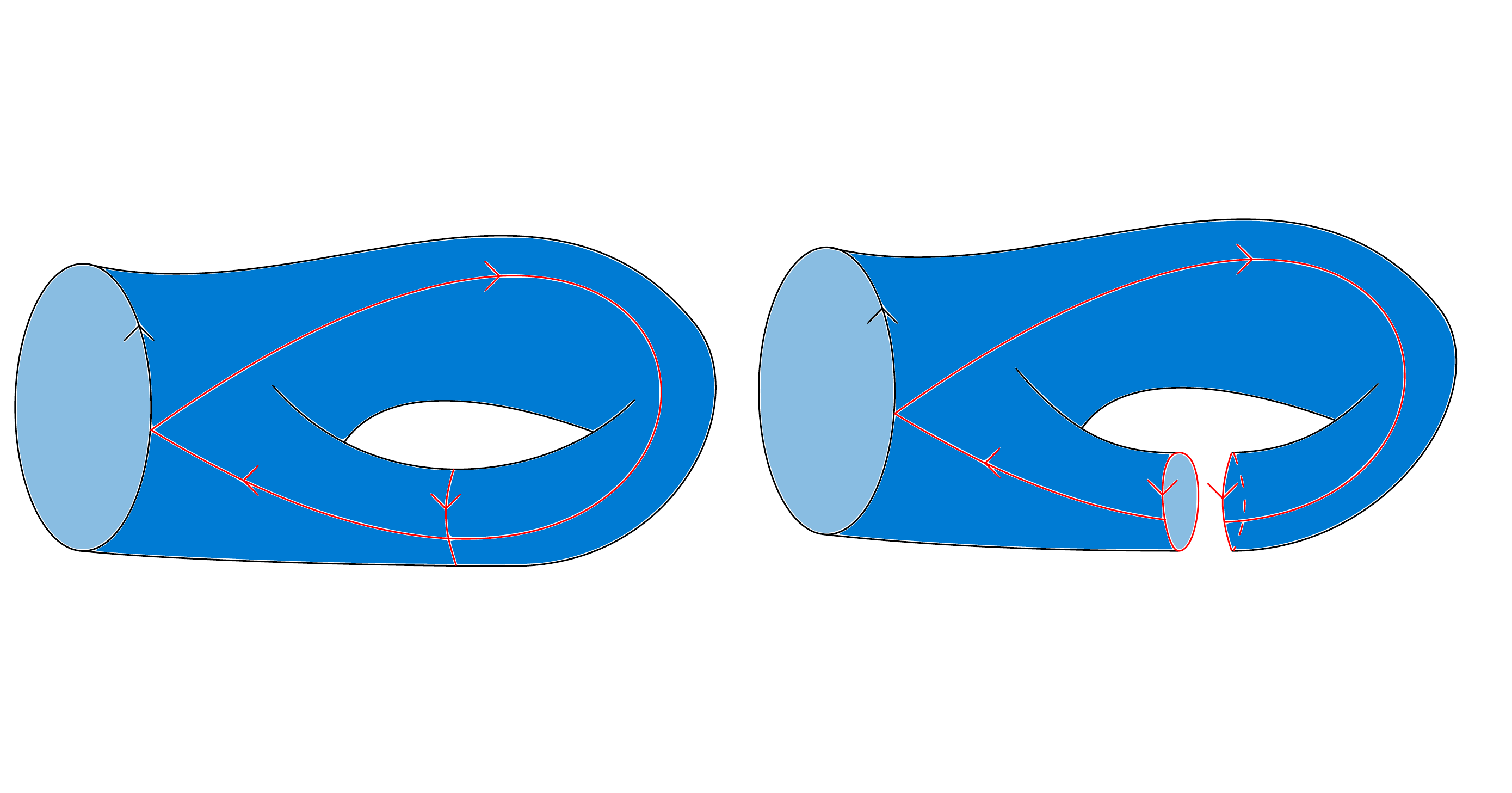
      \caption{The surfaces $\Sigma$ and $\Sigma_{cut}$.}
      \label{sigma_sigmacut}
\end{figure}

Let $p\colon \rr/\zz \rightarrow \partial \Sigma$ be the parame\-trization of the boundary, and $* = p(0)$ the base point. The curve $K$ being non-separating, there  exists a simple curve $\alpha\colon \rr/\zz \rightarrow  \Sigma$ based in $*$ and intersecting transversely $K$ in one point $\alpha (\frac{1}{2} )$. We denote $\alpha_1 = \alpha_{[0,\frac{1}{2}]}$ and $\alpha_2 = \alpha_{[\frac{1}{2}, 1]}$. We also denote $\beta\colon \rr/\zz \rightarrow  \Sigma$ a parametrization of $K$ based  in $\alpha(\frac{1}{2} )$ and oriented so that $\alpha . \beta = +1$, and $\tilde{\beta} = \alpha_2^{-1} \beta \alpha_2 $. The surface $\Sigma \setminus (\alpha \cup \beta)$ has genus $h -1$, Let   $u_2 , v_2 , \cdots , u_h , v_h$ be  generators of its fundamental group such that, denoting $\gamma = [p]$ the boundary curve  in $\Sigma$, one has  $\gamma  = [\alpha , \tilde{\beta} ]\prod_{i=2}^{h}{[u_i , v_i ]}$. The curves  $\alpha , \tilde{\beta}, u_2 , v_2 , \cdots , u_h , v_h$ then form a generating  system of $\pi_1(\Sigma, *)$, and the extended moduli space admits the description:

\[  \N(\Sigma,p) = \left\lbrace  (g, A, \tilde{B}, U_2 , V_2 , \cdots , U_h , V_h )|e^g = [A, \tilde{B}]\prod_{i=2}^{h}{[U_i , V_i ]} \right\rbrace
,\]
where $g\in \mathfrak{su(2)}$ has norm $< \pi \sqrt{2}$, and is the element such that the connection is of the form $g ds$ near the boundary. The elements of $SU(2)$ $A$, $\tilde{B}$, $U_2$, $V_2$, $\cdots$, $U_h$, $V_h$ stand for the  holonomies along the generating  curves $\alpha , \tilde{\beta}, u_2 , v_2 , \cdots , u_h , v_h$.

Let $\Sigma_{cut}$ be the compact surface obtained by cutting $\Sigma$ along $K$. Denote $\beta_1$ and $\beta_2$  parametrizations of the new boundary components, agreeing with  $\beta$ in $\Sigma$, and with $\beta_1$ touching $\alpha_1$ and $\beta_2$ touching $\alpha_2$, see figure~\ref{sigma_sigmacut}. One then associates to $\Sigma_{cut}$ the following moduli space, defined in \cite[Parag. 5.2]{jeffrey} by:
 \[\mathscr{M}^{\mathfrak{g},3}(\Sigma_{cut}) = \mathscr{A}_F^\mathfrak{g}(\Sigma_{cut}) /\Gc (\Sigma_{cut} ),\] 
where the exponent $3$ refers to the number of boundary components  of $\Sigma_{cut}$, $\mathscr{A}_F^\mathfrak{g}(\Sigma_{cut})$ is the space of flat connections on $SU(2)\times\Sigma_{cut}$ of the form $g ds$, $b_1 ds$ and $b_2 ds$ in the  neighborhoods of $\gamma$,   $\beta_1$, $\beta_2$, and $s\in \rr/\zz$ represents the parameter of the boundary. The group  $\Gc (\Sigma_{cut} )$  of gauge  transformations which are trivial in a neighborhood of the boundary acts in a natural way on $\mathscr{A}_F^\mathfrak{g}(\Sigma_{cut})$. 

We will restrict to the open subset $\N(\Sigma_{cut}) \subset  \mathscr{M}^{\mathfrak{g},3}(\Sigma_{cut})$ of connections for which the vectors $g$, $b_1$ and $b_2$ are in the ball of radius $\pi \sqrt{2}$ and center 0. 

This space admits the following description (see \cite[Prop. 5.3]{jeffrey}):

\begin{multline*}\N(\Sigma_{cut} ) \simeq \\ \left\lbrace (g, A_1 , A_2 , b_1 , b_2 , U_2 , V_2 , \cdots )~\left| ~e^g = A_1 e^{b_1} A_1^{-1} A_2^{-1} e^{b_2} A_2 \prod_{i=2}^{h}{[U_i , V_i ]}\right\rbrace \right. , \end{multline*}
where $g, b_1, b_2 \in \mathfrak{su(2)}$ are the values of the connection along the boundaries (elements of the ball of radius $\pi \sqrt{2}$), and $A_1 , A_2 ,  U_2 $ ,$ V_2 , \cdots , U_h , V_h \in SU(2)$ the holonomies along the curves $\alpha_1, \alpha_2, u_2 , v_2 , \cdots $, $u_h , v_h$.

This space is endowed with a symplectic form defined as the one for $\N(\Sigma)$:  if $[A]\in\N(\Sigma_{cut}) $ denotes the orbit of a flat connection $A$, and $\eta$, $\xi$ are $\mathfrak{su(2)}$-valued 1-forms  representing tangent vectors of $T_{[A]} \N(\Sigma_{cut})$, that is, proportional to $ds$ on each boundary, and $d_A$-closed, then

\[ \omega_{[A]}([\eta],[\xi]) = \int_{\Sigma'} \langle \eta \wedge\xi \rangle.  \]

\subsubsection{From $\N(\Sigma_{cut} )$ to $\N(\Sigma )$}\label{sec:gluingreduction}

In order to relate $\N(\Sigma_{cut} )$ and $\N(\Sigma )$, we prove and use the fact that these moduli spaces satisfy the creed ``gluing equals reduction''.

The group $SU(2)^3$ acts in a Hamiltonian way on $\N(\Sigma_{cut} )$, indeed $SU(2)^3$ may be identified with the quotient $\G ^{const} (\Sigma_{cut} ) / \Gc (\Sigma_{cut} )$, where $\G ^{const} (\Sigma_{cut} )$ is the group of gauge transformations  constant near the  boundary. The moment of this action is given by:

\[ \Psi = (\Phi_\gamma,\Phi_1,\Phi_2) \colon \N(\Sigma_{cut} ) \rightarrow \mathfrak{su(2)}^3,  \]
where $\Phi_\gamma ([A]) = g$, $\Phi_1([A]) = -b_1$ and $\Phi_2([A]) = b_2$, if $A$ is a flat connection  of the form $g ds$, $b_1 ds$ and $b_2 ds$ in the neighborhoods of $\gamma$, $\beta_1$ and $\beta_2$ (the minus sign in $\Phi_1$ comes from the fact that we oriented $\beta_1$  as $\beta$, and not by the outward-pointing convention). In the representation-theoretic description of $\N(\Sigma_{cut} )$, this action can be expressed as:

\begin{multline*} (G,G_1 , G_2 ).(g, A_1 , A_2 , b_1 , b_2 , U_2 , V_2 , \cdots , U_h , V_h ) = \\ 
(ad_{G} g, G A_1 G_1^{-1} , G_2 A_2 G^{-1} , ad_{G_1} b_1 , ad_{G_2} b_2 ,G U_2 G^{-1},G V_2 G^{-1}, \cdots ).
\end{multline*}

In particular, the action of $SU(2)$ defined by $G.([A]) = (1,G,G).([A])$ is also Hamiltonian, with moment $\Phi  = \Phi_1 + \Phi_2$, and expression:

\begin{multline*}G.(g, A_1 , A_2 , b_1 , b_2 , U_2 , V_2 , \cdots , U_h , V_h ) = \\
(g, A_1 G^{-1} , G A_2 , ad_G b_1 , ad_G b_2 , U_2 , V_2 , \cdots , U_h , V_h ).\end{multline*}

Denote $\N(\Sigma_{cut} )\red SU(2) $ the  symplectic quotient for this action, and define a map $\N(\Sigma_{cut} )\red SU(2) \rightarrow \N(\Sigma)$ in the following  way: if $A$ is a connection on $\Sigma_{cut}$ such that $\Phi([A]) = 0$, then $b_1 = b_2$ and $A$  glue back to a connection on $\Sigma$. This defines a map $ \Phi^{-1} (0) \rightarrow \N(\Sigma)$. If $G\in SU(2)$, and $\varphi \in \G ^{const}(\Sigma_{cut})$ is a gauge transformation corresponding to $(1,G,G)$, $\varphi$ coincide on $\beta_1$ and $\beta_2$, and glue to a gauge transformation  of $\G^0(\Sigma)$. It then follows that  $A$ and $\varphi . A$ define the same element in $\N(\Sigma)$, i.e. the previous map descends to a map from the quotient  $\N(\Sigma_{cut} )\red SU(2) $ to $\N(\Sigma)$.

\begin{prop}\label{actioncut} In the holonomy description  of $\N(\Sigma_{cut} )$ and $\N(\Sigma)$, this map corresponds to:

\begin{multline*}[(g, A_1 , A_2 , b_1 , b_2 , U_2 , V_2 , \cdots , U_h , V_h )] \mapsto \\
(g, A = A_1 A_2 , \tilde{B} = A_2^{-1} e^{b_1} A_2 , U_2 , V_2 , \cdots , U_h , V_h ).\end{multline*}

Furthermore, this map realizes a symplectomorphism on its image \[\N(\Sigma' ) \setminus C_-,\] where $C_- = \lbrace\tilde{B} = -I\rbrace$.
 
\end{prop}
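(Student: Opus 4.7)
The plan is to handle the proposition in three logical stages: derive the holonomy formula from a direct path-composition computation, verify the image lies in $\N(\Sigma)\setminus C_-$ and that the map is bijective, and finally obtain the symplectomorphism property from the ``gluing equals reduction'' principle for flat connections.

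For the formula, I would take a flat connection $A$ on $\Sigma_{cut}$ representing a class $[A]\in\Phi^{-1}(0)$ and compute the holonomies of the glued connection on $\Sigma$ along the basis curves $\alpha$ and $\tilde\beta$. Since $\alpha=\alpha_1\cdot\alpha_2$ as a based path in $\Sigma$, path composition gives $\operatorname{Hol}(\alpha)=A_1 A_2$. The moment-zero level set identifies $b_1$ and $b_2$ (with the orientation convention built into $\Phi_1=-b_1$), so the connections glue across $K$ and the holonomy along $\beta$ in $\Sigma$ is the single element $e^{b_1}=e^{b_2}$. Since $\tilde\beta=\alpha_2^{-1}\beta\alpha_2$, we get $\operatorname{Hol}(\tilde\beta)=A_2^{-1}e^{b_1}A_2$, which is the stated formula. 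Well-definedness on the quotient is then two short verifications: the formulas are manifestly $\G^c(\Sigma_{cut})$-invariant (we are working with gauge orbits), and under the stabiliser-free $SU(2)$-action $G\cdot(\,\cdot\,)=(g,A_1G^{-1},GA_2,\operatorname{Ad}_G b_1,\operatorname{Ad}_G b_2,U_i,V_i)$ the expressions $A_1 A_2$ and $A_2^{-1}e^{b_1}A_2$ are unchanged by direct substitution.

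Next I would verify that the image lies in $\N(\Sigma)\setminus C_-$. The surface relation $e^g=[A,\tilde B]\prod[U_i,V_i]$ is checked by substituting $A=A_1 A_2$, $\tilde B=A_2^{-1}e^{b_1}A_2$ into the commutator and matching the result against the $\N(\Sigma_{cut})$-relation $e^g=A_1 e^{b_1}A_1^{-1}A_2^{-1}e^{b_2}A_2\prod[U_i,V_i]$ modulo the moment-map constraint. Avoidance of $C_-$ is automatic: $\N(\Sigma_{cut})$ requires $|b_1|<\pi\sqrt{2}$, so $e^{b_1}\neq -I$, and $\tilde B$ being conjugate to $e^{b_1}$ is also distinct from $-I$. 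Bijectivity onto $\N(\Sigma)\setminus C_-$ then uses that the exponential map restricts to a diffeomorphism from $\{|b|<\pi\sqrt 2\}$ onto $SU(2)\setminus\{-I\}$: given $(g,A,\tilde B,U_i,V_i)$ with $\tilde B\neq -I$ and any choice of $A_2\in SU(2)$, set $A_1=AA_2^{-1}$ and $b_1=b_2=\log(A_2\tilde B A_2^{-1})$; the construction is canonical up to the free $SU(2)$-action (freeness because $A_1 G^{-1}=A_1$ forces $G=I$), which gives both surjectivity and injectivity on the quotient.

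For the symplectomorphism property, I would invoke the Marsden--Weinstein recipe. Tangent vectors at $[A]\in\Phi^{-1}(0)$ are represented by $d_A$-closed $\mathfrak{su}(2)$-valued $1$-forms which are multiples of $ds$ near each boundary component; those lying in $\ker d\Phi$ have matching values on $\beta_1$ and $\beta_2$ and therefore glue to $d_A$-closed forms on $\Sigma$ representing tangent vectors at the image point. Because the Huebschmann--Jeffrey form on both spaces is defined by the same integral $\omega([\eta],[\xi])=\int\langle\eta\wedge\xi\rangle$ and $\int_\Sigma=\int_{\Sigma_{cut}}$ for glued forms, the pullback of $\omega_{\N(\Sigma)}$ agrees with $\omega_{\N(\Sigma_{cut})}|_{\Phi^{-1}(0)}$. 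By definition of symplectic reduction and the fact that the action is free, this is exactly the symplectic form on $\N(\Sigma_{cut})\red SU(2)$.

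The main technical obstacle is the relation verification in the second stage: the naive computation $[A_1 A_2,A_2^{-1}e^{b_1}A_2]$ yields $A_1 e^{b_1}A_1^{-1}A_2^{-1}e^{-b_1}A_2$, and reconciling this with $A_1 e^{b_1}A_1^{-1}A_2^{-1}e^{b_2}A_2$ requires careful bookkeeping of the two orientation conventions on the cut boundary components; this is precisely the book-keeping that forced the sign $\Phi_1=-b_1$ in the definition of the moment map, and handling it correctly is where essentially all of the content of the proposition sits.
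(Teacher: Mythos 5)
Your approach matches the paper's: derive the holonomy formula from $\alpha=\alpha_1\alpha_2$ and $\tilde\beta=\alpha_2^{-1}\beta_2\alpha_2$, get bijectivity from an explicit inverse (the paper simply fixes $A_2=I$, you allow arbitrary $A_2$, which is the same argument), and get the symplectomorphism from the observation that both forms are given by the same integral $\int\langle\eta\wedge\xi\rangle$ over $\Sigma$ and $\Sigma_{cut}$; your Marsden--Weinstein phrasing is a more detailed version of the paper's one-sentence argument. The one place where you diverge is actually a place where you are more careful than the paper: you flag that one must verify the image satisfies $e^g=[A,\tilde B]\prod[U_i,V_i]$, which the paper passes over silently, and you correctly compute $[A_1A_2,A_2^{-1}e^{b_1}A_2]=A_1e^{b_1}A_1^{-1}A_2^{-1}e^{-b_1}A_2$.

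However, your claimed reconciliation of the mismatch with $A_1e^{b_1}A_1^{-1}A_2^{-1}e^{b_2}A_2$ does not work as stated. You attribute it to ``the book-keeping that forced the sign $\Phi_1=-b_1$,'' but with $\Phi_1=-b_1$ and $\Phi_2=b_2$, the zero level set is $\{b_1=b_2\}$; substituting this in would force $e^{-b_1}=e^{b_1}$, i.e.\ $b_1=0$, so that sign choice does \emph{not} close the gap. What actually resolves it is that the relation for $\N(\Sigma_{cut})$ must be read with the boundary orientation on $\beta_2$ (which is opposite to the $\beta$-parametrization used to define $b_2$), so that the term appearing is $A_2^{-1}e^{-b_2}A_2$; then $b_1=b_2$ does yield $[A,\tilde B]$ on the nose. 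Your explicit inverse $b_1=b_2=\log(A_2\tilde BA_2^{-1})$ confirms this: plugging it into the relation as literally displayed gives $A\tilde BA^{-1}\tilde B$, not $[A,\tilde B]$, and only the orientation-corrected relation is satisfied. So you have correctly identified a convention-tracking step that the paper glosses over, but your resolution points at the wrong sign; the fix lies in the orientation of $\beta_2$ entering the holonomy relation, not in the sign of $\Phi_1$, and this should be carried out rather than gestured at.
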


\begin{proof} The description comes from the fact that $\alpha  = \alpha_1 \alpha_2$ and $\tilde{\beta}  = \alpha_2^{-1} \beta_2 \alpha_2$.

The exponential  realizes a diffeomorphism between the ball \[\{b_1 \in \mathfrak{su(2)} \ |\ |b_1 | < \pi \sqrt{2} \}\] and $SU(2) \setminus \{-I\}$, we denote  $\log$ its inverse map. One can easily check that the inverse map is given by:
\begin{multline*}(g, A, \tilde{B}, U_2 , V_2 , \cdots , U_h , V_h ) \mapsto \\
 [(g, A_1 = A, A_2 = I, b_1 = \log(\tilde{B}), b_2 = b_1 , U_2 , V_2 , \cdots , U_h , V_h )]\end{multline*}
This proves that it is a bijection. Finally, this map preserves the symplectic forms, as both are defined in an analogous way, by integrating the forms on  $\Sigma$ and $\Sigma_{cut}$.
\end{proof}

\subsubsection{Description of a Dehn twist in the moduli spaces}\label{sec:descriptwist}

We start by describing the action of a  Dehn twist inside $\N(\Sigma_{cut} )$. For each $t \in [0, 1]$, denote $\tau_t$ the diffeomorphism of $ \Sigma_{cut}$ being the identity outside a neighborhood of the curve $ \beta_1$, and on $\nu \beta_1\simeq \rr/\zz \times  [0, 1]$, $\tau_t (s, x) = (s + t\psi (x), x)$, where $\psi: [0, 1] \rightarrow [0, 1]$ is a smooth function equal to  1 on $[0, \frac{1}{3}]$ and 0 on $[\frac{2}{3}, 1]$.

\begin{figure}[!h]
    \centering
    \def\svgwidth{0.65\textwidth}
    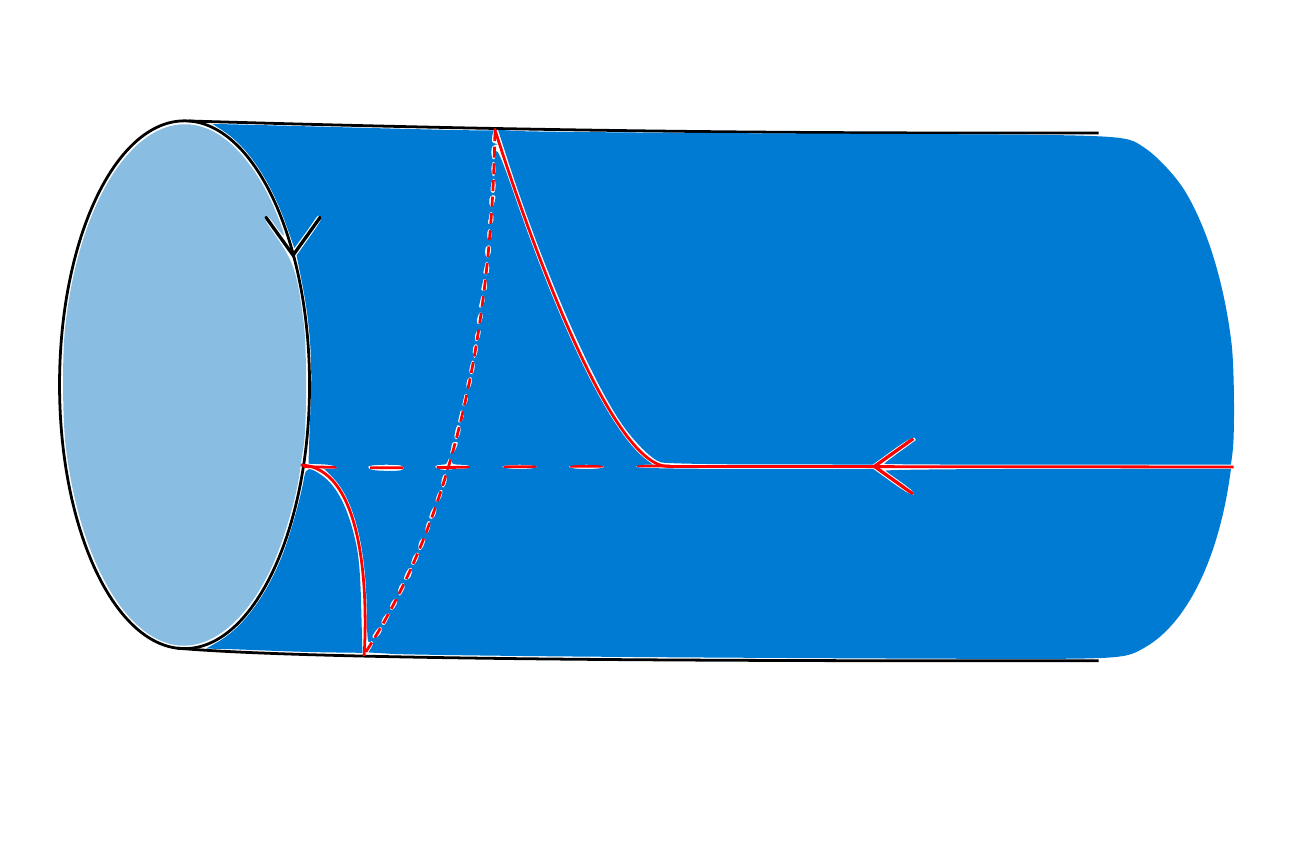
      \caption{The twist $\tau_1$ in the neighborhood of $\beta_1$.}
      \label{twist}
\end{figure}

Only $\tau_0$ and $\tau_1$ glue to diffeomorphisms of $\Sigma$, corresponding respectively to the  identity and a Dehn twist along $\beta$. Denote then the pullback $\varphi_t = \tau_t^*: \N(\Sigma_{cut} ) \rightarrow \N(\Sigma_{cut} )$, defined by $\varphi_t ([A]) = [\tau_t^* A]$. 

\begin{prop}\label{expressionflot}
\begin{itemize}

\item[$(i)$] In the holonomy description of $\N(\Sigma_{cut} )$,  the pullback $\varphi_t$ corresponds to:

\begin{multline*}\varphi_t (g, A_1 , A_2 , b_1 , b_2 , U_2 , V_2 , \cdots , U_h , V_h ) = \\
(g, A_1 e^{tb_1} , A_2 , b_1 , b_2 , U_2 , V_2 , \cdots , U_h , V_h ).\end{multline*}

\item[$(ii)$] For each $t\in [0,1]$, $\varphi_t$ is the time $t$ Hamiltonian flow    of the function $H: \N(\Sigma_{cut} ) \rightarrow \rr$ defined by $H([A]) =  \frac{1}{2}|\Phi_1([A]) |^2$.

\end{itemize}
\end{prop}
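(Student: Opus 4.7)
\emph{Proof proposal.} The plan is to prove (i) by a direct holonomy computation curve by curve, then derive (ii) from the general symplectic identity ``Hamiltonian flow of $\tfrac12|\Phi_1|^2$ at $x$ equals the infinitesimal action by $\Phi_1(x)$''.

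For (i), I would first note that $\tau_t$ is supported in $\nu\beta_1$ and fixes the base point $*\in\gamma$, so all generators disjoint from $\nu\beta_1$ contribute identical holonomies under $\tau_t^*A$ and $A$. Among the generators $\alpha_1,\alpha_2,u_2,v_2,\dots,u_h,v_h$, only $\alpha_1$ crosses $\nu\beta_1$: its endpoint lies on $\beta_1$ (by the labelling in figure~\ref{sigma_sigmacut}), while $\alpha_2$ ends on $\beta_2$ and the remaining generators are disjoint from $\nu\beta_1$. The values $g,b_1,b_2$ of the connection near the boundary components are also untouched, since $\tau_t$ acts by the identity on $\partial\Sigma_{cut}$. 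Since $\tau_t$ rotates the endpoint of $\alpha_1$ on $\beta_1$ by an arc of length $t\psi(0)=t$ in the positive $\beta_1$-direction, the curve $\tau_t\circ\alpha_1$ is homotopic (rel.\ $*$) to $\alpha_1$ followed by this arc along $\beta_1$. Since the connection equals $b_1 ds$ in a neighborhood of $\beta_1$, the holonomy of $A$ along this arc is $e^{tb_1}$, giving
\[
\mathrm{Hol}_{\alpha_1}(\tau_t^*A)\;=\;\mathrm{Hol}_{\tau_t\circ\alpha_1}(A)\;=\;A_1\,e^{tb_1},
\]
which is the claimed formula.

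For (ii), I would invoke the following standard fact: if a Lie group $G$ acts on a symplectic manifold $(M,\omega)$ with equivariant moment map $\mu\colon M\to\mathfrak g^*$ and $\mathfrak g$ carries an invariant inner product, then the Hamiltonian vector field of $H=\tfrac12|\mu|^2$ at $x$ equals the fundamental vector field $\mu(x)^\#_M(x)$. Indeed, $dH_x=\langle d\mu_x,\mu(x)\rangle$ (treating $\mu(x)$ as a constant covector) while the moment-map identity gives $\iota_{\mu(x)^\#}\omega|_x=\langle d\mu_x,\mu(x)\rangle$; comparison yields the claim. Equivariance of $\mu$ implies $\mu$ is constant along this flow, so the flow at $x$ is precisely the one-parameter group $t\mapsto\exp(t\mu(x))\cdot x$.

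Applying this to the $SU(2)$-action on $\N(\Sigma_{cut})$ with moment $\Phi_1=-b_1$ (so $H=\tfrac12|\Phi_1|^2=\tfrac12|b_1|^2$), and using the explicit formula for this action displayed before the Proposition, the time-$t$ flow of $X_H$ sends $(g,A_1,A_2,b_1,b_2,U_2,V_2,\dots)$ to $(g,A_1\exp(tb_1),A_2,\mathrm{Ad}_{\exp(-tb_1)}b_1,b_2,\dots)=(g,A_1e^{tb_1},A_2,b_1,b_2,\dots)$, matching part (i). I expect no serious obstacle: part (i) is a careful bookkeeping of how $\tau_t$ reroutes each generating loop, and the only subtlety is the sign/orientation in ``$e^{tb_1}$'' versus ``$e^{-tb_1}$'', fixed by the convention $\Phi_1=-b_1$ (coming from the orientation of $\beta_1$ as $\beta$ rather than outward-pointing); part (ii) reduces to the well-known moment-norm-squared computation above.
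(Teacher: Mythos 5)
Your proposal is correct and follows essentially the same route as the paper. Part (i) is the same holonomy computation: only $\alpha_1$ crosses $\nu\beta_1$, and $\tau_t\circ\alpha_1$ picks up the extra arc along $\beta_1$ whose holonomy is $e^{tb_1}$. For part (ii), the paper formulates the moment-norm-squared fact as a standalone Lemma (Lemma~\ref{gradcompo}: $\nabla^\omega(f\circ\Phi)_m = X_{\nabla f(\Phi(m))}(m)$, applied to $f(\xi)=\tfrac12|\xi|^2$), whereas you invoke it as the well-known statement ``$X_H(x)=\mu(x)^\#(x)$ for $H=\tfrac12|\mu|^2$'' and additionally observe that $\mu$ is constant along the resulting one-parameter orbit to identify the flow as $t\mapsto\exp(t\mu(x))\cdot x$; the paper instead combines the velocity-at-$t=0$ computation with the flow property $\varphi_{t+h}=\varphi_t\circ\varphi_h$, which is the same content. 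You are also right that the orientation of $\beta_1$ (giving $\Phi_1=-b_1$) is what reconciles the sign.
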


In order to  prove  the proposition, recall  the following fact:

\begin{lemma}\label{gradcompo} Let $G$ be a Lie group, $\mathfrak{g}$ its Lie algebra, $(M, \omega , \Phi )$ a  $G$-Hamiltonian manifold ($\Phi: M \rightarrow \mathfrak{g} \simeq \mathfrak{g}^*$) and $f: \mathfrak{g} \rightarrow \rr$ a smooth function, then the symplectic gradient  of $f \circ  \Phi: M \rightarrow \rr$ is given by:
\[\bigtriangledown^\omega (f \circ  \Phi )_m = X_{\bigtriangledown f (\Phi (m))} (m)\]
where $\bigtriangledown f$ is the gradient of $f$ for the scalar product on $\mathfrak{g}$ realizing the isomorphism $\mathfrak{g} \simeq \mathfrak{g}^*$ and, for $\eta \in \mathfrak{g}$, $X_\eta$ stands for the vector field on $M$ corresponding to the infinitesimal action of $G$.
\end{lemma}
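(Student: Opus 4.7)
The plan is to prove this by a direct chain-rule computation, combined with the defining property of the moment map $\Phi$. The statement is essentially a standard identity in Hamiltonian geometry, so the proof should be short and formal.

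First I would unpack what needs to be checked: by definition of the symplectic gradient, it suffices to show that for every tangent vector $v \in T_m M$ one has
\[\omega_m\bigl(X_{\nabla f(\Phi(m))}(m), v\bigr) = d(f\circ\Phi)_m(v).\]
The right-hand side, by the chain rule, equals $df_{\Phi(m)}\bigl(d\Phi_m(v)\bigr)$. Using the scalar product on $\mathfrak{g}$ that identifies $\mathfrak{g}$ with $\mathfrak{g}^*$, the linear form $df_{\Phi(m)}$ corresponds to the vector $\nabla f(\Phi(m)) \in \mathfrak{g}$, so this can be rewritten as $\langle \nabla f(\Phi(m)), d\Phi_m(v)\rangle$.

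Next I would fix the element $\xi_0 := \nabla f(\Phi(m)) \in \mathfrak{g}$ (which is a constant once $m$ is fixed, even though it varies with $m$) and observe that, since the moment map pairing $\langle \Phi, \xi_0\rangle \colon M\to\rr$ has differential $d\langle \Phi, \xi_0\rangle_m(v) = \langle d\Phi_m(v), \xi_0\rangle$, we obtain the pointwise identity $d(f\circ\Phi)_m = d\langle \Phi, \xi_0\rangle_m$ at the single point $m$. The defining property of the moment map gives $\iota_{X_{\xi_0}}\omega = d\langle\Phi,\xi_0\rangle$, hence at the point $m$,
\[\iota_{X_{\xi_0}(m)}\omega_m = d(f\circ\Phi)_m,\]
which is exactly the claim $\nabla^\omega(f\circ\Phi)_m = X_{\nabla f(\Phi(m))}(m)$.

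There is no real obstacle here, since everything reduces to the chain rule and the definition of the moment map; the only subtlety worth flagging is that $\xi_0 = \nabla f(\Phi(m))$ depends on the base point $m$, so one cannot naively differentiate $\langle\Phi,\nabla f\circ\Phi\rangle$ as a function on $M$ — instead one only uses the identification of differentials at the single point $m$, which is enough to read off the Hamiltonian vector field there.
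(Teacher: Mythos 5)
Your proof is correct and is essentially the same as the paper's: both compute $d(f\circ\Phi)_m$ by the chain rule, identify it at the fixed point $m$ with the differential of the moment-map component $\langle\Phi,\nabla f(\Phi(m))\rangle$ (your $\langle\Phi,\xi_0\rangle$ is exactly the paper's auxiliary function $f_m$), and then invoke the defining moment-map identity. The subtlety you flag — freezing $\xi_0 = \nabla f(\Phi(m))$ before differentiating — is handled in the paper by the same device of introducing $f_m$ for fixed $m$.
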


\begin{proof}[Proof of the lemma]
By definition, $\bigtriangledown^\omega (f \circ  \Phi )$ is such that, for $m \in M$ and $y \in T_m M$,
\[\omega_m (\bigtriangledown^\omega (f \circ  \Phi )_m , y) = \mathrm{D}_m (f \circ  \Phi ).y\]
hence,
\begin{align*}
\mathrm{D}_m (f \circ  \Phi ).y &= \mathrm{D}_{\Phi  (m)}f \circ \mathrm{D}_m \Phi .y\\
 &= \left\langle \bigtriangledown f (\Phi (m)), \mathrm{D}_m \Phi .y \right\rangle \\
 &= \mathrm{D}_m (f_m).y \\
 &= \omega_m (X_{\bigtriangledown f (\Phi (m))}(m), y),
\end{align*}
where $f_m$ is the function on $M$ defined for $m$ fixed by  \[f_m(m') = \langle \bigtriangledown f (\Phi (m)), \Phi (m') \rangle.\]

\end{proof}

\begin{proof}[Proof of Proposition~\ref{expressionflot}] $(i)$ As  $\tau_t$ corresponds to the identity in the neighborhoods of $\gamma$ and $\beta_2$, and to a  rotation in the  neighborhood of $\beta_1$, the values of $g$, $b_1$ and $b_2$ remain unchanged by $\varphi_t$. Moreover, $\tau_t$ doesn't change the curves $\alpha_2 , u_2 , \cdots , v_h$: the corresponding holonomies also remain unchanged.  Furthermore, it sends $\alpha_1$ to a curve homotopic to $\alpha_1 \cup \beta_1([0, t])$ , hence  \[\mathrm{Hol}_{\alpha_1} (\tau_t^* A) = \mathrm{Hol}_{\alpha_1 \cup \beta_1([0,t])} (A) = \mathrm{Hol}_{\alpha_1} (A) e^{tb_1}.\] 

$(ii)$ Note first that according to the previous point, \[\varphi_t ([A]) = (1,e^{-tb_1} , 1)[A] = (1,e^{t \Phi_1 ([A])} , 1)[A],\] for the action of $SU(2)^3$ previously defined.

Apply  Lemma~\ref{gradcompo} to $M=\N(\Sigma_{cut})$, endowed with  the action of $SU(2)$ with moment $\Phi_1$, with  $f (\xi ) = \frac{1}{2} |\xi |^2$. $\bigtriangledown f (\Phi_1 ([A])) = \Phi_1 ([A]) = - b_1 ([A])$. It follows from the first observation that $\frac{\partial \varphi}{\partial t} |_{t=0} = X_{\Phi_1 ([A])} ([A])$, and according to the lemma, $X_{\Phi_1 ([A])} ([A]) = X_{\bigtriangledown f (\Phi_1 ([A]))} ([A]) = \bigtriangledown^\omega H([A])$. The proof of $(ii)$ now follows  from this, and the fact that  $\varphi_t$ satisfies the flow property  $\varphi_{t+h} = \varphi_t \circ \varphi_h$. 
\end{proof}

Recall the following result:
\begin{prop}(\cite[Prop. 2.15]{WWtriangle})
Let $(M,\omega,\Phi)$ be an $SU(2)$-Hamil\-tonian manifold  such that the moment $\Phi \colon M \rightarrow \mathfrak{su(2)}^*$ takes its values in the ball $\lbrace \xi \in \mathfrak{su(2)}~|~ |\xi |<\pi \sqrt{2}\rbrace$, and such that the stabilizer of the action at each point of $\Phi^{-1}(0)$ is trivial (resp. $U(1)$). Let $\psi \in C^\infty ([0,+\infty))$ be such that $\psi ' (0) = \pi \sqrt{2} $, with  compact support, and such that the  time $1$  Hamiltonian flow of $\psi \circ |\Phi|$ extends smoothly to  $\Phi^{-1}(0)$.

Then  $\Phi^{-1}(0)$  is a codimension 3 (resp. 2) spherically fibered coisotropic  submanifold, and the time $1$ of the flow  of $\psi \circ |\Phi|$ is a fibered Dehn twist  along  $\Phi^{-1}(0)$.
\end{prop}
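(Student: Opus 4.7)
The plan is to first establish the geometry of $\Phi^{-1}(0)$, then compute the Hamiltonian flow away from it using Lemma~\ref{gradcompo}, and finally identify the time~$1$ flow with a fibered Dehn twist through a local normal form. Since the stabilizer of the $SU(2)$-action at every point of $\Phi^{-1}(0)$ is trivial (resp.\ $U(1)$), the map $\Phi$ is a submersion (resp.\ submersion onto the annihilator of $\mathfrak{u}(1)$) along $\Phi^{-1}(0)$. Hence $\Phi^{-1}(0)$ is a smooth submanifold of codimension $3$ (resp.\ $2$). By Marsden--Weinstein reduction, $\Phi^{-1}(0)/SU(2)$ is a smooth symplectic manifold, so $\Phi^{-1}(0)$ is coisotropic and its null-foliation coincides with the $SU(2)$-orbits. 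Each orbit is $SU(2)/\mathrm{Stab} \simeq S^3$ (resp.\ $SU(2)/U(1) \simeq S^2$), which gives the spherical fibration.

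To compute the flow, on the open set $\{\Phi \ne 0\}$ I would apply Lemma~\ref{gradcompo} with $f(\xi) = \psi(|\xi|)$, so that $\bigtriangledown f (\xi) = \psi'(|\xi|)\,\xi/|\xi|$. The Hamiltonian flow is then the family of group actions
\[ \varphi_t(x) = \exp\!\bigl(t\, \psi'(|\Phi(x)|) \tfrac{\Phi(x)}{|\Phi(x)|}\bigr) \cdot x, \]
which by hypothesis extends smoothly across $\Phi^{-1}(0)$. The next step is to introduce a model: by the equivariant coisotropic neighborhood theorem (or the local normal form for moment maps at a point with given stabilizer), a neighborhood of $\Phi^{-1}(0)$ is $SU(2)$-equivariantly symplectomorphic to a neighborhood of the zero section of the associated bundle with fiber a ball in $\mathfrak{su}(2)^\ast$ (resp.\ in the annihilator of $\mathfrak{u}(1)$), in which $\Phi$ becomes the projection onto the fiber coordinate and $|\Phi|$ its norm. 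In this model the flow above is visibly the fiberwise extension of the standard flow on $T^\ast S^3$ (resp.\ $T^\ast S^2$) with angle function $R' = \psi'$, i.e., precisely a model Dehn twist in the sense of section~\ref{sec:generalizedtwists}.

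Finally, the time $t=1$ map is identified with a fibered Dehn twist as follows. By assumption $\psi'(0) = \pi\sqrt{2}$; with the normalization $\langle a,b\rangle = -\mathrm{Tr}(ab)$ on $\mathfrak{su}(2)$, a unit vector $v\in\mathfrak{su}(2)$ satisfies $\exp(\pi\sqrt{2}\, v) = -I$, so fiberwise on $\Phi^{-1}(0)$ the extension $\varphi_1$ acts as multiplication by $-I$, which is the antipodal map on the spherical fiber $SU(2)$ (resp.\ projects to the antipodal map on $SU(2)/U(1)=S^2$). Combined with the previous paragraph, this matches the definition of a model Dehn twist on each fiber, globalized by the spherical fibration into a fibered Dehn twist along $\Phi^{-1}(0)$.

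The main obstacle I expect is the local normal form step: producing an $SU(2)$-equivariant symplectic model of a neighborhood of $\Phi^{-1}(0)$ in which $\Phi$ is linearised on the fibers and the flow $\varphi_t$ becomes fiberwise the standard flow on the cotangent bundle of the sphere. Once this identification is carried out the remainder is a routine computation; the subtlety is ensuring compatibility of the equivariant Weinstein tubular neighborhood theorem with the moment map, especially in the $U(1)$-stabilizer case where one has to take the reduction with respect to the normal $U(1)$ before exponentiating.
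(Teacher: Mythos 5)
The paper states this proposition purely as a citation to \cite{WWtriangle} and offers no proof of its own, so there is no internal argument to compare against; I will therefore review your proposal on its own terms.

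The structural skeleton is right: the moment-map rank lemma gives that $\Phi^{-1}(0)$ is a submanifold of codimension $\dim\mathfrak{g}-\dim\mathfrak{h}$, coisotropic with null-leaves equal to the $SU(2)$-orbits, hence spherically fibered by $S^3$ or $S^2$; Lemma~\ref{gradcompo} gives the Hamiltonian vector field $X_{\nabla f(\Phi(m))}(m)$ with $\nabla f(\xi)=\psi'(|\xi|)\xi/|\xi|$, and because $\nabla f(\Phi(x))$ is proportional to $\Phi(x)$, the coadjoint orbit of $\Phi(x)$ is fixed, $\Phi$ is preserved, and the flow is indeed the family of group elements you wrote. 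You also correctly identify the remaining hard step: an $SU(2)$-equivariant coisotropic neighborhood model in which $\Phi$ becomes the fiber coordinate, so that the flow matches a fiberwise geodesic-flow twist. This is precisely the content one would need to import from \cite{WWtriangle} for a self-contained proof.

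There is, however, a concrete error in the $U(1)$-stabilizer branch. You assert that $-I$ ``projects to the antipodal map on $SU(2)/U(1)=S^2$.'' It does not: $-I$ is central and lies in every $U(1)\subset SU(2)$, so left multiplication by $-I$ is the \emph{identity} on $SU(2)/U(1)$. Thus with $\psi'(0)=\pi\sqrt2$ the time-$1$ flow restricted to $\Phi^{-1}(0)$ is the identity in the $U(1)$ case, not the antipode. This does not contradict the statement, but your closing identification with the model twist needs to be corrected. The discrepancy is accounted for by a factor you did not track: with the normalization $\langle a,b\rangle=-\mathrm{Tr}(ab)$, the fiberwise computation on a $T^*S^2$-slice gives $|\Phi|=\sqrt2\,\mu$ where $\mu=|u|$ is the geodesic Hamiltonian, so the time-$1$ flow of $\psi\circ|\Phi|$ at the zero section is $\sigma_{2\pi}=\mathrm{id}$ on the $S^2$-fibers (a full period), while on the $S^3=SU(2)$-fibers the same $\psi'(0)=\pi\sqrt2$ produces $\sigma_\pi$, i.e.\ $-I\cdot$, the genuine antipodal map. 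The fibered Dehn twist along an $S^2$-fibered coisotropic therefore restricts to the identity on the zero section and is not the Seidel model twist on $T^*S^2$ (whose restriction is antipodal). Once this is fixed, the rest of your argument goes through in the trivial-stabilizer case, which is the one actually used in this paper.
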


This proposition applies for the action of $SU(2)$ on $\N(\Sigma_{cut})$ with moment $\Phi_1$. Indeed, on the one hand $Im \Phi_1 \subset \lbrace |\xi |<\pi \sqrt{2}\rbrace$ by definition of $\N(\Sigma_{cut})$. On the other hand, according to the holonomy description, this action is free, and the flow extends. Hence:

\begin{cor} \label{cortwist} Let $R\colon \rr\rightarrow \rr$ be a function that vanishes for $t>\frac{\pi \sqrt{2}}{2}$ and such that $R(-t) = R(t) -2 \pi \sqrt{2} t$. Then the flow of $H = R\circ |\Phi_1|$ at time $1$ extends to a fibered Dehn twist of $\N(\Sigma_{cut})$ along $C_+ = \Phi_1^{-1}(0)$, which is a spherically fibered submanifold.

\end{cor}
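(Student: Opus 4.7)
The plan is to apply the cited Proposition~2.15 of \cite{WWtriangle} directly, so that the corollary reduces to checking its three hypotheses for the $SU(2)$-Hamiltonian action on $\mathscr{N}(\Sigma_{cut})$ with moment $\Phi_1$, and then verifying that $R$ satisfies the analytical condition required of the radial profile $\psi$.

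First I would observe that the moment takes values in the prescribed open ball: by the very definition of $\mathscr{N}(\Sigma_{cut})\subset \mathscr{M}^{\mathfrak{g},3}(\Sigma_{cut})$, the value $b_1$ of the connection near $\beta_1$ satisfies $|b_1|<\pi\sqrt{2}$, and $\Phi_1([A])=-b_1$. Next I would verify that the action of $SU(2)$ with moment $\Phi_1$ is free (so the stabilizer is trivial at every point, a fortiori on $\Phi_1^{-1}(0)$). This is read off the explicit holonomy formula of Paragraph~\ref{sec:gluingreduction}: $G\cdot(g,A_1,A_2,b_1,b_2,U_2,V_2,\ldots) = (g, A_1G^{-1}, A_2, \mathrm{Ad}_G b_1, b_2, U_2,V_2,\ldots)$, and the equation $A_1G^{-1}=A_1$ already forces $G=I$, independently of $b_1$.

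Then I would check the two conditions imposed on $R$. Differentiating the symmetry relation $R(-t)=R(t)-2\pi\sqrt{2}\,t$ at $t=0$ yields $R'(0)=\pi\sqrt{2}$, which is the derivative condition $\psi'(0)=\pi\sqrt{2}$ of the quoted proposition. For the smooth extension of the time-$1$ flow across $C_+=\Phi_1^{-1}(0)$, the Hamiltonian $H=R\circ|\Phi_1|$ is a priori only continuous at the zero locus of the moment, but Proposition~\ref{expressionflot}(ii) combined with Lemma~\ref{gradcompo} shows that away from $C_+$ the flow at time $t$ is given by the group element $\exp(tR'(|\Phi_1|)\,\Phi_1/|\Phi_1|)$ acting via the $SU(2)$-action; the symmetry relation on $R$ is exactly what guarantees that when $|\Phi_1|$ crosses zero the resulting rotation continues smoothly through the antipodal map on the unit sphere of $\mathfrak{su}(2)$, exactly as for the model twist on $T^*S^n$ in Section~\ref{sec:generalizedtwists}. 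Support of $R$ being contained in $[0,\pi\sqrt{2}/2)$ guarantees that $H$ vanishes identically near the divisor $R\subset\mathscr{N}^c(\Sigma_{cut})$ produced by the symplectic cut, so the extended flow makes sense on $\mathscr{N}^c(\Sigma_{cut})$ and equals the identity near that divisor.

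Having verified these hypotheses, the quoted proposition applies and delivers both statements at once: $C_+=\Phi_1^{-1}(0)$ is a codimension-$3$ spherically fibered coisotropic submanifold (the spherical fibers being the free $SU(2)\simeq S^3$-orbits, whose base is $\mathscr{N}(\Sigma)\setminus C_-$ by Proposition~\ref{actioncut}), and the time-$1$ flow of $H$ is a fibered Dehn twist along $C_+$. The only delicate step is the smooth extension of the flow, which is essentially local on a neighborhood of $C_+$ and reduces, after the symplectic-reduction identification of a neighborhood of $C_+$ with an associated bundle over $\mathscr{N}(\Sigma)\setminus C_-$ whose fiber is a ball in $\mathfrak{su}(2)$, to the classical fact that the model Dehn twist on $T^*S^2$ with angle profile $R'$ extends smoothly across the zero section precisely under the condition $R(-t)=R(t)-2\pi\sqrt{2}\,t$.
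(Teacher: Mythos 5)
Your proposal follows essentially the same route as the paper: verify the hypotheses of the cited Proposition~2.15 of \cite{WWtriangle} (moment map valued in the open ball of radius $\pi\sqrt{2}$, free $SU(2)$-action with moment $\Phi_1$, compactly supported profile with $R'(0)=\pi\sqrt{2}$, smooth extension of the time-$1$ flow across $\Phi_1^{-1}(0)$) and then invoke that proposition. The paper's own proof is three sentences and asserts the flow extension without justification; you supply the justification via the explicit form of the flow $\exp\bigl(t R'(|\Phi_1|)\,\Phi_1/|\Phi_1|\bigr)$ together with the symmetry relation on $R$, which is the right mechanism. Your freeness check from the holonomy formula (the component $A_1 G_1^{-1}=A_1$ forces $G_1=I$) and the derivation of $R'(0)=\pi\sqrt{2}$ from $R(-t)=R(t)-2\pi\sqrt{2}\,t$ are both correct.

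One factual slip, contained in a parenthetical: you assert that the base of the spherical fibration on $C_+=\Phi_1^{-1}(0)$ is $\N(\Sigma)\setminus C_-$ ``by Proposition~\ref{actioncut}.'' Proposition~\ref{actioncut} describes the symplectic quotient for the moment map $\Phi=\Phi_1+\Phi_2$, not for $\Phi_1$; the quotient $\Phi_1^{-1}(0)/SU(2)$ is instead identified with $\N(\Sigma_{cut,cap1})$, the moduli space of the surface obtained by capping the boundary $\beta_1$, as the paper records in the discussion right after the corollary. This does not undermine the corollary itself, because Proposition~2.15 of \cite{WWtriangle} already delivers both the fibered-Dehn-twist statement and the fact that $\Phi_1^{-1}(0)$ is a (codimension-$3$) spherically fibered coisotropic; you did not need to identify the base explicitly. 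Still, the confusion between $\Phi$ and $\Phi_1$ is worth correcting, since exactly this distinction is used in the paragraph that follows when Theorem~\ref{twistred} is applied.
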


Recall  now the following result in order to establish the result for $\N(\Sigma)$.
\begin{theo}(\cite[Theorem 2.10]{WWtriangle})\label{twistred}
 Let  $G$ be a Lie group, $(M,\omega, \Phi)$ a Hamiltonian $G$-manifold such that $0$ is a regular value of the moment $\Phi$. Let $C\subset M$ be a spherically fibered coisotropic submanifold over a base $B$ and stable under the action of $G$. Assume that $C$ intersects $\Phi^{-1}(0)$ transversely, and that, denoting $\Phi_B \colon B\rightarrow \mathfrak{g}$ the moment induced on $B$, the induced action on the base  $\Phi_B^{-1}(0)\subset B$ is free. Let $\tau_C \in Diff(M,\omega)$ be a fibered Dehn twist  along $C$ which is $G$-equivariant. 

Then, the induced symplectomorphism $[\tau_C]\colon M /\!\!/ G \rightarrow M /\!\!/ G $ is a fibered Dehn twist  along $C /\!\!/ G$.
\end{theo}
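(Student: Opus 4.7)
The plan is to combine Marsden--Weinstein reduction with a $G$-equivariant Weinstein-type normal form for the coisotropic $C$. First I would record the immediate consequences of the hypotheses: since $C$ is $G$-stable and meets $\Phi^{-1}(0)$ transversely, the intersection $C\cap \Phi^{-1}(0)$ is a smooth $G$-invariant submanifold, and $G$ acts freely on it (freeness of the $G$-action on $\Phi_B^{-1}(0)\subset B$ lifts to freeness on $C\cap\Phi^{-1}(0)$ via the sphere fibration $C\to B$, since the $S^n$-fibers are acted on trivially by $G$ thanks to $G$-stability and the assumption that the action descends to the base). Consequently the quotient $C\red G := (C\cap\Phi^{-1}(0))/G$ is a smooth coisotropic submanifold of $M\red G$, spherically fibered over $B\red G = \Phi_B^{-1}(0)/G$, with fibers isomorphic to those of $C\to B$.

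Second, I would invoke a $G$-equivariant neighborhood theorem for spherically fibered coisotropic submanifolds: a tubular neighborhood of $C$ in $M$ is $G$-equivariantly symplectomorphic to a neighborhood of the zero section in the associated disc bundle $C\times_B T^*(S^n)(\lambda)$, where the local model from Section~\ref{sec:generalizedtwists} sits fiberwise and the $G$-action is induced from its action on $B$. In this normal form, a $G$-equivariant fibered Dehn twist $\tau_C$ is Hamiltonian isotopic to the time-$1$ flow of a $G$-invariant Hamiltonian $H=R\circ \mu$, where $\mu$ is the fiberwise norm-of-covector function and $R$ is as in Definition~\ref{concave}.

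Third, I would push the picture through reduction. Since $H$ is $G$-invariant, it descends to a smooth Hamiltonian $[H]$ on $M\red G$. Marsden--Weinstein reduction commutes with $G$-invariant Hamiltonian flows, so the induced map $[\tau_C]$ on $M\red G$ is the time-$1$ flow of $[H]$. Under the normal form of the previous step, reduction applied fiberwise acts trivially on the $T^*S^n$-factor (the $G$-action is on the base $B$, not on the sphere fibers), so the quotient neighborhood of $C\red G$ in $M\red G$ is symplectomorphic to a neighborhood of the zero section in $(C\red G)\times_{B\red G} T^*(S^n)(\lambda)$, and $[H]$ becomes the analogous function $R\circ\mu$ on this quotient bundle. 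By definition, its time-$1$ flow is a fibered Dehn twist along $C\red G$, which is what we wanted.

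The main obstacle is establishing the $G$-equivariant normal form for spherically fibered coisotropic submanifolds that is simultaneously compatible with the moment map $\Phi$ (so that transversality is respected and reduction can be carried out cleanly). This requires combining the fibered-coisotropic neighborhood theorem with a Guillemin--Sternberg/Marle-type equivariant slice, and checking that the local $U(1)$-action on the fibers (which generates $\tau_C$) genuinely commutes with the ambient $G$-action --- a point that is supplied by $G$-equivariance of the given twist together with the fact that $G$ preserves the fibration $C\to B$. Once that compatibility is in place, the descent in steps three and four is mechanical.
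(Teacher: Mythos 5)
This theorem is quoted from Wehrheim--Woodward (\cite[Theorem 2.10]{WWtriangle}); the present paper does not reproduce its proof, so there is no in-paper argument to compare against.

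Your strategy---establish a $G$-equivariant coisotropic normal form, write the twist as the time-one flow of a $G$-invariant Hamiltonian $R\circ\mu$ in that model, and observe that reduction commutes with invariant Hamiltonian flows---is the natural approach and is almost certainly the substance of the Wehrheim--Woodward argument. Two points, however, should be tightened. First, the phrase ``the $S^n$-fibers are acted on trivially by $G$'' is misleading: when $G$ acts freely on $\Phi_B^{-1}(0)$, no nontrivial element preserves any fiber at all, so there is no fiber action to speak of. What you actually need and use is only that freeness on the base lifts to freeness on $C\cap\Phi^{-1}(0)$, and that for each $b\in\Phi_B^{-1}(0)$ the inclusion $\pi^{-1}(b)\hookrightarrow\pi^{-1}(G\cdot b)$ descends to a bijection onto the quotient fiber; both are immediate once freeness holds. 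Second and more substantively, there is a gap in step two that you partially flag but do not close: the hypothesis is only that the given $\tau_C$ is a fibered Dehn twist that happens to be $G$-equivariant. It is not automatic that the tubular neighborhood and model Hamiltonian realizing $\tau_C$ can themselves be taken $G$-equivariant, nor that $\tau_C$ is $G$-\emph{equivariantly} Hamiltonian isotopic to the equivariant model. For your descent argument one needs either to average the defining data over $G$ (which requires $G$ compact---fine in the application to $SU(2)$, but worth stating) and show the averaged Hamiltonian still generates $\tau_C$ up to $G$-equivariant Hamiltonian isotopy, or to reformulate the theorem so that $\tau_C$ is by hypothesis the time-one flow of a specified $G$-invariant Hamiltonian (which is in fact how the result is used in the paper, cf.~Corollary~\ref{cortwist} and Proposition~\ref{twistsigma}). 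Without one of these, the reduced map $[\tau_C]$ is a symplectomorphism of $M\red G$ but you have not yet produced the local model exhibiting it as a fibered Dehn twist along $C\red G$.
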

Consider $M = \N(\Sigma_{cut})$, endowed with the action of moment $\Phi =  \Phi_1 + \Phi_2$. The submanifold $C = \Phi_1^{-1}(0)$ is a spherically fibered coisotropic submanifold over \[B = \lbrace (g, A_2, b_2, \cdots)\rbrace \simeq \N(\Sigma_{cut, cap1}),\] where the surface $\Sigma_{cut, cap1}$ is obtained from  $\Sigma_{cut}$ by gluing a disc on the  boundary component  $\beta_1$. The time 1 of the Hamiltonian flow  of $R\circ |\Phi_1|$, where $R$ is a function as in the previous corollary, is a fibered Dehn twist $\tau_C$. 
One can apply Proposition~\ref{twistred} to this situation. Indeed, $\N(\Sigma_{cut})$ may be identified with the  following open subset of $\mathfrak{su(2)}^2\times SU(2)^{2h}$ consisting of the elements $ (b_1,b_2, A_2, A_2, U_2, V_2, \cdots U_h, V_h)$ satisfying 
\begin{align*}
 &\abs{b_1}<\pi \sqrt{2}, \ \abs{b_2}<\pi \sqrt{2},  \\ & A_1 e^{b_1} A_1^{-1} A_2^{-1} e^{b_2} A_2 \prod_{i=2}^{h}{[U_i , V_i ]} \neq -I .
\end{align*}
Under this identification, $\Phi$ and $\Phi_1$ correspond respectively to the difference of the two first coordinates and to the opposite of the projection onto the first coordinate. The zero vector $0\in \mathfrak{su(2)}$ is then a regular value for $\Phi$, and $C$ intersects $\Phi^{-1}(0)$ transversely along $\lbrace \Phi_1 = \Phi_2 = 0\rbrace$. Furthermore, the action induced on $\Phi_B^{-1}(0)$ is free (the holonomy $A_2$ is affected by left multiplication), and the twist  $\tau_C$ is $SU(2)$-equivariant, indeed $\tau_C$ has the following expression:
\[ \tau_C (g, A_1, A_2, b_1, b_2, \cdots) = (g, A_1 e^{-t b_1} ,  A_2,  b_1,  b_2, \cdots),\]
where $t =  R'(|b_1|)$. If $H\in SU(2)$ and $H.$ denotes the action with moment $\Phi$, one has:
\begin{align*}
\tau_C \left( H.(g, A_1, A_2, b_1, b_2, \cdots)\right)  &=  H.\tau_C(g, A_1, A_2, b_1, b_2, \cdots) \\
 &= (g, A_1 e^{-t b_1} H^{-1}, H A_2, ad_H b_1, ad_H b_2, \cdots).
\end{align*}

Hence it follows from corollary~\ref{cortwist} and Proposition~\ref{twistred}:
\begin{prop}\label{twistsigma} 
Let $R$ be as in  corollary~\ref{cortwist}. The time 1 Hamiltonian flow  of the function $R(|\log ( \tilde{B})|  )$ is a fibered Dehn twist of $\N(\Sigma)$ along $\lbrace \tilde{B} = I\rbrace$.
\end{prop}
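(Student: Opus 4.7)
The plan is to apply Theorem~\ref{twistred} to the reduction setup described in the paragraphs preceding the statement, and then transport the resulting fibered Dehn twist through the symplectomorphism of Proposition~\ref{actioncut}. All the geometric verifications have essentially been assembled above; the proof is a matter of assembling them.

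First, I would invoke Corollary~\ref{cortwist}: the coisotropic $C = \Phi_1^{-1}(0) \subset \N(\Sigma_{cut})$ is spherically fibered, and the time-$1$ Hamiltonian flow $\tau_C$ of $R \circ |\Phi_1|$ extends smoothly to $C$ as a fibered Dehn twist along $C$. I would then check, one by one, the hypotheses of Theorem~\ref{twistred} for the $SU(2)$-action of moment $\Phi = \Phi_1 + \Phi_2$: namely, that $0$ is a regular value of $\Phi$, that $C$ meets $\Phi^{-1}(0)$ transversally (along $\lbrace b_1 = b_2 = 0 \rbrace$), that the induced action on $\Phi_B^{-1}(0)$ is free (through left multiplication on $A_2$), and that $\tau_C$ is $SU(2)$-equivariant. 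All four assertions are already spelled out in the discussion immediately preceding the statement, so the verification amounts to citing those formulas.

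Having verified the hypotheses, Theorem~\ref{twistred} yields that the descended symplectomorphism $[\tau_C]\colon \N(\Sigma_{cut}) \red SU(2) \rightarrow \N(\Sigma_{cut}) \red SU(2)$ is a fibered Dehn twist along $C \red SU(2)$. The final step is to pull this result back through the symplectomorphism of Proposition~\ref{actioncut}, which identifies $\N(\Sigma_{cut}) \red SU(2)$ with $\N(\Sigma) \setminus C_-$. Under that identification, the coisotropic $C \red SU(2) = \lbrace b_1 = 0 \rbrace \red SU(2)$ is sent to $\lbrace \tilde{B} = I \rbrace$, since $\tilde{B} = A_2^{-1} e^{b_1} A_2$ equals $I$ exactly when $b_1 = 0$ (using $|b_1| < \pi\sqrt{2}$). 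The Hamiltonian $R \circ |\Phi_1| = R(|b_1|)$ descends to $R(|\log \tilde{B}|)$, because conjugation by $A_2$ is an isometry of $\mathfrak{su(2)}$; hence its time-$1$ flow is precisely the claimed fibered Dehn twist on $\N(\Sigma)$.

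The main work — and the place where a technical obstacle could conceivably arise — is actually the smooth extension of the flow to the locus $\lbrace \tilde{B} = I \rbrace$, and more generally the fact that a flow defined only on the complement of this coisotropic descends to a \emph{fibered} Dehn twist rather than merely a symplectomorphism of $\N(\Sigma)\setminus C_-$. Both points are absorbed into Corollary~\ref{cortwist} (through the choice of $R$) and into Theorem~\ref{twistred} respectively, which is why the proof reduces to a bookkeeping verification once those two results are in place.
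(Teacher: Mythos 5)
Your proposal follows the paper's own proof essentially step by step: invoke Corollary~\ref{cortwist}, verify the four hypotheses of Theorem~\ref{twistred} via the explicit holonomy description, descend to the quotient, and transport through the symplectomorphism of Proposition~\ref{actioncut}. You are slightly more explicit than the paper about how the Hamiltonian $R(|b_1|)$ descends to $R(|\log\tilde{B}|)$ and how $C\red SU(2)$ maps to $\lbrace\tilde{B}=I\rbrace$, but these are details the paper's argument implicitly contains; the approach is the same.
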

Notice that when  $\Sigma$ has genus greater or equal to 2, the submanifold $\lbrace \tilde{B} = I\rbrace$ is not compact in $\N(\Sigma)$: its closure in $\Nc(\Sigma)$ intersects the hypersurface $R$. However,  if  $\Sigma$ has genus 1, it is contained in $\lbrace g = 0\rbrace$. Hence:
\begin{theo}
\label{twistinduit}
 Let $H$ be a solid torus,  $T$ its boundary torus, and $T'$ the surface obtained by removing a small disc. Denote $i\colon T \rightarrow \partial H$ the inclusion, and $L(H)\subset \N(T')$ the corresponding Lagrangian submanifold. Let $\tau_K$ be a Dehn twist along a non-separating curve $K \subset T'$, $i' = i\circ \tau_K$ and $H' = (H, i')$ the cobordism between $\emptyset$ and  $T'$, and $L(H')\subset \N(\Sigma)$. Then there exists a Dehn twist along $S = \lbrace \mathrm{Hol}_K = -I\rbrace$ which sends $L(H')$ to $L(H)$. 
\end{theo}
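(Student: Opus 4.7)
The plan is to show that the symplectomorphism induced by $\tau_K$ via pullback sends $L(H')$ to $L(H)$, and that it is itself a generalized Dehn twist along $S$. The first part is immediate from definitions; the second comes from the radial Hamiltonian description of Section~\ref{sec:descriptwist} together with a Moser-type interpolation with a model twist.

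For the first part, an element of $\N(T')$ lies in $L(H)$ iff the corresponding boundary connection extends flatly to $H$ via $i$; using $i'=i\circ\tau_K$ instead of $i$ amounts to pulling back by $\tau_K$, so the same flat extensions parametrise $L(H')$ as the image of $L(H)$ under $\tau_K^*$. Consequently $(\tau_K^{-1})^*(L(H'))=L(H)$.

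For the second part, I will identify $(\tau_K^{-1})^*$ with a generalized Dehn twist along $S$. By Propositions~\ref{actioncut} and~\ref{expressionflot} applied in genus one, $\tau_K^*$ coincides on $\N(T')\setminus S$ with the time $1$ Hamiltonian flow of the radial function $\frac{1}{2}|\log\tilde B|^2$, and in the coordinates of Proposition~\ref{riemannhilbert} it is the globally smooth polynomial map $(g,A,\tilde B)\mapsto(g,A\tilde B,\tilde B)$. Two features single out $\tau_K^*$ as a candidate Dehn twist along $S$: it preserves $S$, and it restricts on $S\simeq SU(2)$ to the antipodal map $A\mapsto -A$, which is the defining behaviour of a model Dehn twist on its core sphere.

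To upgrade this observation into a Hamiltonian isotopy to a model Dehn twist, I will select a smooth function $R\colon[0,\pi\sqrt 2)\to\rr$ such that the time $1$ flow of $R(|\log\tilde B|)$ on $\N(T')\setminus S$ extends to a model Dehn twist $\tau_S$ along $S$, compactly supported in a Weinstein neighborhood of $S$. The existence of such an $R$ follows from the construction in Section~\ref{sec:generalizedtwists} adapted to the case where the core sphere sits at $|\log\tilde B|=\pi\sqrt 2$: this mirrors Corollary~\ref{cortwist} with the roles of the two distinguished values $0$ and $\pi\sqrt 2$ exchanged, and is possible precisely because in genus one $S\subset\{g=0\}$ is disjoint from the degeneracy hypersurface of $\Nc(T')$. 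The linear interpolation of Hamiltonians $H_s=sR(|\log\tilde B|)+(1-s)\frac{1}{2}|\log\tilde B|^2$ then produces a path of radial time $1$ flows interpolating between $\tau_K^*$ and $\tau_S$, giving the required Hamiltonian isotopy provided each intermediate flow extends smoothly across $S$.

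The main obstacle is precisely this smooth-extension verification: one must confirm that for every $s\in[0,1]$ the derivative of $H_s$ as a function of $|\log\tilde B|$, evaluated at $|\log\tilde B|=\pi\sqrt 2$, matches the rotation data needed for the flow to extend by the antipodal map on the zero section, as in Lemma~\ref{intertwistbis}. The explicit polynomial formula $A\mapsto A\tilde B$ provides this matching for $s=0$, the model Dehn twist furnishes it at $s=1$, and a convexity argument for the intermediate values concludes the identification of $(\tau_K^{-1})^*$ as a generalized Dehn twist along $S$ mapping $L(H')$ to $L(H)$.
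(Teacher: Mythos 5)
Your first step — identifying $\tau_K^*$ with the radial time-$1$ flow of $\tfrac12|\log\tilde B|^2$, observing that it preserves $S$ and restricts there to the antipodal map, and that it exchanges the two Lagrangians — is correct and is also the starting point of the paper. The gap is in the second part, where you conclude that $(\tau_K^{-1})^*$ is itself a generalized Dehn twist along $S$; this cannot be literally true, because $\tau_K^*$ is supported on all of $\{\tilde B\neq I\}$ whereas a model Dehn twist along $S=\{\tilde B=-I\}$ has compact support in a Weinstein neighborhood of $S$. Your interpolating family $H_s=sR(|\log\tilde B|)+(1-s)\tfrac12|\log\tilde B|^2$ is not compactly supported for $s<1$ (the quadratic summand never vanishes away from $\{\tilde B=I\}$), so the resulting Hamiltonian isotopy does not take place inside a compact region; it therefore cannot show that $\tau_K^*$ is Hamiltonian-isotopic-with-compact-support to $\tau_S$. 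The paper flags exactly this in a remark right after the theorem statement: the induced symplectomorphism ``isn't a priori a Dehn twist of $\N(T')$ as the Hamiltonian generating it isn't compactly supported.''

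Even if one accepted a non-compactly-supported Hamiltonian isotopy between $\tau_K^*$ and $\tau_S$, there would still be a second gap: a Hamiltonian isotopy between two symplectomorphisms need not carry the image of a fixed Lagrangian under one to its image under the other. So even granting your interpolation, the conclusion that $\tau_S$ sends $L(H')$ to $L(H)$ does not follow. To make the argument work you must construct the Dehn twist $tw$ so that it provably agrees with $\tau_K^*$ on $L(H)$. The paper does this by multiplying $H^f=\tfrac12|\log\tilde B|^2$ by a compactly supported cutoff $\phi$ equal to $1$ on a neighborhood of $\{g=0\}$ (which contains $L(H)$), setting $H^{tw}=\phi\cdot H^f$. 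This truncation gives a symplectomorphism $tw$ whose Hamiltonian has compact support, which agrees with $\tau_K^*$ on $\{g=0\}$ (since the flow of $H^f$ preserves $\{g=0\}$), so that $tw(L(H))=\tau_K^*(L(H))=L(H')$. Finally, $tw$ is recognized as a Dehn twist by writing $tw=(tw\circ\tau^{-1})\circ\tau$, where $\tau$ is a fibered Dehn twist from Proposition~\ref{twistsigma} and $tw\circ\tau^{-1}$ has a \emph{compactly supported} generating Hamiltonian, because $H^{tw}$ and $H^\tau$ coincide in a neighborhood of $\{\tilde B=-I\}$ (this coincidence is a small calculation relating $\tfrac12|\log\tilde B|^2$ and $R(|\log(-\tilde B)|)$ near $\tilde B=-I$, which your proposal does not carry out). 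These two steps — the truncation and the precise matching of Hamiltonians near $S$ — are what your proposal is missing.
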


\begin{remark}The symplectomorphism induced from the twist on the surface isn't a priori a Dehn twist of $\N(T')$ as the Hamiltonian generating it isn't compactly supported, yet we will build a Dehn twist (which will be denoted $tw$) by truncating the Hamiltonian.
\end{remark}

\begin{proof} Recall that we have identified $\N(T')$ with the subset
\[\left\lbrace (g, A, \widetilde{B}) \in \mathfrak{su(2)}\times SU(2)^{2}: e^g = [A,\widetilde{B}] \right\rbrace ,\] 
where $A$ and $\widetilde{B}$ represent the holonomies along the paths $\alpha$ and $\widetilde{\beta}$.  Define three functions \[H^f,H^{tw},H^\tau \colon\N(T') \rightarrow \rr\] by: 
\begin{align*}
H^f  (A, \widetilde{B}) & = \frac{1}{2} |\log(\tilde{B})|^2,\text{ setting }|\log(-I)| = \pi \sqrt{2}, \\
H^{tw}  (A, \tilde{B}) & = \phi(A, \tilde{B}) H^f  (A, \tilde{B}), \\
H^\tau  (A, \tilde{B}) & = R(|\log (- \tilde{B})|  ),
\end{align*}
where $\phi$ is a compactly supported function equal to  1 in a neighborhood of $\lbrace g = 0\rbrace$, 
$R\colon \rr_+\rightarrow \rr$ is zero for $t> \frac{\pi \sqrt{2}}{2}$, and such that $R(t) = \pi^2 - \pi \sqrt{2} t + \frac{1}{2} t^2$ for $t< \frac{\pi \sqrt{2}}{4}$. 

These three functions coincide in the neighborhood of $\lbrace \tilde{B} = -I\rbrace$: this is clear for $H^f$ and $H^{tw}$ since $\lbrace \tilde{B} = -I\rbrace \subset \lbrace g = 0 \rbrace$, and if $-\tilde{B}$ is conjugated to $\begin{pmatrix}
e^{i\alpha}& 0 \\
0 & e^{-i \alpha}
\end{pmatrix}$,  
with $\alpha\in [0,\pi]$, then $\tilde{B}$ is conjugated to $\begin{pmatrix}
e^{i(\pi -\alpha)}& 0 \\
0 & e^{-i (\pi -\alpha)}
\end{pmatrix}$, 
and $\frac{1}{2} |\log ( \tilde{B}) |^2 = (\pi -\alpha)^2 = R(|\log (- \tilde{B})| )$, since $|\log (- \tilde{B})| = \alpha \sqrt{2}$. Hence, $H^{tw} = H^\tau$ in the neighborhood of $\lbrace \tilde{B} = -I\rbrace$.

By proposition ~\ref{expressionflot}, the time 1 flow of $H^f$ is induced by the geometric twist and extends smoothly to $\N(\Sigma)$, 
hence does the flows of $H^{tw}$ and $H^\tau$: denote then $f$, $tw$ and $\tau$ the  flows extended to $\N(\Sigma)$.

On the one hand, the set $ \lbrace g = 0 \rbrace$ is invariant by the flow  of $H^f$ for all time, it follows that $f$ and $tw$  coincide on it, and  $L(H') = tw(L(H))$, since  $L(H') = f(L(H))$ and $L(H)$ is contained in $ \lbrace g = 0 \rbrace$.

On the other hand, by Proposition~\ref{twistsigma}, $\tau$ is the inverse of a Dehn twist  along $\lbrace \tilde{B} = -I\rbrace$. Indeed, denoting $\varphi$ the involution \[(A, \tilde{B}) \mapsto (A, -\tilde{B})\] of $\N(T')$, the map $\varphi \tau \varphi^{-1}$ is a fibered Dehn twist along $\lbrace \tilde{B} = I\rbrace$.

Observe now that  $tw$ can be written as the composition $(tw\circ \tau^{-1})\circ\tau$, with  $\tau$ a Dehn twist along  $\lbrace \tilde{B} = -I\rbrace$, and  $tw\circ \tau^{-1}$  a compactly supported Hamiltonian isotopy. Indeed, outside $\lbrace \tilde{B} = -I\rbrace$,  $tw\circ \tau^{-1}$  is the time 1 flow of the Hamiltonian 

\[H^{comp}(t,x) = H^{tw}(x) - H^\tau( \phi_{tw}^{t} (x)  ),\]
where $\phi_{tw}^{t}$ is the time $t$ flow of $H^{tw}$. Yet, in the neighborhood of $\lbrace \tilde{B} = -I\rbrace$, $\phi_{tw}^{t}$ coincides with the flow of $H^\tau$, hence $H^{comp}(t,x) = H^{tw}(x) - H^\tau (x)$ in the neighborhood of $\lbrace \tilde{B} = -I\rbrace$, and $H^{comp}$ extends smoothly to $\lbrace \tilde{B} = -I\rbrace$.
\end{proof}

\subsubsection{Proof of the surgery exact sequence}\label{sec:proofsurgeryexactseq}

In this paragraph we prove Theorem~\ref{trianglechir}. 

\begin{proof}

 Let  $\alpha$, $\beta$ and $\gamma$ denote three curves in the punctured torus  $T' = \partial Y \setminus \lbrace\mathrm{small~disc}\rbrace$ forming a triad, one has  $\beta^{-1} = \tau_\alpha \gamma$, where $\tau_\alpha$ is a Dehn twist along $\alpha$. Hence, with
\begin{align*}  L_{\alpha} ^{-} &= \lbrace \mathrm{Hol}_\alpha = -I \rbrace, \\
 L_\beta &= \lbrace \mathrm{Hol}_\beta = I \rbrace,  \\
L_\gamma &= \lbrace \mathrm{Hol}_\gamma = I \rbrace
\end{align*} the three Lagrangian spheres  of $\Nc(T')$, it follows from Theorem~\ref{twistinduit} that there  exists a generalized Dehn twist $\tau_S$ of $\Nc(T')$ along $S = L_{\alpha} ^{-}$ which sends $L_\gamma$ to $L_\beta$. Indeed, let $H$  be the solid torus in which  $\beta^{-1}$ bounds a disc, and $i \colon T'\rightarrow \partial H$ the inclusion, one has $i(\beta^{-1}) = \partial D^2$. If $i' = i\circ \tau_\alpha$, one has  $ i'(\tau_\alpha ^{-1} \beta ^{-1}) = i'(\gamma)$.

With $\underline{L} = L(Y,c)$, $S = L_{\alpha} ^{-}$ and $L_0 = L_\beta$,  Theorem~\ref{quilttri} gives an exact sequence:
\[ \ldots \rightarrow HF(\tau_S L_0, \underline{L}) \rightarrow HF(L_0, \underline{L}) \rightarrow HF(L_0,S^T, S, \underline{L})\rightarrow \cdots . \]

It now remains to identify the HSI groups: the Lagrangians $L_\beta$ and $L_\gamma$ being associated to the cobordisms corresponding to a 2-handle attachment  along $\beta$ (resp. $\gamma$) and without homology class, it follows for the two first groups: \[HF(\tau_S L_0, \underline{L}) = HF(L_\beta, \underline{L}) = HSI(Y_\beta,  c_\beta),\] and \[ HF(L_0, \underline{L})= HF( L_\gamma, \underline{L}) = HSI(Y_\gamma,  c_\gamma).\] Finally, $S = L_{\alpha} ^{-}$ corresponds to a two-handle attachment along $\alpha$, with homology class  $k_\alpha$, it follows from the Künneth formula (Theorem~\ref{sommecnx}) and the fact $HF(L_0,S) = HSI(S^3) = \zz$ that: \[ HF(L_0,S^T, S, \underline{L}) = HF(L_0,S) \otimes_\zz HF( S, \underline{L}) = HSI(Y_\alpha, k_\alpha + c_\alpha),\] which completes the proof.

\end{proof}

\subsection{Applications of the exact sequence}\label{sec:applic}

In this section we give some direct applications  of  Theorem~\ref{trianglechir}. These do not require any further properties of the maps in the exact sequence, and follows from an observation due to Ozsv{\'a}th and Szab{\'o}. We start by recalling it, and give some families  of manifolds for which the HSI homology  is minimal. All these  manifolds are L-spaces in Heegaard Floer theory.

\subsubsection{The observation of Ozsv{\'a}th and  Szab{\'o} }\label{sec:OSzobs}The  following fact has been pointed out by Ozsv{\'a}th and Szab{\'o}, see for example \cite[Exercice~1.13]{OSzlectures}. It can be proven directly, or be deduced from the surgery  exact sequence (for $\widehat{HF}$ or $HSI$) by taking the Euler characteristic.

\begin{lemma} Let  $Y_\alpha$, $Y_\beta$ and $Y_\gamma$ be a surgery triad. If one denotes, for a  set $H$, the quantity:
\[ \abs{H} = \begin{cases} \mathrm{Card} H \text{ if $H$ is finite} \\ 0 \text{ otherwise,} \end{cases}\]
one has, up to a permutation of the manifolds, $ \abs{H_1(Y_\alpha;\zz)} = \abs{H_1(Y_\beta;\zz)} + \abs{H_1(Y_\gamma;\zz)} $.
\end{lemma}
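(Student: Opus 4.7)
The plan is a direct homological computation: apply Mayer--Vietoris to the Dehn filling and reduce to an elementary statement about cyclic quotients of $H_1(Y;\zz)$.

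First I would decompose $Y_\delta = Y \cup_T (D^2 \times S^1)$, where $T = \partial Y$, and extract from the Mayer--Vietoris sequence the natural identification
\[
H_1(Y_\delta;\zz) \;\cong\; H_1(Y;\zz) \bigl/ \langle i_*\delta\rangle,
\]
with $i\colon T \hookrightarrow Y$ the inclusion. Next, since $\alpha\cdot\beta = -1$ makes $\alpha,\beta$ a basis of $H_1(T;\zz) \cong \zz^2$, writing $\gamma = p\alpha + q\beta$ and imposing $\beta\cdot\gamma = \gamma\cdot\alpha = -1$ gives $p = q = -1$, so $\gamma = -\alpha-\beta$ in $H_1(T;\zz)$. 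Setting $a = i_*\alpha$ and $b = i_*\beta$ in $G := H_1(Y;\zz)$, the three groups in question become $G/\langle a\rangle$, $G/\langle b\rangle$, $G/\langle a+b\rangle$.

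Then I would use Poincar\'e--Lefschetz duality to obtain that the kernel of $i_*\colon H_1(T;\zz) \to H_1(Y;\zz)$ has rank one, so that the image $A := \langle a,b\rangle \subset G$ is isomorphic to $\zz^2/L$ for a rank-one sublattice $L = \zz\cdot(m,n)$. The short exact sequence $0 \to A/\langle x\rangle \to G/\langle x\rangle \to G/A \to 0$ for $x \in A$ reduces the computation multiplicatively to $A/\langle x\rangle$: if $G/A$ is infinite, all three orders vanish and the relation $0 = 0 + 0$ holds trivially; otherwise $|G/\langle x\rangle| = |G/A|\cdot |A/\langle x\rangle|$. A Smith normal form computation in $\zz^2/L$ then yields
\[
|A/\langle a\rangle| = |n|,\qquad |A/\langle b\rangle| = |m|,\qquad |A/\langle a+b\rangle| = |n-m|,
\]
under the convention $|0| = 0$ that matches the lemma's convention for infinite groups.

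Finally, the triangle identity follows from the identity $n = m + (n-m)$: if any two of $m,\ n-m,\ n$ share a sign (or one is zero), then $|n| = |m| + |n-m|$, while if $m$ and $n-m$ have opposite signs, then either $|n-m| = |m| + |n|$ or $|m| = |n| + |n-m|$ depending on which has larger magnitude. Multiplying through by $|G/A|$ yields the claim. The main obstacle is essentially bookkeeping: fixing orientation conventions so that $\gamma = -\alpha-\beta$ comes out correctly, and handling the ``$|0| = 0$'' convention uniformly across the sign case analysis. An equivalent route hinted at in the paper is to take Euler characteristics in Theorem~\ref{trianglechir} applied with $c = 0$ and invoke Proposition~\ref{eulercara}: the relation $\chi_\beta = \chi_\alpha + \chi_\gamma$ yields $|a+c| \in \{|a|+|c|,\,\bigl||a|-|c|\bigr|\}$, giving the absolute-value identity by the same sign case analysis.
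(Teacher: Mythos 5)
Your argument is correct and fills in the ``direct'' proof that the paper only alludes to: the lemma appears with no proof beyond a citation to Ozsv\'ath--Szab\'o's lecture notes and the remark that it can be ``proven directly, or be deduced from the surgery exact sequence \ldots by taking the Euler characteristic.'' Your Mayer--Vietoris reduction to $H_1(Y)/\langle i_*\delta\rangle$, the identification $\gamma = -\alpha-\beta$ from the prescribed intersection numbers, the rank-one kernel from half-lives-half-dies, the multiplicativity along $0\to A/\langle x\rangle\to G/\langle x\rangle\to G/A\to 0$ (with the $|0|=0$ convention handling the infinite cases uniformly), and the determinant computations giving $|A/\langle a\rangle|=|n|$, $|A/\langle b\rangle|=|m|$, $|A/\langle a+b\rangle|=|n-m|$ are all sound, and the Euler-characteristic route you mention at the end is precisely the other option the paper gestures at. The only slip is a small one in the final case split: the clause ``if any two of $m$, $n-m$, $n$ share a sign (or one is zero), then $|n| = |m| + |n-m|$'' is false as written --- take $m=5$, $n=2$, where $m$ and $n$ share a sign but $|n|=2\neq 8=|m|+|n-m|$. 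The hypothesis you actually need is that $m$ and $n-m$ share a sign or one of them vanishes; equivalently, the clean elementary fact is that for integers with $u+v=w$, the largest of $|u|$, $|v|$, $|w|$ equals the sum of the other two, and this does close the argument in the form you stated.
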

\arnaque

Define HSI-minimal manifolds, which are analogs of Heegaard Floer L-spaces:
\begin{defi}A closed oriented 3-manifold $Y$ will be called \emph{HSI-minimal} if for each class $c\in H_1(Y;\Z{2})$,  $HSI(Y,c)$ is a free abelian group  of rank $\abs{H_1(Y;\zz)}$.
\end{defi}

\begin{remark}According to Proposition~\ref{genreun},  $S^2\times S^1$ isn't HSI-minimal, and the lens spaces are.
\end{remark}

It follows then from the surgery  exact sequence (Theorem~\ref{trianglechir})  and from the  formula for the Euler characteristic  (Proposition~\ref{eulercara}):

\begin{prop}\label{triadehsimin}Let $(Y_\alpha, Y_\beta, Y_\gamma)$ be a surgery triad, with  $Y_\beta$ and $Y_\gamma$  HSI-minimal, and $\abs{H_1(Y_\alpha;\zz)} = \abs{H_1(Y_\beta;\zz)} + \abs{H_1(Y_\gamma;\zz)}$. Then $Y_\alpha$ is also HSI-minimal.
\end{prop}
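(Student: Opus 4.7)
The plan is to apply the surgery exact sequence of Theorem~\ref{trianglechir} to each class $c'\in H_1(Y_\alpha;\Z{2})$, and then to extract freeness by a rank saturation argument. First I would verify via Mayer-Vietoris that the natural map $H_1(Y;\Z{2})\to H_1(Y_\alpha;\Z{2})$ is surjective: the meridian $\alpha$ of the filled solid torus dies, while the longitude gets identified with the core class $k_\alpha$, so the projection $H_1(Y)\oplus\zz\langle k_\alpha\rangle\twoheadrightarrow H_1(Y_\alpha)$ already factors through $H_1(Y)$. Consequently, given an arbitrary $c'\in H_1(Y_\alpha;\Z{2})$, I can lift $c'+k_\alpha$ to some $c\in H_1(Y;\Z{2})$, so that $c_\alpha+k_\alpha=c'$ and the exact sequence genuinely computes every class on $Y_\alpha$.

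Next, I would feed this $c$ into Theorem~\ref{trianglechir} and exploit the HSI-minimality of $Y_\beta$ and $Y_\gamma$: both $HSI(Y_\beta,c_\beta)$ and $HSI(Y_\gamma,c_\gamma)$ are free abelian of ranks $\abs{H_1(Y_\beta;\zz)}$ and $\abs{H_1(Y_\gamma;\zz)}$ respectively. A standard rank count in the exact triangle gives
\[
\mathrm{rk}\,HSI(Y_\alpha,c') \le \abs{H_1(Y_\beta;\zz)}+\abs{H_1(Y_\gamma;\zz)} = \abs{H_1(Y_\alpha;\zz)},
\]
while Proposition~\ref{eulercara} provides the matching lower bound $\mathrm{rk}\,HSI(Y_\alpha,c')\ge |\chi(HSI(Y_\alpha,c'))| = \abs{H_1(Y_\alpha;\zz)}$ (assuming $b_1(Y_\alpha)=0$). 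Equality of the two bounds pins down the rank and forces $HSI(Y_\alpha,c')$ to be concentrated in a single parity of the relative $\Z{8}$-grading.

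The main obstacle I anticipate is upgrading this rank calculation to the freeness required by the definition of HSI-minimality. The key observation is that saturation of the rank inequality forces the map $f\colon HSI(Y_\beta,c_\beta)\to HSI(Y_\gamma,c_\gamma)$ appearing in the triangle to be rationally zero; because its target is free abelian (hence torsion-free), $\mathrm{im}(f)$ is both torsion and a subgroup of a free module, so $f$ vanishes on the nose. The long exact sequence then collapses to the short exact sequence
\[
0\to HSI(Y_\gamma,c_\gamma)\to HSI(Y_\alpha,c')\to HSI(Y_\beta,c_\beta)\to 0,
\]
which splits because its right-hand term is free. Hence $HSI(Y_\alpha,c')\simeq HSI(Y_\beta,c_\beta)\oplus HSI(Y_\gamma,c_\gamma)$ is free abelian of rank $\abs{H_1(Y_\alpha;\zz)}$, as required.

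Finally, the degenerate case $b_1(Y_\alpha)>0$, where $\abs{H_1(Y_\alpha;\zz)}=0$, is handled separately: the hypothesis $\abs{H_1(Y_\alpha;\zz)}=\abs{H_1(Y_\beta;\zz)}+\abs{H_1(Y_\gamma;\zz)}$ forces both neighbouring manifolds to have $\abs{H_1}=0$ as well, hence by HSI-minimality both groups $HSI(Y_\beta,c_\beta)$ and $HSI(Y_\gamma,c_\gamma)$ vanish, and the exact sequence immediately gives $HSI(Y_\alpha,c')=0$ for every class $c'$.
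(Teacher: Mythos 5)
Your proof is correct and follows essentially the same route as the paper: a rank count in the exact triangle saturated against the Euler characteristic bound of Proposition~\ref{eulercara} forces the map $HSI(Y_\beta,c_\beta)\to HSI(Y_\gamma,c_\gamma)$ to vanish, giving a split short exact sequence and hence freeness. The two small additions you make --- explicitly checking via Mayer--Vietoris that every class on $Y_\alpha$ is reached by the exact sequence, and treating the $b_1(Y_\alpha)>0$ case separately --- are points the paper passes over in silence, and they are correct (though note $k_\alpha$ generates a $\Z{2}$ summand, not a $\zz$ one, in the argument as written).
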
 
\begin{proof}Let $c_\alpha \in H_1(Y_\alpha;\Z{2})$, and $c_\beta$, $c_\gamma$ two other classes on $Y_\beta$ and $Y_\gamma$ for which  Theorem~\ref{trianglechir} gives rise to an exact sequence between the three HSI homology groups. Assume by contradiction  that the arrow  between $HSI(Y_\beta,c_\beta)$ and $HSI(Y_\gamma,c_\gamma)$ is nonzero, then one would have \[\mathrm{rk}HSI(Y_\alpha,c_\alpha) < \mathrm{rk}HSI(Y_\beta,c_\beta) + \mathrm{rk}HSI(Y_\gamma,c_\gamma) = \chi ( HSI(Y_\alpha,c_\alpha) ), \]
which is impossible. Hence the  exact sequence is a short exact sequence, and  $HSI(Y_\alpha,c_\alpha)$ is a free abelian group of rank  $\abs{H_1(Y_\alpha;\zz)}$.

\end{proof}

\subsubsection{Some  families of HSI-minimal manifolds}\label{sec:hsiminimals}

We now give  some applications  of the former observation: 
\paragraph{\bf Plumbings}
Let $(G,m)$ denote a weighted  graph: $m$ is a function defined on the set of vertices of the graph $G$, with values in $\zz$. Recall  that one can associate to $(G,m)$ a 4-manifold with boundary obtained by plumbing disc bundles over spheres associated to the vertices, whose Euler number is $m(v)$. Its boundary is a closed oriented 3-manifold  $Y(G,m)$.

\begin{prop}\label{bonplombage}Suppose that $G$ is a  disjoint union of trees, and that, denoting $d(v)$ the number of edges adjascent to a vertex  $v$, the function $m$ satisfies, for every vertex  $v$ of $G$, $m(v)\geq d(v)$, with  at least one vertex for which the   inequality is strict. Then $Y(G,m)$ is HSI-minimal.
\end{prop}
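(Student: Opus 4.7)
The plan is to prove the proposition by induction on the pair $(|V(G)|,\,\sum_v (m(v)-d(v)))$ ordered lexicographically, combining the surgery exact sequence of Theorem~\ref{trianglechir} in the packaged form of Proposition~\ref{triadehsimin}, the K\"unneth formula (Theorem~\ref{sommecnx}), and the HSI computation for lens spaces (Proposition~\ref{genreun}(ii)). A preliminary reduction handles the disjoint union case: since the plumbing on $G\sqcup G'$ is the connected sum $Y(G,m)\# Y(G',m')$, K\"unneth preserves HSI-minimality componentwise, so I may assume $G$ is a single tree (with the strict inequality holding within each component, which is the only sensible reading of the hypothesis, since an isolated vertex of framing $0$ would otherwise introduce an $S^2\times S^1$-summand and contradict HSI-minimality via Proposition~\ref{genreun}(i)). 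The base cases $|V(G)|=0,1$ give $S^3$ and $L(m(v_0),1)$ with $m(v_0)\geq 1$, both HSI-minimal.

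For the inductive step I pick a vertex $v_0$ with $m(v_0)>d(v_0)$; if $v_0$ is isolated the manifold already splits as a lens-space connected summand, so I may assume $d(v_0)\geq 1$. The vertex $v_0$ corresponds to a knot $K_{v_0}$ with solid-torus neighborhood inside the partial plumbing, and the three slopes $m(v_0)$, $m(v_0)-1$, $\infty$ on $\partial\nu(K_{v_0})$ pairwise satisfy the intersection condition of Definition~\ref{def:triad}. The resulting surgery triad is
\[ Y_\alpha = Y(G,m),\quad Y_\beta = Y(G,\,m-\delta_{v_0}),\quad Y_\gamma = Y(G\setminus v_0,\,m|_{G\setminus v_0}), \]
to which Theorem~\ref{trianglechir} associates a long exact sequence of HSI groups. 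The arithmetic identity
\[ |H_1(Y_\alpha;\zz)| = |H_1(Y_\beta;\zz)| + |H_1(Y_\gamma;\zz)| \]
required by Proposition~\ref{triadehsimin} follows from cofactor expansion of the plumbing matrix along the $v_0$-row, namely $\det M_\alpha = \det M_\beta + \det M_\gamma$, combined with positive-definiteness of the plumbing matrices of trees satisfying $m\geq d$ with strict inequality somewhere.

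Both $Y_\beta$ and $Y_\gamma$ are then HSI-minimal by induction: $Y_\gamma$ has strictly fewer vertices and inherits the hypothesis, with a newly strict inequality at every former neighbor of $v_0$, since $m(v_i)\geq d_G(v_i) = d_{G\setminus v_0}(v_i)+1$ there; while $Y_\beta$ has the same vertex set but $\sum_v(m(v)-d(v))$ decreased by one, and the inequality $m(v_0)-1\geq d(v_0)$ preserves the hypothesis in the generic case. Proposition~\ref{triadehsimin} then concludes that $Y_\alpha=Y(G,m)$ is HSI-minimal, completing the inductive step.

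The main obstacle is the degenerate sub-case in which $v_0$ is the unique strict vertex of $(G,m)$ and $m(v_0)=d(v_0)+1$: then $m-\delta_{v_0}\equiv d$, the matrix $M_\beta$ is singular, $Y_\beta$ has infinite $H_1$, and the inner induction cannot be invoked on it. I would handle this by refining the choice of $v_0$: whenever some vertex satisfies $m(v)\geq d(v)+2$ I choose such a $v_0$ so that the hypothesis persists in $Y_\beta$, and otherwise every leaf has $m\in\{1,2\}$, at which point a Kirby blow-down on any leaf of framing $1$ (adjusting the framing of the unique neighbor by $-1$) preserves the diffeomorphism type of $Y(G,m)$ and the hypothesis of the proposition while strictly decreasing $|V(G)|$, thereby returning the argument to the outer induction.
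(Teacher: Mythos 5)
Your proof is correct and uses the same strategy as the paper: set up a surgery triad at a vertex, apply Proposition~\ref{triadehsimin} inductively, with lens spaces (Proposition~\ref{genreun}) as the base case. Two differences are worth noting. First, the paper fixes a leaf $v$ and inducts on its weight; you allow any vertex $v_0$ with strict inequality and organize the induction on $(|V(G)|, \sum_v(m(v)-d(v)))$, which is equivalent. Second, and more substantially, you explicitly identify and resolve a degenerate case that the paper's sketch passes over: when $v_0$ is the unique strict vertex and $m(v_0)=d(v_0)+1$, the manifold $Y(G,m-\delta_{v_0})$ has singular plumbing matrix and fails the inductive hypothesis. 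Your repair---blow down a $1$-framed leaf, which must exist since a tree with $\geq 2$ vertices has $\geq 2$ leaves and at most one of them can be strict---is correct and closes that gap. You also make explicit the cofactor-expansion and positive-definiteness argument for $|H_1(Y_\alpha)| = |H_1(Y_\beta)| + |H_1(Y_\gamma)|$, which the paper outsources to a citation, and you rightly observe that the hypothesis ``at least one strict inequality'' must be read per connected component, since otherwise an $S^2\times S^1$ connect summand appears and, via the K\"unneth formula and Proposition~\ref{genreun}, contradicts the conclusion. Overall this is a more careful implementation of the same argument.
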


\begin{remark} If  $m(v)= d(v)$ for all vertex of $G$, then one can show after a succession of blow-downs that $Y(G,m) \simeq S^2\times S^1$.
\end{remark}

\begin{proof} The proof is analog to the corresponding one for \cite[Theorem 7.1]{OSsymplectic4}: one proceeds by induction on the number of vertices and on the weights. First, if the graph $G$ consists of a single vertex, then $Y(G,m)$ is a  lens space, and the result follows from Proposition~\ref{genreun}.

We prove now the induction on the number of vertices.  Adding a leaf $v$ with  $m(v) = 1$ corresponds to a blow-up, and doesn't change the topological type  of $Y(G,m)$.

We finally prove  the induction on the weight of a leaf. Let  $(G,m)$ be a  graph satisfying the   hypothesis of the proposition, $v$ a leaf   of $G$, $G'$ the graph  obtained by removing $v$, $m'$ the restriction of  $m$ to $G'$, and $\tilde{m}$ the function agreeing with  $m$ outside $v$, and such that $\tilde{m}(v) = m(v)+1$. Suppose that  $(G,m)$ and $(G',m')$ satisfy the induction hypothesis.

The manifolds $Y(G,\tilde{m})$, $Y(G,m)$ and $Y(G',m')$ form a surgery triad, and $| H_1(Y(G,\tilde{m});\zz)| = | H_1(Y(G,m);\zz)| + | H_1(Y(G',m');\zz)|$, see \cite[Proof of Th. 7.1]{OSsymplectic4}. Hence the induction follows from Proposition~\ref{triadehsimin}.
\end{proof}

\paragraph{\bf Branched double covers of $S^3$ along  quasi-alternating links}
In \cite[Def. 3.1]{OSzdoublecover}, Ozsv{\'a}th and  Szab{\'o}  defined the following class of links, called ``quasi-alternating'': it consists of the smallest class of links satisfying the following:

\begin{enumerate}
\item The trivial knot  is quasi-alternating,

\item   Let $L$ be a link. If there  exists a projection and a crossing  of $L$ such that its two resolutions are  quasi-alternating,  $ \mathrm{det} L_0, \mathrm{det} L_1 \neq 0$ , and $  \mathrm{det} L =\mathrm{det} L_0 + \mathrm{det} L_1 $, then $L$ is also quasi-alternating.
\end{enumerate}
According to  \cite[Lemma 3.2]{OSzdoublecover}, this class contains the links admitting a connected alternating projection. It follows directly from Proposition~\ref{triadehsimin}:

\begin{prop} The branched double covers of  quasi-alternating links are HSI-minimal manifolds.
\end{prop}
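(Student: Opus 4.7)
The plan is to argue by induction on the inductive construction of quasi-alternating links. For the base case, the unknot has branched double cover $S^3 = L(1,0)$, which is HSI-minimal by Proposition~\ref{genreun} (ii). For the inductive step, given a quasi-alternating link $L$ with a distinguished crossing whose resolutions $L_0$ and $L_1$ are quasi-alternating and satisfy $\det L_0,\det L_1\neq 0$ and $\det L = \det L_0 + \det L_1$, I will invoke the inductive hypothesis to say that $\Sigma(L_0)$ and $\Sigma(L_1)$ are both HSI-minimal, and then apply Proposition~\ref{triadehsimin}.

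To do so, the key geometric input is the classical observation, due to Ozsv\'ath--Szab\'o in the Heegaard Floer setting, that the triple of branched double covers $(\Sigma(L),\Sigma(L_0),\Sigma(L_1))$ forms a surgery triad in the sense of Definition~\ref{def:triad}. Concretely, the preimage in $\Sigma(L)$ of a small arc traversing the distinguished crossing is a closed curve whose tubular neighborhood is a solid torus; removing this solid torus yields a manifold $Y$ with torus boundary, and the three fillings corresponding to the crossing, its $0$-resolution and its $1$-resolution correspond to three slopes $\alpha,\beta,\gamma\subset\partial Y$ with pairwise intersection number $-1$. The arithmetic input is the standard identification $\lvert H_1(\Sigma(K);\zz)\rvert = \det K$ for any link $K$ with nonzero determinant, together with the hypothesis $\det L = \det L_0 + \det L_1$, which immediately yields
\[\lvert H_1(\Sigma(L);\zz)\rvert = \lvert H_1(\Sigma(L_0);\zz)\rvert + \lvert H_1(\Sigma(L_1);\zz)\rvert.\]

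With these two ingredients in hand, Proposition~\ref{triadehsimin} applied with $Y_\alpha = \Sigma(L)$, $Y_\beta = \Sigma(L_0)$, $Y_\gamma = \Sigma(L_1)$ completes the induction, proving that $\Sigma(L)$ is HSI-minimal. I do not expect a serious obstacle here: both the surgery triad structure on branched double covers at a crossing and the determinant formula for the order of $H_1$ are well-known facts that can be cited directly, and the inductive mechanism is set up precisely to match Proposition~\ref{triadehsimin}. The only point requiring mild care is verifying that we may always take $\Sigma(L)$ in the role of $Y_\alpha$ rather than $Y_\beta$ or $Y_\gamma$, which follows from the strict positivity of all three determinants and the additivity relation.
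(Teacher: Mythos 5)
Your proof is correct and matches the paper's approach: the paper treats this proposition as an immediate corollary of Proposition~\ref{triadehsimin}, and your proposal supplies exactly the standard facts the paper leaves implicit (the surgery triad at a crossing, the identification $\lvert H_1(\Sigma(K);\zz)\rvert = \det K$, and the induction mirroring the recursive definition of quasi-alternating links). The only cosmetic point is that "induction on the inductive construction" is best made precise by inducting on $\det L$, which decreases strictly at each step since $\det L_0, \det L_1 > 0$.
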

\arnaque 

\paragraph{\bf Integral Dehn surgeries along certain knots} Finally, let $K\subset S^3$ be a knot such that, for some integer $n_0>0$, the $n_0$-surgery  $S^3_{n_0}(K)$ is HSI-minimal. From the fact that for every  $n>0$,  $\abs{ H_1(S^3_{n}(K) ,\Z{2}) } = n$, it follows that $S^3_{n}(K)$ is HSI-minimal for every $n>n_0$.

\bibliographystyle{alpha}
\bibliography{biblio}

\address{Laboratoire de Math\'ematiques Jean Leray \\Université de Nantes \\2, rue de la Houssinière - BP 92208 F-44322 Nantes Cedex 3 \\ France}

\email{guillem.cazassus@univ-nantes.fr}

\end{document}